\newtheorem{thm}{{\bf Theorem}}[section]
\newtheorem{lem}[thm]{{\bf Lemma}}
\newtheorem{cor}[thm]{{\bf Corollary}}
\newtheorem{prop}[thm]{{\bf Proposition}}
\newtheorem{claim}[thm]{Claim} 
\newtheorem{rem}[thm]{Remark}
\newtheorem{ques}[thm]{Question}
\newtheorem{conj}[thm]{Conjecture}
\numberwithin{equation}{section}
\begin{document} 

\title[Minimal dilatations and their asymptotic behavior]{
	Minimal dilatations of pseudo-Anosovs generated by 
	the magic $3$-manifold and their asymptotic behavior
}
	
\author[E. Kin]{%
    Eiko Kin
}
\address{%
       Department of Mathematics, Graduate School of Science, Osaka University Toyonaka, Osaka 560-0043, JAPAN
}
\email{%
        kin@math.sci.osaka-u.ac.jp
}

\author[S. Kojima]{%
    Sadayoshi Kojima
}
\address{%
        Department of Mathematical and Computing Sciences \\
        Tokyo Institute of Technology \\
        Ohokayama, Meguro \\
        Tokyo 152-8552 Japan
}
\email{%
        sadayosi@is.titech.ac.jp
}

\author[M. Takasawa ]{%
    Mitsuhiko Takasawa
}
\address{%
        Department of Mathematical and Computing Sciences \\
        Tokyo Institute of Technology \\
        Ohokayama, Meguro \\
        Tokyo 152-8552 Japan
}
\email{%
        takasawa@is.titech.ac.jp
}

\subjclass[2000]{%
	Primary 57M27, 37E30, Secondary 37B40
}

\keywords{%
	mapping class group, pseudo-Anosov, 
	dilatation, entropy, hyperbolic volume, 
	fibered $3$-manifold, 
	magic manifold
}

\date{%
	\today
	}

\thanks{%
	The first author is partially supported by 
	Grant-in-Aid for Young Scientists (B) (No. 20740031), MEXT, Japan, 
	and 
	the second author is partially supported by 
	Grant-in-Aid (A) (No. 22244004), JSPS, Japan.  
	} 

\begin{abstract} 
This paper concerns the set $\widehat{\mathcal{M}}$ of pseudo-Anosovs 
which occur as monodromies of fibrations on manifolds
obtained from the magic $3$-manifold $N$ by Dehn filling three cusps  with a mild restriction. 
Let  $N(r)$ be the manifold obtained from $N$ by Dehn filling 
one cusp along the slope  $r \in {\Bbb Q} $. 
We prove that for each $g$ (resp. $g \not\equiv 0 \pmod{6}$), 
the minimum among dilatations of elements (resp. elements with orientable invariant foliations) 
of $\widehat{\mathcal{M}}$ defined on   a closed surface $\varSigma_g$ of genus $g$ 
is achieved by the monodromy of some $\varSigma_g$-bundle over the circle 
obtained from $N(\tfrac{3}{-2})$ or $N(\tfrac{1}{-2})$ by Dehn filling both cusps. 
These minimizers are the same ones identified  by Hironaka, Aaber and Dunfield, Kin and Takasawa independently. 
In the case $g \equiv 6 \pmod{12}$ we find a new family of pseudo-Anosovs defined on $\varSigma_g$ 
with orientable invariant foliations obtained from $N(-6)$ or $N(4)$ by Dehn filling both cusps.  
We prove that 
if $\delta_g^+$ is  the minimal dilatation of pseudo-Anosovs with orientable invariant foliations 
defined on $\varSigma_g$, then 
$$ \limsup_{\substack{g \equiv 6 \pmod{12} \\ g \to \infty}}  g  \log \delta^+_g  \le 2 \log  \delta(D_5) \approx 1.0870,$$
where $\delta(D_n)$ is the minimal dilatation of pseudo-Anosovs on an $n$-punctured disk. 
We  also study monodromies of fibrations on $N(1)$. 
We prove that if $\delta_{1,n}$ is the minimal dilatation of pseudo-Anosovs on a genus $1$ surface with $n$ punctures, 
then $$ \limsup_{n \to \infty} n \log \delta_{1,n} \le 2 \log \delta(D_4) \approx 1.6628.  $$
\end{abstract} 
\maketitle

\section{Introduction}\label{section_Introduction}

\subsection{Minimal dilatations of pseudo-Anosovs}

Let  $\mathrm{Mod}(\varSigma)$  be the mapping class group of 
a connected oriented surface  $\varSigma$, 
and let $\phi \in \mathrm{Mod}(\varSigma)$ be a pseudo-Anosov class. 
Then  $\phi \in \mathrm{Mod}(\varSigma)$  contains as a representative 
a pseudo-Anosov homeomorphism  $\Phi: \varSigma \rightarrow \varSigma$ 
equipped with a constant  $\lambda= \lambda(\Phi)>1$  called the dilatation of  $\Phi$. 
The dilatation  $\lambda(\phi)$ of $\phi$  is defined to be  $\lambda(\Phi)$. 
The  topological entropy  $\mathrm{ent}(\Phi)$ of $\Phi$ is equal to  $ \log \lambda(\Phi)$, 
and  $\mathrm{ent}(\Phi)$  attains the minimal entropy among all  homeomorphisms  
which are isotopic to  $\Phi$  \cite[Expos\'{e} 10]{FLP}.  
We denote  this characteristic number by  $\mathrm{ent}(\phi)$, 
and call it the {\it entropy} of  $\phi$. 
We call   $\mathrm{Ent}(\phi)= |\chi(\varSigma)| \, \mathrm{ent}(\phi)$ the {\it  normalized entropy} of  $\phi$. 

If we fix  $\varSigma$, 
\begin{equation*} 
	\{\mathrm{ent}(\phi)\ |\ \phi \in \mathrm{Mod}(\varSigma)\  \mbox{is pseudo-Anosov}\} 
\end{equation*}  
is a closed discrete subset of  ${\Bbb R}$, 
see \cite{Ivanov}. 
In particular there exists the minimum  $\delta(\varSigma)$ among dilatations of  
pseudo-Anosov elements in  $\mathrm{Mod}(\varSigma)$. 
The explicit values of  $\delta(\varSigma)$  were 
computed in several cases where   $|\chi(\varSigma)|$ is 
small  \cite{CH,HS,KLS,LT1}. 
For example, if $D_n$ is an $n$-punctured disk, then 
$\delta(D_3)=  \tfrac{3+ \sqrt{5}}{2} \approx 2.6180$, 
$\delta(D_4) \approx 2.2966$ is equal to  the largest real root of $t^4 -2t^3 - 2t+1$, and 
$\delta(D_5) \approx 1.7220$ is equal to the largest real root of $t^5 - 2t^3 -2t^2+1$. 
However, 
it is widely open to determine $\delta(\varSigma)$ for  most surfaces $\varSigma$. 

Let  $\varSigma_g$  be a closed surface of genus $g$, and let 
$\varSigma_{g,n}$ be  a compact surface of genus  $g$  with  $n$  boundary components. 
We  set  $\delta_g= \delta(\varSigma_g)$ and $\delta_{g,n}= \delta(\varSigma_{g,n})$.  
Penner proved that $\log \delta_g \asymp \tfrac{1}{g}$ \cite{Penner}. 
It is an open problem to compute  $\delta_g$  for $g >2$, but 
some partial results are known. 
Let $\delta_g^+$ be the minimal dilatation of pseudo-Anosov homeomorphisms on $\varSigma_g$ 
with orientable invariant foliations. 
The explicit values $\delta_g^+$ are known for all $2 \le g \le 8$ 
except for $g = 6$, see \cite{Zhirov,LT,AD,Hironaka,KT1}.

We are motivated by the following question, posed by McMullen, 
which asks about the asymptotic behavior of the sequence $\{\delta_g\}_{g \ge 2}$. 

\begin{ques}[\cite{McMullen}]
\label{ques_M}
Does $\displaystyle \lim_{g \to \infty}g  \log \delta_g$ exist? What is its value?
\end{ques} 

\noindent
It was proved 
by Minakawa \cite{Minakawa} and independently by Hironaka-Kin \cite{HK} that 
$\log \delta_g^+ \asymp \tfrac{1}{g}$, 
and by Tsai  \cite{Tsai,Tsai1} that  $\log \delta_{1,n} \asymp \tfrac{1}{n}$.   
Thus we can also ask:

\begin{ques}
\label{ques_M2}
Does $\displaystyle \lim_{g \to \infty}g  \log \delta_g^+$ or $\displaystyle \lim_{n \to \infty}n  \log \delta_{1,n}$ 
 exist? What is its  value?
\end{ques}

\noindent 
Penner's lower bound on  $\delta_{g,n}$ in  \cite{Penner} gives  
a uniform lower bound 
$\tfrac{\log 2}{12} < g \log \delta_g \le g \log \delta^+_g$ 
and 
$\tfrac{\log 2}{4} \le n \log \delta_{1,n}$.

The purpose of this paper is to provide 
not a complete but a considerably sharp answer to 
Questions~\ref{ques_M} and  \ref{ques_M2}.   
To explain what we prove and 
why we believe it is very close to the sharp answer more precisely,  
we would like to give a rather long introduction.

\subsection{Thurston norm and fibered $3$-manifolds}
\label{subsection_Tnorm}

Let $M$ be an  oriented  $3$-manifold 
with boundary $\partial M$ (possibly $\partial M = \emptyset$). 
Thurston discovered a pseudo-norm 
\begin{equation*} 
	\| \cdot \|: H_2(M, \partial M; {\Bbb R}) \rightarrow {\Bbb R}. 
\end{equation*}   
When  $M$ is a hyperbolic  $3$-manifold,  $\| \cdot \|$  becomes a norm.  
Moreover when  $M$  fibers over the circle,   
he described a relation between  $\| \cdot \| $ and fibrations on $M$  
as we recall below. 
(For more details, see \cite{Thurston1}.) 

The Thurston norm 
$\| \cdot \|$  is defined for 
an integral class  $a \in H_2(M, \partial M; {\Bbb Z})$  by 
\begin{equation*} 
	\| a\|= \min_F \{- \chi(F)\}, 
\end{equation*} 
where the minimum is taken over 
all oriented surfaces  $F$  embedded in $M$, satisfying  $a= [F]$,  
with no components  of non-negative Euler characteristic. 
The surface  $F$  which realizes this minimum is called 
the {\it minimal representative} of $a $, denoted by  $F_a$. 
The norm  $\| \cdot \|$  defined on integral  classes 
admits a unique continuous extension 
$\| \cdot \|: H_2(M, \partial M; {\Bbb R})  \rightarrow {\Bbb R}$  which is 
linear on rays through the origin. 
The unit ball  $U_M= \{a \in H_2(M, \partial M; {\Bbb R})\ |\ \|a\| \le 1\}$  is 
a compact, convex polyhedron \cite{Thurston1}. 

Suppose that $M$ is a surface bundle over the circle and 
let  $F$  be its fiber. 
The fibration determines a cohomology 
class  $a^* \in H^1(M; \mathbb{Z}) \cong [M, S^1]$,   
and hence a homology class  $a \in H_2(M, \partial M; \mathbb{Z})$  
by Poincar\'e duality.  
Thurston proved in \cite{Thurston1} that 
there exists a top dimensional face  $\Omega$ on $\partial U_M$  such that 
$[F]$  is an integral class of  $int(C_{\Omega})$, 
where  $C_{\Omega}$  is the cone over  $\Omega$  with the origin and 
$int(C_{\Omega})$  is its interior.   
Moreover he proved that 
the minimal representative  $F_a$  
for {\it any} integral class  $a$  in  $int(C_{\Omega})$   
becomes a fiber of the fibration associated to  $a$.  
Because of this result, 
$\Omega$  is called a {\it fibered face} of  $M$,  
and an integral class  $a \in int (C_{\Omega})$  is called a {\it fibered class}.  
This property  shows that 
if a hyperbolic 3-manifold with the second Betti number more 
than  $1$  admits a fibration over the circle, 
then it admits an infinite family of fibrations over the ciecle.   
If $a \in int(C_{\Omega})$  is a primitive integral class, then  
the associated fibration on  $M$  has a connected fiber represented by $F_a$.  
Since  $M$  is hyperbolic,  
the mapping class $\phi_a = [\Phi_a]$ of the monodromy $\Phi_a: F_a \rightarrow F_a$ 
is pseudo-Anosov due to 
the hyperbolization theorem by Thurston~\cite{Thurston3}. 
In particular, 
a single fibered $3$-manifold could offer 
infinitely many pseudo-Anosovs defined on surfaces with variable topology.

Let us fix a fibered face  $\Omega$  of  $M$. 
The set of integral classes (hence fibered classes) and rational classes 
of  $int(C_{\Omega})$  are denoted by 
$int(C_{\Omega}({\Bbb Z}))$  and  $int(C_{\Omega}({\Bbb Q}))$  respectively. 
Let   $a \in int(C_{\Omega}(\mathbb{Z}))$  be a  primitive class.  
The {\it dilatation}  $\lambda(a)$  and  {\it entropy}  
$\mathrm{ent}(a) = \log \lambda(a)$  are defined as the dilatation 
and entropy of the pseudo-Anosov mapping class $\phi_a$ respectively.
The entropy defined on primitive fibered classes is extended to 
rational classes as follows:
for a rational number $r $ and a primitive fibered class $a$, 
the entropy  $\mathrm{ent}(ra)$ is defined 
by  $\frac{1} {|r|}  \mathrm{ent}(a)$.  
Fried proved that  
$\tfrac{1}{\mathrm{ent}}: int(C_{\Omega}({\Bbb Q})) \rightarrow {\Bbb R}$ 
is concave \cite{Fried}, 
and in particular 
$\mathrm{ent}: int(C_{\Omega}({\Bbb Q})) \rightarrow {\Bbb R}$  
admits a unique continuous extension 
\begin{equation*} 
	\mathrm{ent}: int(C_{\Omega}) \rightarrow {\Bbb R}.  
\end{equation*}  
Moreover, 
Fried proved that the restriction of  $\mathrm{ent}$  to 
the open fibered face  $int(\Omega)$  is proper, 
namely, 
$\mathrm{ent}(a)$  goes to  $ \infty$  as  $a$  goes to 
a point on the boundary  $\partial \Omega$.  
Note that  
$\frac{1}{\mathrm{ent}} : int(C_{\Omega}) \rightarrow {\Bbb R}$  is linear along each ray through the origin 
and cannot be strictly concave for this direction, 
but it is actually strictly concave for other directions.  
This refinement of concavity was proved originally 
by Matsumoto \cite{Matsumoto} and 
later  by McMullen \cite{McMullen}.    
The strict concavity of  $\frac{1}{\mathrm{ent}}$  on   $int({\Omega})$  implies 
that  $\mathrm{ent}$  is 
strictly convex on  $int({\Omega})$  because  $\mathrm{ent}$  is 
positive valued.  
Now, 
by the definition of  $\mathrm{ent}$, 
we see that  
\begin{equation*} 
	\mathrm{Ent}= \|\cdot \| \, \mathrm{ent} : int(C_{\Omega}) \to \mathbb{R} 
\end{equation*} 
becomes constant on each ray in  $int(C_{\Omega})$  through the origin.  
We call $\mathrm{Ent}(a)$  the  {\it normalized entropy} 
of $a \in int(C_{\Omega})$. 
Since  $\| \cdot \|$  is constant on a fibered face  $\Omega$, 
the normalized entropy  $\mathrm{Ent}$  is still strictly convex 
on  $int(\Omega)$.  
Thus because of the properness of  $\mathrm{ent}$  by Fried, 
$\mathrm{Ent}$  admits a minimum at a unique ray through the origin.  
In other words,    
if we regard  $\mathrm{Ent}$  as a function defined on $int(\Omega)$, 
then it has a minimum at a unique point in  $int(\Omega)$. 
We denote this minimum by  $\min \mathrm{Ent}(M, \Omega)$.    
We also denote by  $\min \mathrm{Ent}(M)$,  the minimum of  
$\{\min \mathrm{Ent}(M,\Omega) \, | \, \Omega \; \; \text{is a fibered face of  $M$} \}$.

\begin{figure}
\begin{center}
\includegraphics[width=4in]{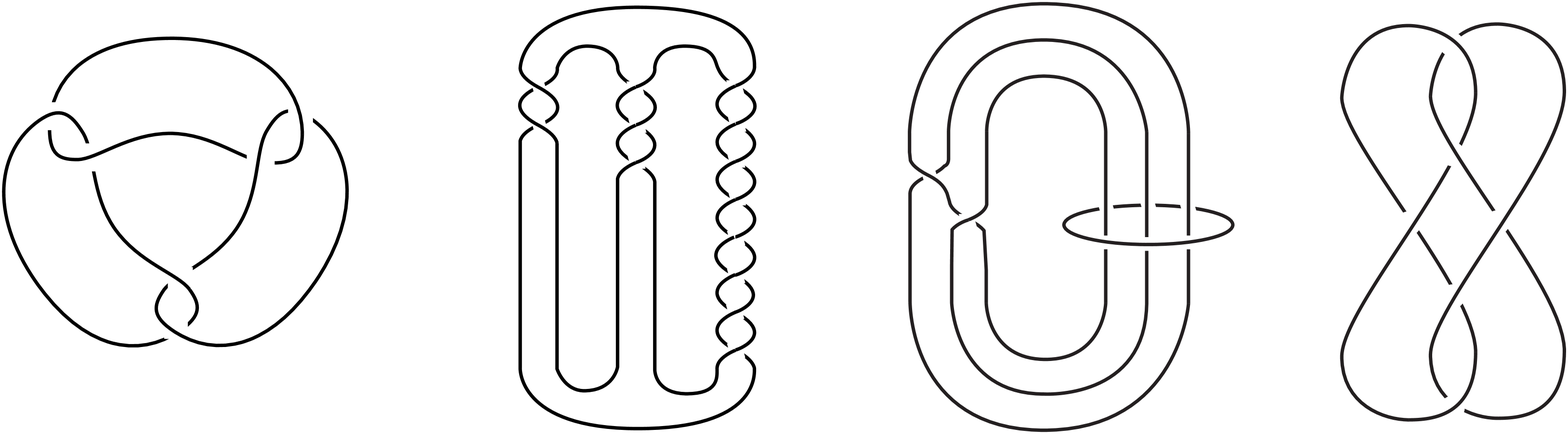}
\caption{
(from left to right) $3$ chain link $\mathcal{C}_3$, $(-2,3,8)$-pretzel link,  link $6_2^2$, Whitehead link}
\label{fig_3chain_etc}
\end{center}
\end{figure}

\subsection{Finiteness of Farb-Leininger-Margalit and Agol}

We recall a  result that connects pseudo-Anosovs 
having small dilatations with finitely many fibered $3$-manifolds.  
For  $P >1$,  consider the following set of pseudo-Anosov homeomorphisms on any surface as follows: 
\begin{equation*} 
	\Psi_P = \{ \Phi \, | \,  
	\Phi: \varSigma \rightarrow \varSigma \; \;  \mbox{pseudo-Anosov},  \; 
	\chi(\varSigma)<0, \; 
	\mathrm{Ent}(\Phi) = |\chi(\varSigma)| \mathrm{ent}(\Phi) \le \log P\}. 
\end{equation*}
Farb, Leininger and Margalit  called elements of  $\Psi_P$  {\it small dilatation pseudo-Anosov homeomorphisms}.  
Let  $\varSigma^{\circ} \subset \varSigma$  be the surface obtained by removing all 
the singularities of the stable foliation for  $\Phi$,  and  
$\Phi|_{\varSigma^{\circ}}: \varSigma^{\circ} \rightarrow \varSigma^{\circ}$  
denotes the restriction of  $\Phi$  to  $\varSigma^{\circ}$.  
Observe that  $\lambda(\Phi)= \lambda(\Phi|_{\varSigma^{\circ}})$. 
Let us put 
\begin{equation*} 
	\Psi_P^{\circ}= \{\Phi|_{\varSigma^{\circ}} 
	\, | \, 
	\Phi \in \Psi_P\}. 
\end{equation*} 
Penner's result implies that  
the set $\Psi_P$ is infinite if  $P$  is large 
($P \ge 11^2= 121$ for instance),  
and hence so is  $\Psi_P^{\circ}$.    
However, 
Farb-Leininger-Margalit \cite{FLM}  and Agol \cite{Agol2} showed  that 
if we let  ${\Bbb T}(\Psi_P^{\circ})$  be the set of  mapping tori of elements of  $ \Psi_P^{\circ}$,  
then  ${\Bbb T}(\Psi_P^{\circ})$  becomes a finite set. 
In other words, for any  $P >1$,  
there is a list of finitely many complete, 
noncompact hyperbolic $3$-manifolds  $M_1, M_2, \cdots, M_r$  fibering 
over the circle with the following property:
for any  $\Phi \in \Psi_P$,  
there exist  $M_i$  in the list and a particular fibration on $M_i$  such that 
$\Phi$   occurs as the monodromy of 
the  fibration on the manifold obtained 
from  $M_i$  by Dehn filling along boundary slopes of the fiber in question. 

Because of this,  it makes sense to say that 
small dilatation pseudo-Anosovs are ``generated" by 
a finite list of fibered  $3$-manifolds. 
This in particular implies that the following sets are finite 
because $\log \delta_g \asymp \tfrac{1}{g}$ and 
$\log \delta (D_n) \asymp \tfrac{1}{n}$ \cite{HK}. 
\begin{align*}
	\mathcal{U} 
	&= \{{\Bbb T}(\Phi|_{\varSigma^{\circ}})\ |\ 
	\Phi\ 
	\mbox{is\ pseudo-Anosov\ on\ } \varSigma= \varSigma_g 
	\mbox{\ such\ that\ }\lambda(\Phi)= \delta_g, 
	\, g \ge 2 \}, \\
	\mathcal{V} 
	&= \{{\Bbb T}(\Phi|_{\varSigma^{\circ}})\ |\ 
	\Phi\ \mbox{is\ pseudo-Anosov\ on\ } \varSigma= D_n 
	\mbox{\ such\ that\ } \lambda(\Phi)= \delta(D_n), 
	\, n \ge 3 \}. 
\end{align*}
A natural question arises: 
how large are these sets? 
By results \cite{AD,Hironaka,KKT,KT,KT1,Venzke}, 
we predict that  $\mathcal{U}$  and  $\mathcal{V}$  are quite small.  

A result in \cite{KT} says that the {\it magic manifold}  $N$,  
which is the  exterior of the $3$ chain 
link  $\mathcal{C}_3$  (see Figure~\ref{fig_3chain_etc}),  
is a member of  $\mathcal{V}$.   
More concretely, in \cite{KT} it was shown that 
for each  $3 \le n \le 8$ (resp. $n \ge 9$), 
there exists a pseudo-Anosov homeomorphism  
$\Phi_n: D_n \rightarrow D_n$ 
with the smallest dilatation (resp. the smallest known dilatation)  
which is generated by  $N$.  
The set  $\mathcal{V}$  might consist of a single element  $N$  indeed.  
Let us turn to the set  $\mathcal{U}$. 
Potential examples of members of  $\mathcal{U}$  are of 
the form  $N(r)$, 
which is the manifold obtained 
from the magic manifold  $N$ by Dehn filling 
one cusp along the slope  $r \in {\Bbb Q} \cup \{ \frac{1}{0} \}$. 
It was proved in \cite{AD,Hironaka,KT1}  that  
there exists a pseudo-Anosov homeomorphism 
on  $\varSigma_g$  for  $g \ge 3$  with  small dilatation 
generated by  $N(\tfrac{3}{-2})$  or by  $N(\tfrac{1}{-2})$. 
The manifolds  $N(\tfrac{3}{-2})$  and  $N(\tfrac{1}{-2})$  are 
the Whitehead sister link 
(i.e, $(-2,3,8)$-pretzel link) exterior and 
the  $6_2^2$  link exterior respectively 
(see Figure~\ref{fig_3chain_etc}).  
What Kin-Takasawa proved in \cite[Theorem~1.5]{KT1} is the following.

\begin{thm}
\label{thm_three}
Let  $r \in \{ \tfrac{3}{-2}, \tfrac{1}{-2}, 2\}$. 
For each  $g \ge 3$,  
there exist  $\varSigma_g$-bundles over the circle 
obtained from  $N(r)$  by Dehn filling both cusps  
along boundary slopes of fibers of  $N(r)$. 
Among them,  
there exist monodromies  
$ \Phi_g(r): \varSigma_g \rightarrow \varSigma_g$ of the fibrations such that  
\begin{equation*} 
	\lim_{g \to \infty} g  \log \lambda (\Phi_g(r)) 
	=\log \delta(D_3) = \log( \tfrac{3+ \sqrt{5}}{2}).
\end{equation*} 
\end{thm}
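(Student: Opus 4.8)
The plan is to exploit the explicit structure of the Thurston norm on the three two-cusped manifolds $N(\tfrac{3}{-2})$, $N(\tfrac{1}{-2})$ and $N(2)$. Each is obtained from the magic manifold $N$ by one Dehn filling, and the Thurston norm ball and fibered faces of $N$ are known explicitly. Since Dehn filling a cusp of a fibered manifold along a boundary slope of a fiber sends a fibered class to a fibered class whose minimal representative is obtained by capping off the corresponding boundary circles---without changing the monodromy or, by the observation $\lambda(\Phi)=\lambda(\Phi|_{\varSigma^{\circ}})$, its dilatation---each of these three slopes produces in $N(r)$ a distinguished fibered cone $C_{\Omega_r}\subset H_2(N(r),\partial N(r);\mathbb{R})$ together with a distinguished primitive class $a_*\in int(C_{\Omega_r}(\mathbb{Z}))$: the one descended from $N$ by capping a single boundary circle of the fiber, so that its minimal representative is a four-holed sphere and its monodromy is the pseudo-Anosov on $D_3$ of minimal dilatation $\delta(D_3)=\tfrac{3+\sqrt{5}}{2}$ identified in \cite{KT}. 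In particular $\|a_*\|=2$ and $\mathrm{ent}(a_*)=\log\delta(D_3)$.

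First I would, in these coordinates, write down inside $int(C_{\Omega_r}(\mathbb{Z}))$ an explicit infinite sequence of primitive classes $a_g$ ($g\ge 3$), of the form $a_g=(\text{linear in }g)\,a_*+(\text{bounded correction})$, whose minimal representatives $F_{a_g}$ have genus exactly $g$ and a uniformly bounded number $b_g$ of boundary circles distributed over the two cusps, and such that Dehn filling both cusps of $N(r)$ along the boundary slopes of $F_{a_g}$ caps these circles off, producing a closed $\varSigma_g$-bundle over the circle. Its monodromy $\Phi_g(r)\colon\varSigma_g\to\varSigma_g$ is the capped-off extension of $\Phi_{a_g}$; one must check that no one-pronged singularity is created at a filled cusp, so that $\Phi_g(r)$ remains pseudo-Anosov and $\lambda(\Phi_g(r))=\lambda(a_g)$.

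The asymptotics then follow formally from the Thurston--Fried--McMullen picture recalled above. Since $\mathrm{Ent}=\|\cdot\|\,\mathrm{ent}$ is constant on rays and continuous on the open fibered face, and $\log\lambda(\Phi_g(r))=\mathrm{ent}(a_g)=\mathrm{Ent}(a_g)/\|a_g\|$ with $\|a_g\|=-\chi(F_{a_g})=2g-2+b_g$, we have
\begin{equation*}
 g\log\lambda(\Phi_g(r))=\frac{g}{2g-2+b_g}\,\mathrm{Ent}(a_g).
\end{equation*}
By construction the projective classes $[a_g]$ converge, as $g\to\infty$, to $[a_*]$, which lies in the open face; hence $\mathrm{Ent}(a_g)\to\mathrm{Ent}(a_*)=\|a_*\|\,\mathrm{ent}(a_*)=2\log\delta(D_3)$, and since $b_g$ stays bounded the right-hand side tends to $\tfrac12\cdot 2\log\delta(D_3)=\log\delta(D_3)$, as claimed. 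As an independent check one can compute $\mathrm{ent}(a_*)$ from the two-variable Teichm\"uller polynomial of $(N(r),\Omega_r)$, obtained from the explicit fibration data: specialising it at $a_*$ produces the largest root of $t^2-3t+1$, namely $\delta(D_3)$.

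The main obstacle is the concrete bookkeeping in the first two steps: exhibiting the norm ball and fibered cone of each $N(r)$, verifying that the three slopes $\tfrac{3}{-2},\tfrac{1}{-2},2$ are exactly those for which the $D_3$-seed class $a_*$ is available, choosing the family $\{a_g\}$ so that the capped fibers are closed surfaces of genus precisely $g$ with uniformly bounded boundary, and checking the prong condition that keeps the capped monodromies pseudo-Anosov. Once this input is in place the limit drops out of the scale invariance and continuity of $\mathrm{Ent}$ on the open fibered face.
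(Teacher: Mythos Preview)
Your outline is essentially the argument carried out in \cite{KT1} and sketched here in Section~\ref{subsection_Idea}: locate on a fibered face $\Omega$ of $N(r)$ the ray minimizing $\mathrm{Ent}$, exhibit primitive fibered classes of genus $g$ with uniformly bounded boundary converging projectively to that ray, verify the prong condition so that capping preserves the dilatation, and let continuity of $\mathrm{Ent}$ on the open face produce the limit.

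One correction is needed, however. Your claim that the seed class $a_*$ always has a four-holed sphere as fiber (so that its monodromy is literally the minimal $D_3$ pseudo-Anosov) is true only for $r=\tfrac{1}{-2}$, where the minimizer is $\overline{(1,2,0)}$. For $r=\tfrac{3}{-2}$ the minimizer is $\overline{(2,2,1)}$, and for $r=2$ it is (after applying $\sigma$) $\overline{(1,1,-1)}$; in both cases the fiber in $N(r)$ is a twice-holed torus, not $D_3$. So your proposed justification of $\|a_*\|=2$ and $\mathrm{ent}(a_*)=\log\delta(D_3)$ via a $D_3$ identification breaks down in two of the three cases. What is genuinely uniform across all three $r$ is the \emph{value} $\mathrm{Ent}(a_*)=2\log\delta(D_3)$: in each case the minimizer has Thurston norm $2$ in $N(r)$ and dilatation the largest root of $t^3-2t^2-2t+1=(t+1)(t^2-3t+1)$, read off directly from the Teichm\"uller polynomial~(\ref{equation_TpolyMagic}) of $N$. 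Accordingly, the three slopes are singled out not by the availability of a ``$D_3$ seed'' but by the equality $\min\mathrm{Ent}(N(r),\Omega)=2\log\delta(D_3)$, as in Theorem~\ref{thm_key}. Once you replace the $D_3$ identification by this direct computation, the rest of your argument goes through unchanged.
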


\noindent
As a corollary, we have the following estimate from above, which was  proved by Hironaka  first \cite{Hironaka}. 
\begin{equation}
\label{equation_asyEneq}
	\limsup_{g \to \infty} \, g \log  \delta_g \le \log(\tfrac{3+ \sqrt{5}}{2}). 
\end{equation}

\noindent
Theorem~\ref{thm_three}  is also established by Hironaka   for  $r = \frac{1}{-2}$ in \cite{Hironaka}  and  
Aabar-Dunfield  for  $r = \frac{3}{-2}$ in \cite{AD}  independently.

\subsection{Thurston norm and Teichm\"{u}ller polynomial 
of the magic manifold}\label{subsection_TTM}

In view of the results in previous two subsections, 
we will focus only on the magic manifold  $N$ 
and present various computational results.  
To do this, 
we discuss some detailed description of  $N$  in this subsection.  

Let $K_{\alpha}$, $K_{\beta}$ and  $K_{\gamma}$  be the components 
of the $3$ chain link  $\mathcal{C}_3$. 
The orientation of each component of $\mathcal{C}_3$ 
is given in Figure~\ref{fig_poly}(right). 
They  bound the oriented 
disks $F_{\alpha}$, $F_{\beta}$ and $F_{\gamma}$ with $2$ holes. 
Let us set  $\alpha = [F_{\alpha}]$, $ \beta = [F_{\beta}]$,  $\gamma= [F_{\gamma}] \in H_2(N, \partial N; {\Bbb Z})$. 
In \cite{Thurston1},  
Thurston computed  the unit ball $U_N$  which  is 
the the parallelepiped with vertices 
$\pm \alpha $, $\pm \beta $, $\pm \gamma$, $\pm(\alpha + \beta + \gamma)$, 
see Figure~\ref{fig_poly}(left). 
The set $\{\alpha, \beta, \gamma\}$ is a basis of $H_2(N, \partial N; {\Bbb Z})$, and 
the class $x \alpha + y \beta + z \gamma \in H_2(N, \partial N)$ is denoted by $(x,y,z)$.  
Every top dimensional face on $\partial U_N$ is a fibered face 
because of  the symmetries of $H_2(N, \partial N)$, 
see Section~\ref{subsection_Fibered}. 
McMullen developed a general theory of 
the Teichm\"{u}ller polynomial $P_{\Omega}$  for a 
fibered face   $\Omega$  of fibered hyperbolic 3-manifolds,  
from which one can compute the dilatation  $\lambda(a)$ of each $a \in int(C_{\Omega}) $, 
see \cite{McMullen}. 
Let us pick the fibered face  $\Delta$ on  $\partial U_N$  as in Figure~\ref{fig_poly}(left) with 
vertices  $(1,0,0)$, $ (1,1,1)$, $(0,1,0)$ and $ (0,0,-1)$.  
The Teichm\"{u}ller polynomial $P_{\Delta}$ tells us that 
the dilatation $\lambda(a) = \lambda_{(x,y,z)}$ of 
a primitive fibered class $a=(x,y,z) \in int(C_{\Delta})$ is the largest  real root of  
\begin{equation} 
\label{equation_TpolyMagic}
	f_{(x,y,z)}(t)= t^{x+y-z}-t^x - t^y - t^{x-z}- t^{y-z}+1, 
\end{equation}  
see \cite[Theorem~3.1]{KT}. 
Thus, 
we have a reasonable source to compute dilatations systematically.  

\begin{figure}[htbp]
\begin{center}
\includegraphics[width=4in]{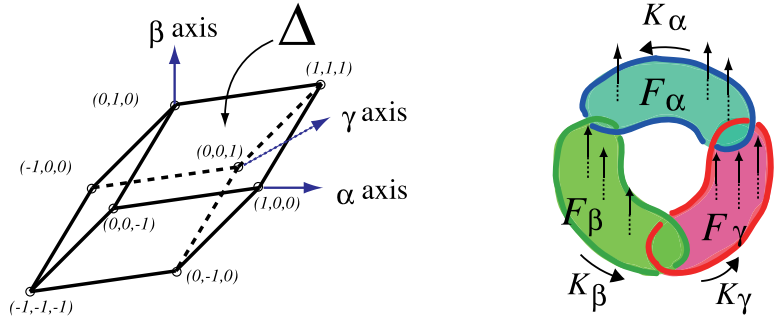}
\caption{(left)  Thurston norm ball for N. (right) $F_{\alpha}$, $F_{\beta}$, $F_{\gamma}$. 
[arrows indicate the normal direction of oriented surfaces.]}
\label{fig_poly}
\end{center}
\end{figure} 

To relate these data to ones on closed manifolds 
obtained from  $N$  by Dehn filling, 
we prepare a few homological properties of  $N$.  
Denote by  $T_{\alpha}$  the torus which is 
the boundary of a regular neighborhood of  $K_{\alpha}$. 
We define the tori  $T_{\beta}$  and  $T_{\gamma}$  in the same manner. 
For a primitive integral class  $a= (x,y,z) \in H_2(N, \partial N)$, 
let us set  $\partial_{\alpha} F_{a} = \partial F_{a} \cap T_{\alpha}$ 
which consists of the parallel simple closed curves on  $T_{\alpha}$. 
We define  $\partial_{\beta} F_{a} $ and $\partial_{\gamma} F_{a}$  in 
the same manner. 
We see that the slope of  $\partial_{\alpha} F_a$  
(resp. $\partial_{\beta} F_a$,  
$\partial_{\gamma} F_a$)  is given by 
$b_{\alpha}(a)= \tfrac{y+z}{-x}$  
(resp.  $b_{\beta}(a)= \tfrac{z+x}{-y}$,  
$b_{\gamma}(a)= \tfrac{x+y}{-z}$). 
We call each of  
$b_{\alpha}(a)$,  $b_{\beta}(a)$,  $b_{\gamma}(a)$  the {\it boundary slope} 
of  $a$.  

For more detailed computation,  
we specify the cusp to be Dehn filled.  
Let  $N(r)$  be the manifold obtained from  $N$  by Dehn filling 
the cusp specified, 
say,  
by $T_{\beta}$  along the slope  $r \in \mathbb{Q}$.  
Then, 
there exists a natural injection  
$\iota_{\beta}: H_2(N(r), \partial N(r)) \rightarrow H_2(N, \partial N)$  
whose image equals $S_{\beta}(r)$, 
where 
\begin{equation*} 
	S_{\beta}(r)= \{(x,y,z) \in H_2(N, \partial N)\ |\ -ry= z+x\},   
\end{equation*} 
see Proposition~\ref{prop_S_r}. 
This implies that every slope  $r  \in {\Bbb Q}$  can 
be realized by a boundary slope of some 
$a \in H_2(N, \partial N)$.   
It is known by  \cite{MP}  that 
$N(r)$  is hyperbolic if and only 
if  $r \in {\mathcal Hyp} = {\Bbb Q}   \setminus \{-3, -2, -1, 0\}$.  
Choose such  $r \in {\mathcal Hyp}$,  
and assume that  $a \in S_{\beta}(r) = \mathrm{Im} \, \iota_{\beta}$  is 
a fibered class in  $H_2(N, \partial N)$.  
Then,  
$\overline{a} = \iota_{\beta}^{-1}(a) \in H_2(N(r), \partial N(r))$  is also 
a fibered class of  $N(r)$.  
This description enables us to compute the Thurston norm of  $N(r)$, 
especially the unit ball and fibered faces, 
and hence to handle closed surface bundles obtained 
from $N$ by Dehn filling all cusps 
systematically.  

What we further need for our purpose is to know  
a systematic method to compute entropies of monodromies on such bundles.


\subsection{Main results}
\label{subsection_MainResults}

This paper concerns the small dilatation pseudo-Anosovs
generated by the magic manifold  $N$  with a very mild restriction  
which we describe below. 
Let  $\Phi: F \rightarrow F$ be the monodromy of 
a fibration on  $N$,  
and let  $\phi$  be the mapping class of  $\Phi$. 
Then the fibration extends naturally to a fibration on the closed manifold 
obtained from  $N$  by Dehn filling three cusps along 
boundary slopes of  $F$.  
Also, $\Phi$  extends to 
the monodromy  $\widehat{\Phi}: \widehat{F} \rightarrow \widehat{F}$  of the extended fibration,  
where the extended fiber  $\widehat{F}$  is obtained from  $F$  by filing holes.  
Suppose that the stable foliation  $\mathcal{F}$  of  $\Phi$ has the property such that 
each boundary component of  $F$ has no $1$ prong.  
Then   $\mathcal{F}$  extends canonically 
to the stable foliation  $\widehat{\mathcal{F}}$  of  $\widehat{\Phi}$, 
and 
$\widehat{\phi}= [\widehat{\Phi}]$  becomes pseudo-Anosov 
(including Anosov)  with the same dilatation 
as that of  $\phi$. 
We consider the set  $\mathcal{M}$  of (pseudo-Anosov) mapping classes coming from fibrations of $N$  with this condition, i.e, 
if we let $\mathcal{F}$ be the stable foliation associated to the fibration on $N$, then 
$\mathcal{F}$ has the property such that 
\begin{equation}
\label{equation_1plong} 
	\text{any boundary component of\ } F\ \text{has no\ }1\ \text{prong}.
\end{equation}   
We will see that 
this restriction is fairly mild (Lemmas~\ref{lem_face-prong} and \ref{lem_LargeSet}). 
Let  $\widehat{\mathcal{M}}$ be 
the set of extensions  $\widehat{\phi}$  of  $\phi \in \mathcal{M}$  defined on 
the closed surfaces.  
For example, 
the pseudo-Anosov $\Phi_g(r): \varSigma_g \rightarrow \varSigma_g$  in Theorem~\ref{thm_three} enjoys 
$[\Phi_g(r)] \in \widehat{\mathcal{M}}$ for large $g$, see \cite[Lemma~4.8]{KT1}.

Let $\widehat{\delta}_g$ be the minimum among dilatations  
of elements in $\widehat{\mathcal{M}} \, \cap \, \mathrm{Mod}(\varSigma_g)$. 
Clearly $\delta_g \le \widehat{\delta}_g$. 
The equality is achieved when $g=2$ \cite{Hironaka}. 
We prove that the limit of $ g  \log \widehat{\delta}_g$ exists and it equals the number 
which we encountered  in Theorem~\ref{thm_three}. 

\begin{thm}
\label{thm_main} 
\ 
\begin{enumerate}
\item[(1)] 
We have 
$\displaystyle \lim_{g \to \infty} g  \log \widehat{\delta}_g = \log( \tfrac{3+ \sqrt{5}}{2})$. 

\item[(2)] 
For large $g$, $\widehat{\delta}_g$ is achieved by the monodromy of some $\varSigma_g$-bundle over the circle 
obtained from either  $N(\tfrac{3}{-2})$  or  $N(\tfrac{1}{-2})$    
by Dehn filling both cusps.  
\end{enumerate}
\end{thm}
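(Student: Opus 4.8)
\emph{Sketch of the intended proof.}
The inequality $\limsup_{g\to\infty}g\log\widehat\delta_g\le\log(\tfrac{3+\sqrt5}{2})$ is immediate from Theorem~\ref{thm_three}: for all large $g$ one has $[\Phi_g(\tfrac{3}{-2})]\in\widehat{\mathcal M}$ by \cite[Lemma~4.8]{KT1}, so $\widehat\delta_g\le\lambda(\Phi_g(\tfrac{3}{-2}))$ and hence $g\log\widehat\delta_g\le g\log\lambda(\Phi_g(\tfrac{3}{-2}))\to\log(\tfrac{3+\sqrt5}{2})$. The substance is the matching lower bound together with part~(2). By Lemmas~\ref{lem_face-prong} and~\ref{lem_LargeSet} each element of $\widehat{\mathcal M}\cap\mathrm{Mod}(\varSigma_g)$ is the canonical closing-up of the monodromy of a primitive fibered class $a$ of $N$ satisfying~\eqref{equation_1plong}; using the symmetries of $H_2(N,\partial N;{\Bbb Z})$ I may take $a=(x,y,z)\in int(C_\Delta({\Bbb Z}))$, so $\|a\|=x+y-z$ and $\lambda(a)$ is the largest root of $f_{(x,y,z)}$ of~\eqref{equation_TpolyMagic}. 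Let $c(a)$ be the number of boundary circles of the fiber $F_a$, an explicit function of $a$ controlled by the boundary slopes $b_\alpha(a),b_\beta(a),b_\gamma(a)$ and, via Lemma~\ref{lem_face-prong}, by~\eqref{equation_1plong}; capping these circles by disks gives $\widehat F_a=\varSigma_g$ with $2g-2=\|a\|-c(a)$, so that, writing $\widehat\Phi_a$ for the closed monodromy,
\begin{equation}
\label{eq:ent}
\mathrm{Ent}(\widehat\Phi_a)=(2g-2)\log\lambda(a)=\mathrm{Ent}(a)\Bigl(1-\tfrac{c(a)}{\|a\|}\Bigr).
\end{equation}
Since $c(na)=n\,c(a)$, $\|na\|=n\|a\|$ and $\lambda(na)=\lambda(a)^{1/n}$, the right-hand side of~\eqref{eq:ent} is constant along each ray, and for $g(a)\ge2$ the rays $na$ give bundles of genus $n(g(a)-1)+1\to\infty$; a standard extremal argument therefore gives
\begin{equation}
\label{eq:inf}
\liminf_{g\to\infty}g\log\widehat\delta_g=\tfrac12\,\inf\bigl\{\mathrm{Ent}(\widehat\Phi_a):a\ \text{primitive fibered class of }N,\ \eqref{equation_1plong},\ g(a)\ge2\bigr\}.
\end{equation}
The family of Theorem~\ref{thm_three} shows this infimum is $\le2\log(\tfrac{3+\sqrt5}{2})$, and the first paragraph bounds $\limsup_g g\log\widehat\delta_g$ from above by $\log(\tfrac{3+\sqrt5}{2})$; so the whole theorem reduces to proving
\begin{equation}
\label{eq:key}
\mathrm{Ent}(a)\Bigl(1-\tfrac{c(a)}{\|a\|}\Bigr)\ \ge\ 2\log\bigl(\tfrac{3+\sqrt5}{2}\bigr)
\end{equation}
for every primitive fibered class $a$ of $N$ satisfying~\eqref{equation_1plong} with $g(a)\ge2$, plus a genus-by-genus identification of the minimiser for part~(2).

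The plan for~\eqref{eq:key} is to trade $\mathrm{Ent}(a)$, which is large exactly when $[a]$ is near $\partial\Delta$ (properness of $\mathrm{ent}$, Fried), against $c(a)/\|a\|$, which is large only when some boundary slope $b_\bullet(a)$ is a small rational. When all of $b_\alpha(a),b_\beta(a),b_\gamma(a)$ stay away from a fixed finite set of small rationals, $c(a)/\|a\|$ is bounded away from $1$ by a definite amount and~\eqref{eq:key} follows from the uniform bound $\mathrm{Ent}(a)\ge\min\mathrm{Ent}(N)$, since a direct computation with~\eqref{equation_TpolyMagic} (minimising over $int(\Delta)$ and invoking the symmetry of $U_N$) gives $\min\mathrm{Ent}(N)>2\log(\tfrac{3+\sqrt5}{2})$. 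When instead one or two of the slopes are small — say $b_\beta(a)=r$ is fixed — capping the corresponding boundary identifies $a$ with a fibered class of the Dehn filling $N(r)$, and of a doubly filled manifold if two slopes are fixed, whose Thurston norm and Teichm\"uller polynomial are obtained by specialising~\eqref{equation_TpolyMagic}; under this identification~\eqref{eq:ent}--\eqref{eq:key} become the analogous statements inside that filling, and~\eqref{equation_1plong} translates into the corresponding no-$1$-prong condition there (Lemma~\ref{lem_face-prong}), which is precisely what keeps $c/\|a\|$ and $\mathrm{Ent}$ in the required relation. Iterating, one is reduced to finitely many explicit one-variable inequalities about specialisations of~\eqref{equation_TpolyMagic}; the tightest occur for $N(\tfrac{3}{-2})$ and $N(\tfrac{1}{-2})$, where equality is approached in the limit along the families of Theorem~\ref{thm_three}. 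This establishes~\eqref{eq:key} and hence part~(1). For part~(2) one compares, genus by genus, the finitely many primitive classes of genus $g$ (necessarily satisfying~\eqref{equation_1plong}) that could realise $\widehat\delta_g$, using $f_{(x,y,z)}$ and a monotonicity estimate along the relevant rays, and finds that the minimiser is always an $N(\tfrac{3}{-2})$- or an $N(\tfrac{1}{-2})$-bundle of genus $g$.

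The step I expect to be the main obstacle is controlling the \emph{infinitely many} Dehn fillings entering this reduction: the slope $r$ a priori ranges over $\mathcal{Hyp}={\Bbb Q}\setminus\{-3,-2,-1,0\}$, and likewise for pairs of slopes. The device is to combine properness with an analysis of how the fibered face $\Delta$ restricts under filling — fixing a slope confines the admissible classes to a subregion of $int(\Delta)$ — and to show that outside a finite list of slopes this subregion is too close to $\partial\Delta$ for~\eqref{eq:key} to be violated (equivalently, that the $\min\mathrm{Ent}$ of the corresponding filling, restricted by~\eqref{equation_1plong}, stays above $2\log(\tfrac{3+\sqrt5}{2})$). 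The surviving finite list then demands explicit estimates on largest roots of polynomials, and throughout one must genuinely use~\eqref{equation_1plong} rather than treat it as negligible: it is exactly what excludes the classes — close to the cone boundary, with $c(a)$ abnormally large — for which~\eqref{eq:key} would otherwise fail. Packaging these estimates uniformly, together with the genus-by-genus comparison required by~(2), is where the bulk of the technical work lies.
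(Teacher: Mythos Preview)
Your central reduction---to the pointwise inequality $\mathrm{Ent}(\widehat\Phi_a)\ge 2\log(\tfrac{3+\sqrt5}{2})$ for every primitive fibered class $a$ of $N$ with $\phi_a\in\mathcal{M}$ and $g(a)\ge2$---fails, because that inequality is simply false. Take $a=(13,12,5)\in int(C_\Delta)\cap S_\beta(\tfrac{3}{-2})$: one has $\|a\|=20$, $c(a)=\gcd(13,17)+\gcd(12,18)+\gcd(5,25)=1+6+5=12$, hence $g(a)=5$; the prong counts from Lemma~\ref{lem_sing-data} are $13,2,3$, so $\phi_a\in\mathcal{M}$; and $\lambda(a)=\lambda_{(13,12,5)}\approx1.1487$, giving $\mathrm{Ent}(\widehat\Phi_a)=8\log\lambda(a)\approx1.11<1.92$. (This is the closed-up monodromy of $m011\simeq N(\tfrac{3}{-2},-5)$; cf.\ Table~\ref{table_1cusp_census}.) Thus $5\log\widehat\delta_5<\log(\tfrac{3+\sqrt5}{2})$, and the sequence $g\log\widehat\delta_g$ approaches its limit from \emph{below} for small $g$: no uniform lower bound of your type can hold.

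The ray step you invoke to justify your formula for $\liminf_g g\log\widehat\delta_g$ is also wrong. For $n>1$ the class $na$ is not primitive; its minimal representative is $n$ disjoint copies of $F_a$, and it does \emph{not} define a connected fibration with fiber of genus $n(g(a)-1)+1$. One cannot manufacture large-genus bundles from a single ray, which is precisely why the paper's proof is organised differently: rather than a uniform inequality, one shows (Propositions~\ref{prop_FiniteSet} and~\ref{prop_finitely-many}, resting on Theorem~\ref{thm_key}) that for all but \emph{finitely many} primitive $a$ with $\phi_a\in\mathcal{M}$ and no boundary slope in $\{-4,\tfrac{3}{-2},\tfrac{1}{-2},2\}$ one has $\mathrm{Ent}(\widehat\Phi_a)>1.97475-\epsilon$, and that among classes with a boundary slope in that set the normalized entropies accumulate only at values $\ge 2\log(\tfrac{3+\sqrt5}{2})$. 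Low-genus exceptions like $(13,12,5)$ are absorbed into the finitely-many clause and do not spoil the limit; Theorem~\ref{thm_three} then shows the limit is attained. Your final two paragraphs gesture toward this slope-by-slope analysis, but the scaffolding you build them into---reducing to a pointwise inequality over all classes---collapses on the counterexample above.
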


\noindent
More precisely, the following holds. (See also Remark~\ref{rem_main}.) 
For large $g$ such that $g \equiv 0,1,5,6,7,9 \pmod{10}$ 
(resp.   large $g$ such that $g \equiv 3,8 \pmod{10}$), 
$\widehat{\delta}_g$ is achieved by the monodromy of some $\varSigma_g$-bundle over the circle 
obtained from $N(\tfrac{3}{-2})$ (resp. $N(\tfrac{1}{-2})$)     
by Dehn filling both cusps.  

We know from \cite[Proposition 4.37]{KT1} that 
for $g= 8,13$, $\widehat{\delta}_g$ cannot be achieved by 
the monodromy of any $\varSigma_g$-bundle over the circle 
obtained from either $N(\tfrac{3}{-2})$ or $N(\tfrac{1}{-2})$ by Dehn filling. 
In fact, the manifold $N(\tfrac{4}{-3}, \tfrac{25}{-17}, -5 )$ (resp. $N(\tfrac{29}{-27}, \tfrac{5}{-3}, -6)$) is 
a $\varSigma_8$-bundle (resp. a $\varSigma_{13}$-bundle) over the circle, 
where $N(r_1, r_2, r_3)$ is the closed manifold obtained from $N$ by Dehn filling all cusps 
along the slopes $r_1$, $r_2$ and $r_3$. 
Its dilatation is smaller than that of   any $\varSigma_8$-bundle (resp. $\varSigma_{13}$-bundle) over the circle 
obtained from either $N(\tfrac{3}{-2})$ or $N(\tfrac{1}{-2})$ by Dehn filling. 
Theorem~\ref{thm_main}  says  that if $g$ is large, then  
among elements of $ \widehat{\mathcal{M}}$, 
the pseudo-Anosovs defined on $\varSigma_g$ with the smallest dilatation 
 are the same examples identified by Hironaka \cite{Hironaka}, Aabor-Dunfield \cite{AD} and Kin-Takasawa \cite{KT1}. 

However, we can find new examples in  $\widehat{\mathcal{M}}$ defined on $\varSigma_g$  with orientable invariant foliations when $g \equiv 0 \pmod{6}$. 
Let $\widehat{\mathcal{M}}^+$ be the set of pseudo-Anosov elements of $\widehat{\mathcal{M}}$ with orientable invariant foliations. 
Let $\widehat{\delta}_g^+$ be the minimum among dilatations of elements in $\widehat{\mathcal{M}}^+ \, \cap \, \mathrm{Mod}(\varSigma_g)$. 
Since $\widehat{\mathcal{M}}^+ \, \cap \, \mathrm{Mod}(\varSigma_g) \ne \emptyset$ for $g \ge 2$ (Lemma~\ref{lem_nonempty}), 
$\widehat{\delta}_g^+$ is well-defined. 
Clearly $\delta_g \le \delta_g^+ \le \widehat{\delta}_g^+$. 

The following describes the asymptotic behavior of $ \widehat{\delta}_g^+$'s in the case $g \not\equiv 0 \pmod{6}$. 

\begin{thm}
\label{thm_smallest_dil_ori}
\ 
\begin{enumerate}
\item[(1)] 
We have $\displaystyle \lim_{\substack{g \not\equiv 0 \pmod{6} \\ g \to \infty}}  g  \log \widehat{\delta}^+_g = \log( \tfrac{3+ \sqrt{5}}{2}) $. 

\item[(2)] 
For large $g$ such that $g \equiv 2,4 \pmod{6}$ or $g \equiv 3 \pmod{10}$, 
$\widehat{\delta}^+_g$ is achieved by the monodromy of some $\varSigma_g$-bundle over the circle 
obtained from $N(\tfrac{1}{-2})$  by Dehn filling both cusps.  

\item[(3)] 
For large $g$ such that $g \equiv 1,5,7,9 \pmod{10}$, 
 $\widehat{\delta}^+_g$ is achieved by the monodromy of some $\varSigma_g$-bundle over the circle 
obtained from $N(\tfrac{3}{-2})$ by Dehn filling both cusps.  
\end{enumerate}
\end{thm}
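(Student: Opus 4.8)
The plan is to get the lower bound in part~(1) for free from Theorem~\ref{thm_main}, and to obtain the matching upper bound together with the identifications in parts~(2) and~(3) by a direct analysis of the fibered faces of $N(\tfrac{1}{-2})$ and $N(\tfrac{3}{-2})$, restricted to the locus of classes whose (extended) monodromy has orientable invariant foliations. For the lower bound, note that $\widehat{\mathcal{M}}^+ \cap \mathrm{Mod}(\varSigma_g) \subseteq \widehat{\mathcal{M}} \cap \mathrm{Mod}(\varSigma_g)$, so $\widehat{\delta}_g \le \widehat{\delta}_g^+$ for all $g$, and hence
$$\liminf_{g \to \infty} g \log \widehat{\delta}_g^+ \ \ge\ \lim_{g \to \infty} g \log \widehat{\delta}_g \ =\ \log\bigl(\tfrac{3+\sqrt 5}{2}\bigr)$$
by Theorem~\ref{thm_main}(1); this applies verbatim along the subsequence $g \not\equiv 0 \pmod 6$, so only the upper bound and the exact statements remain.

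The heart of the matter is an orientability criterion. For a primitive fibered class $a = (x,y,z) \in \mathrm{int}(C_\Delta)$ with boundary slope $b_\beta(a) = r \in \{\tfrac{1}{-2}, \tfrac{3}{-2}\}$, so that $\overline a = \iota_\beta^{-1}(a)$ is a fibered class of $N(r)$, I would decide whether the closed-up monodromy $\widehat\Phi_{\overline a}$ has orientable invariant foliations by reading it off the Teichm\"{u}ller polynomial: orientability is equivalent to the dilatation $\lambda(a)$ being realized by the action of the monodromy on $H_1$ of the fiber, which for the magic manifold amounts to an explicit congruence condition on $(x,y,z)$ modulo $2$ (equivalently, on the parities of the exponents appearing in \eqref{equation_TpolyMagic}). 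This is the same type of computation already carried out for the Lanneau--Thiffeault families in \cite{AD,Hironaka,KT1}, and I would isolate it as a lemma. Granting it, I would use the explicit fibered faces of $N(\tfrac{1}{-2})$ and $N(\tfrac{3}{-2})$ from \cite{KT1} to list, for each large $g$, the primitive fibered classes of $N(r)$ whose extended fiber has genus $g$ and whose extended monodromy lies in $\widehat{\mathcal{M}}^+$; substituting the boundary-slope relation $-ry = z+x$ into \eqref{equation_TpolyMagic}, the smallest such dilatation $\lambda_g(r)$ is the largest real root of a one-parameter polynomial of the shape $t^{2g} - t^{g+1} - t^{g} - t^{g-1} + 1$ up to a $g$-independent shift of exponents, and the elementary identity $t^g + t^{-g} = t + 1 + t^{-1}$ at $t = \lambda_g(r)$ gives $\lim_{g\to\infty} g\log\lambda_g(r) = \log(\tfrac{3+\sqrt 5}{2})$ whenever such classes exist.

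At this point the argument becomes arithmetic bookkeeping: one tracks which residues of $g$ are realized by a genus-$g$ orientable-foliation fibered class of $N(\tfrac{1}{-2})$ versus $N(\tfrac{3}{-2})$. The lattice of such classes on each fibered face produces exactly the stated congruence conditions --- $g \equiv 2,4 \pmod 6$ or $g \equiv 3 \pmod{10}$ for $N(\tfrac{1}{-2})$, and $g \equiv 1,5,7,9 \pmod{10}$ for $N(\tfrac{3}{-2})$ --- and together these cover every $g$ with $g \not\equiv 0 \pmod 6$; this gives $\limsup g\log\widehat{\delta}_g^+ \le \log(\tfrac{3+\sqrt 5}{2})$ along $g\not\equiv 0 \pmod 6$ and completes part~(1). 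To upgrade the asymptotic into the exact statements of parts~(2) and~(3), namely that these fillings actually \emph{achieve} $\widehat{\delta}_g^+$ for large $g$, I would rerun the proof of Theorem~\ref{thm_main} on the orientable locus: by Fried's properness of $\mathrm{ent}$ and the strict convexity of the normalized entropy on each fibered face, the minimum of $\mathrm{Ent}$ over genus-$g$ orientable fibered classes, taken across the finitely many fibered faces of magic-manifold fillings relevant to small dilatation, is attained near the minimizing ray, and for large $g$ it migrates onto the faces of $N(\tfrac{1}{-2})$ and $N(\tfrac{3}{-2})$ exactly as in \cite{KT1}; one then checks that the finitely many competing faces --- the same ones appearing in the proof of Theorem~\ref{thm_main} --- contain no orientable genus-$g$ class of smaller dilatation for $g$ large.

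I expect the main obstacle to be the orientability criterion together with the attendant root comparisons. Once orientability is pinned to a clean parity condition on $(x,y,z)$, the rest is the same style of lattice-point and polynomial-root analysis as for Theorem~\ref{thm_main}; but verifying that, \emph{among the orientable classes}, the competitor faces never undercut $N(\tfrac{1}{-2})$ and $N(\tfrac{3}{-2})$ for all large $g$ requires explicit inequalities between largest roots of the relevant polynomials $f_{(x,y,z)}$, and it is there --- and in deciding which of the two manifolds wins for which residue of $g$ --- that the casework on $g \bmod 6$ and $g \bmod 10$ genuinely bites.
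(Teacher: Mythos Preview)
Your plan is correct and follows essentially the same route as the paper. The paper's proof is two sentences: part~(1) is ``similar to Theorem~\ref{thm_main}(1)'' and parts~(2),(3) follow from a comparison lemma (Lemma~\ref{lem_compare_ori}) together with the argument of Theorem~\ref{thm_main}(2). Your outline unpacks exactly this: the lower bound in~(1) via $\widehat{\delta}_g^+\ge\widehat{\delta}_g$ and Theorem~\ref{thm_main}(1); the upper bound via explicit orientable fibered classes of $N(\tfrac{1}{-2})$ and $N(\tfrac{3}{-2})$; and parts~(2),(3) by rerunning the finite-set/boundary-slope argument of Theorem~\ref{thm_main} on the orientable locus and then comparing the three candidate slopes $\tfrac{3}{-2},\tfrac{1}{-2},2$.

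Two remarks that would tighten the write-up. First, the orientability criterion you anticipate is precisely Lemma~\ref{lem_criterionOri} (from \cite[Proposition~3.5]{KT1}): $\phi_{(x,y,z)}$ has orientable invariant foliations if and only if $x,y$ are even and $z$ is odd. You need not rederive it from the Teichm\"uller polynomial. Second, the casework you call ``arithmetic bookkeeping'' is already tabulated in the literature and packaged in the paper as Lemma~\ref{lem_compare_ori}: for $g\equiv 2,4\pmod 6$ one has $\widehat{\delta}_g^+(\tfrac{3}{-2})=\widehat{\delta}_g^+(2)=\infty$ and $\widehat{\delta}_g^+(\tfrac{1}{-2})=\lambda_{(g,1)}$; for odd $g$ the minimum among the three is computed in \cite[Propositions~4.23, 4.34]{KT1} and \cite{Hironaka}, with the $\bmod\,10$ split falling out of which Lanneau--Thiffeault polynomial $f_{(k,\ell)}$ (with $\ell\in\{1,2,3,4\}$, not just $\ell=1$) is realized by an orientable class of the right genus. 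So the ``root comparisons'' you flag as the main obstacle are already done and can simply be cited.
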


\noindent
Theorem~\ref{thm_smallest_dil_ori}(1) leads to the following estimate, which was proved  by Hironaka first \cite{Hironaka}. 
$$\limsup_{\substack{g \not\equiv 0 \pmod{6} \\ g \to \infty}} g \log  \delta_g^+ \le \log(\tfrac{3+ \sqrt{5}}{2}). $$
If $g \not\equiv 0 \pmod{6}$ is large, then 
elements  of $\widehat{\mathcal{M}}^+$ defined on $\varSigma_g$ with the smallest dilatation 
 are the same examples discovered in \cite{Hironaka,AD,KT1}.

In the case $g \equiv 0 \pmod{6}$, 
there exist no examples of elements in $\widehat{\mathcal{M}}^+$ defined on $\varSigma_g$ 
which occur as monodromies of fibrations on manifolds obtained from 
$N(\tfrac{1}{-2})$ or $N(\tfrac{3}{-2})$ by Dehn filling both cusps \cite{Hironaka, AD,KT1}. 
To the best of our knowledge, 
the smallest known upper bound on $\delta_g^+$ for $g \equiv 0 \pmod{6}$ is 
\begin{equation}
\label{equation_MinakawaHK}
\delta_g^+ \le \lambda_{(g,g,-1)}, 
\end{equation}
 where 
$\lambda_{(g,g,-1)}$  is the largest root of 
$f_{(g,g,-1)}(t)= t^{2g+1} - 2t^{g+1}- 2t^g +1$, 
see \cite{Minakawa,HK}. 
By using the bound (\ref{equation_MinakawaHK}), 
Minakawa and independently Hironaka-Kin proved that 
\begin{equation}
\label{equation_MinakawaHK2}
g \log \delta_g^+ \le \log(2+ \sqrt{3}) \approx 1.3169.
\end{equation}
We would like to point out that 
the set $\widehat{\mathcal{M}}^+$ could be a source to provide a sharper upper bound on $\delta_g^+$ 
than the bound (\ref{equation_MinakawaHK})
in the case $g \equiv 0 \pmod{6}$. 
In fact, we will find elements of  $\widehat{\mathcal{M}}^+$ defined on $\varSigma_g$ for $g \equiv 6 \pmod{12} $ 
whose normalized entropies go to $4 \log \delta(D_5)$ as $g$ goes to $\infty$, 
see Lemmas~\ref{lem_minEnt4} and \ref{lem_6+12i}. 
These examples occur as monodromies of fibrations on manifolds obtained from 
$N(4)$ or $N(-6)$ by Dehn filling both cusps. 
As a corollary, we have

\begin{thm}
\label{thm_6+12i}
$\displaystyle \limsup_{\substack{g \equiv 6 \pmod{12} \\ g \to \infty}}  g  \log \delta^+_g  \le 2 \log  \delta(D_5) \approx 1.0870 $.
\end{thm}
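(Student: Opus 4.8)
The plan is to deduce Theorem~\ref{thm_6+12i} from a more refined analysis of a specific one-parameter family of fibered classes on $N(4)$ (equivalently $N(-6)$), paralleling the strategy used for Theorem~\ref{thm_three}. First I would identify, inside the relevant fibered cone of $N(r)$ for $r=4$ or $r=-6$, the ray of classes $\overline{a}$ whose normalized entropy $\mathrm{Ent}$ approaches $\min\mathrm{Ent}(N(r),\Omega)$; lifting these to $H_2(N,\partial N)$ via $\iota_\beta$, they correspond to classes of the form $(x,y,z)$ lying in $S_\beta(r)$, whose dilatations are governed by the explicit polynomial $f_{(x,y,z)}(t)=t^{x+y-z}-t^x-t^y-t^{x-z}-t^{y-z}+1$ from \eqref{equation_TpolyMagic}. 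I would then specialize to the subfamily for which the Dehn fillings of the remaining two cusps produce a closed surface bundle over the circle whose fiber $\varSigma_g$ has $g\equiv 6\pmod{12}$, and — crucially — for which the invariant foliations are orientable; this is precisely the content to be verified in Lemmas~\ref{lem_minEnt4} and \ref{lem_6+12i}, which I would invoke. The orientability check is the delicate point: one must control the index data of the singularities of $\widehat{\mathcal F}$ under filling, and verify that no odd-order prongs obstruct orientability, which is exactly why the congruence class $g\equiv 6\pmod{12}$ (rather than all $g\equiv 0\pmod 6$) appears.

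With the family in hand, the next step is the asymptotic computation. For the chosen sequence $a_g=(x_g,y_g,z_g)$ with $|\chi(\varSigma_g)|$ comparable to $2g$, I would show $\lim_{g\to\infty} g\log\lambda(a_g) = \tfrac12\lim_{g\to\infty}\mathrm{Ent}(a_g) = \tfrac12\min\mathrm{Ent}(N(r),\Omega)$, and then identify the latter minimum. Here the key input is that $\min\mathrm{Ent}$ of the relevant fibered face equals $4\log\delta(D_5)$: this should follow by recognizing that the fibration realizing the minimizing ray has, after the appropriate Dehn filling / puncturing, the same mapping torus as the minimal-dilatation pseudo-Anosov on $D_5$ (whose dilatation is the largest root of $t^5-2t^3-2t^2+1$), so that the normalized entropy on that ray is $|\chi(D_5)|\log\delta(D_5)=4\log\delta(D_5)$. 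Since $\mathrm{Ent}$ is continuous on $int(C_\Omega)$ and constant along rays, and since $g\log\lambda(a_g)\to\tfrac12\,\mathrm{Ent}$ along the minimizing ray by the normalization $\mathrm{Ent}=|\chi|\,\mathrm{ent}$ with $|\chi(\varSigma_g)|\sim 2g$, the limit computation yields $\lim g\log\lambda(a_g)=2\log\delta(D_5)$.

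Finally, I would assemble the pieces: for large $g\equiv 6\pmod{12}$, the homeomorphisms in the constructed family lie in $\widehat{\mathcal M}^+\cap\mathrm{Mod}(\varSigma_g)$ (being orientable-foliation pseudo-Anosovs generated by $N$ via \eqref{equation_1plong}), hence $\delta_g^+\le \widehat{\delta}_g^+\le\lambda(a_g)$, and therefore
\[
\limsup_{\substack{g\equiv 6\pmod{12}\\ g\to\infty}} g\log\delta_g^+ \;\le\; \limsup_{\substack{g\equiv 6\pmod{12}\\ g\to\infty}} g\log\lambda(a_g) \;=\; 2\log\delta(D_5)\approx 1.0870.
\]
The numerical value $\delta(D_5)\approx 1.7220$ gives $2\log\delta(D_5)\approx 1.0870$, completing the estimate.

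The main obstacle I anticipate is not the asymptotic arithmetic — that is essentially forced once the right family is in place — but rather the construction and verification step hidden in Lemmas~\ref{lem_minEnt4} and \ref{lem_6+12i}: pinning down the exact fibered class, computing the Thurston norm and fibered face of $N(4)$ (and $N(-6)$) explicitly, checking that both Dehn fillings indeed produce closed $\varSigma_g$-bundles with $g\equiv 6\pmod{12}$, and — hardest of all — establishing that the invariant foliations are \emph{orientable} for this congruence class while genuinely failing to be so (via the $N(\tfrac{1}{-2})$, $N(\tfrac{3}{-2})$ constructions) for other $g\equiv 0\pmod 6$. Identifying $\min\mathrm{Ent}$ of the face with $4\log\delta(D_5)$ also requires care, since one must rule out that a different ray on the same face, or a different fibered face, gives a smaller normalized entropy; this is where the explicit convexity of $\mathrm{Ent}$ on $int(\Omega)$ and the symmetry structure of $U_N$ do the work.
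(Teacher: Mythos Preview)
Your proposal is correct and takes essentially the same approach as the paper: the paper's proof of Theorem~\ref{thm_6+12i} consists precisely of invoking Lemmas~\ref{lem_minEnt4} and~\ref{lem_6+12i}, which together furnish the family of orientable-foliation pseudo-Anosovs on $\varSigma_{6+12i}$ coming from $N(4)$ (and $N(-6)$) whose normalized entropies converge to $4\log\delta(D_5)$, and then the bound $\limsup g\log\delta_g^+\le 2\log\delta(D_5)$ follows exactly as you describe. One minor point: the identification $\min\mathrm{Ent}(N(4),\Omega_S)=4\log\delta(D_5)$ in Lemma~\ref{lem_minEnt4} is obtained not by matching mapping tori, but simply by applying Lemma~\ref{lem_MNE-S} to get $\tfrac{4}{5}\log\lambda_{[\frac{2}{5},\frac{2}{5}]}=4\log\lambda_{(2,2,-1)}$ and observing that $f_{(2,2,-1)}(t)=t^5-2t^3-2t^2+1$ is the defining polynomial of $\delta(D_5)$; and the orientability in Lemma~\ref{lem_6+12i} is checked via the parity criterion of Lemma~\ref{lem_criterionOri} applied to the explicit classes $a_q=(4q+8,4q+4,-2q-3)$ with $q=3i$, rather than by a prong-index analysis.
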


\noindent
By using our examples, 
we give the following upper bound on $\delta_g^+$ for $g \equiv 6 \pmod{12}$ 
which is sharper than the previous one (\ref{equation_MinakawaHK}). 
(See also Table~\ref{table_6i}.) 

\begin{thm}[Upper bound on $\delta_g^+$ for $g \equiv 6 \pmod{12}$]
\label{thm_bound6+12i}
\ 
\begin{enumerate}
\item 
$\delta_g^+ \le \lambda_{(\tfrac{3g}{2}+1, \tfrac{3g}{2}-1, \tfrac{g}{2})} $ 
if $g \equiv 6,30,42,54,78 \pmod{84}$.

\item 
$\delta_g^+ \le \lambda_{(g+2,g-2, -\tfrac{g}{2})}  $ 
if $g \equiv18,66 \pmod{84}$.
\end{enumerate}
\end{thm}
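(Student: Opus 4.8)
The plan is to produce, for each $g\equiv 6\pmod{12}$ in the stated residue classes, a single explicit primitive fibered class $a_g=(x_g,y_g,z_g)\in int(C_\Delta(\mathbb{Z}))$ whose monodromy extends to an element of $\widehat{\mathcal{M}}^+\cap\mathrm{Mod}(\varSigma_g)$; then $\delta_g^+\le\widehat{\delta}_g^+\le\lambda(a_g)$, and $\lambda(a_g)$ is read off from the Teichm\"uller polynomial via (\ref{equation_TpolyMagic}). For case (1) I would take $a_g=(\tfrac{3g}{2}+1,\tfrac{3g}{2}-1,\tfrac{g}{2})$ and for case (2) $a_g=(g+2,g-2,-\tfrac{g}{2})$; these are precisely the members of the families of Lemmas~\ref{lem_minEnt4} and \ref{lem_6+12i} sitting over $\varSigma_g$. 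A one-line computation gives $b_\gamma(a_g)=\tfrac{x_g+y_g}{-z_g}=-6$ in case (1) and $=4$ in case (2), so $F_{a_g}$ is a fiber of a manifold obtained from $N(-6)$, resp.\ $N(4)$, by Dehn filling the two remaining cusps along the slopes $b_\alpha(a_g)$ and $b_\beta(a_g)$.

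First I would verify that $a_g$ is a primitive class in $int(C_\Delta(\mathbb{Z}))$. The cone over the fibered face $\Delta$ with vertices $(1,0,0),(1,1,1),(0,1,0),(0,0,-1)$ is $int(C_\Delta)=\{(x,y,z)\mid x>0,\ y>0,\ x>z,\ y>z\}$, and all four inequalities hold for both families whenever $g\ge 3$; primitivity follows from a direct $\gcd$ check, using that $\tfrac g2$ is odd (which holds since $g\equiv 6\pmod{12}$).

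The crux is the genus of the closed fiber, and this is where the residue conditions mod $84$ enter and where I expect the only real work to lie. Since $a_g\in int(C_\Delta)$ one has $\|a_g\|=x_g+y_g-z_g=\tfrac{5g}{2}$ in both cases, while the number of boundary components of $F_{a_g}$ is $\#\partial F_{a_g}=\gcd(y_g+z_g,x_g)+\gcd(z_g+x_g,y_g)+\gcd(x_g+y_g,z_g)$, obtained from the three boundary slopes. One computes $\gcd(x_g+y_g,z_g)=\tfrac g2$ in both families. In case (1) the other two $\gcd$'s equal $\gcd(\tfrac g2-2,7)$ and $\gcd(\tfrac g2+2,7)$, which are simultaneously $1$ exactly when $g\equiv 6,30,42,54,78\pmod{84}$; outside these residues one of them equals $7$, so the class of family (1) loses $3$ in genus, and in those remaining cases $g\equiv 18,66\pmod{84}$ one checks that the class of case (2) has both $\gcd$'s equal to $1$. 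Hence in each case $\#\partial F_{a_g}=\tfrac g2+2$, so the closed surface $\widehat{F}_{a_g}$ obtained by filling the three holes has $\chi(\widehat{F}_{a_g})=-\|a_g\|+\#\partial F_{a_g}=-2g+2$, i.e.\ genus $g$.

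Finally I would check that $\widehat{\phi}_{a_g}$ actually lies in $\widehat{\mathcal{M}}^+$: the no-$1$-prong condition (\ref{equation_1plong}) holds by Lemma~\ref{lem_face-prong} applied to $\Delta$, so $\widehat{\phi}_{a_g}$ is a genuine pseudo-Anosov class on $\varSigma_g$ with dilatation $\lambda(a_g)$, and its invariant foliations are orientable by the orientability criterion for classes on $\Delta$ used in Lemmas~\ref{lem_minEnt4} and \ref{lem_6+12i} (here again one uses $g\equiv 6\pmod{12}$). Substituting $(x_g,y_g,z_g)$ into (\ref{equation_TpolyMagic}) then identifies $\lambda(a_g)$ with the largest root of $f_{(\tfrac{3g}{2}+1,\tfrac{3g}{2}-1,\tfrac g2)}$, resp.\ of $f_{(g+2,g-2,-\tfrac g2)}$, which is exactly the asserted upper bound on $\delta_g^+$. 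I do not expect any obstacle beyond the bookkeeping of the $\gcd$/congruence conditions in the genus count and the check of orientability; everything else is a direct substitution into results already in hand.
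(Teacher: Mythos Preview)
Your proposal is correct and follows essentially the same route as the paper: exhibit the explicit primitive fibered classes in $S_\gamma(-6)$ and $S_\gamma(4)$, verify orientability via Lemma~\ref{lem_criterionOri} (which already forces every prong number to be even, so the separate appeal to Lemma~\ref{lem_face-prong} is misplaced---Lemma~\ref{lem_sing-data} would be the direct reference if you want one), and compute the genus from the boundary count, where the $\bmod\ 84$ dichotomy arises exactly as you say from whether the two auxiliary $\gcd$'s pick up a factor of~$7$. The paper additionally proves, using the entropy equivalence $(N(-6),\Omega_S)\underset{\mathrm{ent}}{\sim}(N(4),\Omega_S)$ and strict concavity of $1/\mathrm{ent}$, that the bound in (1) is strictly sharper than the bound in (2) whenever both apply, but this comparison is not needed for the theorem as stated.
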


In the case $g \equiv 0 \pmod{12}$, 
we improve the  bound (\ref{equation_MinakawaHK}) for many $g$, see Table~\ref{table_6i}.

\begin{table}[hbtp]
\caption{upper bound on $ \delta_g^+$ for $6 \le g \le 216$ such that $g \equiv 0 \pmod{6}$. 
[the bounds   on $\delta_g^+$ on the left  in the case $g \equiv 6 \pmod{12}$ 
 come from Theorem~\ref{thm_bound6+12i}. 
 in the case $g \equiv 0 \pmod{12}$ and $g>12$, 
  the bounds on $\delta_g^+$ on the left  
  are given by elements of $\widehat{\mathcal{M}}^+$ 
which occur as monodromies of  fibrations on manifolds obtained from  $N(\tfrac{5}{-4})$ by Dehn filling both cusps.]} 
\label{table_6i}
\begin{center}
$\left|\begin{array}{c|c|c}
\hline
g  & \mbox{upper bound on\ }\delta_g^+& \mbox{upper bound on\ } \delta_g^+\ \mbox{in \cite{Minakawa,HK}}  \\
\hline
\hline
6& \lambda_{(10,8,3)} \approx 1.20189 & \lambda_{(6,6,-1)} \approx 1.22571  \\
 \hline 
 12& \lambda_{(12,20,3)} \approx 1.10240 &\lambda_{(12,12,-1)} \approx  1.11124  \\
 \hline 
18& \lambda_{(20,16,-9)}  \approx 1.06276 &\lambda_{(18,18,-1)} \approx  1.07382  \\
 \hline 
 24& \lambda_{(32,28,3)}  \approx 1.04757 &\lambda_{(24,24,-1)} \approx  1.05524  \\
 \hline 
30&  \lambda_{(46,44,15)} \approx 1.03692  &\lambda_{(30,30,-1)} \approx  1.04413  \\
 \hline 
 36&  \lambda_{(50,52,15)} \approx 1.03148  &\lambda_{(36,36,-1)} \approx 1.03674  \\
 \hline 
42& \lambda_{(64,62,21)} \approx 1.02622  &\lambda_{(42,42,-1)} \approx 1.03147 \\
 \hline 
 48& \lambda_{(66,68,19)} \approx 1.02367  &\lambda_{(48,48,-1)} \approx 1.02752 \\
 \hline 
54&     \lambda_{(82,80,27)} \approx 1.02033      &\lambda_{(54,54,-1)} \approx  1.02446     \\
  \hline 
60&     \lambda_{(80,76,15)} \approx 1.01903      &\lambda_{(60,60,-1)} \approx  1.02200       \\
  \hline 
66&   \lambda_{(68,64,-33)} \approx 1.01661        &\lambda_{(66,66,-1)} \approx  1.02000    \\  
 \hline 
72&   \lambda_{(96,92,19)} \approx 1.01586        &\lambda_{(72,72,-1)} \approx  1.01833       \\  
 \hline 
78&  \lambda_{(118,116,39)} \approx 1.01403    & \lambda_{(78,78,-1)} \approx 1.01691       \\  
 \hline 
 84&  \lambda_{(114,116,31)} \approx 1.01357    &\lambda_{(84,84,-1)} \approx  1.01570        \\  
 \hline 
90&   \lambda_{(136,134,45)} \approx 1.01215     &\lambda_{(90,90,-1)} \approx  1.01465      \\  
     \hline 
 96&   \lambda_{(132,140,43)} \approx 1.01190     &\lambda_{(96,96,-1)} \approx  1.01374         \\  
     \hline  
102& \lambda_{(104,100,-51)}  \approx 1.01071    & \lambda_{(102,102,-1)} \approx  1.01293    \\  
     \hline 
  108& \lambda_{(146,148,39)}  \approx 1.01057    &\lambda_{(108,108,-1)} \approx   1.01221      \\  
     \hline 
114 & \lambda_{(172,170,57)} \approx 1.00958  &\lambda_{(114,114,-1)} \approx    1.01156   \\  
     \hline 
 120 & \lambda_{(164,172,51)} \approx 1.00952  & \lambda_{(120,120,-1)} \approx   1.01098    \\  
     \hline 
126 &     \lambda_{(190,188,63)} \approx 1.00841     &\lambda_{(126,126,-1)} \approx  1.01046     \\  
 \hline 
 132 &     \lambda_{(174,164,31)} \approx 1.00869     &\lambda_{(132,132,-1)} \approx  1.00998      \\  
 \hline 
 138  &   \lambda_{(208,206,69)}  \approx 1.00790     &  \lambda_{(138,138,-1)} \approx 1.00955     \\
     \hline  
 144  &   \lambda_{(194,196,51)}  \approx 1.00793     &\lambda_{(144,144,-1)} \approx     1.00915      \\
     \hline  
150   &   \lambda_{(152,148,-75)} \approx 1.00727    &   \lambda_{(150,150,-1)} \approx  1.00878       \\
     \hline  
156   &   \lambda_{(210,212,55)} \approx 1.00732    & \lambda_{(156,1566,-1)} \approx    1.00845        \\
     \hline   
162 &   \lambda_{(244,,242,81)} \approx 1.00673      &  \lambda_{(162,162,-1)} \approx   1.00813        \\
     \hline  
 168  &   \lambda_{(228,236,67)} \approx 1.00680      &  \lambda_{(168,168,-1)} \approx    1.00784        \\
     \hline  
 174  &   \lambda_{(262,260,87)} \approx 1.00626      &  \lambda_{(174,174,-1)} \approx   1.00757      \\
     \hline  
 180  &   \lambda_{(240,236,55)} \approx 1.00635      &  \lambda_{(180,180,-1)} \approx    1.00732       \\
     \hline  
 186  &   \lambda_{(188,184,-93)} \approx 1.00586       &  \lambda_{(186,186,-1)} \approx   1.00708      \\
     \hline  
 192  &   \lambda_{(258,260,67)} \approx 1.00595       &  \lambda_{(192,192,-1)} \approx    1.00686          \\
     \hline  
 198 &   \lambda_{(298,296,99)} \approx 1.00550         & \lambda_{(198,198,-1)} \approx     1.00665    \\
     \hline 
  204 &   \lambda_{(276,284,79)} \approx 1.00560         & \lambda_{(204,204,-1)} \approx    1.00646     \\
     \hline  
210  &  \lambda_{(316,314,105)}  \approx 1.00519        &\lambda_{(210,210,-1)} \approx      1.00627        \\
     \hline  
216  &  \lambda_{(290,292,75)}  \approx 1.00529        &  \lambda_{(216,216,-1)} \approx    1.00610         \\
\hline     
\end{array}\right|$
\end{center} 
\end{table}

Section~\ref{subsection_WhiteheadLink} concerns the monodromies of fibrations on the Whitehead link exterior $N(1)$. 
The manifold $N(1)$ is very special among other $N(r)$'s. 
It is the only manifold among the $N(r)$'s which admits fibers with arbitrarily many boundary components (Lemma~\ref{lem_num-boundary}). 
Moreover the invariant foliation of the monodromy of each fibration on $N(1)$ has the property such that 
each boundary component of the fiber has a $1$ prong. 
(Remark~\ref{rem_Whitehead}). 
We shall show in Section~\ref{subsection_WhiteheadLink} that 
there exists the monodromy $\Phi_n: \varSigma_{1,n} \rightarrow \varSigma_{1,n}$ 
of a particular fibration on $N(1)$ whose normalized entropy  tends to $ 2 \log \delta(D_4) $ 
as $n$ tends to $\infty$ (Proposition~\ref{prop_MiniDil_W} and Lemma~\ref{lem_direction_W}). 
Thus we have

\begin{thm} 
\label{thm_tsai}
$\displaystyle \limsup_{n \to \infty} \, n \log \delta_{1,n} \le  2 \log \delta(D_4) $. 
\end{thm}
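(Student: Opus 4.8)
The plan is to produce, for infinitely many $n$, an explicit pseudo-Anosov $\Phi_n : \varSigma_{1,n} \to \varSigma_{1,n}$ arising as the monodromy of a fibration on the Whitehead link exterior $N(1)$, and to control the growth of its dilatation. Since $\delta_{1,n} \le \lambda(\Phi_n)$, it suffices to show $\limsup_{n\to\infty} n \log \lambda(\Phi_n) \le 2 \log \delta(D_4)$, which in terms of the normalized entropy $\mathrm{Ent} = |\chi| \, \mathrm{ent}$ (here $|\chi(\varSigma_{1,n})| = n$, since a genus-$1$ surface with $n$ punctures has $\chi = -n$) amounts to $\mathrm{Ent}(\Phi_n) \to 2\log\delta(D_4)$. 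By Lemma~\ref{lem_num-boundary}, $N(1)$ is the one manifold among the $N(r)$ that admits fibers with arbitrarily many boundary components, so genus-$1$, $n$-puncture fibers genuinely occur here for all large $n$.

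First I would identify the relevant fibered face of $N(1)$. Using the injection $\iota$ (the analogue of $\iota_\beta$ from Proposition~\ref{prop_S_r}) that embeds $H_2(N(1), \partial N(1))$ as the slice $S(1) = \{(x,y,z) \mid -y = z+x\}$ inside $H_2(N,\partial N)$, I would intersect this plane with the cone $C_\Delta$ over the fibered face $\Delta$ with vertices $(1,0,0),(1,1,1),(0,1,0),(0,0,-1)$, and read off the resulting fibered cone of $N(1)$. The dilatation of a primitive fibered class $(x,y,z)$ in $\mathrm{int}(C_\Delta)$ is the largest root of $f_{(x,y,z)}(t) = t^{x+y-z} - t^x - t^y - t^{x-z} - t^{y-z} + 1$ by \cite[Theorem~3.1]{KT}, and this formula restricts to the slice. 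Within the slice I would pick a one-parameter family of primitive integral classes $a_k$ — to be pinned down so that the extended fiber $\widehat F$ has genus $1$ and the number of boundary components (i.e.\ the total number of prongs at the filled cusps, computed from the boundary slopes $b_\alpha, b_\beta, b_\gamma$) grows linearly, say $n = n(k) \to \infty$. The Euler characteristic / genus bookkeeping is exactly the Thurston-norm computation $\|a_k\| = -\chi(F_{a_k})$ followed by the holes-to-punctures count; these are linear in $k$, so the direction of $a_k/\|a_k\|$ in $\mathrm{int}(\Delta)$ converges to a definite point on the face.

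Next I would compute the limit of the normalized entropy along this family. By Fried's and McMullen's results recalled in \S\ref{subsection_Tnorm}, $\mathrm{Ent}$ is continuous (indeed real-analytic) on $\mathrm{int}(C_\Delta)$ and constant along rays, so $\mathrm{Ent}(a_k) = \mathrm{Ent}(a_k/\|a_k\|)$ converges to $\mathrm{Ent}$ evaluated at the limiting ray. The task is then to evaluate that limit and check it equals $2\log\delta(D_4)$; concretely, substituting the parametrization into $f_{(x,y,z)}$ and taking $k\to\infty$ should cause the polynomial (after dividing through by the dominant power of $t$ and setting $t = s^{1/c}$ for the appropriate scaling constant $c$ forced by the slopes) to converge to a fixed polynomial whose largest root is $\delta(D_4)$, the largest root of $t^4 - 2t^3 - 2t + 1$. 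The factor $2$ will appear because $\mathrm{Ent}$ is the $|\chi|$-normalized entropy while $\log\delta(D_4)$ is an un-normalized quantity on $D_4$ (equivalently, $\mathrm{Ent}(D_4\text{-minimizer}) = |\chi(D_4)|\log\delta(D_4) = 4\log\delta(D_4)$, and our fiber sees ``half'' of that rate); I would make this precise by relating the limiting class on $N(1)$ to the fibered face of the magic manifold that \cite{KT} showed governs $\delta(D_4)$. Finally, $\Phi_n$ is pseudo-Anosov by Thurston's hyperbolization as in \S\ref{subsection_Tnorm}, so $\delta_{1,n} \le \lambda(\Phi_n)$ and taking $\limsup$ along the subsequence $n = n(k)$, together with the monotone-type interpolation between consecutive values of $n$ (or simply noting the $\limsup$ only needs one good subsequence), yields the claim.

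The main obstacle I expect is the bookkeeping that simultaneously pins down the genus to be exactly $1$ and forces $n\to\infty$: one must choose the family $a_k$ inside the two-dimensional slice $S(1) \cap \mathrm{int}(C_\Delta)$ so that the Thurston norm gives $2g-2+n$ with $g=1$, while the three boundary-slope counts $b_\alpha(a_k), b_\beta(a_k), b_\gamma(a_k)$ behave correctly at the filled cusp of $N(1)$ and at the two cusps that remain to be Dehn-filled to close up to $\varSigma_{1,n}$. Getting the combinatorics of ``filling holes'' right — i.e.\ which boundary components of $F_{a_k}$ carry how many prongs, and verifying condition \eqref{equation_1plong} fails here (Remark~\ref{rem_Whitehead} says every boundary component has a $1$-prong, so these are genuine punctures, not fillable holes) — is where the care is needed; the entropy limit itself is then a routine asymptotic analysis of the largest root of $f_{(x,y,z)}$.
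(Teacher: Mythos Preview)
Your strategy matches the paper's: exhibit monodromies of fibrations on $N(1)$ with fiber $\varSigma_{1,n}$ and show their normalized entropies tend to $\min\mathrm{Ent}(N(1)) = 2\log\delta(D_4)$. But you have over-complicated the geometry and made one genuine error.

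First, no further Dehn filling is required, and your ``main obstacle'' is illusory. Lemma~\ref{lem_PuncturedTorus} says that \emph{every} fiber of $N(1)$ already has genus~$1$: for coprime $k,\ell\in\mathbb N$ the class $k\overline{\mathfrak a_1}+\ell\overline{\mathfrak b_1}$ has as minimal representative a $(k+\ell)$-holed torus, so its monodromy is directly a pseudo-Anosov on $\varSigma_{1,k+\ell}$ and $\mathrm{Ent}_1(\overline a) = n\log\lambda_1(\overline a)$ with $n=k+\ell$. There is no ``closing up to $\varSigma_{1,n}$,'' no two remaining cusps to fill, and no bookkeeping of fillable holes versus punctures; Remark~\ref{rem_Whitehead} about $1$-prongs is irrelevant here since we are not extending to a closed surface. (Incidentally $|\chi(D_4)|=3$, not $4$; the factor $2$ comes cleanly from Lemma~\ref{lem_MNE-S} with $p=q=1$: $(1-\tfrac12)\log\lambda_{[1/4,1/4]} = \tfrac12\cdot 4\log\lambda_{(1,1,-2)} = 2\log\delta(D_4)$.)

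Second, your parenthetical ``the $\limsup$ only needs one good subsequence'' is false: an upper bound on $\limsup_{n\to\infty}$ requires the bound for \emph{all} large $n$, not merely along a subsequence. This is easily repaired since every $n\ge 2$ can be written as $k+\ell$ with $k,\ell$ coprime and $\ell/k\to 1$ as $n\to\infty$ (Proposition~\ref{prop_MiniDil_W} records the optimal choice for each $n$), whereupon Lemma~\ref{lem_direction_W} gives $n\log\lambda_1(\overline a)\to 2\log\delta(D_4)$.
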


\noindent
This implies the upper bound 
$\displaystyle \limsup_{n \to \infty} \, n \log \delta_{1,n} \le 2 \log 9$ by Tsai, 
see \cite[Section~3.2.1 and Theorem~3.2.2]{Tsai1}.

\subsection{Thurston norm equivalence, entropy equivalence on manifolds $N(r)$}
\label{subsection_equivalence}

In the course of analyzing the magic manifold, 
we discovered many ``twins"  among the  $N(r)$'s.  
The particular ones are 
$N(\tfrac{3}{-2})$ and $N(\tfrac{1}{-2})$    which will be critical in the proof of Theorem~\ref{thm_main}.   
They are different manifolds but 
have  common properties from entropy computational viewpoints.  
To formulate ideas, 
we say that   $3$-manifolds  $M$  and  $M'$ are  {\it Thurston norm equivalent}, 
denoted by 
\begin{equation*} 
	M  \underset{\mathrm{T}}{\sim} M', 
\end{equation*}   
if there exists an isomorphism 
$f: H_2(M, \partial M ; {\Bbb Z}) \rightarrow H_2(M', \partial M'; {\Bbb Z})$  which preserves the Thurston norm, i.e, 
$\|a\| = \|f(a)\|$ for any  $a \in H_2(M, \partial M ; {\Bbb Z})$. 
We call such  $f$ the {\it Thurston norm preserving isomorphism}. 
For example  $N(r) \underset{\mathrm{T}}{\sim}  N(-2-r)$ when 
$r, -2-r \in {\mathcal Hyp}$ (Proposition~\ref{prop_norm-equiv}). 
We introduce two more equivalence relations, 
both called the {\it entropy equivalence}, 
of which the precise definitions are given in Section~\ref{subsubsection_Def}. 
The first one is defined on the pairs  $(M,  \Omega)$, 
where  $M$  is a fibered $3$-manifold and  $\Omega$  is its fibered face. 
Namely,  
$(M, \Omega)$  and  $(M', \Omega')$  are entropy equivalent,  
denoted by 
\begin{equation*} 
	(M, \Omega) \underset{\mathrm{ent}}{\sim} (M', \Omega'), 
\end{equation*}  
if  there exists a Thurston norm preserving isomorphism 
$f: H_2(M, \partial M ; {\Bbb Z}) \rightarrow H_2(M', \partial M'; {\Bbb Z})$ 
such that  $f$  maps  $int(C_{\Omega}(\Bbb Z))$ to $int(C_{\Omega'}(\Bbb Z))$  preserving the entropy function. 
In particular  $(M, \Omega) \underset{\mathrm{ent}}{\sim} (M', \Omega')$  implies 
that  $\min \mathrm{Ent}(M, \Omega) = \min \mathrm{Ent}(M', \Omega')$. 
The second equivalence relation is defined on the fibered $3$-manifolds.  
Fibered $3$-manifolds $M$ and $M'$ are entropy equivalent, 
denoted by 
\begin{equation*} 
	M \underset{\mathrm{ent}}{\sim} M',  
\end{equation*} 
if  there exists a Thurston norm preserving isomorphism 
$f: H_2(M, \partial M ; {\Bbb Z}) \rightarrow H_2(M', \partial M'; {\Bbb Z})$  
such that  $f$  preserves 
both fibered classes and the entropy functions. 
If  $M \underset{\mathrm{ent}}{\sim} M'$, 
then  $\min \mathrm{Ent}(M) = \min \mathrm{Ent}(M')$. 
We shall prove in Theorem~\ref{thm_entropy_equiv} that 
$$(N(2), \Omega_S)  \underset{\mathrm{ent}}{\sim}  (N(\tfrac{3}{-2}), \Omega_A) \underset{\mathrm{ent}}{\sim}  (N(\tfrac{1}{-2}), \Omega_A). $$
For the definition of fibered faces  $\Omega_S$  and  $\Omega_A$,  
see Section~\ref{subsubsection_EntropyEqu}.  
We also prove that $N(r)  \underset{\mathrm{ent}}{\sim} N(-2-r)$ for 
`almost all' $r \in {\mathcal Hyp} $, see Theorem~\ref{thm_entropy_equiv}. 
This is derived from  the symmetry of the Thurston norm ball and the symmetry of 
the entropy function of $N$. 
In particular 
\begin{equation*} 
	N(\tfrac{3}{-2}) \underset{\mathrm{ent}}{\sim}  N(\tfrac{1}{-2}). 
\end{equation*} 

\noindent
Recall that the quantity 
$\min \mathrm{Ent}$ is defined to be the minimum of 
the normalized entropies of the classes in $\displaystyle\bigcup_{\Omega} int(C_{\Omega})$, 
where $\Omega$ is taken over all fibered faces of $M$. 
The number $\log( \tfrac{3+ \sqrt{5}}{2})$ in Theorem~\ref{thm_three} appears in the equalities 
\begin{equation*} 
	\min \mathrm{Ent}(N(\tfrac{3}{-2})) =  \min \mathrm{Ent}(N(\tfrac{1}{-2})) =  \min \mathrm{Ent}(N(2), \Omega_S)= 2 \log( \tfrac{3+ \sqrt{5}}{2}).  
\end{equation*}

\subsection{Question by Lanneau-Thiffeault}  
\label{subsection_LTQuestion}

Let $k$ and $\ell$ be integers such that $0< \ell < k$. 
We consider following fibered classes in $int(C_{\Delta})$: 
$$(2k \pm  \ell, 2k \pm  2\ell, k \pm 2 \ell) \in S_{\beta} (\tfrac{3}{-2}),\ 
(k, 2k \pm 2\ell, \pm \ell) \in S_{\beta} (\tfrac{1}{-2}), \ 
(k \pm \ell, k \mp \ell, -k) \in S_{\gamma}(2). $$
By using the  Teichm\"{u}ller polynomial (\ref{equation_TpolyMagic}), 
we see that  the dilatation of each  fibered class above is equal to 
the largest real root $\lambda_{(k,\ell)}$ of the following {\it Lanneau-Thiffeault polynomial} 
$$f_{(k,\ell)}(t)= t^{2k} - t^{k+\ell} - t^k - t^{k-\ell}+1.$$ 
(In fact, $f_{(k,\ell)}(t)$ is a common factor of $f_{(2k \pm  \ell, 2k \pm  2\ell, k \pm 2 \ell)}(t)$, 
$f_{(k, 2k \pm 2\ell, \pm \ell)}(t)$ and $f_{(k \pm \ell, k \mp \ell, -k)}(t)$.)

It is known that 
$\delta_2^+= \lambda_{(2,1)}$, 
$\delta_4^+ = \lambda_{(4,1)}$, 
$\delta_6^+ \ge \lambda_{(6,1)}$, 
$\delta_8^+=\lambda_{(8,1)} $, see \cite{Zhirov,LT,Hironaka}. 
Motivated by these results, Lanneau and Thiffeault asked the following. 

\begin{ques}[\cite{LT}]
\label{ques_LT}
For $g$ even,   is  $\delta_g^+$ equal to $\lambda_{(g,1)}$? 
\end{ques}

\noindent
We consider Question~\ref{ques_LT} in the set $\widehat{\mathcal{M}}^+$. 
The results in this paper imply that 
there exists a gap between $\widehat{\delta}^+_g $ and $\lambda_{(g,1)}$ 
for large $g$ such that $g \equiv 0 \pmod{6}$.

\begin{thm}
\label{thm_LTQuestion}
\ 
\begin{enumerate}
\item[(1)] 
We fix any $\epsilon>0$ so that $1.97475 - \epsilon > 2\log( \tfrac{3+ \sqrt{5}}{2})$. 
For large $g$ such that $g \equiv 0 \pmod{6}$, we have 
$\widehat{\delta}^+_g > \lambda_{(g,1)}$ and 
$$|\chi(\varSigma_g)| \log \widehat{\delta}^+_g > 1.97475 - \epsilon > 2\log( \tfrac{3+ \sqrt{5}}{2}).$$

\item[(2)]
$\widehat{\delta}^+_g = \lambda_{(g,1)}$ for large $g$ such that $g \equiv 2,4 \pmod{6}$. 
\end{enumerate}
\end{thm}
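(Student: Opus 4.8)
The plan is to deduce both parts from the machinery already set up for the magic manifold, working throughout with the Teichm\"uller polynomial (\ref{equation_TpolyMagic}). Every element of $\widehat{\mathcal{M}}$ is recorded, up to the symmetries of $H_2(N,\partial N)$, by a primitive fibered class $a=(x,y,z)\in int(C_\Delta({\Bbb Z}))$; for such $a$ one has $\|a\|=x+y-z=-\chi(F_a)$, the number $n(a)$ of boundary circles of $F_a$ equals the sum of the three greatest common divisors extracted from the boundary slopes $b_\alpha(a),b_\beta(a),b_\gamma(a)$, the capped fiber has genus $g=g(a)$ with $2g-2=(x+y-z)-n(a)$, the dilatation is the largest root $\lambda_a$ of $f_{(x,y,z)}$, and the normalized entropy of the capped monodromy is $|\chi(\varSigma_g)|\log\lambda_a=\mathrm{Ent}(a)-n(a)\log\lambda_a$. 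So everything is governed by the strictly convex function $\mathrm{Ent}$ on $int(\Delta)$ together with the arithmetic quantity $n(a)$.

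\emph{Part (2).} For the upper bound I would exhibit, for each large $g$ with $g\equiv 2,4\pmod 6$ (so $3\nmid g$), the Lanneau--Thiffeault class $a=(g,\,2g+2,\,1)\in S_\beta(\tfrac{1}{-2})\cap int(C_\Delta)$. One computes $b_\beta(a)=\tfrac{1}{-2}$ and $n(a)=\gcd(2g+3,g)+\gcd(g+1,2g+2)+\gcd(3g+2,1)=1+(g+1)+1$, whence $g(a)=g$; and since $a=(k,2k+2\ell,\ell)$ with $k=g$, $\ell=1$, (\ref{equation_TpolyMagic}) gives $\lambda_a=\lambda_{(g,1)}$. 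Checking (\ref{equation_1plong}) through Lemma~\ref{lem_face-prong} and orientability of the capped invariant foliations through the paper's orientability criterion places $a$ in $\widehat{\mathcal{M}}^+\cap\mathrm{Mod}(\varSigma_g)$, so $\widehat{\delta}_g^+\le\lambda_{(g,1)}$. For the matching lower bound I would invoke Theorem~\ref{thm_smallest_dil_ori}(2): for large $g\equiv 2,4\pmod 6$ the minimum $\widehat{\delta}_g^+$ is attained by the monodromy of a $\varSigma_g$-bundle obtained from $N(\tfrac{1}{-2})$; the proof of that theorem exhibits the minimizing fibered class, and identifying it --- it is the $\ell=1$ Lanneau--Thiffeault class with $k=g$, up to symmetry and the equivalence $N(\tfrac{3}{-2})\underset{\mathrm{ent}}{\sim}N(\tfrac{1}{-2})$ --- yields $\widehat{\delta}_g^+=\lambda_{(g,1)}$.

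\emph{Part (1).} It suffices to show $|\chi(\varSigma_g)|\log\widehat{\delta}_g^+>1.97475-\epsilon$ for all large $g\equiv 0\pmod 6$: since $|\chi(\varSigma_g)|\log\lambda_{(g,1)}=(2g-2)\log\lambda_{(g,1)}\to 2\log(\tfrac{3+\sqrt 5}{2})<1.97475-\epsilon$, for large $g$ one gets $(2g-2)\log\lambda_{(g,1)}<1.97475-\epsilon<(2g-2)\log\widehat{\delta}_g^+$, hence $\widehat{\delta}_g^+>\lambda_{(g,1)}$. For the entropy bound, let $a$ represent an element of $\widehat{\mathcal{M}}^+\cap\mathrm{Mod}(\varSigma_g)$ realizing $\widehat{\delta}_g^+$ with $g\equiv 0\pmod 6$ large; we may assume $|\chi(\varSigma_g)|\log\widehat{\delta}_g^+\le 1.97475$, so $\log\lambda_a=O(1/g)$, and we split on the size of $n(a)$. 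If $n(a)$ stays bounded, then $n(a)\log\lambda_a\to 0$, so the capped normalized entropy equals $\mathrm{Ent}(a)-o(1)\ge\min\mathrm{Ent}(N,\Delta)-o(1)$, and $\min\mathrm{Ent}(N,\Delta)$ --- which one computes from (\ref{equation_TpolyMagic}) to equal $2\log(2+\sqrt 3)\approx 2.63$ --- already exceeds $1.97475$ (this sub-case does not use the congruence). If $n(a)\to\infty$, then one of the three gcd's grows, and the analysis of the fibered cone used in the proof of Theorem~\ref{thm_main} shows that along the resulting sequences of directions the capped normalized entropy can approach its absolute lower bound $2\log(\tfrac{3+\sqrt 5}{2})$ only along the Lanneau--Thiffeault families, for which the orientability criterion contradicts $g(a)\equiv 0\pmod 6$; all other admissible families stay a definite distance away, and a finite computation of $\min\mathrm{Ent}$ for the surviving pairs (manifold, fibered face) --- including those coming from $N(4)$, $N(-6)$ and $N(\tfrac{5}{-4})$ by one Dehn filling --- yields the bound $1.97475-\epsilon$.

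\emph{Main obstacle.} The crux is the classification step in Part (1): showing that the only fibered directions driving the capped normalized entropy below $1.97475$ are the Lanneau--Thiffeault ones, so that removing them for $g\equiv 0\pmod 6$ forces the infimum up to $1.97475$. This requires handling the infinitely many Dehn fillings $N(r)$ uniformly --- reducing to finitely many via the properness and strict convexity of $\mathrm{Ent}$ (Fried, Matsumoto--McMullen) and the finiteness/monotonicity statements for $\min\mathrm{Ent}(N(r))$ from the earlier sections --- and carrying through the parity and congruence bookkeeping that simultaneously controls $g(a)\bmod 6$ and the orientability of $\widehat{\mathcal{F}}$. By comparison the remaining verifications, in particular that the explicit minimizer $(g,2g+2,1)$ of Part (2) has orientable invariant foliations for all large $g\equiv 2,4\pmod 6$, reduce to congruence computations with the singularity data of $f_{(g,1)}$ and are routine.
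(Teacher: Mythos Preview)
Your approach is essentially correct and agrees with the paper's, but the paper's packaging is tighter, especially for Part~(1).

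For Part~(2), the paper does exactly what you outline: the upper bound $\widehat{\delta}^+_g(\tfrac{1}{-2})=\lambda_{(g,1)}$ for $g\equiv 2,4\pmod 6$ is recorded as Lemma~\ref{lem_compare_ori}(2) (citing Hironaka rather than rebuilding the class $(g,2g+2,1)$, but your explicit construction is fine), and then Theorem~\ref{thm_smallest_dil_ori}(2) forces $\widehat{\delta}^+_g=\widehat{\delta}^+_g(\tfrac{1}{-2})$ for large such $g$.

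For Part~(1), your split on whether $n(a)$ stays bounded is an unnecessary detour. The paper argues in one line: by Lemma~\ref{lem_compare_ori}(1), for $g\equiv 0\pmod 6$ one has $\widehat{\delta}^+_g(\tfrac{3}{-2})=\widehat{\delta}^+_g(\tfrac{1}{-2})=\widehat{\delta}^+_g(2)=\infty$, so the orientable minimizer $a_g^+$ has \emph{no} boundary slope in $\{-4,\tfrac{3}{-2},\tfrac{1}{-2},2\}$. The proof of Theorem~\ref{thm_main}(1) has already established (via Theorem~\ref{thm_key}(3), Proposition~\ref{prop_FiniteSet} and Proposition~\ref{prop_finitely-many}) that whenever all boundary slopes avoid this exceptional set, $\mathrm{Ent}(\widehat{\phi}_a)>1.97475-\epsilon$ for all but finitely many classes. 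This gives the bound immediately. In particular the manifolds $N(4)$, $N(-6)$, $N(\tfrac{5}{-4})$ you single out require no separate treatment: they are handled uniformly by Theorem~\ref{thm_key}(3). What you call the ``main obstacle'' is precisely the content of Theorem~\ref{thm_key} together with Propositions~\ref{prop_FiniteSet} and~\ref{prop_finitely-many}, already proved earlier; Theorem~\ref{thm_LTQuestion} itself is then a short corollary.
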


\subsection{Idea of proofs and conjectures}
\label{subsection_Idea}
This subsection describes the outline of the proof of Theorem~\ref{thm_main}. 
(The proof of Theorem~\ref{thm_smallest_dil_ori} is similar.) 
First, let us  recall the approach to  (\ref{equation_asyEneq}) by Hironaka \cite{Hironaka}, Aaber-Dunfield \cite{AD} and Kin-Takasawa \cite{KT1}. 
Take a  particular single $2$-cusped hyperbolic fibered $3$-manifold $M$
(which is either  $N(\tfrac{1}{-2}), N(\tfrac{3}{-2})$ or $N(2)$.) 
Compute the Teichm\"{u}ller polynomial $P_{\Omega}$ and $\min \mathrm{Ent}(M, \Omega)$ ($=2 \log( \tfrac{3+ \sqrt{5}}{2})$ in this case) 
for a fibered face $\Omega$ of $M$. 
Then determine the topological type of each fiber $F$ such that $[F] \in int(C_{\Omega})$. 
We can find a fiber $F_g$ of genus $g$ for large $g$ which enjoys the following. 
The ray of $[F_g]$ goes to the ray 
whose normalized entropy $\mathrm{Ent}$ achieves $\min \mathrm{Ent}(M, \Omega)$ 
as $g$ goes to $\infty$. 
(Then $\mathrm{Ent}([F_g])$ goes to $\min \mathrm{Ent}(M, \Omega)$ as $g$ goes to $\infty$.) 
Moreover, the number of boundary components of $F_g$ is bounded by some constant. 
Finally  check that the stable foliation for the monodromy of the fibration associated to $[F_g]$ 
satisfies that 
each boundary component of $F_g$ has no  $1$ prong.  
Then we obtain the equality in Theorem~\ref{thm_three} which implies (\ref{equation_asyEneq}). 

Compared to the above approach, 
a difficulty for the proof of  Theorem~\ref{thm_main} is that 
for each $r \in {\mathcal Hyp} \setminus \{1\}$, the manifold $N(r)$ has a fiber of arbitrarily large genus. 
Because of this, it is not clear 
which manifold $N(r)$ we should look in. 
Thus it is not a straightforward task to  identify  a primitive fibered class   $a_g \in H_2(N, \partial N)$  
such that  $\phi_{a_g} \in \mathcal{M}$  and  $\widehat{\delta}_g$  is achieved by  
$\widehat{\phi}_{a_g} \in \widehat{\mathcal{M}} \cap \mathrm{Mod}(\varSigma_g)$. 
Also it is not obvious at all that 
one of the boundary slopes of $a_g$ becomes a constant for large $g$. 
(As we will see, one of the boundary slopes of $a_g$ must be in 
$\{-4, \frac{3}{-2}, \frac{1}{-2}, 2 \}$ for large $g$.) 
The key observation to prove Theorem~\ref{thm_main} is 

\begin{thm}
\label{thm_key}
For $r \in {\mathcal Hyp}$, 
let  $\Omega$ be any fibered face of $N(r)$  which enjoys the following. 
\begin{quote}
$(*)$ 
Let $a \in S_{\beta}(r)$ be a  primitive fibered class of $N$ 
such that $\overline{a} \in int(C_{\Omega})$. 
Let $\Phi_a: F_a \rightarrow F_a$ be the monodromy of the fibration associated to $a$. 
Then the stable foliation $\mathcal{F}_a$ of $\Phi_a$ has the property such that 
any boundary component of $F_a$ lying on $T_{\beta}$ has no $1$ prong. 
\end{quote}
Then 
\begin{enumerate}
\item[(1)]
	$\min \mathrm{Ent}(N(1), \Omega) = 2 \log \delta(D_4) \approx 1.6628$, 
\item[(2)] 
	if  $r=-4, \tfrac{3}{-2}, \tfrac{1}{-2}, 2$, 
	then  $\min \mathrm{Ent}(N(r), \Omega) =  2 \log \delta(D_3) = 2 \log( \tfrac{3+ \sqrt{5}}{2}) \approx 1.9248$, 
	and 
\item[(3)] 
	if  $r\ne-4, \tfrac{3}{-2}, \tfrac{1}{-2}, 1,  2$, 
	then  $\min \mathrm{Ent}(N(r), \Omega) >1.97475$.  
\end{enumerate}
\end{thm}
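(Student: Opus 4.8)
\textbf{Proof proposal for Theorem~\ref{thm_key}.}

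The plan is to reduce everything to a computation on the magic manifold $N$ itself, using the Teichm\"uller polynomial (\ref{equation_TpolyMagic}) and the explicit Thurston norm ball $U_N$. First I would fix the fibered face $\Delta$ of $N$ with vertices $(1,0,0),(1,1,1),(0,1,0),(0,0,-1)$, and note that by the symmetries of $H_2(N,\partial N)$ (Section~\ref{subsection_Fibered}) it suffices to analyze cones over faces in the $\Delta$-orbit. Given $r\in{\mathcal Hyp}$, the slice $S_{\beta}(r)=\{(x,y,z)\mid -ry=z+x\}$ is a $2$-plane through the origin, and $S_{\beta}(r)\cap int(C_{\Delta})$ is a $2$-dimensional cone; under $\iota_{\beta}^{-1}$ this is identified with $int(C_{\Omega})$ for the corresponding fibered face $\Omega$ of $N(r)$. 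So the function $\mathrm{Ent}$ on $int(C_{\Omega})$ is just the restriction of $\mathrm{Ent}$ on $int(C_{\Delta})$ to this slice, and $\min\mathrm{Ent}(N(r),\Omega)$ is the minimum over the slice of $\mathrm{Ent}(x,y,z)=(x+y-z)\log\lambda_{(x,y,z)}$ — using $\|(x,y,z)\|=x+y-z$ on $C_{\Delta}$ — where $\lambda_{(x,y,z)}$ is the largest root of $f_{(x,y,z)}(t)=t^{x+y-z}-t^x-t^y-t^{x-z}-t^{y-z}+1$.

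Next I would parametrize the slice. Writing a primitive direction in $S_{\beta}(r)\cap int(C_{\Delta})$ as $(x,y,z)$ with $-ry=z+x$, one eliminates one variable and gets $\mathrm{Ent}$ as a one-parameter function $u\mapsto h_r(u)$ on an interval determined by the edges of $\Delta$; strict convexity of $\mathrm{Ent}$ on $int(\Delta)$ (Matsumoto, McMullen, quoted in Section~\ref{subsection_Tnorm}) guarantees a unique interior minimum, and Fried's properness handles the behavior at the two endpoints (which correspond to boundary slopes of $a$ degenerating, i.e.\ approaching $\partial\Delta$). For the four special values $r=-4,\tfrac{3}{-2},\tfrac{1}{-2},2$ I would exhibit the minimizing ray explicitly: by the Thurston-norm and entropy equivalences of Section~\ref{subsection_equivalence} ($N(r)\underset{\mathrm{ent}}{\sim}N(-2-r)$, and the chain $(N(2),\Omega_S)\underset{\mathrm{ent}}{\sim}(N(\tfrac{3}{-2}),\Omega_A)\underset{\mathrm{ent}}{\sim}(N(\tfrac{1}{-2}),\Omega_A)$), all four reduce to a single computation, and one checks directly that the minimizing direction lands on the ray through a class whose Teichm\"uller polynomial factors through $t^4-2t^3-2t^2+2t-1$ or, after the substitution coming from the $D_3$ identification in \cite{KT}, gives $\mathrm{Ent}=2\log\delta(D_3)=2\log\tfrac{3+\sqrt5}{2}$; this is part (2). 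For $r=1$ (the Whitehead link) the same procedure gives the minimizing ray with dilatation the largest root of $t^4-2t^3-2t+1$, i.e.\ $\delta(D_4)$, and normalized entropy $2\log\delta(D_4)\approx1.6628$; this is part (1).

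For part (3) — the uniform bound $\min\mathrm{Ent}(N(r),\Omega)>1.97475$ for all remaining $r$ — the strategy is a two-regime argument. For $|r|$ or its denominator large, the slice $S_{\beta}(r)\cap\Delta$ is a thin cone pinched near a vertex or edge of $\Delta$; since $\mathrm{Ent}\to\infty$ at $\partial\Delta$ and $\mathrm{Ent}$ is continuous and bounded below by a positive constant on any compact subset of $int(\Delta)$ away from the $r\in\{-4,\tfrac32,\tfrac12,1,2\}$ slices, one gets an explicit lower bound that blows up; quantitatively I would bound $\lambda_{(x,y,z)}$ from below by a root of a truncated polynomial and estimate $(x+y-z)\log\lambda$ from below. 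For the finitely many ``small'' remaining slopes not in the special list, one checks the strict inequality $>1.97475$ by direct computation of the unique minimum of $h_r$ — this is a finite verification. The hard part is making the first regime effective: one needs an honest, slope-uniform lower bound on $\min\mathrm{Ent}$ over an infinite family of thin cones, and the delicate point is that the infimum over \emph{all} $r\ne-4,\tfrac32,\tfrac12,1,2$ of these minima is genuinely bounded away from $2\log\tfrac{3+\sqrt5}{2}$ by the margin $1.97475$ — so the estimate cannot be too lossy. I expect this quantitative separation, together with pinning down exactly which slices are the ``small'' exceptional ones requiring hand computation, to be the main obstacle; the special-slope computations in parts (1) and (2) are essentially bookkeeping on top of the results already imported from \cite{KT,KT1} and Section~\ref{subsection_equivalence}.
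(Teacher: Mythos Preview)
There is a genuine gap at the very first step: you assert that ``$\min\mathrm{Ent}(N(r),\Omega)$ is the minimum over the slice of $\mathrm{Ent}(x,y,z)=(x+y-z)\log\lambda_{(x,y,z)}$'', i.e.\ that the normalized entropy on $N(r)$ is simply the restriction of the normalized entropy on $N$ to the plane $S_{\beta}(r)$. This is false. Dehn filling along $T_{\beta}$ caps off the boundary circles on $T_{\beta}$, so the Thurston norm drops: for $a=(x,y,z)\in S_{\beta}(p/q)$ one has $\|\overline a\|_{p/q}=\|a\|-|y/q|$ (Lemma~\ref{lem_NormChange2}), and the correct formula (Lemma~\ref{lem_Ent_r}) is
\[
\mathrm{Ent}_{p/q}(\overline a)=\Bigl(1-\tfrac{1}{p+q}\Bigr)\log\lambda(\sigma(a))\quad\text{on $S$-faces},\qquad
\mathrm{Ent}_{p/q}(\overline a)=\Bigl(1-\bigl|\tfrac{y}{q}\bigr|\Bigr)\log\lambda(\sigma(a))\quad\text{on $A$-faces}.
\]
Without this correction your numbers in (1) and (2) are off by exactly the missing factor (for $r=1$ your formula would give $4\log\delta(D_4)$, not $2\log\delta(D_4)$), and --- more seriously --- the constant $1.97475$ in (3) has no source: it is precisely $\tfrac34\cdot 2.633$, where $2.633<\log\lambda_{[1/2,1/2]}=2\log(2+\sqrt3)=\min\mathrm{Ent}(N)$. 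The paper's argument for the ``large'' regime is not a thin-cone or properness estimate at all; it is the one-line observation (Lemma~\ref{lem_large-bunbo}) that once the norm-correction factor is at least $3/4$ (i.e.\ $p+q\ge4$ for $S$-faces, $|q|\ge4$ for $A$-faces) the inequality is immediate.

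A second structural point you miss is the $S$-face versus $A$-face dichotomy (Lemma~\ref{lem_face-prong}), which is what condition $(*)$ actually encodes. For $S$-faces the slice (after $\sigma$) is $int(\Delta)\cap S_{\gamma}(r)$, which is symmetric about $y=x$; hence the minimum sits at the center and is explicit (Lemma~\ref{lem_MNE-S}), reducing (1) and the $r=-4,2$ cases of (2) to evaluating one polynomial each. For $A$-faces there is no such symmetry, and the paper instead proves a monotonicity lemma in $|1+r|$ (Lemma~\ref{lem_monotonicity}) to reduce $|q|=2,3$ to the single values $r=\tfrac{1}{-2},\tfrac{3}{-2}$ and $r=\tfrac{2}{-3}$, which are then handled by direct computation (Lemmas~\ref{lem_bunbo2}, \ref{lem_bunbo3}). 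Your ``finite verification for small slopes'' does not identify either mechanism, and your picture of the slice as ``pinched near a vertex or edge'' is not what drives the argument.
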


\noindent
We remark here that 
for any $r \in {\mathcal Hyp}$, 
there exists  $\Omega$  having the condition $(*)$ (Proposition~\ref{prop_goodFiberFace}).  
Thus,  
there are no slopes $r$ for which we cannot apply Theorem~\ref{thm_key}.  
Also, 
since the three claims in Theorem~\ref{thm_key} cover  
all slopes  $r \in {\mathcal Hyp}$,  
the conclusions we could draw from Theorem~\ref{thm_key}  
are expected to be fairly sharp. 

To see this, 
consider the set of pairs 
\begin{equation*} 
	\mathcal{D} = \{ (N(r), \Omega)  \, | \, r \in {\mathcal Hyp}, \, 
	\Omega \; \; \text{is a fibered face of  $N(r)$  with $(*)$} \}. 
\end{equation*} 
Theorem~\ref{thm_key} shows for instance that 
there exist 
a minimum and a second minimum  of  
$$\{ \min \mathrm{Ent}(N(r), \Omega) \, | \, (N(r), \Omega) \in \mathcal{D} \}$$ 
and 
they are $2 \log \delta(D_4)$ and $2 \log \delta(D_3)$ respectively. 
The minimum is attained only by $N(1)$ and the second minimum  is attained  by $N(r)$ for  $r \in \{ -4, \frac{3}{-2}, \frac{1}{-2}, 2\}$.  
Furthermore, 
if  $r \not\in \{ -4, \frac{3}{-2}, \frac{1}{-2}, 1, 2 \}$,  
then Theorem~\ref{thm_key}(3) says that $\min \mathrm{Ent}(N(r), \Omega) $ 
is greater than the second minimum  with a uniform gap.  

The condition  $(*)$  on one boundary component,  
$T_{\beta}$,  of  $N$  
is a weaker version of the condition  (\ref{equation_1plong})  
on all three boundary components.  
If  $\Omega$  enjoys  $(*)$,  
then the dilatation of $a \in S_{\beta}(r)$  for  $N$ equals the dilatation of  $\overline{a}$  for $N(r)$. 
Thus one can compute the dilatation of $\overline{a}$ by using  
the Teichm\"{u}ller polynomial of $N$. 
In Section~\ref{subsection_Ent-symmteries},  
we shall see that 
the entropy function    for  $N$  has symmetries. 
This property together with the strict concavity of $\tfrac{1}{\mathrm{ent}}$ works well 
in the proof of Theorem~\ref{thm_key}.

\begin{proof}[Outline of the proof of Theorem~\ref{thm_main}] 
It is known that $N(-4) \simeq N( \tfrac{3}{-2})$ and 
$N(1) \simeq$ the Whitehead link exterior, see \cite{MP}. 
Recall that  $a_g$ is a primitive fibered class of  $H_2(N, \partial N)$  
such that  $\phi_{a_g} \in \mathcal{M}$  and  $\widehat{\delta}_g$  is achieved by  
$\widehat{\phi}_{a_g} \in \widehat{\mathcal{M}} \cap \mathrm{Mod}(\varSigma_g)$. 
There exists such a fibered class $a_g$ for any  $g \geq 3$. 
(In fact, the existence of the fiber of the fibration of genus $g$ for any  $g \geq 3$  is 
guaranteed by Theorem~\ref{thm_three}.   
By \cite[Lemma~4.7]{KT1}, one sees that 
the monodromy of this fibration is in the set $\mathcal{M}$.)

Since we know from the computation that  $N(1)$  has no fiber of genus greater than $1$, 
$a_g$  does not have a boundary slope  $1$  for  $g \geq 2$.  
On the other hand, 
each of three manifolds $N(-4) \simeq N(\tfrac{3}{-2})$, $N(\tfrac{1}{-2})$ and $N(2)$  has 
a fiber of genus  $g$  for large  $g$.  

Now,  
if we fill two other cusps of  $N(r)$, 
the normalized entropy  of $\widehat{\phi}_{a_g}$  decreases 
from that of  $\phi_{a_g}$  
and we have to consider its defect.  
We will show that the set of normalized entropies of monodromies 
of the fibrations on the closed manifolds, 
obtained from  $N$  by Dehn filling all cusps along the slopes 
not in $\{-4, \frac{3}{-2}, \frac{1}{-2}, 2 \}$, 
have no accumulation values  $\leq 2 \log (\frac{3+\sqrt{5}}{2})$. 
Thus,  
one sees that  $a_g$  has to have a boundary slope in   $\{-4, \frac{3}{-2}, \frac{1}{-2}, 2 \}$  eventually.  
Moreover the set of normalized entropies of the monodromies of the fibrations 
on the closed manifolds obtained from  $N$  by Dehn filling all cusps along the slopes, 
one of which is in $\{-4, \frac{3}{-2}, \frac{1}{-2}, 2 \}$,  
have no accumulation values  $< 2 \log (\frac{3+\sqrt{5}}{2})$. 
This together with  
Theorem~\ref{thm_three}  implies  Theorem~\ref{thm_main}(1). 

 The proof of Theorem~\ref{thm_main}(1) together with a claim in \cite{KT1} leads to Theorem~\ref{thm_main}(2). 
 For more details of the proofs, see Sections~\ref{subsection_Proof_thm_main1}, \ref{subsection_Proof_thm_main2}. 
\end{proof}

Based on the study of the magic manifold above, 
we propose conjectures. 
(The first half of Conjecture~\ref{conj_closed}(1),(2) is also stated in \cite[Question~1.12]{Hironaka}.)    

\begin{conj}
\label{conj_closed}
\ 
\begin{enumerate}
\item[(1)] 
We have 
$\displaystyle \lim_{g \to \infty} g  \log \delta_g = \log( \tfrac{3+ \sqrt{5}}{2})$. 
For large $g$, 
$\delta_g$ is  achieved by the monodromy 
of some $\varSigma_g$-bundle over the circle  
obtained from either $N(\tfrac{3}{-2})$ or $N(\tfrac{1}{-2})$  by  
Dehn filling both cusps. 

\item[(2)] 
We have $\displaystyle \lim_{\substack{g \not\equiv 0 \pmod{6} \\ g \to \infty}}  g  \log \delta^+_g = \log( \tfrac{3+ \sqrt{5}}{2}) $. 
For large $g$ such that $g \not\equiv 0 \pmod{6}$, 
$\delta^+_g$ is achieved by the monodromy of some $\varSigma_g$-bundle over the circle 
obtained from $N(\tfrac{3}{-2})$ or $N(\tfrac{1}{-2})$  by Dehn filling both cusps.  
\end{enumerate}

\end{conj}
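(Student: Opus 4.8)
The upper half of the conjecture is already available: Theorem~\ref{thm_three} and (\ref{equation_asyEneq}) give $\limsup_{g\to\infty} g\log\delta_g \le \log(\tfrac{3+\sqrt{5}}{2})$, and the same bound along $g\not\equiv 0\pmod{6}$ for $\delta^+_g$ follows from Theorem~\ref{thm_smallest_dil_ori}(1) together with $\delta_g\le\delta^+_g\le\widehat{\delta}_g^+$. What remains is the matching lower bound $\liminf_{g\to\infty} g\log\delta_g \ge \log(\tfrac{3+\sqrt{5}}{2})$ together with the identification of the minimizers for large $g$; the plan is to reduce this to a finite list of fibered $3$-manifolds and then to run, for each of them, the analysis performed for the family $\{N(r)\}$ in Theorem~\ref{thm_key}. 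Concretely, fix $P>1$ with $\log P$ slightly larger than $2\log(\tfrac{3+\sqrt{5}}{2})$. By the upper bound, for all large $g$ the minimal pseudo-Anosov $\Phi$ on $\varSigma_g$ has normalized entropy $\mathrm{Ent}(\Phi)=(2g-2)\log\delta_g\le\log P$, so by Farb-Leininger-Margalit \cite{FLM} and Agol \cite{Agol2} the mapping torus ${\Bbb T}(\Phi|_{\varSigma^{\circ}})$ lies in a fixed finite list $M_1,\dots,M_r$ of cusped fibered hyperbolic $3$-manifolds, and $\Phi$ is recovered from a fibration on some $M_i$ by Dehn filling along the boundary slopes of its fiber, with the dilatation unchanged.

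For each $M_i$ one would then compute the Thurston norm ball, its fibered faces $\Omega$, the Teichm\"{u}ller polynomial of each (following \cite{McMullen}), and the minimal normalized entropy $\min\mathrm{Ent}(M_i,\Omega)$; using Fried's properness of $\mathrm{ent}$ on the open fibered face and the strict concavity of $\tfrac{1}{\mathrm{ent}}$ \cite{Fried,McMullen}, one describes the genera of the fibers $F$ with $[F]\in int(C_{\Omega})$ and locates the rays along which the normalized entropy accumulates to $\min\mathrm{Ent}(M_i,\Omega)$. The target is to show that $\inf_i\min_\Omega\mathrm{Ent}(M_i,\Omega)=2\log(\tfrac{3+\sqrt{5}}{2})$, attained -- up to the Thurston-norm and entropy equivalences of Section~\ref{subsection_equivalence} and the homeomorphism $N(-4)\simeq N(\tfrac{3}{-2})$ \cite{MP} -- only by fibered rays of $N(\tfrac{3}{-2})$ and $N(\tfrac{1}{-2})$, while every other $M_i$ contributes normalized entropy bounded below by $2\log(\tfrac{3+\sqrt{5}}{2})$ plus a uniform gap; this is exactly what Theorem~\ref{thm_key}(2),(3) achieves for the sub-family $\{N(r)\}$. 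Granting this, Theorem~\ref{thm_three} supplies the matching asymptotics, and combined with the reduction of the previous paragraph, Theorem~\ref{thm_main} yields Conjecture~\ref{conj_closed}(1) while Theorem~\ref{thm_smallest_dil_ori} yields (2), once orientability of the invariant foliation is used to exclude the $N(1)$-type contributions responsible for the value $2\log\delta(D_4)$.

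The main obstacle -- the reason this is stated as a conjecture -- lies in the very first step: the finiteness theorem of \cite{FLM,Agol2} is non-effective, so at present one cannot produce the list $M_1,\dots,M_r$, nor bound $r$ or the topological complexity of its members, and hence cannot reduce the lower bound to finitely many explicit computations of the kind in Theorem~\ref{thm_key}. A more tractable intermediate target is to prove that for large $g$ the minimal-dilatation pseudo-Anosov on $\varSigma_g$ is generated by the magic manifold, i.e.\ that the set $\mathcal{U}$ consists, up to the equivalences of Section~\ref{subsection_equivalence}, of Dehn fillings of $N$ -- equivalently, that $\widehat{\delta}_g=\delta_g$ for large $g$ and $\widehat{\delta}_g^+=\delta^+_g$ for large $g\not\equiv 0\pmod{6}$; by Lemma~\ref{lem_LargeSet} this would promote Theorems~\ref{thm_main} and \ref{thm_smallest_dil_ori} to Conjecture~\ref{conj_closed}. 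I would attack this comparison by combining normalized-entropy versus hyperbolic-volume inequalities for pseudo-Anosov mapping tori with lower volume bounds for cusped hyperbolic $3$-manifolds, so as to force any host of a near-minimal pseudo-Anosov to have small volume and few cusps, and then to eliminate the finitely many remaining small-volume candidates by direct Teichm\"{u}ller-polynomial computation; making this last elimination rigorous and exhaustive is where the real difficulty concentrates.
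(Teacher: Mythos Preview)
The statement you were given is a \emph{conjecture}, not a theorem: the paper does not prove it and offers no proof sketch beyond the motivation already contained in Theorems~\ref{thm_main}, \ref{thm_smallest_dil_ori}, and~\ref{thm_key}. Your proposal is therefore correctly framed not as a proof but as a research program, and you identify the genuine obstacle accurately: the finiteness of Farb--Leininger--Margalit and Agol is non-effective, so one cannot at present produce the finite list $M_1,\dots,M_r$ and run the $\min\mathrm{Ent}$ analysis on each member. This is exactly why the paper states the assertion as a conjecture rather than a theorem.

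A few comments on the details. Your reduction strategy---produce the FLM/Agol list, compute $\min\mathrm{Ent}(M_i,\Omega)$ for every fibered face, and show that the infimum is $2\log(\tfrac{3+\sqrt{5}}{2})$ with a uniform gap elsewhere---is the natural extrapolation of the paper's method, and the paper's own discussion in Section~\ref{subsection_Idea} is in the same spirit. Your intermediate target, that $\widehat{\delta}_g=\delta_g$ for large $g$, is precisely what would collapse Theorem~\ref{thm_main} to Conjecture~\ref{conj_closed}(1); note however that this is not quite the same as ``$\mathcal{U}$ consists of Dehn fillings of $N$'', since $\widehat{\mathcal{M}}$ is defined via the additional no-$1$-prong condition~(\ref{equation_1plong}), and Lemma~\ref{lem_LargeSet} shows only that this condition fails for finitely many classes on each $S_\beta(r)$, not that every minimizer generated by $N$ automatically satisfies it. Finally, your suggested attack via entropy--volume comparisons (in the spirit of \cite{KKT}) combined with small-volume census enumeration is plausible but would require sharper quantitative bounds than are currently known; the paper does not pursue this and simply records the conjecture.
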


\begin{conj}
\label{conj_torus} 
We have 
$\displaystyle \lim_{n \to \infty} n \log \delta_{1,n} =  2 \log \delta(D_4)$. 
For large $n$, 
$\delta_{1,n}$ is   achieved by 
the monodromy of a 
fibration on $N(1)$. 
\end{conj}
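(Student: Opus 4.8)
The upper bound $\limsup_{n\to\infty} n\log\delta_{1,n}\le 2\log\delta(D_4)$ is Theorem~\ref{thm_tsai}, realized by the explicit monodromies $\Phi_n\colon\varSigma_{1,n}\to\varSigma_{1,n}$ of fibrations on $N(1)$, so the content that remains is the matching lower bound together with the assertion that, for all large $n$, the minimizer realizing $\delta_{1,n}$ is itself the monodromy of a fibration on $N(1)$. The plan is to feed Tsai's estimate $\log\delta_{1,n}\asymp\tfrac1n$ into the finiteness theorem of Farb--Leininger--Margalit \cite{FLM} and Agol \cite{Agol2}, and then run, for the resulting finite list, the normalized-entropy analysis already developed here for the magic manifold. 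Concretely: since $\mathrm{Ent}(\Phi_n)=|\chi(\varSigma_{1,n})|\log\delta_{1,n}=n\log\delta_{1,n}$ is uniformly bounded by some $\log P$, each $\Phi_n$ lies in $\Psi_P$, so by \cite{FLM,Agol2} the mapping tori $\mathbb{T}(\Phi_n|_{\varSigma^\circ})$ belong to a fixed finite list $M_1,\dots,M_r$ of fibered hyperbolic $3$-manifolds, and each $\Phi_n$ is the monodromy of a fibration on a Dehn filling of some $M_i$.

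For each $M_i$ and each fibered face $\Omega$ of $M_i$ I would identify the primitive fibered classes $b\in int(C_\Omega)$ whose fibers are genus-$1$ surfaces with arbitrarily many boundary components; write $b_n$ for the class realizing $\Phi_n|_{\varSigma^\circ}$, so that $\|b_n\|=|\chi(\varSigma^\circ)|=n+s_n$ with $s_n$ the number of interior singular prongs of $\Phi_n$, and $\mathrm{ent}(b_n)=\log\delta_{1,n}\to0$ while $\mathrm{Ent}(b_n)$ stays bounded. Fried's properness of $\mathrm{ent}$ on $int(\Omega)$ keeps the rays of the $b_n$ in a compact subset of $int(\Omega)$, whence $\liminf_n(n+s_n)\log\delta_{1,n}\ge\min\mathrm{Ent}(M_i,\Omega)$. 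The theorem is thereby reduced to three assertions: \emph{(a)} every pair $(M_i,\Omega)$ admitting such fibers satisfies $\min\mathrm{Ent}(M_i,\Omega)\ge 2\log\delta(D_4)$; \emph{(b)} this value is attained, and attained only, by the face of $N(1)$ carrying the fibers $\varSigma_{1,n}$ of Theorem~\ref{thm_tsai}, with a uniform gap to the next value occurring in the list; and \emph{(c)} the minimizers have $s_n=o(n)$. Given \emph{(a)} and \emph{(c)} one gets $\liminf n\log\delta_{1,n}\ge 2\log\delta(D_4)$, which with Theorem~\ref{thm_tsai} yields the limit; given \emph{(b)}, for $n$ large the convergence $n\log\delta_{1,n}\to 2\log\delta(D_4)$ forces $\Phi_n$ to be hosted by $N(1)$, which is the remaining half of the statement.

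The main obstacle is assertion \emph{(a)}: it requires controlling the Farb--Leininger--Margalit--Agol list, and determining this list explicitly is a hard open problem --- the same phenomenon behind the expectation that $\mathcal{V}$ consists of the magic manifold alone. The realistic route is to establish the sub-conjecture that every $M_i$ relevant here is obtained from $N$ by Dehn filling at most one cusp, i.e.\ is $N$ or some $N(r)$, $r\in{\mathcal Hyp}$. Then Theorem~\ref{thm_key} closes the argument: part~(1) gives $\min\mathrm{Ent}(N(1),\Omega)=2\log\delta(D_4)$ for every face $\Omega$ with property $(*)$ --- and the fibers $\varSigma_{1,n}$ of Theorem~\ref{thm_tsai} lie on such a face by Proposition~\ref{prop_goodFiberFace} and the computations behind Theorem~\ref{thm_tsai} --- while parts~(2) and~(3) show that every other $N(r)$, and $N$ itself, has $\min\mathrm{Ent}\ge 2\log\delta(D_3)=2\log(\tfrac{3+\sqrt5}{2})>2\log\delta(D_4)$, supplying the uniform gap required in \emph{(b)}; assertion \emph{(c)} is then read off from the explicit fiber topology of $N(1)$ obtained via the Teichm\"{u}ller polynomial (cf.\ Remark~\ref{rem_Whitehead}), which shows that the interior singular structure of these monodromies is bounded independently of $n$.

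An alternative route to the lower bound bypasses the list entirely: improve Tsai's estimate on $\delta_{1,n}$ directly, by showing that any pseudo-Anosov on $\varSigma_{1,n}$ can be pushed --- through dilatation-nonincreasing operations such as collapsing invariant subsurfaces and filling punctures that are regular or become singular --- onto a punctured disk $D_m$ with $m$ comparable to $n$, so that $n\log\delta_{1,n}$ is bounded below in terms of $\log\delta(D_m)$ and the asymptotics $\log\delta(D_m)\asymp\tfrac1m$; extracting the sharp constant $2\log\delta(D_4)$ this way is precisely the hard part, and it would not by itself deliver the ``achieved by $N(1)$'' conclusion. I expect the first route to be the one that works, so that the residual difficulty is concentrated in pinning down the generating list for the surfaces $\varSigma_{1,n}$ under the bounded-normalized-entropy constraint.
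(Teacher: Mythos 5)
The statement you are addressing is stated in the paper as a \emph{conjecture}, and the paper offers no proof of it: what is actually proved is only the upper bound $\limsup_{n\to\infty} n\log\delta_{1,n}\le 2\log\delta(D_4)$ (Theorem~\ref{thm_tsai}, via Proposition~\ref{prop_MiniDil_W} and Lemma~\ref{lem_direction_W}), realized by monodromies of fibrations on $N(1)$. Your text is accordingly a strategy rather than a proof, and you are candid about where it breaks. The framework you propose --- bounded normalized entropy puts the minimizers into $\Psi_P$, Farb--Leininger--Margalit/Agol finiteness produces a finite generating list, and one then runs a $\min\mathrm{Ent}$ analysis face by face --- is exactly the philosophy the authors follow for their \emph{partial} results, but note that even in the closed-surface case they only obtain an unconditional statement about $\widehat{\delta}_g$, i.e.\ after restricting to the set $\widehat{\mathcal{M}}$ of classes generated by the magic manifold; the unrestricted statement remains Conjecture~\ref{conj_closed}. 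Your assertion \emph{(a)} is the genuine gap: Theorem~\ref{thm_key} controls $\min\mathrm{Ent}(N(r),\Omega)$ only for fillings of $N$, and nothing in the paper (or in the literature it cites) rules out other manifolds $M_i$ in the generating list for the surfaces $\varSigma_{1,n}$, possibly carrying fibered faces with $\min\mathrm{Ent}<2\log\delta(D_4)$. Reducing to ``every relevant $M_i$ is $N$ or some $N(r)$'' is not a sub-lemma one can currently prove; it is essentially the open problem itself.

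Two smaller points. First, your claim that the uniqueness of the minimum ($N(1)$ versus the other $N(r)$'s, with a uniform gap to $2\log\delta(D_3)$) would force the minimizers onto $N(1)$ is correct \emph{within} the class generated by $N$, and Theorem~\ref{thm_key}(1)--(3) together with Lemma~\ref{lem_num-boundary} (only $N(1)$ has fibers with arbitrarily many boundary components, all of genus $1$ by Lemma~\ref{lem_PuncturedTorus}) does make $N(1)$ the unique candidate inside that class; this is the evidence behind the conjecture, not a proof of it. Second, be careful with your appeal to the condition $(*)$ for $N(1)$: by Remark~\ref{rem_Whitehead} the stable foliations of the $N(1)$ monodromies have $1$-pronged boundary components on the two unfilled cusps, so these classes lie outside $\mathcal{M}$; this is harmless for $\delta_{1,n}$ (one does not cap off those punctures), but it means the paper's $\widehat{\mathcal{M}}$-machinery does not directly give even a restricted version of the lower bound here, and your assertion \emph{(c)} about interior singularities should be phrased in terms of the boundary prong data rather than the no-$1$-prong condition.
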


\subsection{Organization of the paper}

In Section~\ref{section_Magic}, first we describe properties of the entropy function for $N$. 
Next we construct the Thurston norm ball of $N(r)$. 
Finally we discuss the Thurston norm equivalence and entropy equivalence on the manifolds $N(r)$. 
In Section~\ref{section_Proof} we prove main results. 
In Section~\ref{section_Cusped-manifolds} 
we exhibit the computation of   $\min \mathrm{Ent}$ for some manifolds $N(r)$ which appeared in 
Gabai-Meyerhoff-Milley's work (Theorem~\ref{thm_GMM}, Table~\ref{table_2cusp_census_MNE}). 
We also  exhibit the normalized entropy of the monodromy of a fibration on each $1$-cusped hyperbolic fibered $3$-manifold 
with volume  at most $2.848$.  
\medskip

\noindent
{\bf Acknowledgments.} 
We would like to thank the referee for the careful reading of the paper and for valuable comments and suggestions. 
Due to the referee's efforts, the exposition of the paper has improved greatly.

\section{Magic manifold} 
\label{section_Magic}

\subsection{Fibered face} 
\label{subsection_Fibered}

Recall that  $\Delta$ is the fibered face of $N$ as in Section~\ref{subsection_TTM}. 
The open face $int(\Delta)$ is written by 
\begin{equation}
\label{equation_int-delta}
 int(\Delta) = \{ (x,y,z)\ |\ x+y-z =1, \ x >0,\  y>0,\   x >z,\   y>z\}.
 \end{equation}
 The Thurston norm of  $(x,y,z) \in int(C_{\Delta})$ is given by $ x+y-z$. 
We  recall some formulas in Lemmas~\ref{lem_topological-type} and \ref{lem_sing-data}. 
Lemma~\ref{lem_sing-data} tells us 
the singularity data of the stable foliation $\mathcal{F}_{a}$ for a primitive fibered class $ a \in int(C_{\Delta})$. 
First of all, we  explain that 
one can  compute the dilatation $\lambda(a)$ and  the singularity data of the stable foliation $\mathcal{F}_{a}$ 
for {\it any}  primitive fibered class  $a \in H_2(N, \partial N)$ 
by using the symmetries of $H_2(N, \partial N)$.

We consider a homeomorphism (in fact, a rotation map) 
$$h: (S^3, \mathcal{C}_3) \rightarrow (S^3, \mathcal{C}_3)$$ 
which sends $K_{\alpha}$, $K_{\beta}$, $K_{\gamma}$ to $K_{\beta}$, $K_{\gamma}$, $K_{\alpha}$ respectively, 
see Figure~\ref{fig_poly}(right).  
Then $h$ induces the isomorphism 
$h_*: H_2(N, \partial N) \rightarrow H_2(N, \partial N) $ 
of order $3$ which sends 
$\alpha$, $\beta$, $\gamma$ to $\beta$, $\gamma$, $\alpha$ respectively.

Let us pick the two fibered faces 
$\Delta_1$ with the vertices 
$(0,0,1)$, $(1,1,1)$, $(1,0,0)$, $(0,-1,0)$,  
and 
$\Delta_2$ with the vertices 
$(0,1,0)$, $(1,1,1)$, $(0,0,1)$, $(-1,0,0)$, 
see Figure~\ref{fig_poly}(left).  
We denote the opposite fibered faces of $\Delta$, $\Delta_1$, $\Delta_2$ by 
$\Delta'$, $\Delta'_1$, $\Delta'_2$ respectively. 
Consider the set 
$$Int\ C= \bigcup_{\widehat{\Delta}} int (C_{\widehat{\Delta}}),$$
where $\widehat{\Delta}$ is taken over all fibered face of $N$. 
We define the map 
$\sigma: Int\ C \rightarrow int(C_{\Delta})$ as follows. 
For $a = (x,y,z) \in Int\ C$, 
\begin{eqnarray*}
\sigma(a)&=& a  \hspace{2mm} \mbox{if}\ a \in int(C_{\Delta}),
\\
\sigma(a)&=& h_*(a)= (z,x,y) \hspace{2mm} \mbox{if}\ a \in int(C_{\Delta_1}), 
\\
\sigma(a)&=& (h^2)_*(a)= (y,z,x)\hspace{2mm} \mbox{if}\ a \in int(C_{\Delta_2}),\ \mbox{and}
\\
\sigma(a)&=& \sigma(-a)\hspace{2mm} \mbox{if}\ a \in int(C_{\Delta'}) \cup int(C_{\Delta_1'}) \cup int(C_{\Delta_2'}),
\end{eqnarray*}
where $h^2= h \circ h$, and 
$(h^2)_*: H_2(N, \partial N) \rightarrow H_2(N, \partial N) $ 
is the isomorphism induced from $h^2$. 
Clearly, 
$a \in H_2(N, \partial N)$ is a fibered class if and only if $-a \in H_2(N, \partial N)$ is a fibered class. 
In this case the inverse $(\Phi_a)^{-1}$ of the monodromy $\Phi_a$ of the fibration on $N$ associated to $a$ 
is isotopic to the monodromy $\Phi_{-a}$ of the fibration on $N$  associated to $-a$. 
In particular $\lambda(a)= \lambda(-a)$. 
Moreover the singularity datum of $\mathcal{F}_a$ and $\mathcal{F}_{-a}$ are the same.

Let us assume that $a$ is a primitive fibered class such that 
$a  \in int(C_{\Delta_1})$ (resp. $a \in int(C_{\Delta_2})$). 
Then two  fibered classes $a$ and $\sigma(a)\in int(C_{\Delta})$ have the fiberes 
$F_a$ and $F_{\sigma(a)}$ with the same topology, and 
the monodromies $\Phi_a$ and $\Phi_{\sigma(a)}$ are conjugate. 
This is because 
the isomorphism $h_*$ (resp. $(h^2)_*$)  
is coming from the homeomorphism on the pair $(S^3, \mathcal{C}_3)$. 
In particular $\lambda(a)= \lambda(\sigma(a))$, and 
$\lambda(a)$ is the largest real root of $f_{(z,x,y)}(t)$ (resp. $f_{(y,z,x)}(t)$), 
see (\ref{equation_TpolyMagic}). 
Notice that 
the conjugacy homeomorphism $g: F_a \rightarrow F_{\sigma(a)}$ 
between $\Phi_a: F_a \rightarrow F_a$ and $\Phi_{\sigma(a)}: F_{\sigma(a)} \rightarrow F_{\sigma(a)}$ 
permutes the boundary components of the fiber. 
More precisely, 
$g$ maps the boundary components of $F_a$ which lie on $T_{\alpha}$, $T_{\beta}$, $T_{\gamma}$ 
to the boundary components of $F_{\sigma(a)}$ which lie on  
$T_{\beta}, \ T_{\gamma}, \ T_{\alpha}$   
(resp. $T_{\gamma}, \ T_{\alpha}, \ T_{\beta}$). 
Thus, to apply Lemma~\ref{lem_sing-data} below  for such a primitive fibered class $a$ in $int(C_{\Delta_1})$ (resp. $int(C_{\Delta_2})$), 
first apply the lemma for $\sigma(a) \in int(C_{\Delta})$. 
Then translate the claim into the one for the fibered class $a$ 
by permuting  the boundary components of the fiber.

\begin{lem}
\label{lem_topological-type}
Let  $a=  (x,y,z) $ be a primitive fibered class in $H_2(N, \partial N)$. 
Then $\sharp(\partial F_{a})$ equals 
$$\gcd(x,y+z)+ \gcd(y,z+x)+ \gcd(z,x+y),$$ 
where $\gcd(0,w)$ is defined by $|w|$.  
More precisely 
$$\sharp( \partial_{\alpha} F_{a}) = \gcd(x,y+z), \ \sharp (\partial_{\beta} F_{a} )=  \gcd(y,z+x),\ \sharp( \partial_{\gamma} F_{a}) = \gcd(z,x+y).$$
\end{lem}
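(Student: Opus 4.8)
The plan is to count the boundary components of a minimal representative $F_a$ of a primitive fibered class $a = (x,y,z)$ by analyzing how the surface meets each of the three boundary tori $T_\alpha$, $T_\beta$, $T_\gamma$ of $N$. Since the three cusps are permuted cyclically by the rotation $h$ inducing $h_*$ on $H_2(N,\partial N)$, it suffices to establish the formula $\sharp(\partial_\alpha F_a) = \gcd(x, y+z)$ for the component lying on $T_\alpha$; the other two formulas then follow by applying $h_*$ and relabeling (as explained in the discussion preceding the lemma), and the total count is obtained by summing. One also notes that replacing $a$ by $-a$ changes nothing, so we may argue up to sign.

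First I would recall from Section~\ref{subsection_TTM} that the slope of $\partial_\alpha F_a$ on $T_\alpha$ is $b_\alpha(a) = \tfrac{y+z}{-x}$, and that all components of $\partial_\alpha F_a$ are parallel copies of a single simple closed curve of that slope. The number of such parallel copies is then governed by the intersection of the homology class carried by $\partial_\alpha F_a$ with the torus $T_\alpha$: concretely, $\partial_\alpha F_a$ represents a primitive multiple $k_\alpha \cdot \mu$ in $H_1(T_\alpha; \mathbb{Z})$ where $\mu$ is the primitive curve of slope $b_\alpha(a)$, and $k_\alpha = \sharp(\partial_\alpha F_a)$. The key computation is to express this class: the curve $\partial_\alpha F_a$ is Poincaré dual on $T_\alpha$ to the restriction of the cohomology class $a^* \in H^1(N;\mathbb{Z})$ dual to $a$, so its divisibility is controlled by the pairing of $a$ with the generators of $H_2(N,\partial N)$ coming from the longitude and meridian of $K_\alpha$. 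Writing $a = x\alpha + y\beta + z\gamma$ and using the linking numbers among the components of $\mathcal{C}_3$ (which are all $\pm 1$ in the standard chain-link configuration), one computes that the longitudinal and meridional coordinates of $[\partial_\alpha F_a]$ are, up to sign, $x$ and $y+z$ respectively; hence the number of parallel components is $\gcd(x, y+z)$, with the convention $\gcd(0,w) = |w|$ covering the degenerate case $x = 0$ where $F_a$ meets $T_\alpha$ only in meridian-type curves.

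The main obstacle will be the careful bookkeeping of orientations and the precise identification of the homology coordinates of $\partial_\alpha F_a$ in terms of $x$, $y$, $z$ — that is, pinning down exactly why the "longitudinal" coordinate is $x$ and the "meridional" one is $y+z$ rather than some other linear combination. This amounts to understanding the inclusion map $H_1(T_\alpha) \to H_1(N)$ together with the linking form of the chain link $\mathcal{C}_3$, and one must check that the chosen meridian-longitude framing on each $T_\alpha$ is consistent with the slope conventions $b_\alpha$, $b_\beta$, $b_\gamma$ already fixed in Section~\ref{subsection_TTM}. Once the coordinates are correctly identified, the gcd formula is immediate, and the symmetry argument via $h_*$ finishes all three cases at once; summing gives $\sharp(\partial F_a) = \gcd(x,y+z) + \gcd(y,z+x) + \gcd(z,x+y)$.
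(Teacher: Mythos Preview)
Your approach is correct and more self-contained than the paper's. The paper's proof is essentially a citation: it invokes \cite[Lemma~3.1]{KT} for the case $a \in int(C_\Delta)$ and then appeals to the symmetries of $H_2(N,\partial N)$ (via $h_*$ and $a \mapsto -a$) to cover the remaining fibered faces. Your argument instead computes the homology class $[\partial_\alpha F_a] \in H_1(T_\alpha;\mathbb{Z})$ directly from the boundary map $\partial: H_2(N,\partial N) \to H_1(\partial N)$, identifies its meridian--longitude coordinates as $(-(y+z),\,x)$, and reads off the number of parallel components as the divisibility $\gcd(x,y+z)$. This works uniformly for all primitive fibered classes without first restricting to one face, so the symmetry reduction you mention is in fact unnecessary in your own argument (though harmless). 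Note also that the very computation you flag as the ``main obstacle'' --- that $\partial_\alpha$ sends $\alpha$ to a longitude and each of $\beta,\gamma$ to minus a meridian --- is carried out in the paper (for $T_\beta$) just before Proposition~\ref{prop_S_r}, so you can simply quote that. The one point worth making explicit is why the number of components of $\partial_\alpha F_a$ equals the divisibility of its homology class: this holds because $F_a$, being a fiber, is norm-minimizing and hence its boundary curves on each torus are coherently oriented parallel copies of a single slope (no cancelling pairs).
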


\begin{proof}
The proof in the case $a \in int(C_{\Delta})$ can be found in \cite[Lemma~3.1]{KT}. 
Because of the symmetries of $H_2(N, \partial N)$, 
the formula for primitive fibered classes over $\Delta$ can be extended to any primitive fibered classes in $ H_2(N, \partial N)$. 
\end{proof}

\begin{lem}[Lemma~3.1 in \cite{KT1}]
\label{lem_sing-data}
Let  $a= (x,y,z) $ be a primitive fibered class in $ int(C_{\Delta})$. 
The stable foliation  $\mathcal{F}_{a}$ of the monodromy $\Phi_a$ has the property such that 
each component  of $\partial_{\alpha} F_{a}$, $\partial_{\beta} F_{a}$ and  $\partial_{\gamma} F_{a}$ 
has $\tfrac{x}{\gcd(x,y+z)}$ prongs, $\tfrac{y}{\gcd(y,x+z)}$ prongs and $\tfrac{x+y-2z}{\gcd(z,x+y)}$ prongs respectively. 
 Moreover $\mathcal{F}_{a}$  does not have singularities in the interior of $F_{a} $. 
\end{lem}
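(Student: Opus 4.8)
The plan is to derive the singularity data of $\mathcal{F}_a$ from the Teichm\"uller polynomial $P_\Delta$ of $N$ together with the Euler--Poincar\'e--Hopf formula for foliations on a surface with boundary. First I would recall that a primitive fibered class $a=(x,y,z)\in int(C_\Delta)$ has $|\chi(F_a)| = \|a\| = x+y-z$ (by \eqref{equation_int-delta}) and, by Lemma~\ref{lem_topological-type}, its boundary curves split into $\gcd(x,y+z)$ curves on $T_\alpha$, $\gcd(y,z+x)$ curves on $T_\beta$, and $\gcd(z,x+y)$ curves on $T_\gamma$. Since the monodromy $\Phi_a$ permutes the components of $\partial_\alpha F_a$ cyclically (and likewise on the other two tori), all boundary curves on a fixed torus carry the same number of prongs; call these $p_\alpha,p_\beta,p_\gamma$. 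The interior singularities and their prong counts are then constrained by the Euler--Poincar\'e formula
\[
\sum_{s\in\mathrm{int}} (2-P_s) \;+\; \sum_{c\in\partial}(1-P_c) \;=\; 2\chi(F_a),
\]
where $P_s$ is the number of prongs at an interior singularity $s$ and $P_c$ the number of boundary prongs of a boundary curve $c$ (an $n$-pronged boundary curve contributes $1-n$).

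The key input is that for the magic manifold the invariant foliation has \emph{no} interior singularities for classes over $\Delta$, which I would extract from the structure of the Teichm\"uller polynomial: $P_\Delta$ is a two-variable polynomial whose specialization gives $f_{(x,y,z)}(t)$ in \eqref{equation_TpolyMagic}, and McMullen's theory relates the Newton polygon / number of terms of $P_\Delta$ to the orbit structure of singular leaves of the suspension flow. Concretely, one identifies the singular orbits of the flow on $N$ with the cusps $T_\alpha,T_\beta,T_\gamma$ themselves (there is one closed singular orbit running along each link component, and none in the interior), so every singularity of $\mathcal{F}_a$ lies on $\partial F_a$. Granting this, the number of prongs of a boundary curve $c$ on $T_\mu$ equals $k_\mu/\gcd(\,\cdot\,)$, where $k_\mu$ is the total intersection number of $F_a$ with the singular orbit along $K_\mu$ and the denominator is the number of boundary components on $T_\mu$; these intersection numbers $k_\mu$ are read off from the homology class: $k_\alpha = x$ (up to the relevant sign convention coming from the orientation in Figure~\ref{fig_poly}), $k_\beta = y$, and $k_\gamma = x+y-2z$. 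The last value is the one that requires care: it is $(x+y-z) - z = \|a\| - z$, i.e.\ the $\gamma$-singular orbit meets $F_a$ with multiplicity equal to the Thurston norm minus the $z$-coordinate; I would verify this by computing the linking number of $K_\gamma$ with a Seifert-type surface representing $(x,y,z)$, using Thurston's description of $F_\gamma$ as the twice-punctured disk bounded by $K_\gamma$ and the intersection pattern $F_\gamma \cdot F_\alpha$, $F_\gamma\cdot F_\beta$.

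Once $p_\alpha = x/\gcd(x,y+z)$, $p_\beta = y/\gcd(y,z+x)$, $p_\gamma = (x+y-2z)/\gcd(z,x+y)$ are in hand, I would close the argument by a consistency check with Euler--Poincar\'e: plugging the three prong counts and the boundary-component counts into $\sum_c (1-P_c) = 2\chi(F_a)$ must give $-(x + y + (x+y-2z)) = -2(x+y-z) = 2\chi(F_a)$, which holds identically — confirming there is no room for interior singularities and that the prong counts are forced. The main obstacle I anticipate is pinning down the precise intersection numbers $k_\mu$ (especially the asymmetric-looking $x+y-2z$ on $T_\gamma$) with the correct signs: this is where the geometry of the chain link $\mathcal{C}_3$, the chosen orientations in Figure~\ref{fig_poly}(right), and the identification of $\Delta$'s vertices $(1,0,0),(1,1,1),(0,1,0),(0,0,-1)$ all have to be reconciled, and a sign error there would propagate into every prong count. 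Everything else — the cyclic permutation of boundary components, the absence of interior singularities once the flow's singular locus is identified with the link, and the final Euler-characteristic bookkeeping — is routine given the results already recorded in Section~\ref{subsection_TTM} and in \cite{KT}.
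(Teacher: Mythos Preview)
The paper does not prove this lemma; it is quoted verbatim from \cite{KT1}. There the argument proceeds by first exhibiting an explicit invariant train track for the monodromy of one reference fiber over $\Delta$ (concretely, a pseudo-Anosov on a sphere with four boundary circles corresponding to a class like $(1,1,0)$), reading off from that train track that all singularities sit on the boundary and what the prong pattern is, and then invoking Fried/McMullen to transport this data across $int(C_\Delta)$ via the suspension lamination. Your outline has the right shape --- no interior singularities, compute total prongs per cusp, divide by the number of boundary components, check Euler--Poincar\'e --- but the two load-bearing claims are not justified as written. The assertion that the singular locus of the suspension flow is exactly the three link components cannot be extracted from the form of $P_\Delta$ or $f_{(x,y,z)}$ alone; the Teichm\"uller polynomial records dilatations, not the location of singularities, and one really does need the explicit branched-surface/train-track model (or an equivalent geometric argument) to see this. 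Likewise the specific totals $k_\alpha=x$, $k_\beta=y$, $k_\gamma=x+y-2z$ are not determined by your Euler--Poincar\'e check: that identity only says the three totals must sum to $2(x+y-z)$, which infinitely many other triples do as well, so the check is a consistency test rather than a derivation. Your proposed route via linking numbers with the singular orbits can be made to work, but it again requires knowing the degeneracy slope and prong multiplicity of the suspension lamination on each $T_\mu$, which comes from the same explicit construction.

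A minor correction: your stated Euler--Poincar\'e formula $\sum_{s}(2-P_s)+\sum_c(1-P_c)=2\chi(F_a)$ is off; with the standard convention (capping a $P_c$-pronged boundary yields a $P_c$-pronged interior singularity) the boundary contribution is $-P_c$, not $1-P_c$, so the correct identity reads $\sum_s(2-P_s)-\sum_c P_c = 2\chi(F_a)$. Your final numerical check $-(x+y+(x+y-2z))=2\chi(F_a)$ is in fact using this corrected version --- you silently dropped the $\sum_c 1=b$ term --- so the arithmetic is right even though the displayed formula is not.
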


For a rational class $a=(x,y,z) \in H_2(N, \partial N; {\Bbb R})$, 
let us define $\tfrac{p_{\alpha}(a)}{q_{\alpha}(a)}$, $\tfrac{p_{\beta}(a)}{q_{\beta}(a)}$, $\tfrac{p_{\gamma}(a)}{q_{\gamma}(a)}$ 
as follows. 

$$\mbox{slope}(a)= (b_{\alpha}(a), b_{\beta}(a), b_{\gamma}(a)) = 
(\tfrac{p_{\alpha}(a)}{q_{\alpha}(a)}, \tfrac{p_{\beta}(a)}{q_{\beta}(a)}, \tfrac{p_{\gamma}(a)}{q_{\gamma}(a)} ), $$ 
where  
$\tfrac{p_{\alpha}(a)}{q_{\alpha}(a)}$, $\tfrac{p_{\beta}(a)}{q_{\beta}(a)}$, $ \tfrac{p_{\gamma}(a)}{q_{\gamma}(a)}$ are irreducible forms 
so that $p_{\alpha}(a)$, $p_{\beta}(a)$, $p_{\gamma}(a) \in {\Bbb N}$. 

\begin{rem}
\label{rem_slope-gam}
Suppose that a rational class $a= (x,y,z)$ is an element of $int(C_{\Delta})$. 
Then $x>0$, $y>0$, $x>z$ and $y >z$ from (\ref{equation_int-delta}). 
Thus if $z \ne 0$ then 
$b_{\gamma}(a) \in (-\infty, -2)$ or $b_{\gamma}(a) \in (0, \infty)$. 
In particular $p_{\gamma}(a)+ 2 q_{\gamma}(a)>0$ and $p_{\gamma}(a)+  q_{\gamma}(a)>0$. 
\end{rem}

\subsection{Entropy function with symmetries} 
\label{subsection_Ent-symmteries}

In this subsection we will see that the entropy function for $N$ possesses symmetries. 
Some claims given here  play an important role in the proof of Theorem~\ref{thm_key}.

Before we state Lemma~\ref{lem_conjugateXY}, we note that 
when $(x,y,z) $ is a primitive fibered class in $ int(C_{\Delta})$, then 
$(y,x,z) $ is also a primitive fibered class in $ int(C_{\Delta})$. 
The topological types of the fibers  $F_{(x,y,z)}$ and $F_{(y,x,z)} $ are the same 
by Lemma~\ref{lem_topological-type}.

\begin{lem}
\label{lem_conjugateXY}
Let $(x,y,z) $ be a primitive fibered class in $ int(C_{\Delta})$. 
Then the inverse $(\Phi_{(x,y,z)})^{-1}$ of the monodromy 
$\Phi_{(x,y,z)}: F_{(x,y,z)} \rightarrow F_{(x,y,z)}$ of the fibration on $N$ associated to $(x,y,z)$ 
 is conjugate to the monodromy 
$\Phi_{(y,x,z)}: F_{(x,y,z)} \rightarrow F_{(y,x,z)}$ of the fibration on $N$ associated to $(y,x,z) \in int(C_{\Delta})$. 
In particular $\lambda_{(x,y,z)}= \lambda_{(y,x,z)}$. 
\end{lem}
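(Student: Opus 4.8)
The statement asserts that for a primitive fibered class $(x,y,z) \in int(C_{\Delta})$, the inverse of the monodromy $\Phi_{(x,y,z)}$ is conjugate to $\Phi_{(y,x,z)}$, and hence $\lambda_{(x,y,z)} = \lambda_{(y,x,z)}$. The natural approach is to realize the swap $(x,y,z) \mapsto (y,x,z)$ by a concrete symmetry of the pair $(S^3, \mathcal{C}_3)$ composed with an orientation reversal, exactly as the paper already did for the order-$3$ rotation $h$ and the antipodal map $a \mapsto -a$ used to define $\sigma$.

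\emph{First}, I would exhibit an involution $\tau \colon (S^3, \mathcal{C}_3) \to (S^3, \mathcal{C}_3)$ that interchanges the components $K_\alpha$ and $K_\beta$ while fixing $K_\gamma$ setwise; such a symmetry is visible from Figure~\ref{fig_poly}(right) (the $3$-chain link has an obvious reflection swapping the two outer components). This $\tau$ should be chosen to be orientation-reversing on $S^3$ (a reflection), so that the induced map $\tau_*$ on $H_2(N,\partial N)$ sends $\alpha \leftrightarrow \beta$ and $\gamma \mapsto -\gamma$ (or $\gamma \mapsto \gamma$ — one checks the sign against the orientations fixed in Figure~\ref{fig_poly}). \emph{Second}, I would track what $\tau$ does to a fibered class: since $\tau$ is orientation-reversing on the ambient $3$-manifold but a fibration pulls back to a fibration, $\tau$ carries the fibration associated to $a$ to the fibration associated to $\tau_*(a)$ but \emph{with reversed flow direction}, so the monodromy gets inverted. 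Concretely, $\tau$ conjugates $(\Phi_a)^{-1}$ to $\Phi_{\tau_*(a)}$. \emph{Third}, one computes $\tau_*(x,y,z)$ on the class $(x,y,z)$: the swap of $K_\alpha,K_\beta$ gives $(y,x,\pm z)$, and using that $(x,y,z)$ and $-(x,y,z)$ (equivalently, adjusting by the already-noted fact $\lambda(a)=\lambda(-a)$ and same singularity data) one lands on $(y,x,z) \in int(C_\Delta)$ — that this is again in $int(C_\Delta)$ is immediate from the defining inequalities \eqref{equation_int-delta}, which are symmetric in $x$ and $y$, and was already observed in the paragraph preceding the lemma. Since a homeomorphism conjugating $(\Phi_a)^{-1}$ to $\Phi_{(y,x,z)}$ preserves dilatation, we get $\lambda_{(x,y,z)} = \lambda((\Phi_{(x,y,z)})^{-1}) = \lambda_{(y,x,z)}$.

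An alternative, purely algebraic route that avoids constructing $\tau$ by hand: simply observe from the Teichm\"uller polynomial formula \eqref{equation_TpolyMagic} that $f_{(y,x,z)}(t) = t^{x+y-z} - t^y - t^x - t^{y-z} - t^{x-z} + 1 = f_{(x,y,z)}(t)$ identically, so the largest real roots coincide and $\lambda_{(x,y,z)} = \lambda_{(y,x,z)}$ follows at once. This gives the dilatation equality cheaply, but it does \emph{not} give the conjugacy statement, so for the full lemma the geometric symmetry argument is needed.

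\emph{Main obstacle.} The delicate point is pinning down the exact behavior of $\tau_*$ on the oriented homology basis $\{\alpha,\beta,\gamma\}$ — in particular the sign on the $\gamma$-coordinate and the need to reconcile orientation-reversal of $S^3$ with the bookkeeping of fiber orientations — and verifying that after composing with the antipodal identification $a \sim -a$ one genuinely lands on $(y,x,z)$ rather than on $(y,x,-z)$ or a class on the opposite face $\Delta'$. This requires carefully reading off orientations from Figure~\ref{fig_poly} and is exactly the kind of sign-chasing that the symmetry setup in Section~\ref{subsection_Fibered} was designed to streamline; I expect it to be routine but the only place where care is genuinely required.
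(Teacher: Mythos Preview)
Your approach is essentially the paper's: use a self-homeomorphism of $(S^3,\mathcal{C}_3)$ swapping $K_\alpha$ and $K_\beta$, track its action on $H_2(N,\partial N)$, and combine with $\Phi_{-a}\simeq\Phi_a^{-1}$. The paper resolves the sign issue you flag by taking an \emph{orientation-preserving} involution $i_\gamma$ (a $180^\circ$ rotation sending the oriented link $\mathcal{C}_3$ to $\mathcal{C}_3^-$) whose induced map on homology is exactly $(\alpha,\beta,\gamma)\mapsto(-\beta,-\alpha,-\gamma)$, so $(i_\gamma)_*(x,y,z)=(-y,-x,-z)\in int(C_{\Delta'})$; then $\Phi_a$ is conjugate to $\Phi_{(-y,-x,-z)}$, and negating gives $(\Phi_a)^{-1}$ conjugate to $\Phi_{(y,x,z)}$ with no residual sign ambiguity.
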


\begin{proof}
Let us denote by $\mathcal{C}_3^{-}$, 
the $3$ chain link such that 
the orientation of each component is opposite to each one for $\mathcal{C}_3$. 
We denote the components of $\mathcal{C}_3^{-}$ by $K_{\alpha}^{-}$, $K_{\beta}^{-}$ and $K_{\gamma}^{-}$. 
There exists a homeomorphism 
$i_{\gamma}: (S^3, \mathcal{C}_3) \rightarrow (S^3, \mathcal{C}_3^{-})$ 
which sends $K_{\alpha}$, $K_{\beta}$, $K_{\gamma}$ to 
$K_{\beta}^{-}$, $K_{\alpha}^{-}$, $K_{\gamma}^{-}$ respectively. 
Then $i_{\gamma}$ induces the isomorphism 
$(i_{\gamma})_*: H_2(N, \partial N) \rightarrow H_2(N, \partial N) $ 
which sends 
$\alpha$, $\beta$, $\gamma$ to $- \beta$, $-\alpha$, $-\gamma$ respectively. 
If we take $a= (x,y,z) \in int(C_{\Delta})$, then 
$(i_{\gamma})_*(a)= (-y,-x,-z) \in int(C_{\Delta'})$. 
Since $(i_{\gamma})_*$ is induced by the homeomorphism $i_{\gamma}$, 
the monodromies $\Phi_a$ and $\Phi_{(i_{\gamma})_*(a)}$ must be conjugate. 
(Hence $\Phi_a^{-1}$ and $(\Phi_{(i_{\gamma})_*(a)})^{-1}$ are conjugate.) 
On the other hand, 
  $(\Phi_{(i_{\gamma})_*(a)})^{-1}$ is isotopic to the monodromy $\Phi_{-(i_{\gamma})_*(a)}$ of the fibration on $N$ 
associated to $-(i_{\gamma})_*(a)= (y,x,z) \in int(C_{\Delta})$. 
Thus $(\Phi_{(x,y,z)})^{-1}$ and $\Phi_{(y,x,z)}$ are conjugate. 
\end{proof}

Observe that if  $(x,y,z) \in int(C_{\Delta})$, then 
$(y-z,y, y-x), (y-z,x-z,-z), (x,x-z, x-y) \in int(C_{\Delta})$. 
These four classes have the same Thurston norm.

\begin{lem} 
\label{lem_symmetry}
The four classes 
$(x,y,z)$, $(y-z,y, y-x)$, $(y-z,x-z,-z)$ and $(x,x-z, x-y) $ of $int(C_{\Delta})$  have  the same entropy. 
\end{lem}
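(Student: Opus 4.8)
The plan is to reduce the statement about entropies to an identity between Teichm\"uller polynomials and to read that identity off from the formula (\ref{equation_TpolyMagic}). Since $\mathrm{ent}(a)=\log\lambda(a)$, it suffices to show that the four classes have the same dilatation. By the observation preceding the statement, all four classes lie in $int(C_{\Delta})$, so for primitive integral classes their dilatations are governed by (\ref{equation_TpolyMagic}): $\lambda_{(x,y,z)}$ is the largest real root of $f_{(x,y,z)}(t)=t^{x+y-z}-t^{x}-t^{y}-t^{x-z}-t^{y-z}+1$.

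The heart of the argument is then a direct substitution. Writing $(x',y',z')$ for each of $(y-z,y,y-x)$, $(y-z,x-z,-z)$ and $(x,x-z,x-y)$, one checks that in every case $x'+y'-z'=x+y-z$ and that the remaining exponents $x',\,y',\,x'-z',\,y'-z'$ are, up to order, exactly $x,\,y,\,x-z,\,y-z$. For instance, for $(y-z,x-z,-z)$ one gets $x'=y-z$, $y'=x-z$, $x'-z'=y$, $y'-z'=x$; the two other verifications are of the same length. Hence $f_{(x,y,z)}$, $f_{(y-z,y,y-x)}$, $f_{(y-z,x-z,-z)}$ and $f_{(x,x-z,x-y)}$ are literally the same polynomial in $t$, so the four classes share the same largest real root, hence the same dilatation, hence the same entropy. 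To cover classes of $int(C_{\Delta})$ that are not primitive integral, I would note that the three linear maps in question belong to $GL_3({\Bbb Z})$ (each has determinant $\pm1$), hence send primitive integral classes to primitive integral classes and, by (\ref{equation_int-delta}), preserve $int(C_{\Delta})$; writing a rational class as $\tfrac{1}{n}a'$ with $a'$ primitive integral, and using that $\mathrm{ent}(ra)=\tfrac{1}{|r|}\mathrm{ent}(a)$ for primitive $a'$ together with the fact that these maps commute with scaling, the equality of entropies extends to all rational classes, and then to all real classes of $int(C_{\Delta})$ by continuity of $\mathrm{ent}$.

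I do not expect a genuine obstacle here: the whole content is the algebraic identity above, and the only points requiring (minor) care are the verification that the three transformed classes lie in $int(C_{\Delta})$ --- immediate from (\ref{equation_int-delta}) and already recorded before the statement --- and the passage from primitive integral classes to arbitrary classes of $int(C_{\Delta})$. It is worth noting that these symmetries of the entropy function differ from those coming from the homeomorphism $h$ and from Lemma~\ref{lem_conjugateXY}: the corresponding linear maps are not signed permutations of the coordinates, so a geometric argument seems less immediate than the polynomial identity, which is why the latter is the natural tool here.
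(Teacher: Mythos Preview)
Your proof is correct and follows essentially the same approach as the paper: both reduce to the verification that $f_{(y-z,y,y-x)}$, $f_{(y-z,x-z,-z)}$ and $f_{(x,x-z,x-y)}$ coincide with $f_{(x,y,z)}$ as polynomials in $t$. Your added care in passing from primitive integral classes to arbitrary classes of $int(C_{\Delta})$ is more explicit than the paper's one-line proof, but the core content is identical.
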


\begin{proof} 
One sees that 
 $f_{(y-z,y, y-x)}(t)$, $ f_{(y-z,x-z,-z)}(t)$ and $ f_{(x,x-z, x-y)}(t)$ are equal to the same polynomial 
$ f_{(x,y,z)}(t) $. 
\end{proof}

\begin{rem} 
If $(x,y,z)$ is a primitive fibered class in  $ int(C_{\Delta})$, 
then the other three classes  in Lemma~\ref{lem_symmetry} are also primitive. 
Although these classes have the same Thurston norm, 
the topological types of their  minimal representatives  may be different. 
\end{rem}

Let $(x,y,z) \in \Delta$. 
Since $x+y-z=1$, one may represent $(x,y,z)$ without $z$.  
Let us denote the class $(x,y,z) $ by $[x,y]$. 
Then the open face $int(\Delta)$ can be written by 
$$int(\Delta)= \{[x,y]\ |\ 0 < x < 1,\ 0 < y < 1\}$$
(Figure~\ref{fig_parameter}).  
We shall see in Remark~\ref{rem_symmetry-ent} that 
this parametrization for the points of $int(\Delta)$ makes it easy to see the symmetry of the entropy function for $N$. 
We denote by $\lambda_{[x,y]}$ the dilatation of  $[x,y] \in int (\Delta)$. 
By Lemma~\ref{lem_symmetry} one obtains

\begin{cor}
\label{cor_symmetry}
Let $(x,y,z) \in int(C_{\Delta})$. 
Then $[\tfrac{x}{x+y-z}, \tfrac{y}{x+y-z}]$, $[\tfrac{y-z}{x+y-z}, \tfrac{y}{x+y-z}]$, $[\tfrac{y-z}{x+y-z}, \tfrac{x-z}{x+y-z}]$ and 
$[\tfrac{x}{x+y-z}, \tfrac{x-z}{x+y-z}]$  have the same entropy. 
(See Figure~\ref{fig_symmetry}.) 
\end{cor}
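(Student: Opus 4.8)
The statement to prove is Corollary~\ref{cor_symmetry}, which is a direct restatement of Lemma~\ref{lem_symmetry} in the $[x,y]$-parametrization of $\Delta$, combined with the homogeneity (constancy along rays) of the entropy function. The plan is therefore to reduce everything to Lemma~\ref{lem_symmetry} and the definition $\mathrm{ent}(ra)=\tfrac{1}{|r|}\mathrm{ent}(a)$ for rational $r$.

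First I would observe that for any $(x,y,z)\in int(C_{\Delta})$ the class $\tfrac{1}{x+y-z}(x,y,z)$ lies on the open fibered face $int(\Delta)$, since $\|(x,y,z)\| = x+y-z$ by the formula recalled after (\ref{equation_int-delta}), and by definition of $[x,y]$ this scaled class is exactly $[\tfrac{x}{x+y-z},\tfrac{y}{x+y-z}]$ (its third coordinate being $\tfrac{x+y-z}{x+y-z}-\cdots$, but it is determined by $x+y-z=1$). Similarly, the three companion classes $(y-z,y,y-x)$, $(y-z,x-z,-z)$, $(x,x-z,x-y)$ from Lemma~\ref{lem_symmetry} all have Thurston norm $x+y-z$ (as noted just before that lemma), so dividing each by $x+y-z$ places them on $int(\Delta)$ at the points $[\tfrac{y-z}{x+y-z},\tfrac{y}{x+y-z}]$, $[\tfrac{y-z}{x+y-z},\tfrac{x-z}{x+y-z}]$, and $[\tfrac{x}{x+y-z},\tfrac{x-z}{x+y-z}]$ respectively. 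This is just reading off the first two coordinates.

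Next I would invoke Lemma~\ref{lem_symmetry}: the four classes $(x,y,z)$, $(y-z,y,y-x)$, $(y-z,x-z,-z)$, $(x,x-z,x-y)$ have the same entropy. Because $\mathrm{ent}$ is constant along rays through the origin in $int(C_{\Delta})$ — equivalently, because $\mathrm{Ent}=\|\cdot\|\,\mathrm{ent}$ is ray-constant and the four classes share the same norm — rescaling each of the four by the common positive factor $\tfrac{1}{x+y-z}$ preserves the equality of entropies. Hence the four points of $int(\Delta)$ listed above have the same entropy, which is precisely the assertion of Corollary~\ref{cor_symmetry}. One should note the mild point that Lemma~\ref{lem_symmetry} as stated concerns classes on (cones over) $\Delta$, so one must check $(x,y,z)\in int(C_{\Delta})$ indeed forces the three companions into $int(C_{\Delta})$ — but this is exactly the observation stated immediately before the lemma, so it may simply be cited.

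There is essentially no obstacle here; the corollary is a bookkeeping translation. If anything, the only thing to be careful about is the edge case $z=0$ or coincidences among the four points (when $x=z$ or $y=z$, some of the four classes collapse onto others or onto $\partial C_{\Delta}$), but since we assume $(x,y,z)\in int(C_{\Delta})$ the inequalities $x>0$, $y>0$, $x>z$, $y>z$ from (\ref{equation_int-delta}) guarantee all four scaled points lie in the open face $int(\Delta)=\{[x,y] \mid 0<x<1,\ 0<y<1\}$, so the statement is well-posed. Thus the proof is: rescale to the face, apply Lemma~\ref{lem_symmetry}, use ray-constancy of $\mathrm{ent}$.
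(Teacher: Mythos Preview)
Your proof is correct and matches the paper's approach, which simply notes that the corollary follows from Lemma~\ref{lem_symmetry} by rescaling to the face. One small slip: $\mathrm{ent}$ is \emph{not} constant along rays (rather $\mathrm{ent}(ra)=\tfrac{1}{|r|}\mathrm{ent}(a)$), but your argument only needs that rescaling all four classes by the \emph{same} factor $\tfrac{1}{x+y-z}$ preserves equality of entropies, which is what you actually use.
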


\begin{rem}
\label{rem_symmetry-ent}
Corollary~\ref{cor_symmetry} says that any two classes of $int(\Delta)$ having a line symmetry about  $x = \tfrac{1}{2}$ (resp. $y = \tfrac{1}{2}$) 
have the same entropy. 
In addition by Lemma~\ref{lem_conjugateXY}, 
$\lambda_{(x,y,z)}= \lambda_{(y,x,z)}$ holds  for  $(x,y,z) \in int(C_{\Delta})$. 
This implies that any two classes $a= [x,y], \widetilde{a} \in [y,x] \in int(\Delta)$ with a line symmetry about $y=x$ have the same entropy. 
Putting all things together, one has another line symmetry about  $y= -x+1$ for the entropy function of $N$. 
Thus $8$ classes  $ b_0, \widetilde{b_0}, \cdots, b_3,\widetilde{b_3} \in  int(\Delta)$ as in Figure~\ref{fig_symmetry} have the same entropy. 
\end{rem}

\begin{figure}
\begin{center}
\includegraphics[width=1.7in]{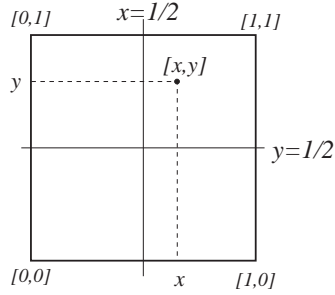}
\caption{$[x,y] \in int(\Delta)$.}
\label{fig_parameter}
\end{center}
\end{figure}

\begin{figure}
\begin{center}
\includegraphics[width=3.5in]{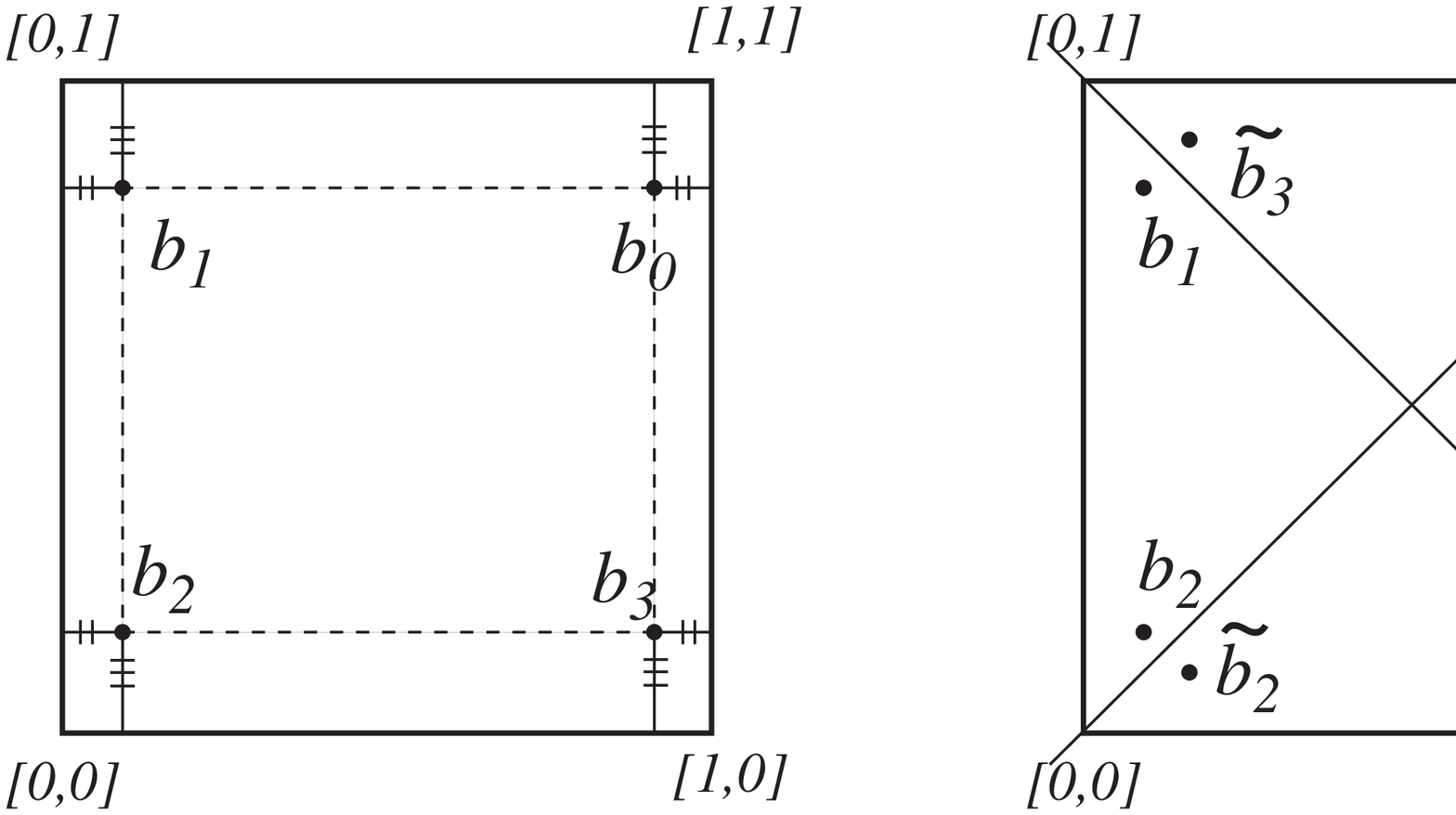}
\caption{
$b_0 = [\tfrac{x}{x+y-z},  \tfrac{y}{x+y-z}]$, $b_1= [\tfrac{y-z}{x+y-z}, \tfrac{y}{x+y-z}]$,  
$b_2= [\tfrac{y-z}{x+y-z}, \tfrac{x-z}{x+y-z}]$, $b_3= [\tfrac{x}{x+y-z}, \tfrac{x-z}{x+y-z}]  \in int(\Delta)$ and $\widetilde{b_i} \in int(\Delta)$.}
\label{fig_symmetry}
\end{center}
\end{figure}

By Corollary~\ref{cor_symmetry}, one obtains 

\begin{lem}
\label{lem_concave_sym}
Fix $0 < x_0 < 1$, $0 < y_0 < 1$ and  $0 <  c <2$. 
\begin{enumerate}
\item[(1)] 
$\lambda_{[x_0, \tfrac{1}{2} - t]} = \lambda_{[x_0, \tfrac{1}{2} + t]}$ for $0 \le t < \tfrac{1}{2}$, and 
$\lambda_{[x_0, \tfrac{1}{2}]} = \min \{ \lambda_{[x_0, y]}\ |\ 0 < y <1  \}$. 

\item[(2)] 
$\lambda_{[\tfrac{1}{2} - t, y_0]} = \lambda_{[\tfrac{1}{2} + t, y_0]}$ for $0 \le t < \tfrac{1}{2}$, and 
$\lambda_{[\tfrac{1}{2} , y_0]} = \min \{ \lambda_{[x, y_0]}\ |\ 0 < x <1  \}$. 

\item[(3)] 
$\lambda_{[\tfrac{c}{2}+t, \tfrac{c}{2}-t]}= \lambda_{[\tfrac{c}{2}-t, \tfrac{c}{2}+t]}$ for $0 \le t < 1 - \tfrac{c}{2}$, and 
$$\lambda_{[\tfrac{c}{2}, \tfrac{c}{2}]} = \min \{\lambda_{[x,y]}\ |\  [x,y] \in int(\Delta), y = -x+c\}.$$
\end{enumerate}
\end{lem}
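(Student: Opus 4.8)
The strategy is to combine the line symmetries of the entropy function furnished by Corollary~\ref{cor_symmetry} and Lemma~\ref{lem_conjugateXY} with the strict convexity of $\mathrm{ent}$ (equivalently, strict concavity of $\tfrac{1}{\mathrm{ent}}$) on the open fibered face $int(\Delta)$ recalled in Section~\ref{subsection_Tnorm}. Each of the three items is a ``symmetric function of one real variable, convex, hence minimized at the axis of symmetry'' argument; the only content is identifying the correct axis in each case and checking the segment along which we restrict lies entirely in $int(\Delta)$.

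\textbf{Step 1: the symmetry identities.} For (1), fix $0<x_0<1$. By Remark~\ref{rem_symmetry-ent} (i.e.\ Corollary~\ref{cor_symmetry}), the two points $[x_0,\tfrac12-t]$ and $[x_0,\tfrac12+t]$ are related by the line symmetry about $y=\tfrac12$, so they have the same entropy, hence the same dilatation $\lambda$; since $0\le t<\tfrac12$ keeps $0<\tfrac12\pm t<1$ and $x_0$ is fixed, both points lie in $int(\Delta)$ by \eqref{equation_int-delta}. This gives the first assertion of (1). Item (2) is identical with the roles of $x$ and $y$ interchanged, using the symmetry about $x=\tfrac12$. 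For (3), fix $0<c<2$: the segment $\{[x,y]:y=-x+c\}\cap int(\Delta)$ is parametrized by $[\tfrac c2+t,\tfrac c2-t]$ with $t$ in an interval symmetric about $0$, and the two endpoints for $\pm t$ are exchanged by the reflection about $y=x$, whose invariance is exactly Lemma~\ref{lem_conjugateXY} (recast in the $[x,y]$ coordinates as in Remark~\ref{rem_symmetry-ent}). One checks that $[x,y]\in int(\Delta)$ forces $|t|<1-\tfrac c2$, so the stated range of $t$ is precisely the open segment.

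\textbf{Step 2: convexity forces the minimum at the axis.} For (1), consider the function $t\mapsto \mathrm{ent}([x_0,\tfrac12+t])$ on $(-\tfrac12,\tfrac12)$. It is the restriction of $\mathrm{ent}$ to an affine segment inside $int(\Delta)$, hence strictly convex there; being even in $t$ by Step~1, it attains its minimum uniquely at $t=0$, i.e.\ at $[x_0,\tfrac12]$. Since $\lambda=e^{\mathrm{ent}}$ is a strictly increasing function of $\mathrm{ent}$, the same holds for $\lambda$, which is the second assertion of (1). Items (2) and (3) follow verbatim by restricting $\mathrm{ent}$ to the corresponding affine segments ($x$-direction through $[\tfrac12,y_0]$, and the anti-diagonal direction through $[\tfrac c2,\tfrac c2]$) and invoking strict convexity plus the evenness established in Step~1.

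\textbf{Main obstacle.} There is no serious obstacle; the only point requiring care is bookkeeping of which reflection realizes which identity and verifying the relevant affine segments stay inside the open face $int(\Delta)$ so that strict convexity of $\mathrm{ent}$ applies (convexity is only asserted on $int(\Delta)$, not on its closure). One should also note that $\mathrm{ent}$ is a priori defined on rational fibered classes and extended continuously; the symmetry identities of Step~1 hold on the dense set of primitive fibered classes and pass to the continuous extension, and strict convexity of the extension is exactly the refinement of Fried's theorem due to Matsumoto and McMullen cited in Section~\ref{subsection_Tnorm}. With these observations in place, all three statements are immediate.
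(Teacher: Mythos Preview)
Your proposal is correct and follows essentially the same approach as the paper's proof: establish the reflection symmetries from Corollary~\ref{cor_symmetry} and Lemma~\ref{lem_conjugateXY}, then invoke strict concavity of $\tfrac{1}{\mathrm{ent}}$ (equivalently strict convexity of $\mathrm{ent}$) on the affine segment to force the minimum at the axis of symmetry. The paper writes out only case~(3) and says (1) and (2) are similar, whereas you spell out which reflection is used in each case; your version is more explicit but not different in substance.

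One small bookkeeping slip: your claim that ``$[x,y]\in int(\Delta)$ forces $|t|<1-\tfrac{c}{2}$'' in item~(3) is only correct for $c\ge 1$. When $0<c<1$ the segment $\{y=-x+c\}\cap int(\Delta)$ corresponds to $|t|<\tfrac{c}{2}$, which is strictly smaller than $1-\tfrac{c}{2}$. This does not affect the argument (the symmetry and convexity hold on whatever the actual open segment is), but the assertion that the stated range of $t$ is ``precisely the open segment'' should be corrected.
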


\begin{proof} 
We prove (3). 
The first equality follows since $\lambda_{[x,y]}= \lambda_{[y,x]}$ for $0 < x < 1$ and $0 < y < 1$. 
The function $\tfrac{1}{\log \lambda}$ restricted to the set $\{[x,y] \in int(\Delta)\ |\ y = -x+c\}$ is strictly concave.  
This together with the first equality implies that $[\tfrac{c}{2}, \tfrac{c}{2}]$ reaches a minimum. 

The proofs of (1),(2) are similar to that of (3). 
\end{proof}

\noindent
By using Lemma~\ref{lem_concave_sym} one sees that 
the center $[\tfrac{1}{2}, \tfrac{1}{2}]  \in int(\Delta)$ achieves  $\min \mathrm{Ent}(N, \Delta)$. 
Because of the symmetries of $H_2(N, \partial N)$, 
the equality 
$\min \mathrm{Ent}(N, \Delta)= \min \mathrm{Ent}(N, \widehat{\Delta})$ 
holds for any fibered face $\widehat{\Delta}$. 
Thus one has

\begin{prop}
\label{prop_MNE-magic}
$\min \mathrm{Ent}(N)= \mathrm{Ent}([\tfrac{1}{2}, \tfrac{1}{2}])= 2 \log (2+ \sqrt{3}) \approx 2.6339$. 
\end{prop}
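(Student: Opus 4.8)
\textbf{Proof proposal for Proposition~\ref{prop_MNE-magic}.}
The plan is to reduce the global minimization of the normalized entropy $\mathrm{Ent}$ over $\bigcup_{\widehat{\Delta}} int(C_{\widehat{\Delta}})$ to a single concrete point on the single fibered face $\Delta$, and then evaluate the dilatation there via the Teichm\"uller polynomial. First I would invoke the symmetries of $H_2(N,\partial N)$: because the rotation $h_*$ of order $3$ cyclically permutes all the fibered faces and preserves the Thurston norm and the entropy function, and because $\lambda(a)=\lambda(-a)$, we have $\min \mathrm{Ent}(N,\widehat{\Delta}) = \min \mathrm{Ent}(N,\Delta)$ for every fibered face $\widehat{\Delta}$. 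Hence $\min \mathrm{Ent}(N) = \min \mathrm{Ent}(N,\Delta)$, and it suffices to minimize $\mathrm{Ent}$ on $int(\Delta)$.

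Next I would locate the minimizing point on $int(\Delta)$. Using the parametrization $int(\Delta)=\{[x,y] \mid 0<x<1,\ 0<y<1\}$, Lemma~\ref{lem_concave_sym}(1) says that along any vertical segment $x=x_0$ the function $\lambda_{[x_0,y]}$ is symmetric about $y=\tfrac12$ and minimized at $y=\tfrac12$; similarly Lemma~\ref{lem_concave_sym}(2) gives the minimum along any horizontal segment at $x=\tfrac12$. Since $\|\cdot\|$ is constant on $\Delta$, minimizing $\mathrm{Ent}$ is the same as minimizing $\mathrm{ent}$, hence the same as minimizing $\lambda$. Applying part (1) with $x_0=\tfrac12$ (or combining parts (1) and (2), or using part (3) with $c=1$ together with the line symmetry about $y=x$), one concludes that the center $[\tfrac12,\tfrac12]$ achieves $\min\mathrm{Ent}(N,\Delta)$. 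One should be slightly careful here that a single application of Lemma~\ref{lem_concave_sym}(1) only shows $[\tfrac12,\tfrac12]$ beats every other point on the line $x=\tfrac12$; to beat \emph{all} points of $int(\Delta)$ one combines it with part (2), i.e. $\lambda_{[\tfrac12,\tfrac12]} \le \lambda_{[\tfrac12,y_0]} \le$ ... no, rather: for arbitrary $[x_0,y_0]$, part (2) gives $\lambda_{[\tfrac12,y_0]}\le\lambda_{[x_0,y_0]}$ and part (1) gives $\lambda_{[\tfrac12,\tfrac12]}\le\lambda_{[\tfrac12,y_0]}$, so $\lambda_{[\tfrac12,\tfrac12]}\le\lambda_{[x_0,y_0]}$.

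Finally I would compute the value. The class $[\tfrac12,\tfrac12]$ corresponds to the ray through $(1,1,1)\in int(C_\Delta)$ (since $x=y=\tfrac12$, $z=x+y-1=0$... wait: $[x,y]$ stands for $(x,y,z)$ with $x+y-z=1$, so $[\tfrac12,\tfrac12]=(\tfrac12,\tfrac12,0)$, lying on the ray of $(1,1,0)$). By \eqref{equation_TpolyMagic}, $f_{(1,1,0)}(t)=t^{2}-t-t-t-t+1 = t^2-4t+1$, whose largest real root is $2+\sqrt3$, so $\lambda_{(1,1,0)}=2+\sqrt3$. Since $\|(1,1,0)\|=1+1-0=2$, the normalized entropy is $\mathrm{Ent}([\tfrac12,\tfrac12])=2\log(2+\sqrt3)\approx 2.6339$, giving $\min\mathrm{Ent}(N)=2\log(2+\sqrt3)$ as claimed. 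The only genuinely delicate step is the reduction in the second paragraph — one must make sure the one-dimensional symmetry/concavity lemmas are chained correctly so that the center dominates \emph{every} point of the open face, not merely points on one symmetry line — but this is immediate once parts (1) and (2) are used together as above; everything else is bookkeeping with the symmetries and a one-line polynomial evaluation.
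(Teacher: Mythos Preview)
Your argument is correct and matches the paper's approach exactly: the paper also reduces to the single face $\Delta$ by the symmetries of $H_2(N,\partial N)$, invokes Lemma~\ref{lem_concave_sym} to pin the minimum at the center $[\tfrac12,\tfrac12]$, and then reads off the value $2\log(2+\sqrt{3})$ from the Teichm\"uller polynomial (indeed $[\tfrac12,\tfrac12]$ lies on the ray of $(1,1,0)$ and $f_{(1,1,0)}(t)=t^2-4t+1$). Your explicit chaining of parts (1) and (2) of Lemma~\ref{lem_concave_sym} to show $\lambda_{[\frac12,\frac12]}\le\lambda_{[\frac12,y_0]}\le\lambda_{[x_0,y_0]}$ is a clean way to spell out what the paper leaves implicit.
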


\noindent
By Proposition~\ref{prop_MNE-magic}, one sees the following: 
When $[x,y] \in int(\Delta)$ such that $[x,y] \ne [\tfrac{1}{2}, \tfrac{1}{2}]$, 
$$\log \lambda_{[x,y]} > \log \lambda_{[\tfrac{1}{2}, \tfrac{1}{2}]} = 2 \log (2+ \sqrt{3})> 2.633.$$

\subsection{Thurston norm of manifolds $N(r)$}

Let  $N(r)$  be the manifold obtained from the magic manifold  $N$  
by Dehn filling 
the cusp specified by the torus  $T_{\beta}$  along the slope  $r$, 
and  $D(r)$  an attached solid torus in  $N(r)$  
so that  $\partial D(r) = T_{\beta}$.  
Consider the exact sequence of the homology group of the triple  
$(N(r), \partial N(r) \cup D(r), \partial N(r))$  with real coefficients,   
\begin{equation*}
	\cdots \to 
	H_2(N(r), \partial N(r)) \overset{j}\to 
	H_2(N(r), \partial N(r) \cup D(r)) \overset{\partial}\to 	
	H_1(\partial N(r) \cup D(r), \partial N(r)) \to \cdots
\end{equation*} 
The first homomorphism  $j$  is injective 
since  $H_2(\partial N(r) \cup D(r), \partial N(r)) = 0$.  
Also by excision, 
we have an isomorphism 
\begin{equation*}
	e : H_2(N, \partial N) \to H_2(N(r), \partial N(r) \cup D(r)).  
\end{equation*} 
Notice that the composition  
\begin{equation*} 
	\partial \circ e : H_2(N, \partial N) \to H_1(\partial N(r) \cup D(r), \partial N(r)) \cong \mathbb{Z}
\end{equation*}   
can be identified with the intersection number for a cycle in  
$H_2(N, \partial N)$  with a slope  $r$  on   $\partial D(r) = T_{\beta}$.  

On the other hand, 
since the composition of the boundary map with 
a quotient homomorphism 
\begin{equation*} 
	H_2(N, \partial N) \overset{\partial}\to H_1(\partial N) \to 
	H_1(\partial N) / H_1(T_{\alpha} \cup T_{\gamma}) \cong H_1(T_{\beta}) 
\end{equation*} 
sends  $\alpha$  and  $\gamma$  to the minus meridian on  $T_{\beta}$ (see Figure~\ref{fig_meridian})  and  $\beta$ to a longitude, 
the kernel of  $\partial \circ e$  is identified with 
\begin{equation*}
	S_{\beta}(r) = \{ (x, y, z) \in H_2(N, \partial N) \, | \, -ry = x+z \}.   
\end{equation*}
Thus,  
we have proved 

\begin{prop}
\label{prop_S_r}
Take a slope  $r \in \mathbb{Q}$  on a boundary torus for  $N$,  
say  $T_{\beta}$.  
Let  $N(r)$  be the manifold obtained from $N$ by Dehn filling the cusp specified by  $T_{\beta}$  
along the slope  $r$.  
Then there is a natural injection  
\begin{equation*} 
	\iota_{\beta} = e^{-1} \circ j : H_2(N(r), \partial N(r)) \to H_2(N, \partial N) 
\end{equation*} 
such that  ${\rm Im} \, \iota_{\beta} = S_{\beta}(r)$.  
\end{prop}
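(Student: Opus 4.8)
The plan is to extract the claimed injection directly from the long exact sequence of the triple $(N(r), \partial N(r) \cup D(r), \partial N(r))$ already written down in the excerpt, so most of the work is bookkeeping about which maps are which. First I would observe that the composite $j$ is injective because of the boundary term $H_2(\partial N(r)\cup D(r),\partial N(r))$, which vanishes: $D(r)$ is a solid torus, $\partial N(r)$ is the remaining two tori, and the pair $(\partial N(r)\cup D(r),\partial N(r))$ deformation retracts to $(D(r), T_\beta)$, whose relative second homology is $H_2(D^2\times S^1, \partial(D^2\times S^1))\cong H_2(D^2,\partial D^2)\otimes H_*(S^1)$, which is $0$ in degree $2$. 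So $j$ is a monomorphism. Next I would note that excision gives the isomorphism $e\colon H_2(N,\partial N)\xrightarrow{\ \cong\ } H_2(N(r),\partial N(r)\cup D(r))$, by cutting out the open solid torus $\mathring D(r)$; this identifies $(N(r),\partial N(r)\cup D(r))$ relative to $D(r)$ with $(N,\partial N)$. Setting $\iota_\beta = e^{-1}\circ j$ then gives a natural injection $H_2(N(r),\partial N(r))\hookrightarrow H_2(N,\partial N)$, and since $e$ is an isomorphism, $\mathrm{Im}\,\iota_\beta = e^{-1}(\mathrm{Im}\, j) = e^{-1}(\ker\partial) = \ker(\partial\circ e)$.

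It therefore remains only to identify $\ker(\partial\circ e)$ with $S_\beta(r)$, and this is exactly the computation the excerpt sets up. The composite $\partial\circ e\colon H_2(N,\partial N)\to H_1(\partial N(r)\cup D(r),\partial N(r))\cong \mathbb Z$ is, by exactness and the geometric meaning of the connecting homomorphism, the algebraic intersection number of a relative $2$-cycle in $N$ with the filling slope $r$ on $T_\beta$. To evaluate it on the basis $\{\alpha,\beta,\gamma\}$, I would pass through the boundary map $\partial\colon H_2(N,\partial N)\to H_1(\partial N)$ followed by the projection onto $H_1(T_\beta)=H_1(\partial N)/H_1(T_\alpha\cup T_\gamma)$. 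As recorded in the excerpt (with Figure~\ref{fig_meridian}), the surfaces $F_\alpha$ and $F_\gamma$ each contribute a $-1$ meridian component on $T_\beta$, while $F_\beta$ contributes a longitude; hence in coordinates $m,\ell$ on $H_1(T_\beta)$ (meridian, longitude) one has $\partial(x\alpha+y\beta+z\gamma) = -(x+z)\,m + y\,\ell$ up to the conventions fixed by the figure. The slope $r=p/q$ on $T_\beta$ is the class $p\,m+q\,\ell$ (with $r$ as given, $q=1$, $p=r$ after normalizing), and the intersection pairing on the torus gives $(\partial\circ e)(x\alpha+y\beta+z\gamma) = \det\begin{pmatrix} -(x+z) & y \\ r & 1\end{pmatrix}$ up to sign, which vanishes precisely when $-(x+z) = ry$, i.e. $-ry = x+z$. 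That is the defining equation of $S_\beta(r)$, so $\ker(\partial\circ e)=S_\beta(r)$ and the proof is complete.

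The only genuinely delicate point is the sign/normalization computation in the last step: one must be careful that the connecting homomorphism of the triple really does compute the intersection number with the filling slope (rather than its negative or a shifted version), and that the meridian–longitude conventions from Figure~\ref{fig_poly}(right) and Figure~\ref{fig_meridian} are used consistently, since a flipped orientation of $D(r)$ or of $T_\beta$ would change $-ry=x+z$ to $ry=x+z$. I expect this to be the main obstacle, and I would handle it by fixing once and for all the oriented meridian of $D(r)$ (bounding in the solid torus) and the oriented longitude coming from the Seifert framing implicit in the surfaces $F_\alpha,F_\beta,F_\gamma$, then tracking orientations through excision; everything else is formal. Since all maps involved ($j$, $e$, $\partial$) are induced by inclusions and are natural in $N$, the resulting $\iota_\beta$ is natural, completing the proof of Proposition~\ref{prop_S_r}.
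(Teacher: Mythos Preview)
Your approach is essentially identical to the paper's: the proof of Proposition~\ref{prop_S_r} is given in the paragraphs immediately preceding its statement, and you are simply fleshing out those paragraphs. So the overall strategy is correct and matches the paper.

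There is, however, a small but genuine slip in your justification that $H_2(\partial N(r)\cup D(r),\partial N(r))=0$. You write that this pair ``deformation retracts to $(D(r),T_\beta)$''. But $T_\beta$ is not contained in $\partial N(r)$: after Dehn filling, $\partial N(r)=T_\alpha\sqcup T_\gamma$, which is disjoint from $D(r)$. Hence by excision (or simply because the subspaces are disjoint) one has
\[
H_*(\partial N(r)\cup D(r),\partial N(r))\;\cong\;H_*(D(r),\emptyset)\;=\;H_*(D(r)),
\]
and $H_2(D(r))=0$ since a solid torus has the homotopy type of $S^1$. Your proposed computation of $H_2(D^2\times S^1,\partial D^2\times S^1)$ is in fact $\mathbb{Z}$, not $0$ (the class $[D^2,\partial D^2]\otimes[\mathrm{pt}]$ survives), so your two errors cancel to give the right conclusion, but the intermediate reasoning should be corrected.

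The remainder of your argument --- identifying $\mathrm{Im}\,\iota_\beta$ with $\ker(\partial\circ e)$ by exactness, and then computing that kernel via the projection $H_1(\partial N)\to H_1(T_\beta)$ and the intersection pairing with the slope $r$ --- is exactly what the paper does, and your sign bookkeeping lands on the correct equation $-ry=x+z$.
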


\begin{figure}
\begin{center}
\includegraphics[width=1in]{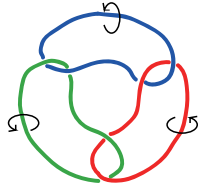}
\caption{meridians of the components of $\mathcal{C}_3$}
\label{fig_meridian}
\end{center}
\end{figure}

\noindent
For  $a = (x,y,z) \in S_{\beta}(r)$, 
we denote by $\overline{a}= \overline{(x,y,z)}$, 
the element of $H_2(N(r), \partial N(r))$ 
such that $\iota_{\beta}(\overline{a}) = a$. 
We sometimes denote $N(r)$ by $N_{\beta}(r)$ 
when we need to specify the cusp which is filled. 
By using this notation, we may write 
$\overline{a} \in H_2(N_{\beta}(r), \partial N_{\beta}(r))$.

Similarly, 
when  $N(r)$  is the manifold obtained from $N$ by Dehn filling the cusp specified 
by  $T_{\alpha}$  or  $T_{\gamma}$  along the slope  $r$, 
one has natural injections, 
\begin{align*}
	\iota_{\alpha} & : H_2(N(r), \partial N(r)) \to H_2(N, \partial N), \\ 
	\iota_{\gamma} & : H_2(N(r), \partial N(r)) \to H_2(N, \partial N) 
\end{align*}
such that their images are 
\begin{align*}
	S_{\alpha}(r) & = \{ (x, y, z) \in H_2(N, \partial N) \, | \, -rx = y+z \}, \\
	S_{\gamma}(r) & = \{ (x, y, z) \in H_2(N, \partial N) \, | \, -rz = x+y \}.  
\end{align*}
We also denote by $N_{\alpha}(r)$ or $N_{\gamma}(r)$, the manifold $N(r)$ in this case.

Hereafter we denote the Thurston norm of $N$ by $\| \cdot \|$ and 
its Thurston norm ball with radius $d$  by $B(d)$. 
(Hence $U_N= B(1)$.) 
The entropy function and the normalized entropy function of $N$ are denoted  
by $\mathrm{ent}$ and $\mathrm{Ent}$ respectively as usual.  
We also denote the Thurston norm of $N(r)$  by $\|\cdot \|_{r}$ and 
the Thurston norm ball with radius $d$  by $B_{r}(d)$. 
The dilatation, entropy function and the normalized entropy function of $N(r)$ are denoted by 
$\lambda_r$, $\mathrm{ent}_{r}$  and  $\mathrm{Ent}_{r}$  respectively.

Let us define the set $\widehat{B}_{\beta, r}(1)$ to be $\widehat{B}_{\beta, r}(1)= B(1) \cap  S_{\beta}(r)$, 
see Figure~\ref{fig_rokumentai}. 
It is   parallelogram when $r \in (-2,0)$ 
(resp. hexagons when $r \in (-\infty, -2) \cup (0, \infty)$).

\begin{figure}
\begin{center}
\includegraphics[width=5in]{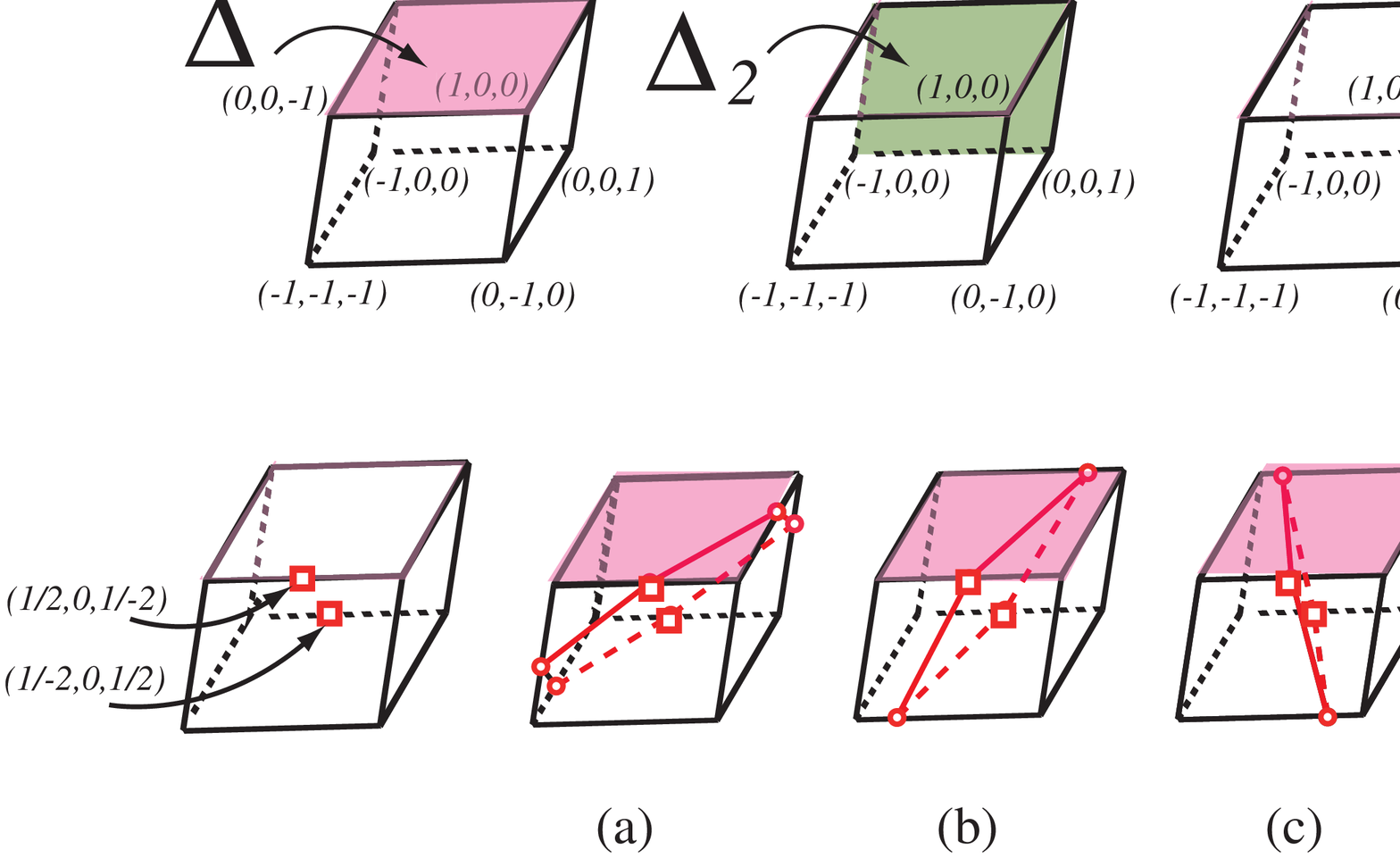}
\caption{(top) fibered faces $\Delta$, $\Delta_2$ and $\Delta_1$. 
(bottom) $\widehat{B}_{\beta,r}(1)$ in the case 
(a) $r \in (-\infty, -2)$, (b) $r \in (-2, -1)$, (c) $r \in (-1,0)$, (d) $r \in (0, \infty)$.}  
\label{fig_rokumentai}
\end{center}
\end{figure}

Now we consider the sets 
$\Delta \cap S_{\alpha}(r)$, $\Delta \cap S_{\beta}(r)$ and $\Delta \cap S_{\gamma}(r)$ for $r \in {\mathcal Hyp}$, see Figure~\ref{fig_DeltaCapS}. 
Note  that $\Delta \cap S_{\gamma}(r) \ne \emptyset$  if and only if 
$r \in  (-\infty, -2) \cup (0, \infty)$, see Remark~\ref{rem_slope-gam}. 

\begin{lem}
\label{lem_boundary}
\ 
\begin{enumerate}
\item[(1)] 
$\Delta \cap S_{\alpha}(r)$ is a segment 
$ \{[x,y] \in \Delta\ |\ y=( \tfrac{1+r}{-2})x+ \tfrac{1}{2}\}$. 
The set of its endpoints  equals 
\begin{enumerate}
\item[(i)] 
$\{[0, \tfrac{1}{2}], [\frac{-1}{1+r}, 1]\}$ 
when $r \in (- \infty, -2)$, 

\item[(ii)] 
$\{[0, \tfrac{1}{2}], [1, \tfrac{r}{-2}]\}$  
when $r \in (-2,0)$,

\item[(iii)] 
$\{[0, \tfrac{1}{2}], [\frac{1}{1+r}, 0]\}$ 
when $r \in (0,\infty)$. 
\end{enumerate}

\item[(2)] 
$\Delta \cap S_{\beta}(r)$ is a segment  $\{[x,y] \in \Delta\ |\ y=( \tfrac{-2}{1+r})x+ \tfrac{1}{1+r}\}$. 
The set of its endpoints  equals 

\begin{enumerate}
\item[(i)] 
$\{[\tfrac{1}{2}, 0], [1, \tfrac{-1}{1+r}] \}$ 
when $r \in (- \infty, -2)$, 

\item[(ii)]  
$\{[\tfrac{1}{2}, 0], [\tfrac{r}{-2}, 1] \}$ 
when $r \in (-2,0)$, 

\item[(iii)] 
$ \{[\tfrac{1}{2}, 0], [0, \tfrac{1}{1+r}] \}$ 
when $r \in (0,\infty)$. 
\end{enumerate}

\item[(3)] 
$ \Delta \cap S_{\gamma}(r)$ is a segment  $\{[x,y] \in \Delta\ |\ y= -x + \tfrac{r}{1+r}\}$ when  $r \in (-\infty, -2) \cup (0, \infty)$. 
In this case the set of its endpoints  equals  
\begin{enumerate}
\item[(i)] 
$\{[\tfrac{-1}{1+r}, 1], [1, \tfrac{-1}{1+r}]\}$ 
when $r \in (- \infty, -2)$, 

\item[(ii)] 
$\{[0, \tfrac{r}{1+r}], [\tfrac{r}{1+r}, 0]\}$ 
when $r \in (0, \infty)$.
\end{enumerate}
\end{enumerate}
\end{lem}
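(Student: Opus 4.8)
The plan is to compute each intersection $\Delta \cap S_\bullet(r)$ by substituting the linear constraint defining $S_\bullet(r)$ into the description \eqref{equation_int-delta} of $int(\Delta)$ and then translating into the $[x,y]$-coordinates where $z = x+y-1$. Recall $S_\alpha(r) = \{(x,y,z) \mid -rx = y+z\}$, $S_\beta(r) = \{(x,y,z) \mid -ry = x+z\}$, $S_\gamma(r) = \{(x,y,z) \mid -rz = x+y\}$. On $\Delta$ we have $x+y-z=1$, i.e. $z = x+y-1$, so for instance in the $S_\alpha$ case the equation $-rx = y+z$ becomes $-rx = y + (x+y-1)$, which rearranges to $2y = -(1+r)x + 1$, i.e. $y = \tfrac{1+r}{-2}x + \tfrac12$; this is exactly the claimed line. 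The analogous substitution $-ry = x + (x+y-1)$ gives $(1+r)y = -2x+1$, i.e. $y = \tfrac{-2}{1+r}x + \tfrac{1}{1+r}$ for part (2), and $-r(x+y-1) = x+y$ gives $(1+r)(x+y) = r$, i.e. $y = -x + \tfrac{r}{1+r}$ for part (3). So the "equation of the segment" half of each claim is a one-line computation once one is careful about which cusp $S_\bullet(r)$ refers to and uses the $z$-elimination.

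The second half — identifying the two endpoints — is where one has to do the case analysis on $r$. I would intersect each of the three lines above with the boundary of the square $\overline{int(\Delta)} = \{[x,y] \mid 0 \le x \le 1,\ 0 \le y \le 1\}$ (using \eqref{equation_int-delta}, whose strict inequalities $x>0$, $y>0$, $x>z$, $y>z$ translate under $z = x+y-1$ to $x>0$, $y>0$, $y<1$, $x<1$, so the closure is precisely the unit square). For each line I would set $x=0$, $x=1$, $y=0$, $y=1$ in turn, solve for the other coordinate, and check which two of the four candidate points actually lie on the closed square — this selection depends on the sign of the slope and of the intercepts, which is governed by whether $r \in (-\infty,-2)$, $(-2,-1)$, $(-1,0)$, or $(0,\infty)$, and this is where the various sub-cases (i), (ii), (iii) come from. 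For example, for $S_\alpha(r)$ the line always passes through $[0,\tfrac12]$ (set $x=0$), and the other endpoint is where it next hits the square: the $y=1$ edge at $[\tfrac{-1}{1+r},1]$ when the slope is positive and this $x$-value is in $(0,1)$ (which needs $r<-2$), the $x=1$ edge at $[1,\tfrac{r}{-2}]$ when $-2<r<0$, and the $y=0$ edge at $[\tfrac{1}{1+r},0]$ when $r>0$. The same bookkeeping handles (2) and (3), with the extra observation for (3) that $\Delta \cap S_\gamma(r) = \emptyset$ unless $r \in (-\infty,-2)\cup(0,\infty)$, which is already recorded in Remark~\ref{rem_slope-gam} (it reflects that $b_\gamma(a) \notin [-2,0]$ on $int(\Delta)$).

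I would also double-check consistency with the symmetry picture: since $N(r) \underset{\mathrm{T}}{\sim} N(-2-r)$ and the entropy function of $N$ has the line symmetries about $x=\tfrac12$, $y=\tfrac12$, $y=x$, $y=-x+1$ described in Remark~\ref{rem_symmetry-ent}, the segments for $S_\alpha$, $S_\beta$, $S_\gamma$ should be carried into one another under the obvious reflections, which gives a free sanity check on the endpoint formulas (e.g. the $S_\alpha$ and $S_\beta$ segments are exchanged by the reflection $y=x$ as one expects from the roles of $\alpha$ and $\beta$, and $r \leftrightarrow -2-r$ exchanges the $(-\infty,-2)$ and $(0,\infty)$ cases while fixing $(-2,-1) \leftrightarrow (-1,0)$). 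The main obstacle here is not conceptual but purely organizational: keeping the three cusps, the four (or two) ranges of $r$, and the four edges of the square all straight simultaneously, so that the right candidate intersection point is selected in each cell of the case split. Once the bookkeeping is set up carefully the proof is just linear algebra on lines in the plane.
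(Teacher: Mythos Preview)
Your proposal is correct and follows essentially the same approach as the paper: substitute $z=x+y-1$ from the norm constraint $x+y-z=1$ into the defining equation of $S_\bullet(r)$ to obtain the line, then read off the endpoints. The paper's own proof carries out exactly the substitution you describe for (1), declares the endpoint check ``immediate,'' and says (2) and (3) are similar; your version simply makes the endpoint bookkeeping (intersecting with the four edges of the unit square and selecting the correct pair according to the sign of $1+r$) explicit, and adds the symmetry sanity check, which is extra but harmless.
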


\begin{proof} 
We prove the claim (1). 
Let $a = (x,y,z) \in \Delta \cap S_{\alpha}(r)$. 
Then $\|a\| = x+y-z=1$ and $-rx = y+z$. 
Substituting  $z= x+y-1$ for   $-rx = y+z$, 
one obtains $y=( \tfrac{1+r}{-2})x+ \tfrac{1}{2}$. 
It is immediate to check (i),(ii),(iii). 

The proofs of (2),(3) are similar to that of (1). 
\end{proof}

\begin{figure}
\begin{center}
\includegraphics[width=5in]{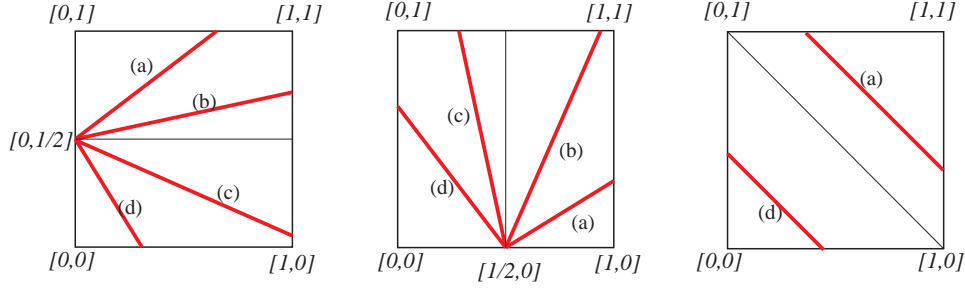}
\caption{(from left to right) $\Delta \cap S_{\alpha}(r)$, 
$\Delta \cap S_{\beta}(r)$, 
 $\Delta \cap S_{\gamma}(r)$. 
 [(a) $r \in (- \infty, -2)$, 
(b) $r \in (-2,-1)$, 
(c) $r \in (-1,0)$, 
(d) $r \in (0, \infty)$.]} 
\label{fig_DeltaCapS}
\end{center}
\end{figure}

\begin{rem}
\label{rem_SymmetrySetXY}
We note that 
$(\Delta \cap S_{\alpha}(r)) \cup (\Delta \cap S_{\beta}(r)) $ has a line symmetry about $y=x$.  
\end{rem}

\begin{lem}
\label{lem_slope1}
Suppose that one of the  boundary slopes of a rational class of $ H_2(N, \partial N)$ equals $1$. 
Then the other two  boundary slopes also  equal $1$. 
\end{lem}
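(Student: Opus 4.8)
The plan is to work directly with the boundary slope formulas recorded just before Lemma~\ref{lem_topological-type}. Recall that for a rational class $a=(x,y,z)\in H_2(N,\partial N)$ one has
\[
b_{\alpha}(a)=\tfrac{y+z}{-x},\qquad b_{\beta}(a)=\tfrac{z+x}{-y},\qquad b_{\gamma}(a)=\tfrac{x+y}{-z}.
\]
First I would observe that by the $3$-fold symmetry $h_*$ of $H_2(N,\partial N)$ (which cyclically permutes $\alpha,\beta,\gamma$ and hence the three boundary slopes), it suffices to assume that the slope on $T_{\gamma}$ equals $1$, i.e.\ $b_{\gamma}(a)=1$, and to deduce $b_{\alpha}(a)=b_{\beta}(a)=1$ as well; the other starting cases follow by applying $h_*$ or $(h^2)_*$.

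So suppose $b_{\gamma}(a)=\tfrac{x+y}{-z}=1$, which gives the linear relation $x+y=-z$, equivalently $x+y+z=0$. The key step is then purely algebraic: substitute $z=-(x+y)$ into the other two slope expressions. One computes
\[
b_{\alpha}(a)=\frac{y+z}{-x}=\frac{y-(x+y)}{-x}=\frac{-x}{-x}=1,
\qquad
b_{\beta}(a)=\frac{z+x}{-y}=\frac{-(x+y)+x}{-y}=\frac{-y}{-y}=1,
\]
provided $x\ne 0$ and $y\ne 0$. The only thing to check carefully is that the denominators do not vanish, i.e.\ that the condition $b_{\gamma}(a)=1$ is not vacuously satisfied by a class with $x=0$ or $y=0$; here one uses that $a$ is a genuine class for which the three boundary slopes are all defined (equivalently, none of $x,y,z$ is zero), which is implicit in the hypothesis that $a$ "has" a boundary slope equal to $1$ on the relevant torus. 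If $x=0$ then $b_{\alpha}$ is undefined, and similarly for $y$ and $b_{\beta}$; and if $z=0$ then $b_{\gamma}$ is undefined, contradicting $b_{\gamma}(a)=1$. So on the locus where all three slopes are defined, the relation $x+y+z=0$ forces all three to equal $1$.

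The only real obstacle — and it is a minor one — is bookkeeping about which slope formula corresponds to which cusp and getting the signs right; once the substitution $x+y+z=0$ is in hand the computation is immediate and symmetric in the three variables, so no case analysis beyond the cyclic reduction via $h_*$ is needed. I would present the argument for the $T_{\gamma}$ case and remark that the remaining two cases are identical after applying the symmetry.
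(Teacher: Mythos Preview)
Your proof is correct, and it is actually more direct than the paper's. The paper first uses the symmetries of $H_2(N,\partial N)$ to reduce to the fibered face $\Delta$, and then invokes Lemma~\ref{lem_boundary} to observe that the three segments $\Delta\cap S_{\alpha}(1)$, $\Delta\cap S_{\beta}(1)$, $\Delta\cap S_{\gamma}(1)$ all lie on the same line $y=-x+\tfrac{1}{2}$ and hence coincide. You instead read off directly from the slope formulas that any one of the conditions $b_{\alpha}(a)=1$, $b_{\beta}(a)=1$, $b_{\gamma}(a)=1$ is equivalent to the single symmetric relation $x+y+z=0$, so all three hold at once. Your route avoids the fibered-face bookkeeping of Lemma~\ref{lem_boundary} entirely; the paper's route has the advantage of fitting into the geometric picture of the intersections $\Delta\cap S_{\bullet}(r)$ that is used repeatedly elsewhere in the section. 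Note also that your preliminary reduction via $h_*$ is unnecessary: since each of the three slope conditions is already equivalent to $x+y+z=0$, the argument is symmetric from the start and no cyclic reduction is needed.
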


\begin{proof} 
Because of the symmetries of $H_2(N, \partial N)$, 
it suffices to suppose that the rational class lives in $int(C_{\Delta})$. 
By Lemma~\ref{lem_boundary}, 
$\Delta \cap S_{\alpha}(1)= \Delta \cap S_{\beta}(1)=  \Delta \cap S_{\gamma}(1) $. 
This leads to  the lemma. 
\end{proof}

We present a formula for the Thurston norm  of $N(r)$ by using  the Thurston norm of $N$.

\begin{lem}
\label{lem_NormChange}
Let $p \in {\Bbb N}$ and $q \in {\Bbb Z}$ be coprime such that $ \tfrac{p}{q} \in {\mathcal Hyp}$. 
If $a =(x,y,z)\in  S_{\gamma}(\tfrac{p}{q})$, then 
the Thurston norm of $\overline{a} \in H_2(N_{\gamma}(\tfrac{p}{q}), \partial N_{\gamma}(\tfrac{p}{q}))$ 
equals $ \|a\| -  |\tfrac{z}{q}|$. 
In particular 
$$  \|\overline{a}\|_{p/q}  = 1 - \tfrac{1}{p+q}  \hspace{4mm}\mbox{if}\   a \in \Delta \cap S_{\gamma}(\tfrac{p}{q}).$$ 
\end{lem}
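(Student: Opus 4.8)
The plan is to compute the Thurston norm of $\overline{a}$ directly, using the relationship between minimal representatives in $N_{\gamma}(\tfrac{p}{q})$ and minimal representatives in $N$. Recall from Proposition~\ref{prop_S_r} (applied with the cusp $T_{\gamma}$) that $\iota_{\gamma}: H_2(N_{\gamma}(\tfrac{p}{q}), \partial N_{\gamma}(\tfrac{p}{q})) \to H_2(N, \partial N)$ is injective with image $S_{\gamma}(\tfrac{p}{q})$. So $a = \iota_{\gamma}(\overline{a})$ and the two classes correspond to one another. The key geometric fact is that when we Dehn fill the cusp along $T_{\gamma}$ with a slope meeting the boundary curves of a surface representing $a$, a minimal-genus representative $F$ of $a$ in $N$ — whose boundary on $T_{\gamma}$ consists of $\sharp(\partial_{\gamma} F_a)$ parallel copies of the boundary slope $b_{\gamma}(a)$ — can be capped off in $N_{\gamma}(\tfrac{p}{q})$ by meridian disks of the attached solid torus $D(\tfrac{p}{q})$, provided the filling slope $\tfrac{p}{q}$ is the boundary slope $b_{\gamma}(a)$. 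In general the filling slope $\tfrac{p}{q}$ need not equal $b_{\gamma}(a)$, so instead one caps off using the right number of disks determined by the intersection number of the boundary curves with the meridian of $D(\tfrac{p}{q})$.

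**Computing the Euler characteristic change.** First I would recall (Lemma~\ref{lem_topological-type}) that for a primitive class $a = (x,y,z)$, the boundary on $T_{\gamma}$ is $\sharp(\partial_{\gamma}F_a) = \gcd(z, x+y)$ curves, each of slope $b_{\gamma}(a) = \tfrac{x+y}{-z}$. When we fill $T_{\gamma}$ along the slope $\tfrac{p}{q}$, the geometric intersection number of one copy of the curve of slope $b_{\gamma}(a)$ with the meridian slope $\tfrac{p}{q}$ of $D(\tfrac{p}{q})$ is a determinant of the form $|p \cdot (-z/\gcd) - q\cdot(x+y)/\gcd|$ type expression; after a short computation one sees each boundary curve of $F_a$ on $T_\gamma$ meets the core of $D(\tfrac pq)$ algebraically (hence, for an appropriately chosen minimal representative, geometrically) in a number of points, and the total number of meridian disks needed to cap off, i.e. the number of new boundary-parallel intersection points collapsed, works out to $|z/q|$. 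Capping off by $|z/q|$ disks raises $\chi$ by $|z/q|$, so $-\chi$ drops by $|z/q|$, giving $\|\overline{a}\|_{p/q} = \|a\| - |z/q|$. I would need to verify that the resulting capped surface is still norm-minimizing in $N_\gamma(\tfrac pq)$ — this follows because $\iota_\gamma$ being a norm-non-increasing map combined with the explicit construction gives both inequalities; one direction is the construction just described, the other uses that any minimal representative in the filled manifold can be isotoped off $D(\tfrac pq)$ or intersect it in meridian disks, pushing back to a representative of $a$ in $N$ whose norm is controlled.

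**The special case.** For the last assertion, I would plug in $a \in \Delta \cap S_\gamma(\tfrac pq)$: here $\|a\| = x + y - z = 1$ by (\ref{equation_int-delta}), and the relation $-\tfrac pq z = x+y$ defining $S_\gamma(\tfrac pq)$ combined with $x+y-z = 1$ gives $x + y = \tfrac{p}{p+q}$ and $-z = \tfrac{q}{p+q} \cdot(\text{something})$; solving, $z = \tfrac{-q}{p+q}$ after normalizing, so $|z/q| = \tfrac{1}{p+q}$, yielding $\|\overline{a}\|_{p/q} = 1 - \tfrac{1}{p+q}$. (The arithmetic: from $x+y-z=1$ and $(p/q)(-z) = x+y$ we get $-z(p/q) - z = 1$, i.e. $-z(p+q)/q = 1$, so $z = -q/(p+q)$, hence $|z/q| = 1/(p+q)$.)

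**Main obstacle.** The hard part will be the topological bookkeeping in the capping-off argument: justifying rigorously that the minimal representative $F_a$ has boundary consisting of exactly $\gcd(z,x+y)$ curves of the stated slope, that these can be capped by precisely $|z/q|$ meridian disks after a suitable isotopy (this is really a lemma about how a surface with boundary of slope $b_\gamma(a)$ on a torus interacts with a Dehn filling along slope $\tfrac pq$, and involves the mod-$q$ behavior of $z$), and that no further compression is possible in the filled manifold so that the capped surface is genuinely norm-minimizing. The inequality $\|\overline a\|_{p/q} \le \|a\| - |z/q|$ comes from the explicit construction; the reverse inequality requires the standard argument that a norm-minimizing surface in $N_\gamma(\tfrac pq)$ meeting $D(\tfrac pq)$ only in meridian disks can be retubed to a surface in $N$ representing $a$, with controlled Euler characteristic — this uses that $D(\tfrac pq)$ is a solid torus and an innermost-disk/irreducibility argument.
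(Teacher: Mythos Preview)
You have missed the key simplification that makes this lemma nearly immediate: by definition, $S_{\gamma}(\tfrac{p}{q}) = \{(x,y,z) : -\tfrac{p}{q}z = x+y\}$, while the boundary slope is $b_{\gamma}(a) = \tfrac{x+y}{-z}$. So $a \in S_{\gamma}(\tfrac{p}{q})$ \emph{means precisely} that $b_{\gamma}(a) = \tfrac{p}{q}$. Your statement ``In general the filling slope $\tfrac{p}{q}$ need not equal $b_{\gamma}(a)$'' is therefore false in this context, and the intersection-number bookkeeping you propose is unnecessary.

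With this observation, the paper's proof is one line for integral $a$: each of the $\sharp(\partial_{\gamma}F_a) = \gcd(z,x+y)$ boundary circles of $F_a$ on $T_{\gamma}$ has slope exactly $\tfrac{p}{q}$, hence bounds a meridian disk in the attached solid torus, so
\[
\|\overline{a}\|_{p/q} = \|a\| - \gcd(z,x+y).
\]
Now since $-pz = q(x+y)$ with $\gcd(p,q)=1$, we have $q \mid z$; writing $z = qm$ gives $x+y = -pm$, whence $\gcd(z,x+y) = |m|\gcd(p,q) = |m| = |z/q|$. This formula for integral classes then extends continuously to all of $S_{\gamma}(\tfrac{p}{q})$. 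Your final arithmetic for $a \in \Delta \cap S_{\gamma}(\tfrac{p}{q})$ is correct and matches the paper.

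You are right that one must know the capped surface remains norm-minimizing in $N_{\gamma}(\tfrac{p}{q})$; the paper takes this as understood (it is the standard fact that a fiber of a fibration is Thurston-norm-minimizing, and here the capped $F_a$ is again a fiber). But there is no need for the general innermost-disk/retubing argument you sketch, since the slopes match exactly.
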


\begin{proof} 
Suppose that $a \in S_{\gamma}(\tfrac{p}{q})$ is an integral class. 
Then  $ \|\overline{a}\|_{p/q} $ equals $\|a\|$ minus the number of the boundary components of $F_a$  which lie on $T_{\gamma}$, that is 
$$\|\overline{a}\|_{p/q} = \|a\| - \gcd(z, x+y) =\|a\| - |\tfrac{z}{q}|.$$ 
The Thurston norm  $\|\cdot\|_{p/q} $ defined on integral classes admits a unique continuous extension to 
$H_2(N(\tfrac{p}{q}), \partial N( \tfrac{p}{q}); {\Bbb R})$. 
Thus the above formula holds for any class $a \in S_{\gamma}(\tfrac{p}{q})$. 

Suppose that $a \in \Delta \cap S_{\gamma}(\tfrac{p}{q})$. 
Then  $p+q>0$. 
One has 
$-pz = q(x+y)$ and $\|a\|= x+y-z=1$. 
Hence $-pz = q(1+z)$, and one obtains $z= \tfrac{q}{-(p+q)}$. 
Thus $ \|\overline{a}\|_{p/q}  = 1 -  |\tfrac{z}{q}|= 1 - \tfrac{1}{p+q} $. 
\end{proof}

\noindent
Similarly, we have: 

\begin{lem}
\label{lem_NormChange2}
If one fills the cusp of $N$ specified by  the torus $T_{\alpha}$ (resp. $T_{\beta}$) along the slope $\tfrac{p}{q}$, 
then the Thurston norm of $N_{\alpha}(\tfrac{p}{q})$ (resp. $N_{\beta}(\tfrac{p}{q})$) is given by 
\begin{eqnarray*}
 \|\overline{a}\|_{p/q} &=& \|a\| -  |\tfrac{x}{q}|\ \mbox{for} \ a= (x,y,z) \in S_{\alpha}(\tfrac{p}{q})
 \\
(\mbox{resp.}\   \|\overline{a}\|_{p/q} &=& \|a\| -  |\tfrac{y}{q}|\ \mbox{for}\  a= (x,y,z) \in S_{\beta}(\tfrac{p}{q})). 
\end{eqnarray*}
\end{lem}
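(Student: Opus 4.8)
The plan is to mirror the proof of Lemma~\ref{lem_NormChange} almost verbatim, exploiting the symmetries of $H_2(N,\partial N)$ to reduce the two new cases (filling $T_\alpha$, filling $T_\beta$) to the case already treated (filling $T_\gamma$). First I would recall from Lemma~\ref{lem_topological-type} that for a primitive integral class $a=(x,y,z)\in H_2(N,\partial N)$ one has $\sharp(\partial_\alpha F_a)=\gcd(x,y+z)$, $\sharp(\partial_\beta F_a)=\gcd(y,z+x)$, $\sharp(\partial_\gamma F_a)=\gcd(z,x+y)$. When we Dehn fill the cusp specified by $T_\alpha$ (resp. $T_\beta$) along $\tfrac{p}{q}$, the minimal representative $F_{\overline a}$ of $\overline a=\iota_\alpha^{-1}(a)$ (resp. $\iota_\beta^{-1}(a)$) is obtained from $F_a$ by capping off precisely the boundary curves lying on $T_\alpha$ (resp. $T_\beta$) with disks; hence $-\chi$ drops by exactly the number of those curves. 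So $\|\overline a\|_{p/q}=\|a\|-\sharp(\partial_\alpha F_a)$ (resp. $\|a\|-\sharp(\partial_\beta F_a)$) for integral classes.

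Next I would identify $\sharp(\partial_\alpha F_a)$ with $\bigl|\tfrac{x}{q}\bigr|$ under the constraint $a\in S_\alpha(\tfrac{p}{q})$. Since $a\in S_\alpha(\tfrac{p}{q})$ means $-\tfrac{p}{q}x=y+z$, i.e. $-px=q(y+z)$, and $\gcd(p,q)=1$, it follows that $q\mid x$ and $\gcd(x,y+z)=\gcd(x,-\tfrac{p}{q}x)=|x|\cdot\gcd(1,\tfrac{p}{q}\cdot\text{sign stuff})$; more carefully, writing $x=q x'$ we get $y+z=-p x'$, so $\gcd(x,y+z)=\gcd(qx',px')=|x'|\gcd(p,q)=|x'|=\bigl|\tfrac{x}{q}\bigr|$. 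This gives $\|\overline a\|_{p/q}=\|a\|-\bigl|\tfrac{x}{q}\bigr|$ on primitive integral classes; the same argument with $y$ in place of $x$ and $S_\beta(\tfrac pq)$ handles the $T_\beta$ case. Then, exactly as in Lemma~\ref{lem_NormChange}, I would invoke the fact that the Thurston norm $\|\cdot\|_{p/q}$ defined on integral classes extends uniquely and continuously (linearly on rays) to $H_2(N(\tfrac pq),\partial N(\tfrac pq);\Bbb R)$, and that both sides of the claimed identity are continuous and homogeneous of degree one in $a$ on $S_\alpha(\tfrac pq)$ (resp. $S_\beta(\tfrac pq)$), so the formula, valid on a set of primitive integral classes spanning the subspace, propagates to all real classes there.

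Alternatively, and perhaps more cleanly, I could deduce everything formally from Lemma~\ref{lem_NormChange} via the order-$3$ isomorphism $h_*:H_2(N,\partial N)\to H_2(N,\partial N)$ induced by the symmetry $h$ of $(S^3,\mathcal C_3)$ that cyclically permutes $K_\alpha,K_\beta,K_\gamma$: since $h$ is a homeomorphism of the pair it preserves the Thurston norm, carries $S_\gamma(r)$ to $S_\alpha(r)$ and $S_\beta(r)$ appropriately, and intertwines the three filling maps $\iota_\gamma,\iota_\alpha,\iota_\beta$; applying Lemma~\ref{lem_NormChange} and chasing the coordinate permutation $(x,y,z)\mapsto(y,z,x)$ immediately yields the two displayed formulas. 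I expect the proof to be short, and the only point requiring a little care is the bookkeeping of which coordinate ($x$ versus $y$) and which sign appears after the symmetry is applied — that is, making sure the permutation of coordinates induced by $h_*$ matches the permutation of the tori $T_\alpha,T_\beta,T_\gamma$ correctly, as flagged in the discussion preceding Lemma~\ref{lem_topological-type}. This is the main (minor) obstacle; everything else is a direct transcription of the argument for $N_\gamma(\tfrac pq)$.
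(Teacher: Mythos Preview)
Your proposal is correct and takes essentially the same approach as the paper: the paper simply writes ``Similarly, we have'' before stating the lemma, leaving the proof implicit as a direct transcription of the argument for Lemma~\ref{lem_NormChange}. Your first approach (counting the boundary components on $T_\alpha$ or $T_\beta$ via Lemma~\ref{lem_topological-type}, computing the relevant $\gcd$ explicitly from the constraint $-px=q(y+z)$, then extending by continuity and homogeneity) is exactly this transcription, with more detail than the paper supplies; the alternative via the symmetry $h_*$ is equally valid and equally in the spirit of the paper's treatment.
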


\subsection{Thurston norm equivalence on manifolds $N(r)$}

Let $p \in {\Bbb N}$ and $q \in {\Bbb Z}$ be coprime such that $r= \tfrac{p}{q} \in {\mathcal Hyp}$. 
We shall investigate the shape of the Thurston norm ball of $N(r)= N_{\beta}(r)$.

First we take $ \mathfrak{a}_r, \mathfrak{b}_r \in S_{\beta}(r)$ as follows. 
\begin{eqnarray*}
\mathfrak{a}_r = (\tfrac{p+1}{2}, -q, \tfrac{p-1}{2}),\  
\mathfrak{b}_r = (\tfrac{p-1}{2}, -q, \tfrac{p+1}{2})\ &\mbox{if}&\ p\ \mbox{is\ odd}, 
\\
\mathfrak{a}_r = (\tfrac{p}{2}+1, -q, \tfrac{p}{2}-1),\  
\mathfrak{b}_r = (\tfrac{p}{2}, -q, \tfrac{p}{2})\ &\mbox{if}&\ p\ \mbox{is\ even}.
\end{eqnarray*}

\begin{lem}
\label{lem_basis}
The set $\{\overline{\mathfrak{a}_r}, \overline{\mathfrak{b}_r}\}$ is a basis of $H_2(N_{\beta}(r), \partial N_{\beta}(r); {\Bbb Z})$. 
\end{lem}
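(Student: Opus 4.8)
The plan is to show directly that $\{\overline{\mathfrak{a}_r}, \overline{\mathfrak{b}_r}\}$ forms a $\mathbb Z$-basis of $H_2(N_\beta(r),\partial N_\beta(r);\mathbb Z)$, a rank-$2$ free abelian group. By Proposition~\ref{prop_S_r}, the injection $\iota_\beta$ identifies $H_2(N_\beta(r),\partial N_\beta(r))$ with the subgroup $S_\beta(r)=\{(x,y,z)\in H_2(N,\partial N)\mid -ry=z+x\}$ of $H_2(N,\partial N)\cong\mathbb Z^3$, where $r=\tfrac pq$ in lowest terms with $p\in\mathbb N$. Since $\iota_\beta$ is an isomorphism onto $S_\beta(r)$, it suffices to prove that $\mathfrak{a}_r$ and $\mathfrak{b}_r$ generate $S_\beta(r)$ over $\mathbb Z$. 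First I would check that both $\mathfrak{a}_r$ and $\mathfrak{b}_r$ actually lie in $S_\beta(r)$: writing $r=p/q$, the defining relation $-ry=z+x$ becomes $-py=q(z+x)$, and in each of the two cases ($p$ odd, $p$ even) one just substitutes the listed coordinates and verifies the identity (e.g.\ for $p$ odd, $-p(-q)=pq$ and $q(z+x)=q\cdot p$). This is a one-line computation in each case.

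Next I would identify $S_\beta(r)$ concretely as a sublattice of $\mathbb Z^3$ and find an explicit basis of it, then show $\mathfrak{a}_r,\mathfrak{b}_r$ is another basis by checking the change-of-basis matrix is unimodular. The lattice $S_\beta(r)=\{(x,y,z)\in\mathbb Z^3\mid -py=q(z+x)\}$; since $\gcd(p,q)=1$, the condition $q\mid py$ forces $q\mid y$, say $y=qk$, and then $-pk=z+x$, so $z=-pk-x$ with $x,k\in\mathbb Z$ free. Hence $(x,y,z)=x(1,0,-1)+k(0,q,-p)$, giving the natural basis $e_1=(1,0,-1)$, $e_2=(0,q,-p)$ of $S_\beta(r)$. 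Now I would express $\mathfrak{a}_r,\mathfrak{b}_r$ in terms of $e_1,e_2$. In the odd case, $\mathfrak{b}_r=(\tfrac{p-1}2,-q,\tfrac{p+1}2)=\tfrac{p-1}2 e_1 + (-1)e_2$ (check: $\tfrac{p-1}2(1,0,-1)+(-1)(0,q,-p)=(\tfrac{p-1}2,-q,-\tfrac{p-1}2+p)=(\tfrac{p-1}2,-q,\tfrac{p+1}2)$, correct), and similarly $\mathfrak{a}_r=\tfrac{p+1}2 e_1+(-1)e_2$. The matrix $\begin{pmatrix}\tfrac{p+1}2 & -1\\ \tfrac{p-1}2 & -1\end{pmatrix}$ has determinant $-\tfrac{p+1}2+\tfrac{p-1}2=-1$, hence is unimodular, so $\{\mathfrak{a}_r,\mathfrak{b}_r\}$ is a basis. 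The even case is entirely analogous: $\mathfrak{a}_r=(\tfrac p2+1)e_1-e_2$ and $\mathfrak{b}_r=\tfrac p2 e_1-e_2$, with determinant $-(\tfrac p2+1)+\tfrac p2=-1$.

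Since $\iota_\beta$ is an isomorphism onto its image $S_\beta(r)$, pulling back via $\iota_\beta^{-1}$ shows $\{\overline{\mathfrak{a}_r},\overline{\mathfrak{b}_r}\}$ is a $\mathbb Z$-basis of $H_2(N_\beta(r),\partial N_\beta(r);\mathbb Z)$. I do not expect any genuine obstacle here: the only mild care needed is the case split on the parity of $p$ and the bookkeeping to pin down a clean basis $\{e_1,e_2\}$ of $S_\beta(r)$ using $\gcd(p,q)=1$. One could alternatively avoid the case split by noting $\mathfrak{a}_r-\mathfrak{b}_r=(1,0,-1)=e_1$ in both cases (so $e_1$ is in the span), and then using either $\mathfrak{a}_r$ or $\mathfrak{b}_r$ together with $e_1$ and the structure of $S_\beta(r)$ to recover $e_2$; but the direct determinant computation is cleanest and I would present that.
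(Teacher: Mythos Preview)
Your proof is correct and follows essentially the same approach as the paper: both identify $H_2(N_\beta(r),\partial N_\beta(r);\mathbb Z)$ with $S_\beta(r)$ via $\iota_\beta$, parametrize $S_\beta(r)$ using $\gcd(p,q)=1$ to write a generic element as $(x,-qt,pt-x)$ (your $k=-t$), and then check that $\mathfrak a_r,\mathfrak b_r$ generate. The only cosmetic difference is that the paper writes down explicit coefficients $k_0,\ell_0$ expressing a generic element of $S_\beta(r)$ in terms of $\mathfrak a_r,\mathfrak b_r$, whereas you name the intermediate basis $e_1,e_2$ and verify the change-of-basis determinant is $-1$; these are the same computation, packaged differently.
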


\begin{proof} 
By Proposition~\ref{prop_S_r},  ${\rm Im} \, \iota_{\beta}= S_{\beta}(r)$. 
Thus it is enough to show  that for any integral class $a= (x,y,z) \in S_{\beta}(r)$, 
there exist integers $k_0,\ell_0$ such that $a=k _0\mathfrak{a}_r  + \ell_0 \mathfrak{b}_r $. 
One has $-py= q(x+z)$. 
Since $p$ and $q$ are coprime, 
there exists an integer $t$ such that $x+z= pt$. 
Hence $z= pt-x$. 
Substitute $x+z= pt$ for $-py= q(x+z)$, then one obtains $ (x,y,z)= (x, -qt, pt-x)$. 
Now let us take $k_0=(\tfrac{1-p}{2}) t +x$, 
$\ell_0 = (\tfrac{1+p}{2})t-x$ if $p$ is odd 
(resp. $k_0= (\tfrac{-p}{2}) t+x$, $\ell_0= (1+\tfrac{p}{2})t-x$ if $p$ is even). 
One can check that $a= k _0\mathfrak{a}_r  + \ell_0 \mathfrak{b}_r $. 
\end{proof}

\begin{lem}
\label{lem_shape}
Let $r= \tfrac{p}{q}$ be as above. 
The Thurston norm ball of $N(r)$   is described by using $(\overline{\mathfrak{a}_r}, \overline{\mathfrak{b}_r})$ coordinates  as follows. 
(See Figure~\ref{fig_Ball_Nr}.) 
\begin{enumerate}

\item[(1)] 
Suppose that $r \in (- \infty, -2)$. 
\begin{enumerate}
\item[(i)]  
If $|q| (= -q) \ne 1$, then 
$B_r(p+q-1)$ is a hexagon with vertices  
\begin{eqnarray*}
\pm (\tfrac{p+2q+1}{2}, \tfrac{p+2q-1}{-2}), 
\pm (\tfrac{p+2q-1}{2}, \tfrac{p+2q+1}{-2}), 
\pm (\tfrac{p+q-1}{2}, \tfrac{p+q-1}{-2})&\ &\mbox{when}\ p\ \mbox{is\ odd}, 
\\
\pm (\tfrac{p+2q}{2}, \tfrac{p+2q-2}{-2}), 
\pm (\tfrac{p+2q}{2}, \tfrac{p+2q+2}{-2}), 
\pm (\tfrac{p+q-1}{2}, \tfrac{p+q-1}{-2})&\ & \mbox{when}\ p\ \mbox{is\ even}.
\end{eqnarray*}

\item[(ii)] 
If $|q| (= -q)=1$, then 
$B_r(p+q-1)$ is a rectangle 
with vertices 
\begin{eqnarray*}
\pm (\tfrac{p+2q+1}{2}, \tfrac{p+2q-1}{-2}), 
\pm (\tfrac{p+2q-1}{2}, \tfrac{p+2q+1}{-2})&\ & \mbox{when}\ p\ \mbox{is\ odd}, 
\\
\pm (\tfrac{p+2q}{2}, \tfrac{p+2q-2}{-2}), 
\pm (\tfrac{p+2q}{2}, \tfrac{p+2q+2}{-2})&\ & \mbox{when}\ p\ \mbox{is\ even}.
\end{eqnarray*}
\end{enumerate}

\item[(2)] 
Suppose that $r \in (-2,0)$. 
$B_r(-q)$ is a parallelogram with vertices 
\begin{eqnarray*}
\pm (\tfrac{q}{2q+2}, \tfrac{q}{2q+2}),\   \pm (\tfrac{q}{-2}, \tfrac{q}{2}) &\ &\mbox{when}\ p\ \mbox{is\ odd,\ and}
\end{eqnarray*}
$B_r(-q-1)$ is a parallelogram with vertices 
\begin{eqnarray*}
\pm (0,1),\   \pm (\tfrac{q+1}{-2}, \tfrac{q+1}{2}) &\ & \mbox{when}\ p\ \mbox{is\ even}.
\end{eqnarray*}

\item[(3)] 
Suppose that $r \in (0, \infty)$. 
\begin{enumerate}
\item[(i)] 
If $|q|(= q) \ne 1$, then 
$B_r(p+q-1)$ is a hexagon with vertices 
\begin{eqnarray*}
\pm (\tfrac{p+1}{2}, \tfrac{p-1}{-2}), 
\pm (\tfrac{p-1}{2}, \tfrac{p+1}{-2}), 
\pm (\tfrac{p+q-1}{2}, \tfrac{p+q-1}{-2})&\ & \mbox{when}\ p\ \mbox{is\ odd}, 
\\
\pm (\tfrac{p}{2}, \tfrac{p-2}{-2}), 
\pm(\tfrac{p}{2}, \tfrac{p+2}{-2}), 
\pm (\tfrac{p+q-1}{2}, \tfrac{p+q-1}{-2})&\ & \mbox{when}\ p\ \mbox{is\ even}.
\end{eqnarray*}

\item[(ii)] 
If  $|q| (= q)=1$, then 
$B_r(p+q-1)$ is a rectangle with vertices 
\begin{eqnarray*}
\pm (\tfrac{p+1}{2}, \tfrac{p-1}{-2}), 
\pm (\tfrac{p-1}{2}, \tfrac{p+1}{-2})&\ &\mbox{when}\ p\ \mbox{is\ odd}, 
\\
\pm (\tfrac{p}{2}, \tfrac{p-2}{-2}), 
\pm(\tfrac{p}{2}, \tfrac{p+2}{-2})&\ &\mbox{when}\ p\ \mbox{is\ even}.
\end{eqnarray*}
\end{enumerate}
\end{enumerate}
\end{lem}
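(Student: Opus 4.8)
The plan is to transport the whole problem to the plane $S_\beta(r)\subset H_2(N,\partial N;{\Bbb R})$. By Proposition~\ref{prop_S_r} the injection $\iota_\beta$ identifies $H_2(N_\beta(r),\partial N_\beta(r);{\Bbb R})$ with $S_\beta(r)$, and by Lemma~\ref{lem_basis} the pair $(\mathfrak a_r,\mathfrak b_r)$ is the image of an integral basis. So I would write a real class $\overline a$ as $\overline a\leftrightarrow a=u\,\mathfrak a_r+v\,\mathfrak b_r$ with $(u,v)\in{\Bbb R}^2$, and the task becomes: identify the convex polygon $B_r(d)=\{(u,v)\mid\|\overline a\|_r\le d\}$ and read off its vertices in the $(\overline{\mathfrak a_r},\overline{\mathfrak b_r})$--coordinates $(u,v)$.

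First I would make the ambient norm explicit. From Thurston's description of $U_N$ together with the fact that $\|(x,y,z)\|=x+y-z$ on $C_\Delta$ and the order--$3$ symmetry $h_*$ of Section~\ref{subsection_Ent-symmteries}, one gets $\|(x,y,z)\|=\max\{|x+y-z|,\,|x-y+z|,\,|-x+y+z|\}$ on all of $H_2(N,\partial N;{\Bbb R})$. Substituting $a=u\,\mathfrak a_r+v\,\mathfrak b_r$ and writing $a=(x,y,z)$, one finds $y=-q(u+v)$ (independently of the parity of $p$), while the three forms $x+y-z$, $x-y+z$, $-x+y+z$ become explicit linear forms in $(u,v)$ with coefficients built from $1\pm q$, $2\pm q$ and $p+q$; for instance when $p$ is odd they are $(1-q)u-(1+q)v$, $(p+q)(u+v)$ and $-(1+q)u+(1-q)v$, and the $p$--even case is entirely parallel with analogous coefficients. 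Combining this with Lemma~\ref{lem_NormChange2}, which gives $\|\overline a\|_r=\|a\|-|y/q|=\|a\|-|u+v|$, produces a closed piecewise--linear formula for $\|\overline a\|_r$ as a maximum of absolute values of linear forms in $(u,v)$ minus $|u+v|$.

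It then remains to turn this formula into a polygon. Since $N(r)$ is hyperbolic for $r\in{\mathcal Hyp}$, the norm $\|\cdot\|_r$ is genuine, so $B_r(d)$ is a bounded, centrally symmetric convex polygon; its edges lie on the lines $\ell=d$ for the various linear pieces $\ell$ of $\|\overline a\|_r$, and its vertices are the successive intersections of adjacent such lines. Fixing which of the four ranges $r$ lies in pins down the sign of $q$ (hence of $1\pm q$, $2\pm q$, $p+q$) and, through the sector structure already visible for $\widehat B_{\beta,r}(1)$ in Figure~\ref{fig_rokumentai}, the cyclic order in which the three forms attain the maximum; resolving the absolute values sector by sector then gives the explicit linear pieces, and I would simply check that the resulting intersection points are exactly the listed vertices and that the radius is the stated $d$ (it is quickest to verify directly that each listed vertex has $\|\overline a\|_r=d$ and that the whole boundary segment joining two consecutive listed vertices lies on a single line $\ell=d$). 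The subcase $|q|=1$ has to be singled out because there the two outer forms, after subtracting $|u+v|$, align into a single edge, so the generic hexagon in the ranges $(-\infty,-2)$ and $(0,\infty)$ collapses to a rectangle, while in the range $(-2,0)$ one gets the stated parallelogram. The bookkeeping can be roughly halved by invoking the Thurston--norm--preserving identification $N(r)\underset{\mathrm{T}}{\sim}N(-2-r)$ of Proposition~\ref{prop_norm-equiv}, which matches $(-\infty,-2)$ with $(0,\infty)$ and the two halves of $(-2,0)$ with each other, so that only cases (1) and (2) need a complete treatment.

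The only real difficulty is this last bit of bookkeeping: keeping track, uniformly across the four ranges and the odd/even and $|q|=1$ subcases, of which linear form is active on which sector and of the normalization in which the vertices are expressed. Once the explicit formula for $\|\overline a\|_r$ in $(u,v)$--coordinates is in hand there is no conceptual obstacle, only the routine but error--prone casework, and Figure~\ref{fig_Ball_Nr} serves as a guide to organize it.
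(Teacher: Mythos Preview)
Your approach is correct and is essentially the paper's own argument, carried out more explicitly: the paper also starts from $\widehat B_{\beta,r}(1)=B(1)\cap S_\beta(r)$, applies the norm--change formula $\|\overline a\|_r=\|a\|-|y/q|$ from Lemma~\ref{lem_NormChange2}, and describes the resulting deformation of the parallelogram/hexagon (calling it a ``shearing'' that degenerates to a rectangle exactly when $|q|=1$), leaving the vertex computations as ``straightforward to verify''. Your explicit formula $\|(x,y,z)\|=\max\{|x+y-z|,|x-y+z|,|{-x}+y+z|\}$ and the substitution $y=-q(u+v)$ are the right way to make that verification precise.

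One point to correct: you cannot invoke Proposition~\ref{prop_norm-equiv} to halve the casework, since its proof in the paper \emph{uses} Lemma~\ref{lem_shape} (it reads off and compares the vertex lists of $B_r(p+q-1)$ and $B_{r'}(p'+q'-1)$). This is only your optional shortcut, so removing it does no damage to the argument---you simply have to carry out cases (1), (2), (3) separately.
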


\begin{proof}
Let us consider the classes in $\widehat{B}_{\beta,r}(1)(= B(1) \cap S_{\beta}(r))$, 
see Figure~\ref{fig_rokumentai}. 
Then $\|a\|=1$ and $\|\overline{a}\|_r= 1 - |\tfrac{y}{q}|$ 
for all $a = (x,y,z) \in \widehat{B}_{\beta,r}(1)$. 
To find the Thurston norm ball of $N(r)$, 
one needs to shear $\widehat{B}_{\beta,r}(1)$ 
by an appropriate amount depending on the $y$-coordinate of $a$. 
One can see that 
the shearing turns the parallelogram/hexagon 
into another parallelogram/hexagon 
unless $|q|$ equals $1$. 
The degeneration of the Thurston norm ball of $N(r)$ occurs 
when $|q|$ equals $1$. 
In this case, 
the shearing makes $2$ sides of the hexagon line up, and the hexagon turns into a rectangle. 
By using this argument, it is straightforward to verify the lemma. 
\end{proof}

Note that every top dimensional face on the boundary of the Thurston norm ball  of  $N(r)$  
is a fibered face for each $r \in {\mathcal Hyp}$. 
Figure~\ref{fig_ExBall_Nr} illustrates the Thurston norm balls of $N(\tfrac{5}{-2})$, $N(\tfrac{3}{-2})$, $N(\tfrac{2}{-3})$ and $N(1)$.

\begin{figure}
\begin{center}
\includegraphics[width=5.2in]{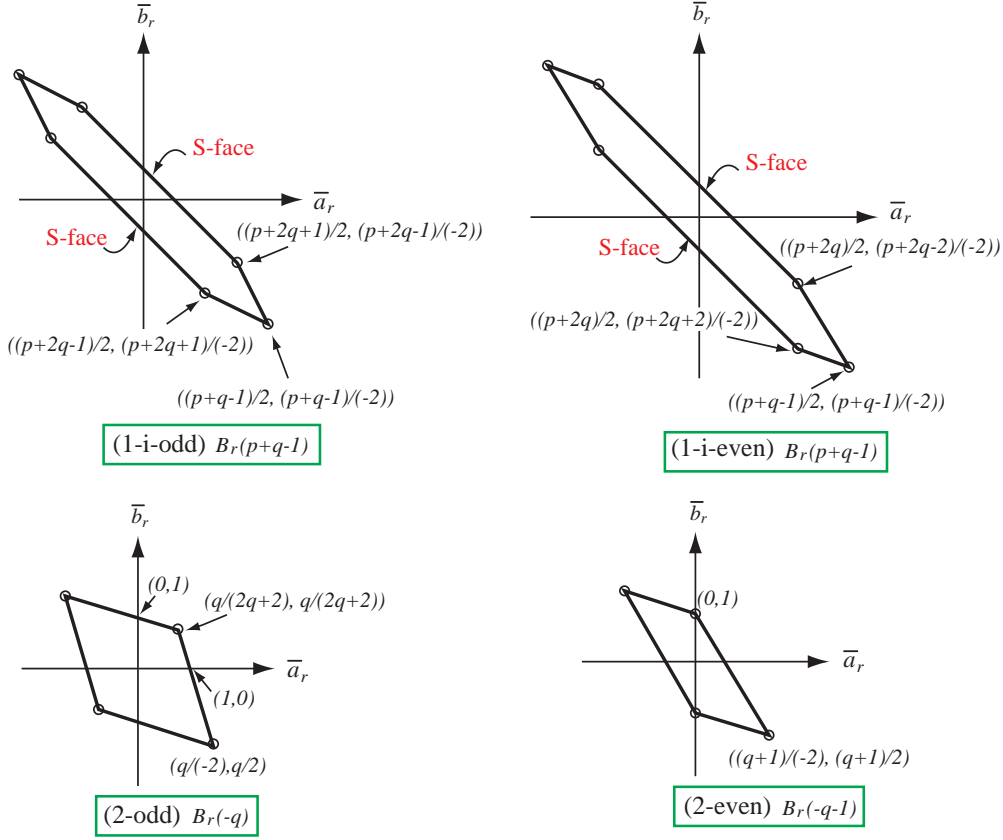}
\caption{Thurston norm ball $B_r(d)$ (with radius $d$) of $N(\tfrac{p}{q})$. 
(1-i-odd) $\tfrac{p}{q} \in (- \infty,-2) $, $q \ne 1$ and $p$ is odd. 
(1-i-even)  $\tfrac{p}{q} \in (- \infty,-2) $, $q \ne 1$ and $p$ is even. 
(2-odd) $\tfrac{p}{q} \in (-2,0)$ and $p$ is  odd. 
(2-even) $\tfrac{p}{q} \in (-2,0)$ and $p$ is even.}
\label{fig_Ball_Nr}
\end{center}
\end{figure}

 \begin{figure}[htbp]
\begin{center}
\includegraphics[width=5.5in]{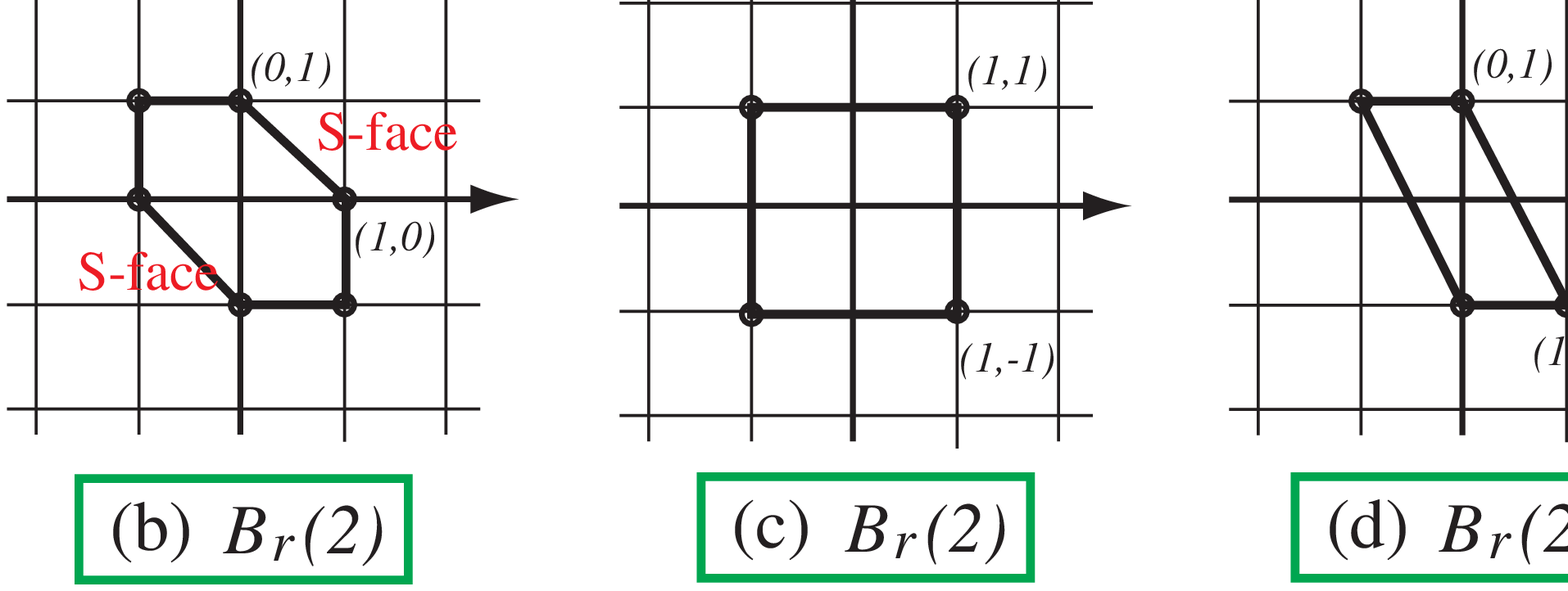}
\caption{Thurston norm ball $B_r(d)$ (with radius $d$)  of $N(r)$ when 
(a) $r= -6,4$, 
(b) $r=\tfrac{5}{-2}, \tfrac{1}{2}$, 
(c) $r = \tfrac{3}{-2},  \tfrac{1}{-2}$, 
(d) $r=\tfrac{4}{-3}, \tfrac{2}{-3}$, 
(e) $r=1$.} 
\label{fig_ExBall_Nr}
\end{center}
\end{figure}

We now prove  that 
there exist infinitely many  Thurston norm equivalent pairs obtained from $N$ by Dehn filling.

\begin{lem}
\label{lem_Teq-bunbo}
Let $p \in {\Bbb N}$ and $q \in {\Bbb Z}$ (resp. $p' \in {\Bbb N}$ and $q \in {\Bbb Z}$) be coprime such that 
$r = \tfrac{p}{q}, r'= \tfrac{p'}{q} \in {\mathcal Hyp} \cap (-2,0)$. 
Suppose that either both $p$ and $p'$ are odd or both $p$ and $p'$ are even. 
Then $N(r)  \underset{\mathrm{T}}{\sim} N(r')$. 
\end{lem}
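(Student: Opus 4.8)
The plan is to write down an explicit linear isomorphism $f\colon H_2(N(r),\partial N(r);{\Bbb Z})\to H_2(N(r'),\partial N(r');{\Bbb Z})$ and to check that it is Thurston norm preserving, using Lemma~\ref{lem_shape}(2) to compare the two norm balls.

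First I would invoke Lemma~\ref{lem_basis} to fix the bases $\{\overline{\mathfrak{a}_r},\overline{\mathfrak{b}_r}\}$ of $H_2(N_\beta(r),\partial N_\beta(r);{\Bbb Z})$ and $\{\overline{\mathfrak{a}_{r'}},\overline{\mathfrak{b}_{r'}}\}$ of $H_2(N_\beta(r'),\partial N_\beta(r');{\Bbb Z})$, and let $f$ be the linear isomorphism determined by $f(\overline{\mathfrak{a}_r})=\overline{\mathfrak{a}_{r'}}$ and $f(\overline{\mathfrak{b}_r})=\overline{\mathfrak{b}_{r'}}$. Since $f$ sends one ${\Bbb Z}$-basis to another, it is an isomorphism over ${\Bbb Z}$ (and both groups have rank $2$ by Lemma~\ref{lem_basis}).

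The crux is that the description in Lemma~\ref{lem_shape}(2) of the Thurston norm ball of $N(s)$ for $s=\tfrac{p''}{q}\in{\mathcal Hyp}\cap(-2,0)$, written in the coordinates $(\overline{\mathfrak{a}_s},\overline{\mathfrak{b}_s})$, depends only on $q$ and on the parity of $p''$, not on $p''$ itself: when $p''$ is odd it is the parallelogram $B_s(-q)$ with vertices $\pm(\tfrac{q}{2q+2},\tfrac{q}{2q+2})$ and $\pm(\tfrac{q}{-2},\tfrac{q}{2})$, and when $p''$ is even it is the parallelogram $B_s(-q-1)$ with vertices $\pm(0,1)$ and $\pm(\tfrac{q+1}{-2},\tfrac{q+1}{2})$. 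Since $r$ and $r'$ have the common denominator $q$ and $p,p'$ have the same parity, the radius $d$ (which is $-q$ in the odd case and $-q-1$ in the even case; note $q\le-1$, and $q\le-2$ when the common parity is even, so $d\ge1$) is the same for both, and the two coordinate descriptions of $B_r(d)$ and $B_{r'}(d)$ coincide verbatim. Because $f$ is the identity on coordinates relative to these bases, it follows that $f(B_r(d))=B_{r'}(d)$, and hence, scaling by $\tfrac1d$, that $f$ carries the unit ball $B_r(1)$ onto $B_{r'}(1)$.

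Finally, a linear isomorphism carrying the Thurston unit ball of $N(r)$ onto that of $N(r')$ automatically preserves the norm, since the norm is the gauge of its unit ball; restricting to integral classes yields $\|a\|_r=\|f(a)\|_{r'}$ for every $a\in H_2(N(r),\partial N(r);{\Bbb Z})$, which is precisely $N(r)\underset{\mathrm{T}}{\sim}N(r')$. I do not anticipate a genuine obstacle: all the substantive work is already contained in Lemma~\ref{lem_shape}, and the step most prone to error is merely the bookkeeping of which radius $d$ goes with which parity of $p$, together with the elementary check that $d$ does not degenerate to $0$.
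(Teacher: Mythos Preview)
Your proposal is correct and follows essentially the same approach as the paper: define $f$ by sending the basis $\{\overline{\mathfrak{a}_r},\overline{\mathfrak{b}_r}\}$ to $\{\overline{\mathfrak{a}_{r'}},\overline{\mathfrak{b}_{r'}}\}$, then invoke Lemma~\ref{lem_shape}(2) to see that the vertex description of the norm ball in these coordinates depends only on $q$ and the parity of the numerator, so $f$ carries one unit ball to the other. Your version is slightly more explicit about the bookkeeping (the radius $d$ and its nondegeneracy), but the argument is the same.
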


\begin{proof} 
Suppose that $p$ and $p'$ are odd. 
The numerator does not appear in the vertices of   $B_{r}(-q)$, see  Lemma~\ref{lem_shape}(2).  
The position for the vertices of  $B_{r}(-q)$ is the same as that of $B_{r'}(-q)$. 
Thus the natural isomorphism 
$f: H_2(N(r), \partial N(r); {\Bbb Z}) \rightarrow H_2(N(r'), \partial N(r'); {\Bbb Z}) $ which sends 
$\overline{\mathfrak{a}_r}$ to $\overline{\mathfrak{a}_{r'}}$ and 
$\overline{\mathfrak{b}_r}$ to $\overline{\mathfrak{b}_{r'}}$ becomes a Thurston norm preserving isomorphism. 

The proof in the case $p$ and $p'$ are even is similar. 
\end{proof}

\begin{prop}
\label{prop_norm-equiv}
Suppose that both $r$, $-2-r \in {\mathcal Hyp}$. 
Then  $N(r)  \underset{\mathrm{T}}{\sim} N(-2-r) $. 
\end{prop}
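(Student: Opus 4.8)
The plan is to prove the Thurston norm equivalence by exhibiting, case by case, an explicit $\mathbb{Z}$-linear isomorphism between $H_2(N(r),\partial N(r))$ and $H_2(N(-2-r),\partial N(-2-r))$ carrying one Thurston norm ball onto the other, using the explicit vertex descriptions of Lemma~\ref{lem_shape} together with Lemma~\ref{lem_Teq-bunbo}. The starting observation is purely arithmetic: if $r=\tfrac{p}{q}\in{\mathcal Hyp}$ with $p\in{\Bbb N}$, $q\in{\Bbb Z}$ coprime, then $-2-r$ is again a reduced fraction of controlled shape. Writing $-2-r=\tfrac{p'}{q'}$ in lowest terms with $p'\in{\Bbb N}$, one checks directly that $q'=q$ and $p'=-2q-p$ when $r\in(-2,0)$ (so that $-2-r\in(-2,0)$ as well), while $q'=-q$ and $p'=2q+p$ when $r\notin[-2,0]$ (so that $r$ and $-2-r$ lie on opposite sides, one in $(0,\infty)$ and one in $(-\infty,-2)$). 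In both cases $|q'|=|q|$, and $p'$ has the same parity as $p$ since $p'-p$ equals $-2(q+p)$, resp. $2q$. Because the asserted equivalence is symmetric in the pair $\{r,-2-r\}$, it suffices to treat $r\in(-2,0)$ in the first case and $r\in(0,\infty)$ in the second.

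\emph{Case $r\in(-2,0)$.} Then $-2-r\in(-2,0)\cap{\mathcal Hyp}$, it has the same denominator $q$ as $r$, and the same numerator parity; hence this case is immediate from Lemma~\ref{lem_Teq-bunbo}, which yields $N(r)\underset{\mathrm{T}}{\sim}N(-2-r)$. (Equivalently, one reads off from Lemma~\ref{lem_shape}(2) that in the $(\overline{\mathfrak{a}_r},\overline{\mathfrak{b}_r})$-coordinates the vertices of $B_r$ depend only on $q$ and on the parity of $p$, not on $p$ itself, so the natural isomorphism sending $\overline{\mathfrak{a}_r}\mapsto\overline{\mathfrak{a}_{-2-r}}$, $\overline{\mathfrak{b}_r}\mapsto\overline{\mathfrak{b}_{-2-r}}$ preserves the norm ball.)

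\emph{Case $r\in(0,\infty)$.} Now $-2-r=\tfrac{p'}{q'}\in(-\infty,-2)$ with $p'=2q+p$, $q'=-q$. The plan is to compare the explicit vertex lists for $B_r$ from Lemma~\ref{lem_shape}(3) and for $B_{-2-r}$ from Lemma~\ref{lem_shape}(1), written in the bases $(\overline{\mathfrak{a}_r},\overline{\mathfrak{b}_r})$ and $(\overline{\mathfrak{a}_{-2-r}},\overline{\mathfrak{b}_{-2-r}})$ respectively. Substituting $p'=2q+p$, $q'=-q$ gives the two identities $p'+2q'=p$ and $p'+q'-1=p+q-1$; feeding these into the vertex formulas of Lemma~\ref{lem_shape}(1) reproduces exactly the vertex formulas of Lemma~\ref{lem_shape}(3) — hexagon when $|q|\ne 1$, rectangle when $|q|=1$, with the odd-$p$ and even-$p$ sub-cases matching since $p'\equiv p\pmod 2$ — and the norm radii $p'+q'-1$ and $p+q-1$ coincide. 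Hence the $\mathbb{Z}$-isomorphism sending $\overline{\mathfrak{a}_r}\mapsto\overline{\mathfrak{a}_{-2-r}}$, $\overline{\mathfrak{b}_r}\mapsto\overline{\mathfrak{b}_{-2-r}}$ (well-defined on bases by Lemma~\ref{lem_basis}) maps $B_r(p+q-1)$ onto $B_{-2-r}(p+q-1)$, hence $B_r(1)$ onto $B_{-2-r}(1)$, and is therefore a Thurston norm preserving isomorphism.

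The only real work is the bookkeeping in the second case: verifying that the reduced form of $-2-r$ has the claimed $p',q'$, and that each of the four vertex formulas (odd/even $p$, and $|q|=1$ or not) transforms correctly under $p\mapsto 2q+p$, $q\mapsto -q$. This is the routine substitution indicated above, so I expect no genuine obstacle — the entire content sits in Lemmas~\ref{lem_Teq-bunbo} and~\ref{lem_shape}.
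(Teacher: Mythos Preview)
Your proof is correct and follows essentially the same approach as the paper's own proof: both reduce the case $r\in(-2,0)$ to Lemma~\ref{lem_Teq-bunbo}, and both handle the remaining case by directly comparing the vertex lists of Lemma~\ref{lem_shape}(1) and~(3) under the substitution $(p',q')=(p+2q,-q)$, observing that $p'+2q'=p$, $p'+q'-1=p+q-1$, and $p'\equiv p\pmod 2$, so that the natural basis-to-basis isomorphism carries one norm ball onto the other. The only cosmetic difference is that you parametrize the second case from the $(0,\infty)$ side while the paper parametrizes from the $(-\infty,-2)$ side; by the symmetry of the statement these are equivalent.
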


\begin{proof} 
Let $p \in {\Bbb N}$ and $q \in {\Bbb Z}$ be coprime such that $r= \tfrac{p}{q} \in {\mathcal Hyp}$. 
We have shown the claim when $\tfrac{p}{q} \in (-2,-1)$, see  Lemma~\ref{lem_Teq-bunbo}. 
Now suppose that $\tfrac{p}{q} \in (-\infty,-2)$. 
Let us set an irreducible form $r'= \tfrac{p'}{q'}= \tfrac{p+2q}{-q}$ $(p' = p+2q \in {\Bbb N})$. 
By Lemma~\ref{lem_shape}(1)(3), $B_{r'}(p'+q'-1)$ and  $B_{r}(p+q-1)$ are  hexagons when $|q| \ne 1$ 
(resp. rectangle when $|q|=1$). 
The position for the vertices of $B_{r'}(p'+q'-1)$ is the same as that of  $B_{r}(p+q-1)$. 
The Thurston norm balls $B_{r'}(p'+q'-1)$ and  $B_{r}(p+q-1)$ have the same radius, 
i.e,  $p'+q'-1= p+q-1$. 
Thus the natural isomorphism 
$f: H_2(N(r), \partial N(r); {\Bbb Z}) \rightarrow H_2(N(r'), \partial N(r'); {\Bbb Z}) $ which sends 
$\overline{\mathfrak{a}_r}$ to $\overline{\mathfrak{a}_{r'}}$ and $\overline{\mathfrak{b}_r}$ to $\overline{\mathfrak{b}_{r'}}$ 
becomes  a Thurston norm preserving isomorphism. 
\end{proof}

\subsection{Entropy equivalence on fibered $3$-manifolds} 

\subsubsection{Definition of entropy  equivalence} 
\label{subsubsection_Def} 

Let  $(M, \Omega)$ and $(M', \Omega')$ be pairs of $3$-manifolds $M$, $M'$ and their fibered faces $\Omega$, $\Omega'$ respectively. 
Possibly $M \simeq M'$. 
Then $(M, \Omega)$ and $(M', \Omega')$ are {\it entropy equivalent},  
denoted by $(M, \Omega) \underset{\mathrm{ent}}{\sim} (M', \Omega')$, 
 if  there exists a Thurston norm preserving isomorphism 
$f: H_2(M, \partial M ; {\Bbb Z}) \rightarrow H_2(M', \partial M'; {\Bbb Z})$ satisfying the following. 
\begin{itemize}
\item 
$a \in int(C_{\Omega}({\Bbb Z}))$ if and only if $f(a) \in int(C_{\Omega'}({\Bbb Z}))$. 

\item 
$\mathrm{ent}(a) = \mathrm{ent}(f(a))$  for any $a \in int(C_{\Omega}({\Bbb Z}))$. 
\end{itemize}
The second bullet implies that 
$\mathrm{ent}(a) = \mathrm{ent}(f(a))$  for any $a \in int(C_{\Omega})$, 
since   $\mathrm{ent}: int(C_{\Omega}({\Bbb Q})) \rightarrow {\Bbb R}$  admits a unique continuous extension. 
Thus  if  $(M, \Omega) \underset{\mathrm{ent}}{\sim} (M', \Omega')$, then 
  $\min \mathrm{Ent}(M, \Omega) = \min \mathrm{Ent}(M', \Omega')$.

Here is an obvious example. 
If a face $\Omega'$ of $M$ is opposite to a fibered face $\Omega$, 
then $\Omega'$ is also a fibered face. 
The pairs $(M, \Omega)$ and $(M, \Omega')$ are entropy equivalent, 
because the isomorphism on $H_2(M, \partial M ; {\Bbb Z})$ 
given by $a \mapsto -a$ preserves the Thurston norm and entropy.

 Fibered $3$-manifolds $M$ and $M'$ are  {\it entropy equivalent}, denoted by $M \underset{\mathrm{ent}}{\sim} M'$, 
if  there exists a Thurston norm preserving isomorphism 
$f: H_2(M, \partial M ; {\Bbb Z}) \rightarrow H_2(M', \partial M'; {\Bbb Z})$ satisfying the following. 
\begin{itemize}
\item 
$a \in H_2(M, \partial M ; {\Bbb Z})$ is a fibered class  if and only if 
$f(a) \in H_2(M', \partial M' ; {\Bbb Z})$ is a fibered class. 

\item 
Given a fibered face $\Omega$ of $M$, 
we have  $\mathrm{ent}(a) = \mathrm{ent}(f(a))$  for any $a \in int(C_{\Omega}({\Bbb Z}))$. 
\end{itemize} 
If $M \underset{\mathrm{ent}}{\sim} M'$, 
then  $\min \mathrm{Ent}(M) = \min \mathrm{Ent}(M')$.

\subsubsection{Entropy equivalence on manifolds $N(r)$}
\label{subsubsection_EntropyEqu}

In this subsection, first of all we focus on the set $S_{\beta}(r)$ and   the stable foliation $\mathcal{F}_a$ for  $a \in S_{\beta}(r)$. 
We compute the number of prongs on each boundary component of $F_a$ lying on  $T_{\beta}$. 
We will see this number depends on the slope $r$ and the fibered face $\Omega$ of $N(r)$  
with the property  $\overline{a} \in int(C_{\Omega})$. 
Then we discuss the entropy equivalence between  $N(r)$ and $N(-2-r)$ 
when $r, -2-r \in {\mathcal Hyp}$.

We begin with the definition of  {\it $A$-faces} and {\it $S$-faces}. 
They  are  top dimensional faces $\Omega$ on the boundary of  the Thurston norm ball of $N(r)$ for $r = \tfrac{p}{q} \in {\mathcal Hyp}$. 
\begin{itemize}
\item 
Suppose that $|q| \ne 1$.  
Then $\Omega$   is called an {\it $A$-face}  
if an element of $ \partial \Omega$ is equal to $\overline{\alpha- \gamma}$ projectively. 
Equivalently, $\Omega$ is an $A$-face 
if  an element  of $\partial \Omega$ is equal to $\overline{\mathfrak{a}_r} -  \overline{\mathfrak{b}_r}$ projectively.  
A face  $\Omega$ is called an {\it $S$-face} if it is not  an $A$-face.

\item 
Suppose that $|q|=1$. 
Then  $\Omega$  is called an  {\it $A$-face} 
if the interior of the cone over $\Omega$ contains   $\overline{\alpha- \gamma}$ projectively. 
Equivalently, $\Omega$ is an $A$-face 
if  the interior of the cone over $\Omega$ contains $\overline{\mathfrak{a}_r} -  \overline{\mathfrak{b}_r}$ projectively. 
A  face  $\Omega$ is called an {\it $S$-face} if it is not  an $A$-face.  
\end{itemize}

It follows from Lemma~\ref{lem_shape} that 
every top dimensional face for $N(r)$ is an $A$-face if  $r \in (-2,0)$. 
When $r \in (- \infty, -2) \cup (0, \infty)$ such that $|q| \ne 1$ (resp.  $|q|=1$), 
the Thurston norm ball for $N(r)$ is a hexagon (resp. rectangle) having two $S$-faces and four $A$-faces 
(resp. having two $S$-faces and two $A$-faces), see Figures~\ref{fig_Ball_Nr} and \ref{fig_ExBall_Nr}. 

It is worthwhile to point out that 
the two $S$-faces come from the fibered face $\Delta_1$ and its opposite face $\Delta_1'$  for $N$. 
(See Figure~\ref{fig_rokumentai}(a)(d).)
Let us turn to the $A$-faces. 
If $|q| \ne 1$, then the Thurston norm ball of $N(r)$ has four $A$-faces, and 
they come from the four fibered faces $\Delta$, $\Delta_2$ and their opposite faces $\Delta'$, $\Delta_2'$. 
(See Figure~\ref{fig_rokumentai}(a)(b)(c)(d).) 
The degeneration of $A$-faces occur when $|q|=1$. 
In this case, the Thurston norm ball of $N(r)$ has two $A$-faces. 
One of the $A$-faces comes from the pair $\Delta$ and $\Delta_2'$. 
The other $A$-face comes from the pair $\Delta'$ and $\Delta_2$. 
This observation leads to the following.

\begin{lem}
\label{lem_Cannonical}
Let $\sigma: Int\ C \rightarrow int(C_{\Delta})$ be the map 
given in Section~\ref{subsection_Fibered}. 
We take a class $a \in S_{\beta}(r) \cap Int\ C$. 
\begin{itemize}
\item[(1)] 
Suppose that $a \in S_{\beta}(r) \cap (int(\Delta) \cup int(\Delta'))$. 
Then $\overline{a} \in int(C_{\Omega_A}) \subset H_2(N_{\beta}(r), \partial N_{\beta}(r))$ 
for some $A$-face, and 
$\sigma(a) \in int(C_{\Delta}) \cap S_{\beta}(r)$.

\item[(2)]
Suppose that $a \in S_{\beta}(r) \cap (int(\Delta_1) \cup int(\Delta_1'))$. 
Then $\overline{a} \in int(C_{\Omega_S}) \subset H_2(N_{\beta}(r), \partial N_{\beta}(r))$ 
for some $S$-face, and 
$\sigma(a) \in int(C_{\Delta}) \cap S_{\gamma}(r)$.

\item[(3)]
Suppose that $a \in S_{\beta}(r) \cap (int(\Delta_2) \cup int(\Delta_2'))$. 
Then $\overline{a} \in int(C_{\Omega_A}) \subset H_2(N_{\beta}(r), \partial N_{\beta}(r))$ 
for some $A$-face, and 
$\sigma(a) \in int(C_{\Delta}) \cap S_{\alpha}(r)$. 

\end{itemize}
\end{lem}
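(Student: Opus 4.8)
\emph{Plan.} In each of the three cases there are two things to check: where $\sigma(a)$ lands, and whether the face of $U_{N(r)}$ whose open cone contains $\overline{a}$ is an $A$-face or an $S$-face. I would settle the first point using only the explicit formula for $\sigma$ from Section~\ref{subsection_Fibered} together with two elementary observations about the rotational symmetry $h$, and deduce the second from the combinatorial description of $\partial U_{N(r)}$ obtained in Lemma~\ref{lem_shape} and in the discussion preceding the statement.

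\emph{Where $\sigma(a)$ goes.} Since $h_*$ sends $\alpha,\beta,\gamma$ to $\beta,\gamma,\alpha$, in coordinates $h_*(x,y,z)=(z,x,y)$ and $(h^2)_*(x,y,z)=(y,z,x)$. Evaluating $h_*$ on the four vertices of $\Delta_1$ reproduces the four vertices of $\Delta$, so $h_*$ maps $int(C_{\Delta_1})$ onto $int(C_{\Delta})$; likewise $(h^2)_*$ maps $int(C_{\Delta_2})$ onto $int(C_{\Delta})$. Moreover, comparing the defining equations $S_\beta(r)=\{-ry=z+x\}$, $S_\gamma(r)=\{-rz=x+y\}$, $S_\alpha(r)=\{-rx=y+z\}$ with the coordinate formulas above shows at once that $h_*(S_\beta(r))=S_\gamma(r)$ and $(h^2)_*(S_\beta(r))=S_\alpha(r)$. (Conceptually this is clear: $h$ cyclically permutes the cusp tori $T_\alpha\to T_\beta\to T_\gamma$ and, being a rotation, matches meridians and longitudes, so it descends to a homeomorphism $N_\beta(r)\to N_\gamma(r)$; but the coordinate check is quicker.) Hence if $a\in int(C_{\Delta_1})\cap S_\beta(r)$ then $\sigma(a)=h_*(a)\in int(C_{\Delta})\cap S_\gamma(r)$, and similarly $a\in int(C_{\Delta_2})\cap S_\beta(r)$ gives $\sigma(a)=(h^2)_*(a)\in int(C_{\Delta})\cap S_\alpha(r)$; the case $a\in int(C_{\Delta})\cap S_\beta(r)$ is immediate since $\sigma(a)=a$; and the three opposite-cone cases reduce to these via $\sigma(a)=\sigma(-a)$, using that $S_\beta(r)$ is a linear subspace and hence invariant under $a\mapsto -a$.

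\emph{$A$-face versus $S$-face.} For the remaining claim I would appeal to the structure of $\partial U_{N(r)}$ established just before the lemma: under the isomorphism $\iota_\beta : H_2(N(r),\partial N(r)) \to S_\beta(r)$, every top-dimensional face of $U_{N(r)}$ is inherited from a $2$-face of $U_N$ cut by the plane $S_\beta(r)$ — the two $S$-faces from $\Delta_1$ and $\Delta_1'$, and the $A$-faces from $\Delta,\Delta',\Delta_2,\Delta_2'$ (with $\{\Delta,\Delta_2'\}$ and $\{\Delta',\Delta_2\}$ merging into single $A$-faces when $|q|=1$). Consequently $\iota_\beta^{-1}$ carries $int(C_{\widehat\Delta})\cap S_\beta(r)$ into the open cone over the face of $U_{N(r)}$ produced by $\widehat\Delta$. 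Applying this with $\widehat\Delta\in\{\Delta,\Delta'\}$ yields $\overline{a}\in int(C_{\Omega_A})$ in case~(1), with $\widehat\Delta\in\{\Delta_2,\Delta_2'\}$ yields $\overline{a}\in int(C_{\Omega_A})$ in case~(3), and with $\widehat\Delta\in\{\Delta_1,\Delta_1'\}$ yields $\overline{a}\in int(C_{\Omega_S})$ in case~(2).

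\emph{Main obstacle.} The one delicate point, and the only place requiring real care, is the degeneration at $|q|=1$, where the two open cones $int(C_\Delta)\cap S_\beta(r)$ and $int(C_{\Delta_2'})\cap S_\beta(r)$ both land inside the cone over the single merged $A$-face $\Omega_A$, and one must verify they land in its interior rather than on $\partial C_{\Omega_A}$. This follows because the two boundary rays of $int(C_\Delta)\cap S_\beta(r)$ lie in $\partial C_\Delta$: one of them is ${\Bbb R}_{>0}\,\overline{\alpha-\gamma}$, which by the very definition of an $A$-face at $|q|=1$ lies in $int(C_{\Omega_A})$, and the other lies on the wall of $C_\Delta$ shared with $C_{\Delta_1}$, which $\iota_\beta^{-1}$ carries onto the boundary ray of $C_{\Omega_A}$ shared with $C_{\Omega_S}$; thus the open cone is trapped strictly between an interior ray and a boundary ray of $C_{\Omega_A}$, and therefore lies in $int(C_{\Omega_A})$. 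Since all of this combinatorics is already set up by Lemma~\ref{lem_shape} and the preceding discussion, the argument is short once the symmetry computations of the second paragraph are recorded.
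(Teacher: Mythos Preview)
Your proposal is correct and follows exactly the approach the paper intends: the paper does not give a separate proof but simply writes ``This observation leads to the following,'' referring to the paragraph just before the lemma which records that the $S$-faces of $N(r)$ come from $\Delta_1,\Delta_1'$ and the $A$-faces from $\Delta,\Delta',\Delta_2,\Delta_2'$ (with the pairing $\{\Delta,\Delta_2'\}$, $\{\Delta',\Delta_2\}$ when $|q|=1$). Your argument is precisely a fleshed-out version of that observation, together with the coordinate check that $h_*$ and $(h^2)_*$ carry $S_\beta(r)$ to $S_\gamma(r)$ and $S_\alpha(r)$, which is the content already used in Section~\ref{subsection_Fibered}; your treatment of the $|q|=1$ degeneration is more careful than the paper's, but not different in spirit.
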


\begin{lem}
\label{lem_EeqSA}
Let $r \in {\mathcal Hyp}$. 
Any two $S$-faces of $N(r)$ are entropy equivalent, and 
any two $A$-faces of $N(r)$ are entropy equivalent. 
\end{lem}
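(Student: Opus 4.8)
The plan is to exploit the symmetries of the magic manifold $N$ that were established in Section~\ref{subsection_Fibered} and Section~\ref{subsection_Ent-symmteries}, together with the structural dictionary of Lemma~\ref{lem_Cannonical}. The key point is that the $S$-faces and $A$-faces of $N(r)$ do not arise in isolation: each of them is the image under Dehn filling of one of the six fibered cones $int(C_{\Delta})$, $int(C_{\Delta_1})$, $int(C_{\Delta_2})$ and their opposites, and the rotation map $h_*$ of order $3$ together with the involution $a \mapsto -a$ permutes these cones while preserving both the Thurston norm and the entropy function. Concretely, I would argue that whenever $\Omega$ and $\Omega'$ are two $S$-faces (resp.\ two $A$-faces) of $N(r)$, the composite isomorphism built from $\iota_\beta$, an appropriate power of $h_*$ or $\pm\mathrm{id}$, and $\iota_\beta^{-1}$ gives a Thurston norm preserving isomorphism $H_2(N(r),\partial N(r)) \to H_2(N(r),\partial N(r))$ carrying $int(C_\Omega(\mathbb Z))$ to $int(C_{\Omega'}(\mathbb Z))$ and preserving entropy, which is exactly the definition of entropy equivalence from Section~\ref{subsubsection_Def}.

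First I would treat the case $|q|\ne 1$, where by Lemma~\ref{lem_shape} the norm ball of $N(r)$ for $r\in(-\infty,-2)\cup(0,\infty)$ is a hexagon with two $S$-faces and four $A$-faces (and for $r\in(-2,0)$ a parallelogram all of whose faces are $A$-faces). The two $S$-faces are opposite to each other, so the map $a\mapsto -a$ gives their entropy equivalence by the ``obvious example'' remark in Section~\ref{subsubsection_Def}. For the four $A$-faces: two are opposite pairs, handled by $a\mapsto -a$ again, so it remains to relate an $A$-face coming from $\Delta$ to one coming from $\Delta_2$. Here I would use that $h_*$ cyclically permutes the cohomology classes $\alpha,\beta,\gamma$, hence permutes $S_\alpha(r), S_\beta(r), S_\gamma(r)$, and more importantly that it sends $int(C_{\Delta})$ and $int(C_{\Delta_2})$ among the six fibered cones while preserving the entropy and norm (because it is induced by a homeomorphism of $(S^3,\mathcal{C}_3)$). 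Combined with the identification in Lemma~\ref{lem_Cannonical}, which tells us precisely which $\Delta_i$-cone sits over which face of $N(r)$, this transports one $A$-face of $N(r)$ to the other; one just has to check that the relevant power of $h_*$ preserves the kernel $S_\beta(r)$, which follows from the defining equation $-ry=x+z$ being symmetric in $x$ and $z$ — equivalently from Lemma~\ref{lem_conjugateXY} / Remark~\ref{rem_symmetry-ent}, which supplies the extra reflection symmetry swapping $x\leftrightarrow z$ (the map $(i_\gamma)_*$) fixing $S_\beta(r)$ setwise.

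Then I would handle the degenerate case $|q|=1$, where the norm ball is a rectangle with two $S$-faces and two $A$-faces, each of which is a ``merged'' face coming from a pair $\{\Delta,\Delta_2'\}$ (resp.\ $\{\Delta',\Delta_2\}$) for the $A$-faces, and $\{\Delta_1,\Delta_1'\}$ for the $S$-faces, as spelled out just before the statement of the lemma. The two $S$-faces remain opposite, so $a\mapsto -a$ works; the two $A$-faces are again opposite, so again $a\mapsto -a$ (together with the observation that entropy is invariant under $a\mapsto -a$ on the whole of $int(C_\Omega)$, since $\Phi_a^{-1}$ and $\Phi_{-a}$ are isotopic) gives the equivalence. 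So in every case the equivalence is witnessed either by the antipodal map or by a symmetry of $N$ descending through $\iota_\beta$.

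The main obstacle I anticipate is purely bookkeeping rather than conceptual: one must verify that the relevant symmetry of $H_2(N,\partial N)$ actually restricts to an isomorphism of the sublattice $S_\beta(r)$ (not merely $S_\beta(r)\otimes\mathbb{R}$) and that, after composing with $\iota_\beta^{-1}$, it sends the primitive integral classes of one cone precisely onto those of the target cone — i.e.\ that the entropy values match on the nose and not just up to the continuous extension. This is where Lemma~\ref{lem_Cannonical} does the heavy lifting, since it pins down exactly which sub-cone of $int(\Delta)$ (namely $int(C_\Delta)\cap S_\alpha(r)$, $S_\beta(r)$, or $S_\gamma(r)$) governs each face, so the entropy on a given $A$- or $S$-face of $N(r)$ is computed by the \emph{same} restriction of the magic-manifold Teichm\"uller polynomial $f_{(x,y,z)}(t)$ of (\ref{equation_TpolyMagic}) up to the coordinate permutation induced by $\sigma$ — and permuting coordinates is an isometry of the entropy function by Lemma~\ref{lem_symmetry} and Corollary~\ref{cor_symmetry}. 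Once that identification is in place the proof is a short assembly of these pieces.
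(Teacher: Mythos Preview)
Your approach is essentially the paper's: opposite faces are handled by $a\mapsto -a$, and the remaining pair of $A$-faces (when $|q|\ne 1$) by a symmetry of $N$ preserving $S_\beta(r)$ and carrying $int(C_{\Delta_2})$ to $int(C_\Delta)$. One slip to fix: the map $(i_\gamma)_*$ from Lemma~\ref{lem_conjugateXY} swaps $x\leftrightarrow y$ (with sign), not $x\leftrightarrow z$, so neither it nor any nontrivial power of $h_*$ alone fixes $S_\beta(r)$; the paper instead uses the composite $f=\overline{-(i_\gamma)_*}\circ\overline{(h^2)_*}$, which acts as $(x,y,z)\mapsto(z,y,x)$ and therefore does preserve $S_\beta(r)$ (exactly because $-ry=x+z$ is symmetric in $x,z$, as you correctly observed). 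Since both factors are induced by homeomorphisms of $(S^3,\mathcal{C}_3)$, entropy preservation on the filled manifolds follows from conjugacy of the monodromies, not from Lemma~\ref{lem_symmetry} or Corollary~\ref{cor_symmetry}, which concern a different symmetry of $\mathrm{ent}$ on $int(\Delta)$.
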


\begin{proof}
An $S$-face of $N(r)$ is opposite to the other $S$-face, and hence they are entropy equivalent. 
(See the example after the definition of entropy equivalence.) 
Similarly, if an $A$-face $\Omega$ is opposite side to an $A$-face $\Omega'$, 
then they are entropy equivalent. 
Thus the proof in the case $r \in {\Bbb Z}$ is done.

We assume that $r = \tfrac{p}{q} \notin {\Bbb Z}$, i.e, $q \ne 1$. 
We need to show that an $A$-face $\Omega$ is entropy equivalent to an $A$-face $\widehat{\Omega}$ 
which is not the opposite face $\Omega'$. 
To do this, it is enough to prove that 
the $A$-face of $N(r)$ coming from $\Delta$, say $\Omega_{A, \Delta}$, 
and the $A$-face of $N(r)$ coming from $\Delta_2$, say $\Omega_{A, \Delta_2}$ 
are entropy equivalent. 
We first find the Thurston norm preserving isomorphism 
$$f: H_2(N(r), \partial N(r) ;{\Bbb Z}) \rightarrow  H_2(N(r), \partial N(r); {\Bbb Z})$$
which sends $int(C_{\Omega_{A, \Delta_2}}(\Bbb Z))$ to $int(C_{\Omega_{A, \Delta}}(\Bbb Z))$. 
We recall the two isomorphisms: 
\begin{eqnarray*}
(h^2)_*:H_2(N, \partial N ;{\Bbb Z}) &\rightarrow& H_2(N, \partial N ;{\Bbb Z})
\\
(x,y,z) &\mapsto& (y,z,x)
\end{eqnarray*}
and 
\begin{eqnarray*}
- (i_{\gamma})_*:H_2(N, \partial N ;{\Bbb Z}) &\rightarrow& H_2(N, \partial N ;{\Bbb Z})
\\
(x,y,z) &\mapsto& (y,x,z), 
\end{eqnarray*}
see the proof of Lemma~\ref{lem_conjugateXY}. 
Observe that $(h^2)_*(S_{\beta}(r))= S_{\alpha}(r)$. 
This shows that we have the isomorphism 
\begin{eqnarray*}
\overline{(h^2)_*}: H_2(N_{\beta}(r), \partial N_{\beta}(r) ;{\Bbb Z}) &\rightarrow&  H_2(N_{\alpha}(r), \partial N_{\alpha}(r) ;{\Bbb Z})
\\
\overline{(x,y,z)} &\mapsto& \overline{(y,z,x)}
\end{eqnarray*}
induced from $(h^2)_*$. 
On the other hand, we have  $-(i_{\gamma})_* (S_{\alpha}(r))= S_{\beta}(r)$. 
Thus $-(i_{\gamma})_*$ induces the isomorphism 
\begin{eqnarray*}
\overline{-(i_{\gamma})_*}: H_2(N_{\alpha}(r), \partial N_{\alpha}(r) ;{\Bbb Z}) &\rightarrow&  H_2(N_{\beta}(r), \partial N_{\beta}(r) ;{\Bbb Z})
\\
\overline{(x,y,z)} &\mapsto& \overline{(y,x,z)}.
\end{eqnarray*}
Let us set 
$f= \overline{-(i_{\gamma})_*} \circ \overline{(h^2)_*}$. 
One sees that $f$ sends $int(C_{\Omega_{A, \Delta_2}}(\Bbb Z))$ to $int(C_{\Omega_{A, \Delta}}(\Bbb Z))$, because 
\begin{eqnarray*}
(h^2)_*(S_{\beta}(r) \cap int(C_{\Delta_2}({\Bbb Z})))&=& S_{\alpha}(r) \cap int(C_{\Delta}({\Bbb Z}))\ \mbox{and}
\\
-(i_{\gamma})_*(S_{\alpha}(r) \cap int(C_{\Delta}({\Bbb Z}))) &=& S_{\beta}(r) \cap int(C_{\Delta}({\Bbb Z})). 
\end{eqnarray*}
Then $f$ preserves the Thurston norm, 
since both $ \overline{(h^2)_*}$ and $\overline{-(i_{\gamma})_*} $ preserve the Thurston norm 
by Lemma~\ref{lem_NormChange2}. 

We now prove that $f$ preserves the entropies on $int(C_{\Omega_{A, \Delta_2}}({\Bbb Z}))$. 
Let $(x,y,z) \in S_{\beta}(r) \cap int(C_{\Delta_2}({\Bbb Z}))$. 
Then $\overline{(x,y,z)}$ and $\overline{(h^2)_*}(\overline{(x,y,z)}) = \overline{(y,z,x)}$ have the same entropy, 
since $\overline{(h^2)_*}$ is induced from the homeomorphism 
$h^2: (S^3, \mathcal{C}_3) \rightarrow (S^3, \mathcal{C}_3)$. 
Next, let us take $(y,z,x) \in S_{\alpha}(r) \cap int(C_{\Delta}({\Bbb Z}))$. 
As a consequence of Lemma~\ref{lem_conjugateXY}, 
$\overline{(y,z,x)}$ and $-(i_{\gamma})_*(\overline{(y,z,x)}) = \overline{(z,y,x)}$ have the same entropy. 
In fact,  the inverse $(\Phi_{\overline{(y,z,x)}})^{-1}$ of the monodromy $\Phi_{\overline{(y,z,x)}}$ 
of the fibration on $N_{\alpha}(r)$ associated to $\overline{(y,z,x)}$ 
is conjugate to the monodromy $\Phi_{\overline{(z,y,x)}}$ of the fibration on $N_{\beta}(r)$ 
associated to $\overline{(z,y,x)}$. 
Putting all things together, we see that for $(x,y,z) \in S_{\beta}(r) \cap int(C_{\Delta_2}({\Bbb Z}))$, 
the two fibered classes 
$\overline{(x,y,z)} \in int(C_{\Omega_{A, \Delta_2}}({\Bbb Z}))$ and $\overline{(z,y,x)} \in int(C_{\Omega_{A, \Delta}}({\Bbb Z}))$ 
have the same entropy. 
This completes the proof. 
\end{proof}

Because of the lemma above, 
we denote  by $\Omega_A = \Omega_{A,r}$ (resp. $\Omega_S = \Omega_{S,r}$), 
any $A$-face (resp. $S$-face) of $N(r)$.  
The first letter `$A$'  (resp. `$S$') represents `asymmetry' (resp. `symmetry'). 
cf. Remark~\ref{rem_symmetry}.

\begin{lem}
\label{lem_face-prong} 
Let $a \in S_{\beta}(\tfrac{p}{q}) \subset H_2(N, \partial N; {\Bbb Z})$ be a primitive fibered class, and 
let $\Omega$ be the fibered face of $N(\tfrac{p}{q})$ such that 
$\overline{a} \in int(C_{\Omega}) \subset H_2(N_{\beta}(r), \partial N_{\beta}(r))$. 
If  $\Omega$ is an $S$-face (resp. $A$-face), 
then  $\mathcal{F}_{a}$  has the property that 
each boundary component on $T_{\beta}$ has $p+2q$ prongs (resp. $|q|$ prongs).  
The inequality $\lambda_{p/q}(\overline{a}) \le \lambda(a)$ holds, and 
the equality is achieved   if $p+2q \ne 1$ (resp. if $|q| \ne 1$). 
\end{lem}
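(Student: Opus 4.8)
The plan is to prove the two assertions in turn: first the number of prongs of $\mathcal{F}_a$ on the boundary components lying on $T_{\beta}$, then the comparison of dilatations. Both will be reduced, using the symmetries of $H_2(N,\partial N)$ described in Section~\ref{subsection_Fibered}, to computations for a fibered class lying in $int(C_{\Delta})$, where Lemmas~\ref{lem_topological-type} and \ref{lem_sing-data} apply directly.

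For the prong count, I would first note that $a$ --- or, since $\mathcal{F}_a$ and $\mathcal{F}_{-a}$ have the same singularity data, the class $-a$ --- lies in exactly one of the cones $int(C_{\Delta})$, $int(C_{\Delta_1})$, $int(C_{\Delta_2})$. By Lemma~\ref{lem_Cannonical} this cone is $int(C_{\Delta})$ or $int(C_{\Delta_2})$ exactly when $\Omega$ is an $A$-face, and then $\sigma(a)\in int(C_{\Delta})$ lies in $S_{\beta}(r)$ or $S_{\alpha}(r)$ correspondingly; it is $int(C_{\Delta_1})$ exactly when $\Omega$ is an $S$-face, and then $\sigma(a)\in int(C_{\Delta})\cap S_{\gamma}(r)$. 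The conjugating homeomorphism between $\Phi_a$ and $\Phi_{\sigma(a)}$ carries $\partial_{\beta}F_a$ onto the part of $\partial F_{\sigma(a)}$ lying on $T_{\beta}$, $T_{\alpha}$, or $T_{\gamma}$ according to the case (Section~\ref{subsection_Fibered}) and preserves prong numbers, so it suffices to evaluate the appropriate case of Lemma~\ref{lem_sing-data} for $\sigma(a)=(x,y,z)\in int(C_{\Delta})$. When $\sigma(a)\in S_{\beta}(r)$, the relation $-py=q(x+z)$ together with $\gcd(p,q)=1$ forces $q\mid y$; writing $y=qt$ yields $x+z=-pt$, hence $\gcd(y,x+z)=|t|$, so the number of prongs $\tfrac{y}{\gcd(y,x+z)}$ equals $|q|$ (here $y>0$ since $\sigma(a)\in int(\Delta)$). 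The case $\sigma(a)\in S_{\alpha}(r)$ is identical after exchanging the roles of $x$ and $y$. When $\sigma(a)\in S_{\gamma}(r)$, $-pz=q(x+y)$ forces $z=qt$ (with $t\ne 0$, as $x,y>0$) and $x+y=-pt$, so $\gcd(z,x+y)=|t|$ while the numerator $x+y-2z$ equals $-(p+2q)t$; thus the number of prongs is $|p+2q|$, and this equals $p+2q$ because $S$-faces occur only for $r\in(-\infty,-2)\cup(0,\infty)$, where one checks $p+2q>0$.

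For the dilatation, recall from \cite{MP} that $N(r)$ is hyperbolic, so the monodromy $\Phi_{\overline a}$ of the fibration of $N(r)$ associated to $\overline a$ is pseudo-Anosov and $\lambda(\Phi_{\overline a})=\lambda_{p/q}(\overline a)$. The fibration on $N$ associated to $a$ extends over the Dehn filling of $T_{\beta}$ to the fibration on $N(r)$ associated to $\overline a$; its fiber $F_{\overline a}$ is obtained from $F_a$ by capping each boundary component lying on $T_{\beta}$ with a disk (as confirmed on Euler characteristics by Lemmas~\ref{lem_topological-type} and \ref{lem_NormChange2}), and $\Phi_{\overline a}$ is isotopic to the homeomorphism $\widehat{\Phi}_a$ obtained from $\Phi_a$ by extending over these disks. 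The foliation $\mathcal{F}_a$ extends over the disks to a $\widehat{\Phi}_a$-invariant singular foliation with the same transverse expansion factor $\lambda(a)$, so $\mathrm{ent}(\widehat{\Phi}_a)=\log\lambda(a)$; since a pseudo-Anosov representative minimizes topological entropy in its isotopy class \cite[Expos\'e 10]{FLP}, we get $\log\lambda_{p/q}(\overline a)=\mathrm{ent}(\Phi_{\overline a})\le\mathrm{ent}(\widehat{\Phi}_a)=\log\lambda(a)$, which is the asserted inequality.

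For the equality statement, observe that by Lemma~\ref{lem_sing-data} (together with the conjugacies of Section~\ref{subsection_Fibered}) $\mathcal{F}_a$ has no singularity in the interior of $F_a$, so the only interior singularities of the extended foliation on $F_{\overline a}$ sit at the centres of the capping disks, each carrying the number of prongs found above, namely $|q|$ for an $A$-face and $p+2q$ for an $S$-face. If that number is $\ne 1$, the extended foliation is an honest pseudo-Anosov foliation, hence $\widehat{\Phi}_a$ is pseudo-Anosov; being a monodromy of the fibration associated to $\overline a$ it represents the same mapping class as $\Phi_{\overline a}$, and therefore $\lambda_{p/q}(\overline a)=\lambda(\widehat{\Phi}_a)=\lambda(a)$. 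I expect the main obstacle to be the bookkeeping in the prong computation --- keeping track of which boundary torus the conjugating homeomorphism carries $\partial_{\beta}F_a$ onto in each of the three cases --- together with the standard but somewhat delicate foliation fact that capping a boundary component carrying $b\ne 1$ prongs preserves the pseudo-Anosov type, and hence the dilatation.
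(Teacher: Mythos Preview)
Your proof is correct. It differs from the paper's in the route taken for the prong count. The paper invokes McMullen's result \cite[Corollary~3.2]{McMullen} that the suspended stable foliation $\mathcal{S}_a$ on $N$ depends only on the fibered face of $N$, deduces that the number of prongs on each boundary component on $T_\beta$ depends only on the slope $r$ and the fibered face, and then verifies the formula by computing it for a \emph{single} explicit class per face (e.g.\ $\mathfrak{a}_r$ for the $A$-face when $r\in(-2,0)$). You instead bypass the suspended-foliation invariance entirely: for each $a$ you use the symmetry map $\sigma$ of Section~\ref{subsection_Fibered} together with Lemma~\ref{lem_Cannonical} to bring it into $int(C_\Delta)$, keep track of which torus $\partial_\beta F_a$ is carried to, and compute the prong number directly from Lemma~\ref{lem_sing-data} via the divisibility argument forced by $-py=q(x+z)$ (or its analogues). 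Your approach is more elementary and self-contained, needing nothing beyond Lemmas~\ref{lem_topological-type} and \ref{lem_sing-data}; the paper's is more conceptual and avoids the three-case arithmetic. For the inequality $\lambda_{p/q}(\overline a)\le\lambda(a)$ and the equality criterion, the paper simply declares this ``clear'' with a one-line comment about $1$-prongs; your capping-off/entropy-minimisation argument makes explicit what the paper leaves implicit, and is essentially the same reasoning.
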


\noindent
Note that $p+2q \ge 1$ when $\Omega$ is an $S$-face (cf. Remark~\ref{rem_slope-gam}).

\begin{proof} 
Let $\mathcal{S}= \mathcal{S}_a$ be the suspended stable foliation constructed from 
$\mathcal{F}_a \times I \subset F_a \times I$ by gluing $\mathcal{F}_a \times \{1\}$ to $\mathcal{F}_a \times \{0\}$ 
using $\Phi_a$. 
It is known that such a foliation $\mathcal{S}$  depends only on the fibered face, 
that is $\mathcal{S}_a$ is isotopic to $\mathcal{S}_{a'}$ 
if  $a$ and $a'$ are  primitive fibered classes in  the cone over the same fibered face \cite[Corollary~3.2]{McMullen}. 
When $a \in S_{\beta}(r)$, 
the number of prongs on each boundary component of $T_{\beta}$ 
is determined by how  $\mathcal{S}$ intersects with the simple closed curve representing the slope $r$ on $T_{\beta}$. 
Thus such a number  depends only on the slope $r$ and the fibered face. 

Given a fibered face $\Omega$ of $N(\tfrac{p}{q})$, 
it is enough to take one primitive fibered class $a \in S_{\beta}(\tfrac{p}{q})$  such that $\overline{a} \in int(C_{\Omega})$. 
Then one can apply Lemma~\ref{lem_sing-data} to know  the desired number of prongs on each boundary component. 
Let us compute the desired number when $r= \tfrac{p}{q} \in (-2,0)$. 
(The rest of the cases can be proved similarly.) 
In this case, every face of $N(r)$ is an $A$-face. 
One sees that $\mathfrak{a}_r \in int(C_{\Delta}) \cap S_{\beta}(r)$ and 
$\overline{\mathfrak{a}_r} \in int(C_{\Omega_A})$. 
By Lemma~\ref{lem_sing-data}, the desired number equals $|q|$. 

The second half of the claim on the inequality between $\lambda_{p/q}(\overline{a})$ and $\lambda(a)$ is clear. 
The equality holds if $\mathcal{F}_a$ has the property such that 
 any boundary component on $T_{\beta}$ has no $1$ prong. 
\end{proof}

\begin{prop}
\label{prop_goodFiberFace}
For $r \in {\mathcal Hyp}$, there exists a fibered face $\Omega$ of $N(r)$ which enjoys  $(*)$ in Theorem~\ref{thm_key}.  
\end{prop}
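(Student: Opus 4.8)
The plan is to write $r = \tfrac{p}{q}$ in lowest terms with $p \in {\Bbb N}$ and $q \in {\Bbb Z}$, and to exhibit the required face $\Omega$ as either an $S$-face or an $A$-face of $N(r)$, chosen according to the arithmetic of $p$ and $q$. The essential input is Lemma~\ref{lem_face-prong}: if $\Omega$ is an $S$-face (resp.\ $A$-face) and $a \in S_{\beta}(r)$ is a primitive fibered class with $\overline{a} \in int(C_{\Omega})$, then every boundary component of $F_a$ lying on $T_{\beta}$ has exactly $p+2q$ prongs (resp.\ $|q|$ prongs), and this prong count depends only on $\Omega$, not on the particular class $a$. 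Consequently an $S$-face enjoys $(*)$ exactly when $p+2q \ne 1$, and an $A$-face enjoys $(*)$ exactly when $|q| \ne 1$. So it remains to verify that for each $r \in {\mathcal Hyp}$ the norm ball of $N(r)$ actually possesses a face of the appropriate type; for this I would read off the shape of the ball and its decomposition into $S$-faces and $A$-faces from Lemma~\ref{lem_shape} and the discussion preceding it.

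First I would dispose of $r \in (-2,0)$. The only integer in this interval is $-1$, which lies outside ${\mathcal Hyp}$, so $r$ is non-integral and $|q| \ge 2$; by Lemma~\ref{lem_shape}(2) every top-dimensional face of $N(r)$ is an $A$-face, and since $|q| \ne 1$ each of them enjoys $(*)$. Next, for $r \in (0,\infty)$ one has $p \ge 1$ and $q \ge 1$, hence $p+2q \ge 3 \ne 1$; by Lemma~\ref{lem_shape}(3) the norm ball has (two) $S$-faces, any of which enjoys $(*)$.

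Finally I would treat $r \in (-\infty,-2)$ in two subcases. If $|q| \ge 2$, then by Lemma~\ref{lem_shape}(1) the norm ball has $A$-faces, and since $|q| \ne 1$ any of them enjoys $(*)$. If $|q| = 1$, then $q = -1$ and $r = -p$ is a negative integer less than $-2$; as $-3 \notin {\mathcal Hyp}$, this forces $p \ge 4$, so $p + 2q = p - 2 \ge 2 \ne 1$, and by Lemma~\ref{lem_shape}(1)(ii) the ball is a rectangle carrying (two) $S$-faces, each of which enjoys $(*)$. Since the intervals $(-\infty,-2)$, $(-2,0)$, $(0,\infty)$ exhaust ${\mathcal Hyp}$, this completes the argument.

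There is no genuinely hard step: the proof is essentially bookkeeping once Lemma~\ref{lem_face-prong} is in hand. The only point requiring attention is the degenerate regime, where the naive choice of face fails --- an $A$-face when $|q| = 1$, or an $S$-face when $p + 2q = 1$ --- and one must pass to the complementary type. This is always possible precisely because the excluded slopes $-3,-2,-1,0$ rule out exactly those configurations in which both types would degenerate at once.
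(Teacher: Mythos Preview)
Your proof is correct and follows essentially the same approach as the paper: both reduce the question, via Lemma~\ref{lem_face-prong}, to the observation that for $r = \tfrac{p}{q} \in {\mathcal Hyp}$ one never has $|q|=1$ and $p+2q=1$ simultaneously, so one can always select an $A$-face or an $S$-face accordingly. The paper organizes the case split by which of these two conditions fails, whereas you organize it by the interval containing $r$, but the content is the same.
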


\begin{proof} 
We use Lemma~\ref{lem_face-prong}. 
Let $p \in {\Bbb N}$ and $q \in {\Bbb Z}$ be coprime such that  $\tfrac{p}{q} \in {\mathcal Hyp}$. 
There exists no pair $(p,q)$ such that $|q|=1$ and $p+2q=1$.

Suppose that  $|q| \ne 1$ and $p+2q \ne 1$. 
Then each fibered face of $N(r)$ 
enjoys $(*)$ in Theorem~\ref{thm_key}. 
Suppose that $|q| \ne 1$ and $p+2q =1$ (resp.  Suppose that $|q| = 1$ and $p+2q \ne 1$). 
Then only $A$-faces (resp. only $S$-faces) of $N(r)$ fulfill $(*)$. 
\end{proof}

\begin{lem} 
\label{lem_Ent_r}
For $r= \tfrac{p}{q} \in {\mathcal Hyp}$, 
let $\Omega$ be a fibered face of $N(r)$  enjoying $(*)$ in Theorem~\ref{thm_key}. 
We take $a \in S_{\beta}(r) \cap Int\ C$ such that $\|a\|=1$. 
\begin{enumerate}
\item[(1)]
If $\Omega= \Omega_S$ and 
$\overline{a} \in int(C_{\Omega_S}) \subset H_2(N_{\beta}(r), \partial N_{\beta}(r))$, then 
$$\mathrm{Ent}_r(\overline{a}) =   (1 - \tfrac{1}{p+q}) \log \lambda(\sigma(a)).$$

\item[(2)]
If $\Omega= \Omega_A$ and 
$\overline{a} \in int(C_{\Omega_A}) \subset H_2(N_{\beta}(r), \partial N_{\beta}(r))$, then 
\begin{eqnarray*}
 \mathrm{Ent}_r(\overline{a}) &=&   (1 - |\tfrac{y}{q}|) \log \lambda(\sigma(a))\  \hspace{2mm}
\mbox{when}\   \sigma(a) = (x,y,z) \in int(\Delta) \cap S_{\beta}(r), \ \mbox{and} 
\\
 \mathrm{Ent}_r(\overline{a}) &=&   (1 - |\tfrac{x}{q}|) \log \lambda(\sigma(a))\  \hspace{2mm}
\mbox{when}\   \sigma(a) = (x,y,z) \in int(\Delta) \cap S_{\alpha}(r). 
\end{eqnarray*}
\end{enumerate}
\end{lem}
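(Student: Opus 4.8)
The plan is to split $\mathrm{Ent}_r(\overline{a}) = \|\overline{a}\|_r\cdot\mathrm{ent}_r(\overline{a})$ and compute the two factors separately: the entropy factor using condition $(*)$ together with the symmetry map $\sigma$, and the norm factor using Lemmas~\ref{lem_NormChange} and \ref{lem_NormChange2}. Throughout I may replace $a$ by $-a$ without changing anything, since $\sigma(-a)=\sigma(a)$, $\|\overline{-a}\|_r=\|\overline{a}\|_r$, and $\mathrm{ent}_r(\overline{-a})=\mathrm{ent}_r(\overline{a})$ (the monodromy of $-a$ is the inverse of that of $a$); so I will assume $a$ lies in $int(C_{\Delta})$, $int(C_{\Delta_1})$ or $int(C_{\Delta_2})$, and will consult Lemma~\ref{lem_Cannonical} to know which one.

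For the entropy factor: since $\Omega$ enjoys $(*)$, the $1$-prong obstruction on $T_{\beta}$ is excluded (by Lemma~\ref{lem_face-prong} this forces $p+2q\ne 1$ when $\Omega$ is an $S$-face and $|q|\ne 1$ when $\Omega$ is an $A$-face), so the same lemma gives $\lambda_{p/q}(\overline{b})=\lambda(b)$ for every primitive fibered class $b\in S_{\beta}(r)$ with $\overline{b}\in int(C_{\Omega})$. Each such $b$ lies in one of the open cones occurring in the definition of $\sigma$, on which $\sigma$ acts by a linear isomorphism induced by a homeomorphism of $(S^3,\mathcal{C}_3)$, possibly followed by negation; hence $\lambda(b)=\lambda(\sigma(b))$. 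Because $\mathrm{ent}$ extends continuously to $int(C_{\Omega})$ (Fried) and $\sigma$ is linear on the cone containing $a$, these equalities pass to the given real class, yielding $\mathrm{ent}_r(\overline{a})=\log\lambda(\sigma(a))$.

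For the norm factor, Lemma~\ref{lem_NormChange2} gives $\|\overline{a}\|_r=\|a\|-|\tfrac{y}{q}|=1-|\tfrac{y}{q}|$, where $y$ is the $\beta$-coordinate of $a=(x,y,z)$, so it remains to read off this coordinate in each case. In case (2) with $\sigma(a)\in int(\Delta)\cap S_{\beta}(r)$ we have (after the normalization) $a\in int(\Delta)$, so $\sigma(a)=a$ and writing $\sigma(a)=(x,y,z)$ gives $\|\overline{a}\|_r=1-|\tfrac{y}{q}|$. In case (2) with $\sigma(a)\in int(\Delta)\cap S_{\alpha}(r)$ we have $a\in int(\Delta_2)$ and $\sigma(a)=(h^2)_*(a)$ sends $(x,y,z)\mapsto(y,z,x)$, so the $\beta$-coordinate of $a$ equals the $\alpha$-coordinate of $\sigma(a)$; writing $\sigma(a)=(x,y,z)$ gives $\|\overline{a}\|_r=1-|\tfrac{x}{q}|$. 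In case (1), $\Omega=\Omega_S$, we have $a\in int(\Delta_1)$ and $\sigma(a)=h_*(a)$ sends $(x,y,z)\mapsto(z,x,y)$, so the $\beta$-coordinate of $a$ equals the $\gamma$-coordinate of $\sigma(a)$; since $\sigma(a)\in int(\Delta)\cap S_{\gamma}(r)$ and $\|\sigma(a)\|=\|a\|=1$, the computation in the proof of Lemma~\ref{lem_NormChange} (solving $-pz=q(x+y)$, $x+y-z=1$) shows this coordinate equals $\tfrac{q}{-(p+q)}$, whence $\|\overline{a}\|_r=1-\tfrac{1}{p+q}$ (note $p+q>0$ since $\Delta\cap S_{\gamma}(r)\ne\emptyset$). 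Multiplying the two factors in each case gives the stated formulas.

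The step I expect to require the most care is purely organizational: correctly tracking which coordinate permutation and which sign $\sigma$ applies in each of the six open cones of $Int\ C$, and verifying that Lemmas~\ref{lem_NormChange}, \ref{lem_NormChange2} and \ref{lem_face-prong}, stated for integral or primitive fibered classes, extend by continuity of the Thurston norm and of the entropy function to an arbitrary real class of norm $1$. No new idea is needed once Lemma~\ref{lem_Cannonical} pins down the cone of $a$.
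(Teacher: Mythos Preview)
Your proof is correct and follows essentially the same approach as the paper: split $\mathrm{Ent}_r(\overline{a})$ into norm and entropy factors, use condition $(*)$ and the conjugacy built into $\sigma$ for the entropy factor, and use Lemmas~\ref{lem_NormChange}, \ref{lem_NormChange2} together with Lemma~\ref{lem_Cannonical} for the norm factor. The only cosmetic difference is that the paper computes the norm by first noting $\|\overline{a}\|_r=\|\overline{\sigma(a)}\|_r$ (since $\sigma$ comes from a self-homeomorphism of $N$) and then applying Lemma~\ref{lem_NormChange} directly to $\sigma(a)\in\Delta\cap S_{\gamma}(r)$, whereas you compute $\|\overline{a}\|_r$ via the $\beta$-coordinate of $a$ and then track that coordinate through the permutation defining $\sigma$; both routes are equivalent.
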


\begin{proof} 
(1) 
We have $\sigma(a) \in int(\Delta) \cap S_{\gamma}(r)$ 
since $\overline{a} \in int(C_{\Omega_S})$, 
see Lemma~\ref{lem_Cannonical}. 
The Thurston norms of both classes 
$\overline{a} \in H_2(N_{\beta}(r), \partial N_{\beta}(r))$ and 
$\overline{\sigma(a)} \in H_2(N_{\gamma}(r), \partial N_{\gamma}(r))$ are equal, and 
hence 
$\|\overline{a}\|_r = \|\overline{\sigma(a)}\|_r = 1 - \tfrac{1}{p+q}$ 
by Lemma~\ref{lem_NormChange}. 
On the other hand, 
the condition $(*)$ in Theorem~\ref{thm_key} ensures that 
$\lambda_r(\overline{a})$ is equal to $\lambda(a)$. 
We have the equality $\lambda(a)= \lambda(\sigma(a))$, since the monodromies $\Phi_a$ and $\Phi_{\sigma(a)}$ are conjugate. 
Thus 
$$\mathrm{Ent}_r(\overline{a}) =  \|\overline{a}\|_r \log \lambda_r(\overline{a}) = (1 - \tfrac{1}{p+q}) \log \lambda(\sigma(a)) .$$

\noindent
(2) 
By using Lemma~\ref{lem_NormChange2}, one can prove the claim similarly. 
\end{proof}

\begin{thm}
\label{thm_entropy_equiv}
Let $p \in {\Bbb N}$ and $q \in {\Bbb Z}$ be coprime such that $\tfrac{p}{q} \in {\mathcal Hyp}$.  
\begin{enumerate}
\item[(1)]  
Suppose that  $\tfrac{p}{q} \in (- \infty, -2)$ and $p+2q \ne 1$. 
Then $(N(\tfrac{p}{q}), \Omega_S) \underset{\mathrm{ent}}{\sim} (N(\tfrac{2q + p}{-q}), \Omega_S)$.

\item[(2)] 
Suppose that   $\tfrac{p}{q} \in (- \infty, -1)$ and $|q| \ne 1$. 
Then $(N(\tfrac{p}{q}), \Omega_A) \underset{\mathrm{ent}}{\sim} (N(\tfrac{-2q - p}{q}), \Omega_A)$. 

\item[(3)] 
Suppose that   $\tfrac{p}{q} \in (- \infty, -1)$,  $p+2q \ne 1$ and $|q| \ne 1$. 
Then $N(\tfrac{p}{q}) \underset{\mathrm{ent}}{\sim} N(\tfrac{-2q - p}{q})$. 
\end{enumerate}
\end{thm}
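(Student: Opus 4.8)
The plan is to build the required Thurston norm preserving isomorphisms explicitly from the symmetry maps of $N$ already introduced, and then to verify the two defining bullets of entropy equivalence (compatibility with the distinguished faces, and preservation of the entropy function) by reducing everything to statements about $N$ itself via the identifications $\iota_\alpha,\iota_\beta,\iota_\gamma$ of Proposition~\ref{prop_S_r}. For part (1), set $r=\tfrac{p}{q}\in(-\infty,-2)$ and $r'=\tfrac{p+2q}{-q}$; this is exactly the pairing $r'=-2-r$, so Proposition~\ref{prop_norm-equiv} already gives a Thurston norm preserving isomorphism $f\colon H_2(N(r),\partial N(r))\to H_2(N(r'),\partial N(r'))$, namely the one sending $\overline{\mathfrak a_r}\mapsto\overline{\mathfrak a_{r'}}$, $\overline{\mathfrak b_r}\mapsto\overline{\mathfrak b_{r'}}$. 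First I would check that this $f$ carries the $S$-face cone to the $S$-face cone: both $S$-faces arise (Lemma~\ref{lem_Cannonical}, and the discussion of $S$-faces before Lemma~\ref{lem_Cannonical}) from $\Delta_1\cup\Delta_1'$ of $N$, and under $\iota_\beta$ a class $\overline a\in int(C_{\Omega_S})$ corresponds to $a\in S_\beta(r)\cap(int(\Delta_1)\cup int(\Delta_1'))$ with $\sigma(a)\in int(C_\Delta)\cap S_\gamma(r)$; the same description holds verbatim for $r'$, and since $f$ is defined on the $(\overline{\mathfrak a},\overline{\mathfrak b})$-lattice exactly as the norm-equivalence of Proposition~\ref{prop_norm-equiv}, it matches the $S$-cones. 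Then, for the entropy: if $\overline a\in int(C_{\Omega_{S,r}})$ with $\|a\|=1$, Lemma~\ref{lem_Ent_r}(1) gives $\mathrm{Ent}_r(\overline a)=(1-\tfrac{1}{p+q})\log\lambda(\sigma(a))$; the hypothesis $p+2q\ne1$ guarantees $(*)$ holds (Proposition~\ref{prop_goodFiberFace}, Lemma~\ref{lem_face-prong}) so this formula is valid, and the analogous formula holds for $r'$ with $p'+q'=(p+2q)+(-q)=p+q$, i.e.\ the \emph{same} norm factor. So it remains only to see that $f$ matches the classes $\sigma(a)$ and $\sigma(f(\overline a){}^{\mathrm{lift}})$ up to the symmetry group of $N$, which follows because the norm-equivalence $f$ and the symmetry map $\sigma$ are both built from the $h_*$- and $(i_\gamma)_*$-type maps; conjugate monodromies have equal dilatation, so $\log\lambda$ agrees.

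For part (2), the situation is the $A$-face analogue. Here $r=\tfrac{p}{q}\in(-\infty,-1)$, $|q|\ne1$, and $r'=\tfrac{-2q-p}{q}$, which again is $-2-r$ written over the \emph{same} denominator $q$; when $r\in(-2,-1)$ this is Lemma~\ref{lem_Teq-bunbo}, and when $r\in(-\infty,-2)$ it is Proposition~\ref{prop_norm-equiv}, so in either subcase a Thurston norm preserving $f$ sending $\overline{\mathfrak a_r}\mapsto\overline{\mathfrak a_{r'}}$, $\overline{\mathfrak b_r}\mapsto\overline{\mathfrak b_{r'}}$ exists. I would check $f$ sends $A$-cones to $A$-cones: by Lemma~\ref{lem_Cannonical}(1),(3) an $A$-class $\overline a$ corresponds to $a\in S_\beta(r)$ with $\sigma(a)\in int(C_\Delta)\cap(S_\alpha(r)\cup S_\beta(r))$, and the $A$-faces are canonically identified by Lemma~\ref{lem_EeqSA} (any two $A$-faces are entropy equivalent), so it suffices to handle one representative $A$-face, e.g.\ $\Omega_{A,\Delta}$ coming from $\Delta$. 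For the entropy equality use Lemma~\ref{lem_Ent_r}(2): $\mathrm{Ent}_r(\overline a)=(1-|\tfrac{y}{q}|)\log\lambda(\sigma(a))$ (or the $|\tfrac{x}{q}|$ variant), and the key point is that under $f$ the relevant coordinate and the denominator $q$ are unchanged — indeed $r$ and $r'$ share the denominator $q$, and the defining relation $S_\beta(\tfrac{p}{q})=\{-py=q(x+z)\}$ versus $S_\beta(\tfrac{-2q-p}{q})$ differ by a lattice automorphism that, composed with the $N$-symmetry, fixes the norm factor while conjugating the monodromy. The hypothesis $|q|\ne1$ ensures $(*)$ holds on the $A$-faces (Lemma~\ref{lem_face-prong}, Proposition~\ref{prop_goodFiberFace}), so Lemma~\ref{lem_Ent_r}(2) applies on both sides.

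Part (3) is then essentially the conjunction of (1) and (2): with $\tfrac{p}{q}\in(-\infty,-1)$, $p+2q\ne1$, and $|q|\ne1$, the single isomorphism $f\colon H_2(N(\tfrac{p}{q}),\partial N(\tfrac{p}{q}))\to H_2(N(\tfrac{-2q-p}{q}),\partial N(\tfrac{-2q-p}{q}))$ sending $\overline{\mathfrak a_r}\mapsto\overline{\mathfrak a_{r'}}$, $\overline{\mathfrak b_r}\mapsto\overline{\mathfrak b_{r'}}$ simultaneously carries $S$-cones to $S$-cones (by the part~(1) analysis, noting $r'=-2-r$) and $A$-cones to $A$-cones (by the part~(2) analysis). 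A fibered class of $N(\tfrac{p}{q})$ lies in the cone over some face, which is either an $S$-face or an $A$-face; in each case $f$ sends it to a fibered class of $N(\tfrac{-2q-p}{q})$ in the cone over the corresponding face, and the entropy is preserved on each cone by (1) and (2) respectively. Hence $f$ preserves fibered classes and the entropy function on every fibered cone, which is precisely $N(\tfrac{p}{q})\underset{\mathrm{ent}}{\sim}N(\tfrac{-2q-p}{q})$.

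The main obstacle I anticipate is bookkeeping rather than conceptual: one must verify carefully that the \emph{same} norm-preserving isomorphism $f$ built from the $(\overline{\mathfrak a},\overline{\mathfrak b})$-coordinates simultaneously respects the $S$-cone/$A$-cone decomposition \emph{and} matches up, modulo the order-$3$ symmetry $h_*$ and the orientation-reversing symmetry $(i_\gamma)_*$, the images $\sigma(a)$ whose dilatations appear in Lemma~\ref{lem_Ent_r}; i.e.\ that the chain of identifications $\iota_\beta$, $\sigma$, and the $N$-symmetries commutes appropriately with $f$. Checking this amounts to tracking a handful of explicit linear maps on $\mathbb{Z}^2$ (and on $\mathbb{Z}^3$ before filling) through the degeneration cases $|q|=1$ versus $|q|\ne1$ and $p+2q=1$ versus $p+2q\ne1$ — which is why the hypotheses on $p+2q$ and $|q|$ are exactly what is needed to keep $(*)$ in force and the norm factors nondegenerate on both sides.
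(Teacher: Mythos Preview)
Your approach is essentially the paper's: use the norm-preserving isomorphism $f$ of Proposition~\ref{prop_norm-equiv} (and Lemma~\ref{lem_Teq-bunbo}), check that $f$ respects the $S$/$A$ decomposition, and reduce entropy preservation to an equality of dilatations of the corresponding $\sigma$-images in $int(\Delta)$, which is then supplied by the symmetries of Corollary~\ref{cor_symmetry} and Remark~\ref{rem_symmetry-ent}.

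However, the step you label ``bookkeeping'' is in fact the substantive content, and your justification for it is not quite right. You write that $f$ is ``built from the $h_*$- and $(i_\gamma)_*$-type maps''; it is not. The map $f$ is defined purely combinatorially on the lattice $\mathbb{Z}\overline{\mathfrak a_r}\oplus\mathbb{Z}\overline{\mathfrak b_r}$ and has no a~priori relation to the symmetries of $N$. What must actually be checked is this: if $a\in S_\beta(r)$ and $a'=\iota_\beta(f(\overline a))\in S_\beta(r')$, then $\sigma(a)$ and $\sigma(a')$ lie at positions in $int(\Delta)$ related by one of the eight symmetries of Remark~\ref{rem_symmetry-ent}. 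The paper does exactly this computation and identifies the specific symmetry in each case: for the $S$-faces (part~(1)), $\sigma(a)\in int(\Delta)\cap S_\gamma(r)$ sits at $b_0$ while $\sigma(a')\in int(\Delta)\cap S_\gamma(r')$ sits at $\widetilde{b_2}$; for the $A$-faces (part~(2)), $\sigma(a)\in int(\Delta)\cap S_\beta(r)$ sits at $b_0$ while $\sigma(a')\in int(\Delta)\cap S_\alpha(r')$ sits at $\widetilde{b_0}$ (note the switch from $S_\beta$ to $S_\alpha$: the $A$-face over $\Delta$ for $N(r)$ is sent by $f$ to the $A$-face over $\Delta_2'$ for $N(r')$, not over $\Delta$). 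Once these identifications are made, Corollary~\ref{cor_symmetry} finishes the job. Your invocation of Lemma~\ref{lem_Ent_r} is fine as a way to organize the argument, but it does not by itself establish that $\lambda(\sigma(a))=\lambda(\sigma(a'))$; that equality is the whole point, and it requires the explicit matching above rather than a general appeal to ``conjugate monodromies.''
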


\begin{proof} 
For $r = \tfrac{p}{q} \in {\mathcal Hyp} \cap (- \infty, -1)$, 
set $r' = -2-r$. 
Recall that 
$$f: H_2(N(r), \partial N(r); {\Bbb Z}) \rightarrow  H_2(N(r'), \partial N(r'); {\Bbb Z}) $$ 
is the Thurston norm preserving isomorphism 
as in the proof of Lemma~\ref{lem_Teq-bunbo} and Proposition~\ref{prop_norm-equiv}. 
Then $f$ maps $A$-faces (resp. $S$-faces) of $N(r)$ to $A$-faces (resp. $S$-faces) of $N(r')$. 

Let $b_0, \widetilde{b_0}, \cdots, b_3, \widetilde{b_3} \in int(\Delta)$ be as in Remark~\ref{rem_symmetry-ent}. 
\medskip
\\
(1) 
Let $\Omega_{S,r}$ (resp. $\Omega_{S, r'}$) be the $S$-face of $N(r)$ (resp. $N(r')$) 
coming from $\Delta_1$ of $N$. 
Observe that 
$$f(int(C_{\Omega_{S,r}}({\Bbb Z}))) = int(C_{\Omega_{S,r'}}({\Bbb Z})).$$
It suffices to prove that for each $a \in int(\Delta_1) \cap S_{\beta}(r)$, 
the two classes 
\begin{eqnarray*}
\overline{a} &\in&  int(C_{\Omega_{S, r}}) \subset H_2(N_{\beta}(r), \partial N_{\beta}(r)) \ \mbox{and}
\\
f(\overline{a}) &\in& int(C_{\Omega_{S, r'}}) \subset H_2(N_{\beta}(r'), \partial N_{\beta}(r'))
\end{eqnarray*}
have the same entropy. 
To do this, 
we consider the sets $int(\Delta) \cap S_{\gamma}(r)$ and $int(\Delta) \cap S_{\gamma}(r')$ 
which are the images of $int(\Delta_1) \cap S_{\beta}(r)$ and $int(\Delta_1) \cap S_{\beta}(r')$ under $\sigma$.  
If we write $b_0= \sigma(a) \in int(\Delta) \cap S_{\gamma}(r)$, then 
$\widetilde{b_2} = \sigma(a') \in int(\Delta) \cap S_{\gamma}(r')$, 
where $a'= \iota_{\beta}(f(\overline{a})) \in int(\Delta_1) \cap S_{\beta}(r')$. 
(See Figure~\ref{fig_DeltaCapS}(right).) 
As a consequence of  Corollary~\ref{cor_symmetry}  and Lemma \ref{lem_face-prong}, it follows that 
$$\mathrm{ent}_r( \overline{b_0})= \mathrm{ent}_{r'} (\overline{\widetilde{b_2}})$$
i.e, $\overline{b_0} \in H_2(N_{\gamma}(r), \partial N_{\gamma}(r))$ and 
$\overline{\widetilde{b_2}} \in H_2(N_{\gamma}(r'), \partial N_{\gamma}(r'))$ 
have the same entropy. 
Since $\mathrm{ent}_r( \overline{b_0})= \mathrm{ent}_r( \overline{a})$ 
and $\mathrm{ent}_{r'} (\overline{\widetilde{b_2}})=\mathrm{ent}_{r'} (f(\overline{a}))$, 
we conclude that $\mathrm{ent}_r( \overline{a})= \mathrm{ent}_{r'}( f(\overline{a}) )$. 
This completes the proof. 
\medskip
\\
(2) 
Let $\Omega_{A, r}$ be the $A$-face of $N(r)$ coming from $\Delta$ of $N$, and 
let $\Omega_{A,r'}$ be the $A$-face of $N(r)$ coming from $\Delta_2'$ of $N$. 
One sees that 
 $$f(int(C_{\Omega_{A,r}}({\Bbb Z}))) = int(C_{\Omega_{A,r'}}({\Bbb Z})).$$
It is enough to prove that for each $a \in int(\Delta) \cap S_{\beta}(r)$, 
the two classes 
\begin{eqnarray*}
\overline{a} &\in&  int(C_{\Omega_{A, r}}) \subset H_2(N_{\beta}(r), \partial N_{\beta}(r)) \ \mbox{and}
\\
f(\overline{a}) &\in& int(C_{\Omega_{A, r'}}) \subset H_2(N_{\beta}(r'), \partial N_{\beta}(r'))
\end{eqnarray*}
have the same entropy. 
Now, we consider the sets $int(\Delta) \cap S_{\beta}(r)$ and $int(\Delta) \cap S_{\alpha}(r')$ 
which are the images of $int(\Delta) \cap S_{\beta}(r)$ and $int(\Delta_2') \cap S_{\beta}(r')$ under $\sigma$.  
If one writes $b_0 = \sigma(a) \in int(\Delta) \cap S_{\beta}(r)$, then 
one can write 
$\widetilde{b_0} = \sigma(a') \in int(\Delta) \cap S_{\alpha}(r')$, 
where $a'= \iota_{\beta}(f(\overline{a})) \in int(\Delta'_2) \cap S_{\beta}(r')$. 
As a consequence of Corollary~\ref{cor_symmetry}  and Lemma \ref{lem_face-prong}, it follows that 
$$\mathrm{ent}_r( \overline{b_0})= \mathrm{ent}_{r'} (\overline{\widetilde{b_0}})$$  
i.e, 
$\overline{b_0} \in H_2(N_{\beta}(r), \partial N_{\beta}(r))$ and 
$\overline{\widetilde{b_0}} \in H_2(N_{\alpha}(r'), \partial N_{\alpha}(r'))$ 
have the same entropy. 
Since $\mathrm{ent}_r( \overline{b_0})= \mathrm{ent}_r( \overline{a})$ 
and $\mathrm{ent}_{r'} (\overline{\widetilde{b_0}})=\mathrm{ent}_{r'} (f(\overline{a}))$, 
the map  $f$ preserves the entropy, i.e, 
$\mathrm{ent}_r( \overline{a})= \mathrm{ent}_{r'}( f(\overline{a}) )$. 
This completes the proof. 
\medskip
\\
(3) The proof of (3) is similar to that of (1) or (2).  
\end{proof}

Let us check the entropy equivalence on some pairs 
which we promised to prove in Section~\ref{subsection_equivalence}. 
Theorem~\ref{thm_entropy_equiv} tells us that 
$(N(-4), \Omega_S)   \underset{\mathrm{ent}}{\sim}  (N(2), \Omega_S)$ and 
$N(\tfrac{3}{-2}) \underset{\mathrm{ent}}{\sim}  N(\tfrac{1}{-2})$.  
Since $N(-4) \simeq N(\tfrac{3}{-2})$, 
we see that 
$$(N(2), \Omega_S)  \underset{\mathrm{ent}}{\sim}  (N(\tfrac{3}{-2}), \Omega_A)  \underset{\mathrm{ent}}{\sim}  (N(\tfrac{1}{-2}), \Omega_A)  .$$

\section{Proofs of main results}
\label{section_Proof}


\subsection{Proof of Theorem~\ref{thm_key}(Theorem~\ref{thm_key-SA})}
\label{subsection_theorem_key}

In this subsection, we shall prove the next theorem which is equivalent to Theorem~\ref{thm_key} 
(see Lemma~\ref{lem_face-prong} or proof of Proposition~\ref{prop_goodFiberFace}). 

\begin{thm}
\label{thm_key-SA}
Let $p \in {\Bbb N}$ and $q \in {\Bbb Z}$ be coprime such that $\tfrac{p}{q} \in {\mathcal Hyp}$.  
\begin{enumerate}
\item[(1)] 
Suppose that $\tfrac{p}{q} \in (- \infty, -2) \cup (0, \infty)$ and $p+ 2q \ne 1$.  
Then 
\begin{enumerate}
\item[(i)] 
$\min \mathrm{Ent}(N(1), \Omega_S) = 2 \log \delta(D_4) \approx 1.6628$, 

\item[(ii)] 
$\min \mathrm{Ent}(N(\tfrac{p}{q}), \Omega_S) = 2 \log( \tfrac{3+ \sqrt{5}}{2}) \approx 1.9248$ for  $\tfrac{p}{q} = -4, 2$, and 

\item[(iii)] 
$\min \mathrm{Ent}(N(\tfrac{p}{q}), \Omega_S) > 1.97475$ if $\tfrac{p}{q} \ne -4, 1,2$. 
\end{enumerate}

\item[(2)] 
Suppose that $|q| \ne 1$. 
Then 
\begin{enumerate}
\item[(i)] 
$\min \mathrm{Ent}(N(\tfrac{p}{q}), \Omega_A) =  2 \log( \tfrac{3+ \sqrt{5}}{2}) \approx 1.9248$ for $\tfrac{p}{q}= \tfrac{3}{-2}, \tfrac{1}{-2}$, and 

\item[(ii)] 
$\min \mathrm{Ent}(N(\tfrac{p}{q}), \Omega_A) >  1.97475$ if  $\tfrac{p}{q} \ne  \tfrac{3}{-2}, \tfrac{1}{-2}$. 
\end{enumerate}
\end{enumerate}
\end{thm}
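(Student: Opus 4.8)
The plan is to transport every computation back to the fibered face $\Delta$ of $N$ itself, where the Teichm\"uller polynomial (\ref{equation_TpolyMagic}) and the symmetries of Subsection~\ref{subsection_Ent-symmteries} are available. By Lemma~\ref{lem_EeqSA} it suffices to treat one $S$-face and one $A$-face of each $N(\tfrac{p}{q})$. By Lemma~\ref{lem_Cannonical} and Lemma~\ref{lem_boundary}, the projectivized cone of an $S$-face corresponds under $\sigma$ to the segment $\Delta\cap S_{\gamma}(\tfrac pq)=\{[x,y]\in\Delta: y=-x+\tfrac{r}{1+r}\}$, and that of an $A$-face to $\Delta\cap S_{\beta}(\tfrac pq)$; Lemma~\ref{lem_Ent_r} then identifies $\min\mathrm{Ent}(N(\tfrac pq),\Omega)$ with the minimum over the interior of the corresponding segment of a weighted quantity $w\cdot\mathrm{Ent}([x,y])$, where $w=1-\tfrac1{p+q}$ is \emph{constant} for an $S$-face, while for an $A$-face $w=1-|\tfrac yq|$, which one computes to equal $1-\tfrac{|1-2x|}{|p+q|}$ along $\Delta\cap S_\beta(\tfrac pq)$; in particular $w\ge 1-\tfrac1{|p+q|}$ always. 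Using Theorem~\ref{thm_entropy_equiv} (essentially $N(r)\underset{\mathrm{ent}}{\sim}N(-2-r)$), I may finally assume $r<-1$ throughout, handling separately the few slopes not paired this way, namely those of numerator $1$ (in particular $r=1$).

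For the $S$-faces the weight is constant, so by Lemma~\ref{lem_concave_sym}(3) --- the line symmetry $[x,y]\leftrightarrow[y,x]$ together with strict concavity of $\tfrac1{\mathrm{ent}}$ --- the minimum of $\mathrm{Ent}$ along $y=-x+\tfrac{r}{1+r}$ is attained at its diagonal point $[\tfrac c2,\tfrac c2]$ with $c=\tfrac p{p+q}$, whose ray contains $(p,p,-2q)$ with Teichm\"uller polynomial $t^{2(p+q)}-2t^{p}-2t^{p+2q}+1$. Hence $\min\mathrm{Ent}(N(\tfrac pq),\Omega_S)=(1-\tfrac1{p+q})\,\mathrm{Ent}([\tfrac c2,\tfrac c2])$ is explicit: for $r=1$ it reduces (via $t^{4}-2t^{3}-2t+1$) to $2\log\delta(D_4)$, and for $r=-4$ --- hence, by Theorem~\ref{thm_entropy_equiv}(1), for $r=2$ --- to $2\log\tfrac{3+\sqrt5}2$ (via $s^{3}-2s^{2}-2s+1$ with $s=t^{2}$). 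For every other $r=\tfrac pq\in(-\infty,-2)$ I would use Proposition~\ref{prop_MNE-magic}, namely $\mathrm{Ent}([\tfrac c2,\tfrac c2])\ge\mathrm{Ent}([\tfrac12,\tfrac12])=2\log(2+\sqrt3)$, to get $\min\mathrm{Ent}>(1-\tfrac14)\cdot 2\log(2+\sqrt3)>1.97475$ as soon as $p+q\ge4$; the residual case $p+q=3$ gives $r\in\{-4,-\tfrac52\}$, and $-\tfrac52$ has $p+2q=1$, so its $S$-faces violate $(*)$. The numerator-$1$ slopes $r=\tfrac1{q'}$ with $q'\ge 2$ lie near the corner $[0,0]$ of $\Delta$ and are dispatched by the same estimate (their $\min\mathrm{Ent}$ in fact tends to $\infty$ with $q'$). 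This proves part (1).

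For the $A$-faces one repeats this, but now the weight $w=1-\tfrac{|1-2x|}{|p+q|}$ varies along $\Delta\cap S_\beta(\tfrac pq)$. Since both endpoints of this segment lie on $\partial\Delta$ (Lemma~\ref{lem_boundary}), Fried's properness gives $\mathrm{Ent}([x,y])\to\infty$ there while $w$ stays bounded, so $w\cdot\mathrm{Ent}$ attains an interior minimum, made unique by strict convexity of $\mathrm{Ent}$ (McMullen, Matsumoto) and the affineness of $w$ in $x$. For $r=\tfrac3{-2}$ and $r=\tfrac1{-2}$ one exhibits the minimizing classes directly --- the primitive classes $(2,2,1)$ and $(1,2,0)$ of $int(C_\Delta)$, each with $t^{3}-2t^{2}-2t+1$ as Teichm\"uller polynomial --- so $\min\mathrm{Ent}(N(\tfrac pq),\Omega_A)=2\log\tfrac{3+\sqrt5}2$. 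For the remaining slopes (necessarily $|q|\ge2$, since $|q|=1$ forces a $1$-prong on $T_\beta$ and so no $A$-face satisfies $(*)$, by Lemma~\ref{lem_face-prong}): when $|p+q|\ge4$ the estimate $\min\mathrm{Ent}\ge(1-\tfrac1{|p+q|})\cdot 2\log(2+\sqrt3)>1.97475$ applies verbatim; for the ``thin'' families with $|p+q|\le3$ --- these are the slopes $r\to-1$, such as $r=-\tfrac p{p-1}$ or $r=-\tfrac{2k+1}{2k-1}$, together with $r=-\tfrac52$ --- the segment $\Delta\cap S_\beta(\tfrac pq)$ collapses toward $\{x=\tfrac12\}$ and the minimizer approaches $[\tfrac12,\tfrac12]$ where $w\to1$, so $\min\mathrm{Ent}\to 2\log(2+\sqrt3)$ and only finitely many slopes in each such family can have $\min\mathrm{Ent}\le1.97475$; these finitely many slopes are then checked directly with (\ref{equation_TpolyMagic}), the only ones reaching $2\log\tfrac{3+\sqrt5}2$ being $\tfrac3{-2}$ and $\tfrac1{-2}$, and the remaining finite cases in $(-1,0)\cup(0,\infty)$ are reached via $r\leftrightarrow-2-r$.

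I expect the $A$-face analysis to be the main obstacle. Because the weight is not constant, the minimizer on $\Delta\cap S_\beta(\tfrac pq)$ is genuinely not a symmetry point of the segment and must be located through the first-order condition (so one leans on strict concavity of $\tfrac1{\mathrm{ent}}$ rather than on the explicit diagonal point as in the $S$-face case), and converting the qualitative convergence statements for the thin families $|p+q|\le3$ into an effective, explicit finite list of slopes requires careful, case-by-case bookkeeping of the entropy estimates near $\partial\Delta$.
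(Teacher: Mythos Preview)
Your Part~(1) is essentially the paper's argument (Lemma~\ref{lem_MNE-S} for the explicit minimum, then Lemma~\ref{lem_large-bunbo}(1) for $p+q\ge4$). One slip: among your numerator-$1$ exceptions, $r=\tfrac12$ has $p+q=3$, so your estimate does \emph{not} dispatch it; the paper reduces to $r\in(0,\infty)$ rather than $(-\infty,-2)$ and computes this case directly ($\min\mathrm{Ent}(N(\tfrac12),\Omega_S)=4\log\lambda_{(1,1,-4)}\approx2.93$).

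Your Part~(2) takes a different route and has the gap you yourself anticipate. The bound $w=1-\tfrac{|1-2x|}{|p+q|}>1-\tfrac{1}{|p+q|}$ leaves the three \emph{infinite} families $|p+q|\in\{1,2,3\}$, and ``$\min\mathrm{Ent}\to2\log(2+\sqrt3)$ as $r\to-1$'' is only qualitative: you produce no effective cutoff and no finite list to check. The paper avoids this by using the other obvious bound $|y/q|=y/|q|<1/|q|$, which reduces to $|q|\in\{2,3\}$ (Lemma~\ref{lem_large-bunbo}(2)); these are still infinite in $p$, but the paper proves a \emph{monotonicity lemma} (Lemma~\ref{lem_monotonicity}): for fixed $q$ with $|q|\ne1$, $\min\mathrm{Ent}(N(\tfrac pq),\Omega_A)$ is strictly increasing in $|1+\tfrac pq|$. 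Its proof is one line of geometry --- all segments $\Delta\cap S_\beta(\tfrac pq)$ pass through $[\tfrac12,0]$, so at a fixed $y$-level the segment with larger $|1+\tfrac pq|$ lies farther from $x=\tfrac12$, hence has larger $\lambda$ (Lemma~\ref{lem_concave_sym}(2)), while the weight $1-y/|q|$ is unchanged. This collapses each $|q|$-family to its member nearest $-1$, and a single numerical estimate (Lemma~\ref{lem_bunbo3}) then handles $|q|=3$. With this lemma in hand the thin-family bookkeeping disappears; without it, your proposal is not a proof. A secondary point: exhibiting $(2,2,1)$ and $(1,2,0)$ on the $A$-segments does not show they \emph{minimize} $\mathrm{Ent}_r$ there; the paper proves the centre of $\Omega_A$ is the minimizer (Lemma~\ref{lem_bunbo2}) via a polynomial-factorization symmetry showing mirror-image classes have equal dilatation, not via a first-order condition.
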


We start by the computation of  $\min \mathrm{Ent}(N(\tfrac{p}{q} ), \Omega_S) $ and 
$\min \mathrm{Ent}(N(\tfrac{p}{q} ), \Omega_A) $.

\begin{lem}
\label{lem_MNE-S}
Let $\tfrac{p}{q} \in (- \infty, -2) \cup  (0, \infty)$. 
Then 
$$\min \mathrm{Ent}(N(\tfrac{p}{q} ), \Omega_S) =  (1- \tfrac{1}{p+q}) \log \lambda_{[\tfrac{p}{2p+2q}, \tfrac{p}{2p+2q}]} \hspace{2mm}
\mbox{if}\ p+2q \ne 1.$$
\end{lem}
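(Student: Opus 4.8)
The plan is to pull the minimization over the $S$-face $\Omega_S$ of $N(\tfrac{p}{q})$ back to the fibered face $\Delta$ of $N$ through the map $\sigma$ of Section~\ref{subsection_Fibered}, and then read off the answer from the symmetry of the entropy function of $N$ established in Section~\ref{subsection_Ent-symmteries}.

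First I would use that $p+2q\ne 1$, which guarantees (Proposition~\ref{prop_goodFiberFace}, via Lemma~\ref{lem_face-prong}) that $\Omega_S$ enjoys condition $(*)$ of Theorem~\ref{thm_key}, so that Lemma~\ref{lem_Ent_r}(1) applies. Combining it with Lemma~\ref{lem_Cannonical}(2): for every norm-one class $a\in S_\beta(\tfrac{p}{q})\cap Int\,C$ with $\overline{a}\in int(C_{\Omega_S})$ one has $\sigma(a)\in int(\Delta)\cap S_\gamma(\tfrac{p}{q})$ and
\[\mathrm{Ent}_{p/q}(\overline{a})=\bigl(1-\tfrac{1}{p+q}\bigr)\log\lambda\bigl(\sigma(a)\bigr).\]
Since $\sigma$ restricts to a bijection of $S_\beta(\tfrac{p}{q})\cap int(C_{\Delta_1})$ onto $S_\gamma(\tfrac{p}{q})\cap int(C_{\Delta})$, as $\overline{a}$ runs over the rays of $int(C_{\Omega_S})$ the norm-one normalization of $\sigma(a)$ sweeps out the whole of $int(\Delta)\cap S_\gamma(\tfrac{p}{q})$, i.e.\ (by Lemma~\ref{lem_boundary}(3)) the set $\{[x,y]\in int(\Delta)\mid y=-x+\tfrac{r}{1+r}\}$ with $\tfrac{r}{1+r}=\tfrac{p}{p+q}$. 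Hence
\[\min\mathrm{Ent}\bigl(N(\tfrac{p}{q}),\Omega_S\bigr)=\bigl(1-\tfrac{1}{p+q}\bigr)\,\min\bigl\{\,\log\lambda_{[x,y]}\ \big|\ [x,y]\in int(\Delta),\ y=-x+\tfrac{p}{p+q}\,\bigr\}.\]

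To evaluate the remaining minimum I would set $c=\tfrac{p}{p+q}$ and verify $0<c<2$: if $\tfrac{p}{q}\in(0,\infty)$ then $p,q>0$ and $c\in(0,1)$, while if $\tfrac{p}{q}\in(-\infty,-2)$ then $q<0$ and $p+2q>0$ (this inequality being equivalent to $\tfrac{p}{q}<-2$), whence $p+q=p-|q|>0$ and $c=\tfrac{p}{p-|q|}\in(1,2)$. Lemma~\ref{lem_concave_sym}(3) then applies with this $c$ and yields $\min\{\lambda_{[x,y]}\mid [x,y]\in int(\Delta),\ y=-x+c\}=\lambda_{[c/2,\,c/2]}=\lambda_{[\frac{p}{2p+2q},\,\frac{p}{2p+2q}]}$ (the minimizer $[c/2,c/2]$ indeed lies in $int(\Delta)\cap S_\gamma(\tfrac{p}{q})$ since $0<c/2<1$), which substituted into the previous display is the assertion. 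The mathematics is carried entirely by Lemma~\ref{lem_Ent_r} and Lemma~\ref{lem_concave_sym}(3); the only thing needing care is bookkeeping — checking that $\sigma$ identifies $int(C_{\Omega_S})$ projectively with the full segment $int(\Delta)\cap S_\gamma(\tfrac{p}{q})$, i.e.\ that $\Omega_S$ is the face descending from $\Delta_1$, and that $0<\tfrac{r}{1+r}<2$ in the delicate range $\tfrac{p}{q}<-2$, where one uses $p+2q>0$. The excluded case $p+2q=1$ is precisely the one in which $\Omega_S$ would fail $(*)$ and Lemma~\ref{lem_Ent_r}(1) would be unavailable.
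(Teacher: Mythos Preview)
Your proof is correct and follows essentially the same route as the paper's: reduce to the segment $int(\Delta)\cap S_\gamma(\tfrac{p}{q})$ via Lemma~\ref{lem_Cannonical}(2), observe that $\mathrm{Ent}_{p/q}$ is a constant multiple of $\log\lambda$ on that segment, and then locate the minimum at the midpoint using Lemma~\ref{lem_concave_sym}(3). The only cosmetic difference is that you invoke the packaged Lemma~\ref{lem_Ent_r}(1), whereas the paper cites its ingredients (Lemmas~\ref{lem_NormChange} and~\ref{lem_face-prong}) separately, and you spell out the verification $0<c<2$ that the paper leaves implicit.
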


\begin{proof} 
By Lemma~\ref{lem_EeqSA}, 
we have the equalities 
\begin{eqnarray*}
\min \mathrm{Ent}(N(\tfrac{p}{q} ), \Omega_S) 
&=& 
\min\{\|\overline{[x,y]}\|_{p/q}  \log \lambda_{p/q} (\overline{[x, y]})\ |\ 
[x,y] \in int(\Delta) \cap S_{\gamma}(\tfrac{p}{q}) \}
\\
&=& \min \{ (1- \tfrac{1}{p+q}) \log \lambda_{p/q} (\overline{[x, y]})\ |\ 
[x,y] \in int(\Delta) \cap S_{\gamma}(\tfrac{p}{q}) \}
\\
&=& 
\min \{ (1- \tfrac{1}{p+q}) \log \lambda_{[x, y]}\ |\ 
[x,y] \in int(\Delta) \cap S_{\gamma}(\tfrac{p}{q}) \}. 
\end{eqnarray*}
The first equality comes from Lemma~\ref{lem_Cannonical}(2).  
The second equality and the third one  follow from Lemma~\ref{lem_NormChange} and Lemma~\ref{lem_face-prong} respectively.  
Lemmas~\ref{lem_concave_sym}(3) and \ref{lem_boundary}(3) imply that 
the minimum is achieved by the center  $[\tfrac{p}{2p+2q}, \tfrac{p}{2p+2q}] \in int(\Delta) \cap S_{\gamma}(\tfrac{p}{q})$. 
This completes the proof.
\end{proof} 

\begin{lem}
\label{lem_MNE-A}
Let $\tfrac{p}{q} \in (- \infty, \infty)$.  
Then 
\begin{eqnarray*}
\min \mathrm{Ent}(N(\tfrac{p}{q}), \Omega_A)  
&=& \min \{(1 - |\tfrac{y}{q}| ) \log \lambda_{[x,y]}\ |\ [x,y] \in int (\Delta) \cap S_{\beta} (\tfrac{p}{q})\}
\\
&=& \min \{(1 - |\tfrac{x}{q}| ) \log \lambda_{[x,y]}\ |\ [x,y] \in int (\Delta) \cap S_{\alpha} (\tfrac{p}{q})\}.
\end{eqnarray*}
\end{lem}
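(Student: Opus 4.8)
The plan is to follow the template of the proof of Lemma~\ref{lem_MNE-S}, replacing the $S_\gamma$-slice and the constant weight $1-\tfrac{1}{p+q}$ by the $S_\beta$- and $S_\alpha$-slices and the variable weights $1-|\tfrac{y}{q}|$ and $1-|\tfrac{x}{q}|$, and then to use the reflection $[x,y]\leftrightarrow[y,x]$ to identify the two resulting expressions. First I would invoke Lemma~\ref{lem_EeqSA}: since all $A$-faces of $N(\tfrac pq)$ are entropy equivalent, $\min\mathrm{Ent}(N(\tfrac pq),\Omega_A)$ does not depend on the chosen $A$-face, so it equals the infimum of $\mathrm{Ent}_{p/q}(\overline a)$ over all primitive fibered classes $a\in S_\beta(\tfrac pq)$, normalised so that $\|a\|=1$, whose image $\overline a$ lies in the cone over \emph{some} $A$-face. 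By Lemma~\ref{lem_Cannonical}(1),(3) these are exactly the classes with $a\in S_\beta(\tfrac pq)\cap\bigl(int(\Delta)\cup int(\Delta')\cup int(\Delta_2)\cup int(\Delta_2')\bigr)$, and replacing $a$ by $-a$ — which preserves $S_\beta(\tfrac pq)$, the Thurston norm and the entropy, since $\Phi_{-a}\simeq(\Phi_a)^{-1}$ — reduces this to the two cases $a\in int(\Delta)$ and $a\in int(\Delta_2)$.

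For $a=(x,y,z)\in int(\Delta)\cap S_\beta(\tfrac pq)$ with $\|a\|=1$ one has $\sigma(a)=a=[x,y]$, and exactly the computation behind Lemma~\ref{lem_Ent_r}(2) — Lemma~\ref{lem_NormChange2} for the norm and the conjugacy $\Phi_a\simeq\Phi_{\sigma(a)}$ for the dilatation — gives $\mathrm{Ent}_{p/q}(\overline a)=(1-|\tfrac yq|)\log\lambda_{[x,y]}$. For $a=(x,y,z)\in int(\Delta_2)\cap S_\beta(\tfrac pq)$ with $\|a\|=1$ one has $\sigma(a)=(h^2)_*(a)=(y,z,x)\in int(\Delta)\cap S_\alpha(\tfrac pq)$, and the same computation, using Lemma~\ref{lem_NormChange2} for $T_\alpha$, gives $\mathrm{Ent}_{p/q}(\overline a)=(1-|\tfrac yq|)\log\lambda_{(y,z,x)}$, which in the notation $[u,v]:=[y,z]\in int(\Delta)\cap S_\alpha(\tfrac pq)$ reads $(1-|\tfrac uq|)\log\lambda_{[u,v]}$. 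Since the primitive fibered classes are dense in the corresponding segments $int(\Delta)\cap S_\beta(\tfrac pq)$ and $int(\Delta)\cap S_\alpha(\tfrac pq)$ (cf. Lemma~\ref{lem_boundary}) and the entropy is continuous and proper on a fibered face, passing to the closure gives
\[
\min\mathrm{Ent}(N(\tfrac pq),\Omega_A)=\min\bigl(M_\beta,M_\alpha\bigr),
\]
where $M_\beta=\min\{(1-|\tfrac yq|)\log\lambda_{[x,y]}\mid[x,y]\in int(\Delta)\cap S_\beta(\tfrac pq)\}$ and $M_\alpha=\min\{(1-|\tfrac xq|)\log\lambda_{[x,y]}\mid[x,y]\in int(\Delta)\cap S_\alpha(\tfrac pq)\}$.

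It remains to show $M_\alpha=M_\beta$. By Remark~\ref{rem_SymmetrySetXY} the reflection $R\colon[x,y]\mapsto[y,x]$ is a bijection from $int(\Delta)\cap S_\beta(\tfrac pq)$ onto $int(\Delta)\cap S_\alpha(\tfrac pq)$; by Lemma~\ref{lem_conjugateXY} (see also Remark~\ref{rem_symmetry-ent}) it preserves the dilatation, $\lambda_{R([x,y])}=\lambda_{[y,x]}=\lambda_{[x,y]}$; and it matches the weights, since the weight $1-|\tfrac yq|$ attached to $[x,y]\in int(\Delta)\cap S_\beta(\tfrac pq)$ equals the weight $1-|\tfrac xq|$ attached to $R([x,y])=[y,x]\in int(\Delta)\cap S_\alpha(\tfrac pq)$, the first coordinate of $[y,x]$ being $y$. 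Hence $R$ carries the minimisation problem defining $M_\beta$ onto the one defining $M_\alpha$, so $M_\beta=M_\alpha$, and both equal $\min\mathrm{Ent}(N(\tfrac pq),\Omega_A)$.

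The step I expect to be delicate is the degenerate case $|q|=1$ (integer slopes): there each $A$-face of $N(\tfrac pq)$ comes from a pair of fibered faces of $N$ (for instance $\Delta$ and $\Delta_2'$), the boundary curves on $T_\beta$ carry a single prong, and Lemma~\ref{lem_Ent_r} — whose hypothesis is condition $(*)$ of Theorem~\ref{thm_key}, which $A$-faces satisfy only when $|q|\ne 1$ — does not apply verbatim, so the identity $\mathrm{Ent}_{p/q}(\overline a)=(1-|\tfrac yq|)\log\lambda_{[x,y]}$ cannot be quoted directly (Lemma~\ref{lem_face-prong} then only yields $\lambda_{p/q}(\overline a)\le\lambda(a)$). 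I would circumvent this by continuity: $\mathrm{Ent}_{p/q}$ on the $A$-face is the continuous extension of the entropy on integral classes, the quantities $M_\beta$ and $M_\alpha$ vary continuously with the slope, and the identity already holds on the $A$-face for every slope $\tfrac{p'}{q'}$ near $\tfrac pq$ with $|q'|\ne 1$; letting $\tfrac{p'}{q'}\to\tfrac pq$ and using that the minimum over the open $A$-face is attained at an interior point (where $\mathrm{ent}$ is proper and strictly convex) pins down the value in the remaining case as well.
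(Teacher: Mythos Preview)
Your argument for $|q|\ne 1$ is correct and is exactly what the paper's one-line proof (citing Lemmas~\ref{lem_Cannonical}(1)(3) and \ref{lem_EeqSA}) unpacks to. One small simplification: your separate verification that $M_\alpha=M_\beta$ via the reflection $[x,y]\mapsto[y,x]$ is correct but unnecessary. Lemma~\ref{lem_EeqSA} already tells you that the $A$-face coming from $\Delta$ (whose $\min\mathrm{Ent}$ is $M_\beta$ by Lemma~\ref{lem_Ent_r}(2)) and the $A$-face coming from $\Delta_2$ (whose $\min\mathrm{Ent}$ is $M_\alpha$) are entropy equivalent, so $M_\beta=M_\alpha=\min\mathrm{Ent}(N(\tfrac pq),\Omega_A)$ drops out immediately; the reflection argument you give is essentially the content of the proof of Lemma~\ref{lem_EeqSA} itself.

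Your instinct about $|q|=1$ is right, but the continuity fix does not work. The quantities $\min\mathrm{Ent}(N(\tfrac{p'}{q'}),\Omega_A)$ for varying slopes live on genuinely different manifolds, and nothing in the paper provides a mechanism making them continuous in the slope; you cannot transport the identity across a limit $\tfrac{p'}{q'}\to\tfrac pq$. More to the point, when $|q|=1$ Lemma~\ref{lem_face-prong} says each boundary component on $T_\beta$ carries a single prong, so the stable foliation of $\Phi_a$ does \emph{not} extend over the filled cusp, and one only has $\lambda_{p/q}(\overline a)\le\lambda(a)$; the expression $(1-|\tfrac yq|)\log\lambda_{[x,y]}$ is then an overcount of $\mathrm{Ent}_{p/q}(\overline a)$, and the asserted equality need not hold. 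This is simply an imprecision in the lemma's hypothesis: every use of Lemma~\ref{lem_MNE-A} in the paper---Lemmas~\ref{lem_monotonicity}, \ref{lem_bunbo2}, \ref{lem_bunbo3}, \ref{lem_large-bunbo}(2), Proposition~\ref{prop_finitely-many}(2), and Theorem~\ref{thm_key-SA}(2)---carries the standing assumption $|q|\ge 2$. Read the lemma with $|q|\ne 1$ and drop the continuity paragraph.
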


\begin{proof}
The claim follows from Lemmas~\ref{lem_Cannonical}(1)(3) and \ref{lem_EeqSA}. 
\end{proof}

\begin{rem}
\label{rem_symmetry}
If an $S$-face  enjoys $(*)$ in Theorem~\ref{thm_key}, 
one can compute $\min \mathrm{Ent}(N(r), \Omega_S) $ explicitly from Lemma~\ref{lem_MNE-S}. 
This is because 
$\mathrm{ent}: int(\Delta) \rightarrow {\Bbb R}$ 
on $int(\Delta) \cap S_{\gamma}(\tfrac{p}{q})$ 
has a symmetry with respect to the center. 
There exists no  symmetry of 
$\mathrm{ent}: int(\Delta) \rightarrow {\Bbb R}$ 
on $int(\Delta) \cap S_{\beta}(\tfrac{p}{q})$ (hence on $int(\Delta) \cap S_{\alpha}(\tfrac{p}{q})$) in general.   
Later we shall compute $ \min \mathrm{Ent}(N(r), \Omega_A)$ for some manifolds  having a symmetry themselves 
(see Lemma~\ref{lem_bunbo2} and Proposition~\ref{prop_whitehead}), 
but these cases are rare.  
\end{rem}

We prove the following monotonicity of $ \min \mathrm{Ent}(\cdot, \Omega_A)$. 

\begin{lem}
\label{lem_monotonicity}
Let $p,p' \in {\Bbb N}$ and $q \in {\Bbb Z}$ such that 
$(p,q)$ and $(p',q)$ are coprime pairs. 
If $|q| \ne 1$ and $|1 +  \tfrac{p'}{q}| > |1 +  \tfrac{p}{q}|$, then 
$\min \mathrm{Ent}(N(\tfrac{p'}{q}), \Omega_A) > \min \mathrm{Ent}(N(\tfrac{p}{q}), \Omega_A)$. 
\end{lem}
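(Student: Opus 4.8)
The plan is to translate the monotonicity statement about $\min\mathrm{Ent}(N(\tfrac{p}{q}),\Omega_A)$ into a statement about the entropy function on a family of line segments inside $int(\Delta)$, and then exploit the symmetries of $\mathrm{ent}$ collected in Section~\ref{subsection_Ent-symmteries}. By Lemma~\ref{lem_MNE-A} we have
\begin{equation*}
\min\mathrm{Ent}(N(\tfrac{p}{q}),\Omega_A)=\min\Bigl\{(1-|\tfrac{x}{q}|)\log\lambda_{[x,y]}\ \bigm|\ [x,y]\in int(\Delta)\cap S_{\alpha}(\tfrac{p}{q})\Bigr\},
\end{equation*}
so the only slope-dependent data on the right are (i) the common factor $1-|\tfrac{x}{q}|$ and (ii) the segment $int(\Delta)\cap S_{\alpha}(\tfrac{p}{q})$ over which the minimum is taken. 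First I would use Lemma~\ref{lem_boundary}(1) to describe this segment explicitly: it is $\{[x,y]\in\Delta\mid y=(\tfrac{1+r}{-2})x+\tfrac12\}$ with $r=\tfrac{p}{q}$, a segment through the fixed point $[0,\tfrac12]$ whose slope is governed by $1+\tfrac{p}{q}$. Increasing $|1+\tfrac{p}{q}|$ tilts the segment toward the $x=0$ edge of the square $int(\Delta)$, shortening it and pushing all its points toward smaller $|x|$; simultaneously the factor $1-|\tfrac{x}{q}|$ is being evaluated at points closer to $x=0$.

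The key step is to show that the point realizing the minimum moves in a controlled way and that the value strictly increases. I would argue as follows. By Corollary~\ref{cor_symmetry} and the strict concavity of $\tfrac{1}{\mathrm{ent}}$ (Matsumoto--McMullen, as recalled in Section~\ref{subsection_Tnorm}), the function $[x,y]\mapsto\log\lambda_{[x,y]}$ on $int(\Delta)\cap S_{\alpha}(r)$ is strictly convex and proper (it blows up at the endpoints, by Fried's properness), while $1-|\tfrac{x}{q}|$ is affine in $x$ and positive on the relevant range; hence the product $(1-|\tfrac{x}{q}|)\log\lambda_{[x,y]}$ is a positive, strictly convex, proper function along the segment, attaining its minimum at a unique interior point, which is $\min\mathrm{Ent}(N(\tfrac pq),\Omega_A)$. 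To compare two slopes $\tfrac{p}{q}$ and $\tfrac{p'}{q}$ with $|1+\tfrac{p'}{q}|>|1+\tfrac{p}{q}|$, I would set up a map from the segment $S_{\alpha}(\tfrac{p'}{q})$ into $S_{\alpha}(\tfrac{p}{q})$ (both pass through $[0,\tfrac12]$) that on the level of $x$-coordinates is $x\mapsto x\cdot\tfrac{1+p'/q}{1+p/q}$ up to a reflection $x\mapsto 1-x$ arranged via the $y=-x+1$ symmetry of $\mathrm{ent}$ noted in Remark~\ref{rem_symmetry-ent} — this is the bookkeeping one has to get exactly right. Under such an identification, a point of $S_\alpha(\tfrac{p'}{q})$ at parameter $x'$ corresponds to a point of $S_\alpha(\tfrac pq)$ with a \emph{strictly larger} value of $|x|$ (since $|1+\tfrac{p'}{q}|>|1+\tfrac{p}{q}|$), so $1-|\tfrac{x}{q}|$ at the image is strictly smaller, while $\log\lambda$ at the image is no larger — because the image segment is the longer one and, by the symmetry/convexity, the dilatation at a point decreases as one moves it toward the center of the longer segment along the matched direction. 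Combining these, $(1-|\tfrac{x'}{q}|)\log\lambda_{[x',\cdot]}>(1-|\tfrac{x}{q}|)\log\lambda_{[x,\cdot]}\ge\min\mathrm{Ent}(N(\tfrac pq),\Omega_A)$ for every $x'$, and taking the minimum over $x'$ gives the claim.

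The main obstacle I anticipate is precisely the geometric comparison in the last step: establishing that the entropy at corresponding points of the two segments behaves monotonically requires combining two separate monotonicities (in the ``width'' of the segment and in the affine weight $1-|\tfrac xq|$) and making sure they cooperate rather than compete. The cleanest way I see to handle this is to reduce everything to the one-parameter family of segments $y=-kx+\tfrac12$ through $[0,\tfrac12]$, parametrized by $k=\tfrac{1+p/q}{2}$, and show directly that the function $k\mapsto\min_x(1-|\tfrac{x}{q}|)\log\lambda_{[x,-kx+1/2]}$ is strictly increasing in $|k|$, using the line symmetry about $x=\tfrac12$ from Lemma~\ref{lem_concave_sym}(2) and strict concavity of $\tfrac{1}{\log\lambda}$ to pin down and track the minimizing $x$. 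The integrality/coprimality hypotheses on $(p,q)$ and $(p',q)$ and the condition $|q|\neq1$ are used only to guarantee (via Lemma~\ref{lem_face-prong}) that $\Omega_A$ genuinely enjoys $(*)$ so that $\lambda_{p/q}(\overline a)=\lambda(a)=\lambda_{[x,y]}$, i.e.\ that Lemma~\ref{lem_MNE-A} applies; the monotonicity argument itself is then a statement purely about the real-analytic entropy function of $N$ on $int(\Delta)$.
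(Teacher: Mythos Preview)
Your overall strategy is the right one and matches the paper's: use Lemma~\ref{lem_MNE-A} to reduce to a pointwise comparison of $(1-|\cdot/q|)\log\lambda_{[\cdot,\cdot]}$ along the two segments, and for every point on the $r'$-segment exhibit a point on the $r$-segment with strictly smaller normalized entropy. The gap is in your choice of comparison point. Your map $x'\mapsto x=x'\cdot\tfrac{1+r'}{1+r}$ keeps the $y$-coordinate fixed, so both factors $(1-|x/q|)$ and $\log\lambda_{[x,y']}$ change simultaneously, and your assertion that ``$\log\lambda$ at the image is no larger'' is not justified and is in fact false in general. The relevant symmetry of $\log\lambda$ (Lemma~\ref{lem_concave_sym}(2)) is about the line $x=\tfrac12$, not about the midpoint of the segment; since both $S_\alpha$-segments emanate from $[0,\tfrac12]$, there is no reason the image $[x,y']$ should be closer to $x=\tfrac12$ than $[x',y']$. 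Concretely, take $q=2$, $p=1$, $p'=3$ and $y'$ small: then $x'=\tfrac{1-2y'}{5/2}<\tfrac12$ while $x=\tfrac{1-2y'}{3/2}>\tfrac12$, and one checks $|x-\tfrac12|>|x'-\tfrac12|$, so $\lambda_{[x,y']}>\lambda_{[x',y']}$.

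The fix is to match the coordinate that appears in the weight, which is exactly what the paper does. In your $S_\alpha$ picture, send $[x',y']$ to the point $[x',y]$ on $S_\alpha(r)$ with the \emph{same} first coordinate; this exists because the $r$-segment, being less steep, has at least the $x$-range of the $r'$-segment. Then the weight $(1-|x'/q|)$ is identical on both sides, and $|y-\tfrac12|=\tfrac{|1+r|}{2}|x'|<\tfrac{|1+r'|}{2}|x'|=|y'-\tfrac12|$ gives $\lambda_{[x',y]}<\lambda_{[x',y']}$ by Lemma~\ref{lem_concave_sym}(1). The paper carries out the literally symmetric argument in the $S_\beta$ picture (segments through $[\tfrac12,0]$, match the $y$-coordinate, invoke Lemma~\ref{lem_concave_sym}(2)); no convexity of the product, no tracking of minimizers, and no reflection $x\mapsto 1-x$ is needed.
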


\begin{proof} 
We use Lemma~\ref{lem_MNE-A}. 
Put $r'= \tfrac{p'}{q}$ and $r=\tfrac{p}{q}$. 
The sets 
$\Delta \cap S_{\beta}(r')$ and $\Delta \cap S_{\beta}(r)$ lie on the lines 
$y= (\tfrac{-2}{1+r'})x + \tfrac{1}{1+r'}$ and $y= (\tfrac{-2}{1+r})x + \tfrac{1}{1+r}$ respectively. 
(These lines go through $[\tfrac{1}{2},0] \in \partial \Delta$.) 
One has the inequality $ |\tfrac{-2}{1+r'}|< |\tfrac{-2}{1+r}|$  between the slopes. 
Thus for any $a'= [x', y'] \in int (\Delta) \cap S_{\beta}(r')$, there exists a unique point 
$a=  [x, y'] \in int (\Delta) \cap S_{\beta}(r)$ with the same second coordinate $y'$. 
Since $|\tfrac{1}{2}-x| < |\tfrac{1}{2}-x'|$,  one sees that 
$\lambda_{[x, y']}< \lambda_{[x', y']}$ (cf.  Lemma~\ref{lem_concave_sym}(2)). 
The condition $|q| \ne 1$ says that 
$A$-faces for both $N(\tfrac{p'}{q})$ and $N(\tfrac{p}{q})$ enjoy $(*)$ in Theorem~\ref{thm_key}. 
Hence by Lemma~\ref{lem_Ent_r}(2), 
$$\mathrm{Ent}_{r'}(\overline{a'})= (1 - |\tfrac{y'}{q}|) \lambda_{[x', y']}> 
(1 - |\tfrac{y'}{q}|) \lambda_{[x, y']}= \mathrm{Ent}_{r}(\overline{a}).$$
Since this holds for any $a' \in int (\Delta) \cap S_{\beta}(r')$, the proof is done. 
\end{proof}

\begin{lem}
\label{lem_bunbo2}
Suppose that $|q|=2$. 
Then 
\begin{enumerate}
\item[(1)] 
$\min \mathrm{Ent}(N(\tfrac{p}{q} ), \Omega_A)  = 2 \log( \tfrac{3+ \sqrt{5}}{2})$ if $\tfrac{p}{q} = \tfrac{3}{-2}, \tfrac{1}{-2}$, 

\item[(2)] 
$\min \mathrm{Ent}(N(\tfrac{p}{q} ), \Omega_A)  = 4 \log \lambda_{(4,2,1)} \approx 2.5318$ if $\tfrac{p}{q} = \tfrac{5}{-2}, \tfrac{1}{2}$, and 

\item[(3)]
$\min \mathrm{Ent}(N(\tfrac{p}{q} ), \Omega_A)  >   4 \log \lambda_{(4,2,1)}$ otherwise.   
\end{enumerate}
\end{lem}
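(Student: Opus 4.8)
The plan is to reduce the statement to two explicit entropy computations, one for $N(\tfrac{1}{-2})$ and one for $N(\tfrac{5}{-2})$, together with the monotonicity of $\min\mathrm{Ent}(\cdot,\Omega_A)$. Since $p$ and $q$ are coprime and $|q|=2$, the numerator $p$ is odd, and Theorem~\ref{thm_entropy_equiv}(2) gives $(N(\tfrac{3}{-2}),\Omega_A)\underset{\mathrm{ent}}{\sim}(N(\tfrac{1}{-2}),\Omega_A)$ and $(N(\tfrac{5}{-2}),\Omega_A)\underset{\mathrm{ent}}{\sim}(N(\tfrac{1}{2}),\Omega_A)$; hence it suffices to compute $\min\mathrm{Ent}(N(\tfrac{1}{-2}),\Omega_A)$ and $\min\mathrm{Ent}(N(\tfrac{5}{-2}),\Omega_A)$ and then dispose of all remaining slopes with $|q|=2$. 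Throughout I will use Lemma~\ref{lem_MNE-A} to express $\min\mathrm{Ent}(N(\tfrac{p}{q}),\Omega_A)$ as a minimum over the open segment $int(\Delta)\cap S_\beta(\tfrac{p}{q})$, Lemma~\ref{lem_face-prong} (on an $A$-face every boundary component of $F_a$ on $T_\beta$ carries $|q|=2\neq 1$ prongs, so $\lambda_{p/q}(\overline{a})=\lambda(a)$), and Lemma~\ref{lem_NormChange2}.

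First I treat $r=\tfrac{1}{-2}$. The rational points of $int(\Delta)\cap S_\beta(\tfrac{1}{-2})$ are exactly the projective classes of the primitive fibered classes $a=(k,2k+2m,m)\in int(C_\Delta)$ with $-k<m<k$, and for such $a$ one has $\|\overline{a}\|_r=2k$ while $\lambda(a)=\lambda_{(k,|m|)}$, the largest root of the Lanneau-Thiffeault polynomial $f_{(k,|m|)}$ (which divides $f_{(k,2k+2m,m)}$, Section~\ref{subsection_LTQuestion}). Thus
\[ \mathrm{Ent}_r(\overline{a})=2k\log\lambda_{(k,|m|)}=2\log T,\qquad T:=\lambda_{(k,|m|)}^{\,k}. \]
Dividing $f_{(k,|m|)}(\lambda_{(k,|m|)})=0$ by $\lambda_{(k,|m|)}^{\,k}$ yields $T+T^{-1}=\lambda_{(k,|m|)}^{\,|m|}+\lambda_{(k,|m|)}^{-|m|}+1\geq 3$, hence $T\geq\tfrac{3+\sqrt5}{2}$ with equality if and only if $m=0$. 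Therefore $\mathrm{Ent}_r(\overline{a})\geq 2\log(\tfrac{3+\sqrt5}{2})$, with equality on the ray through $(1,2,0)$, whose projective class $[\tfrac13,\tfrac23]$ is interior to the segment; by continuity the bound holds on all of $int(\Delta)\cap S_\beta(\tfrac{1}{-2})$, so $\min\mathrm{Ent}(N(\tfrac{1}{-2}),\Omega_A)=2\log(\tfrac{3+\sqrt5}{2})$, and with the entropy equivalence this proves (1).

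Next I treat $r=\tfrac{5}{-2}$. Here the rational points of $int(\Delta)\cap S_\beta(\tfrac{5}{-2})$ are the projective classes of the primitive fibered classes $a=(k+3s,2s,2s-k)\in int(C_\Delta)$ with $k,s\geq 1$; for these $\|\overline{a}\|_r=2k+2s$, and writing $A=\lambda(a)^{\,k}$, $B=\lambda(a)^{\,s}$ the equation $f_{(k+3s,2s,2s-k)}(\lambda(a))=0$ rearranges to $A^2B^3+1=(A+B^2)(1+AB)$, so that $\mathrm{Ent}_r(\overline{a})=(2k+2s)\log\lambda(a)=\log(A^2B^2)$. I minimize $\log(A^2B^2)$ along the segment. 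Since the normalized entropy is strictly convex on the open fibered face, it has a unique critical point there; parametrizing a ray by $(u,v)=(k\log\lambda(a),\,s\log\lambda(a))$, the constraint reads $G(u,v):=e^{2u+3v}+1-(e^u+e^{2v})(1+e^{u+v})=0$ and the critical point is where $G_u=G_v$. A short computation shows that on the diagonal $u=v$ one has $G_u-G_v=-\lambda(\lambda^4-2\lambda^3+\lambda^2-2\lambda+1)$ with $\lambda=e^u$, while $G(u,u)=0$ is exactly $\lambda^5-\lambda^4-\lambda^3-\lambda^2-\lambda+1=(\lambda+1)(\lambda^4-2\lambda^3+\lambda^2-2\lambda+1)=0$; since $f_{(4,2,1)}(t)=t^5-t^4-t^3-t^2-t+1=(t+1)(t^4-2t^3+t^2-2t+1)$, the curve $G=0$ and the locus $G_u=G_v$ meet on the diagonal, at the ray $k=s=1$, i.e., the class $(4,2,1)$, which is interior to the segment. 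Hence $\min\mathrm{Ent}(N(\tfrac{5}{-2}),\Omega_A)=4\log\lambda_{(4,2,1)}$, and with $(N(\tfrac{5}{-2}),\Omega_A)\underset{\mathrm{ent}}{\sim}(N(\tfrac{1}{2}),\Omega_A)$ this gives (2).

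Finally, for (3): among all slopes with $|q|=2$, one has $|1+\tfrac{p}{q}|=\tfrac12$ precisely for $\tfrac{p}{q}\in\{\tfrac{3}{-2},\tfrac{1}{-2}\}$ and $|1+\tfrac{p}{q}|=\tfrac32$ precisely for $\tfrac{p}{q}\in\{\tfrac{5}{-2},\tfrac{1}{2}\}$, while every other such slope satisfies $|1+\tfrac{p}{q}|>\tfrac32$. For such a slope Lemma~\ref{lem_monotonicity} (applied with the common denominator $q$) gives $\min\mathrm{Ent}(N(\tfrac{p}{q}),\Omega_A)>\min\mathrm{Ent}(N(\tfrac{5}{-2}),\Omega_A)$ when $q=-2$ and $>\min\mathrm{Ent}(N(\tfrac{1}{2}),\Omega_A)$ when $q=2$; both of these equal $4\log\lambda_{(4,2,1)}$ by (2), which proves (3). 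I expect the main obstacle to be part (2): unlike the $S$-face case (Lemma~\ref{lem_MNE-S}), the entropy function of $N$ carries no symmetry along $int(\Delta)\cap S_\beta(\tfrac{5}{-2})$ a priori (Remark~\ref{rem_symmetry}), so the critical point of the normalized entropy really must be located, and the argument hinges on the algebraic coincidence that $G_u=G_v$ restricted to the diagonal collapses to the vanishing of $t^4-2t^3+t^2-2t+1$, which pins the minimizer to the rational ray through $(4,2,1)$.
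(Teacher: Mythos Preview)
Your proof is correct and, for part~(2), takes a genuinely different route from the paper. Parts~(1) and~(3) are close to the paper's treatment: the paper cites \cite[Proposition~4.13]{KT1} for~(1) while you give a pleasant self-contained argument via the Lanneau--Thiffeault relation $T+T^{-1}=\lambda^{|m|}+\lambda^{-|m|}+1\ge 3$, and for~(3) both arguments invoke Lemma~\ref{lem_monotonicity} against the value computed in~(2).

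For part~(2), the paper does \emph{not} locate the minimizer by a Lagrange-multiplier computation. Instead it exhibits a hidden reflection symmetry of $\mathrm{ent}_{5/(-2)}|_{\Omega_A}$ about the center of the face: writing $p_{\pm}(k,\ell)=(4,2,1)k\pm(2,2,3)\ell$, the classes $\overline{p_+(k,\ell)}$ and $\overline{p_-(k,\ell)}$ have equal Thurston norm, and the paper factors $f_{p_\pm(k,\ell)}(t)$ to show they share a common factor carrying the largest real root, hence equal dilatation. Strict convexity then forces the minimum to sit at the center, i.e.\ the ray through $(4,2,1)$. Your approach instead passes to $(u,v)=(k\log\lambda,\,s\log\lambda)$, recognizes the constraint as $G(u,v)=0$, and verifies the Lagrange condition $G_u=G_v$ on the diagonal via the factorization $f_{(4,2,1)}(t)=(t+1)(t^4-2t^3+t^2-2t+1)$. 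Both arguments ultimately rest on an algebraic coincidence, but they are different coincidences: the paper's factorization explains \emph{why} the face is symmetric (and hence why the minimizer is rational), while your computation certifies the critical point directly without revealing any symmetry. The paper's route generalizes whenever one can guess a center and check equidistant pairs; your route would apply even in the absence of such symmetry, at the cost of having to identify the critical point explicitly. One small point worth making explicit in your write-up: the passage from ``$G_u=G_v$ at a point of the correct branch'' to ``this is the minimum'' uses that $\Phi:[k\!:\!s]\mapsto(u,v)$ is a smooth embedding (so critical points transfer) and that $\nabla G\neq 0$ there; both are easy to check, but you lean on them silently.
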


\begin{proof} 
For the proof of  (1), see  \cite[Proposition~4.13]{KT1}. 
In fact  in this case,   
the  center of each $A$-face $\Omega_A$  reaches $\min \mathrm{Ent}(N(\tfrac{p}{q} ), \Omega_A)$.

Let us turn to the proof of (2). 
By Theorem~\ref{thm_entropy_equiv}, $N(\tfrac{5}{-2})$ and $N(\tfrac{1}{2})$ are entropy equivalent. 
Put $r_0= \tfrac{5}{-2}$. 
We consider the $A$-face $\Omega_A$ (on $\partial B_{r_0}(2)$) 
whose endpoints are 
$\overline{(3,2,2)}, \overline{(1,0,-1)} \in H_2(N(r_0), \partial N(r_0))$.  
We now prove that $\mathrm{ent}_{r_0}|_{\Omega_A}$ has a minimum at the center of $\Omega_A$. 
The ray from the origin and  through 
$\overline{(4,2,1)} \in int(C_{\Omega_A}) $ passes through the center of $\Omega_A$. 
(In other words, the ray from the origin, through $(2,-1)$ in the $(\overline{\mathfrak{a}_{r_0}}, \overline{\mathfrak{b}_{r_0}})$ coordinates, 
passes through the center of $\Omega_A$, see Figure~\ref{fig_ExBall_Nr}.) 
For $k > \ell$, let
$$p_{\pm}(k, \ell)= (4,2,1)k \pm  (2,2,3) \ell = (4k \pm 2\ell, 2k \pm  2\ell, k \pm 3 \ell).$$ 
Observe that $p_{\pm}(k, \ell)$ are elements of $ int(C_{\Delta}) \cap S_{\beta}(r_0)$, 
and $\overline{p_{\pm}(k, \ell)} \in int(C_{\Omega_A}) $ have the same Thurston norm. 
To show that the center of $\Omega_A$ achieves the minimum of $\mathrm{ent}_{r_0}|_{\Omega_A}$, 
it suffices to prove that $\overline{p_+(k, \ell)}$ and $\overline{p_-(k, \ell)}$ have the same entropy for each $k$, $\ell$ such that $k > \ell$. 
To do this, we  show that 
$p_+(k, \ell)$ and $p_-(k, \ell)$ have the same dilatation 
(since in this case, $\lambda(a) = \lambda_{r_0}(\overline{a})$ for $a \in S_{\beta}(r_0)$ such that $\overline{a} \in int(C_{\Omega_A})$). 
The dilatation $\lambda(p_+(k, \ell))$ (resp. $\lambda(p_-(k, \ell))$) is the largest real root of the polynomial 
\begin{eqnarray*}
f_{(4k+2 \ell, 2k+ 2\ell, k+ 3\ell)}(t) 
&=& - t^{- \ell} (1+ t^{k+ \ell}) (t^k + t^{3k} - t^{\ell} - t^{2k+ \ell} - t^{4k+ \ell} + t^{k + 2 \ell} + t^{3k + 2 \ell})
\\
\mbox{(resp.\ }f_{(4k- 2 \ell, 2k -  2\ell, k- 3\ell)}(t) 
&=& - t^{- 2\ell} (t^k + t^{\ell}) (t^k + t^{3k} - t^{\ell} - t^{2k+ \ell} - t^{4k+ \ell} + t^{k + 2 \ell} + t^{3k + 2 \ell}).
\end{eqnarray*}
Since each of polynomials $ - t^{- 2\ell} (t^k + t^{\ell}) $ and $- t^{- \ell} (1+ t^{k+ \ell}) $ have no real roots greater than $1$, the proof of (2) is done. 

The claim (2) together with Lemma~\ref{lem_monotonicity} leads to (3). 
\end{proof}

\begin{lem} 
\label{lem_bunbo3}
Suppose that $|q| =3$. 
Then $\min \mathrm{Ent}(N(\tfrac{p}{q}), \Omega_A) > 2.0918$. 
\end{lem}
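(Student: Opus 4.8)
\textbf{Proof proposal for Lemma~\ref{lem_bunbo3}.}
The plan is to mimic the structure of the proof of Lemma~\ref{lem_bunbo2}: first pin down a reference slope among the pairs with $|q|=3$ using the monotonicity of $\min\mathrm{Ent}(\cdot,\Omega_A)$ from Lemma~\ref{lem_monotonicity}, then compute (or bound from below) $\min\mathrm{Ent}(N(\tfrac{p}{q}),\Omega_A)$ there. Since $|q|=3$ means $q=\pm 3$, and by Theorem~\ref{thm_entropy_equiv}(2) we have $N(\tfrac{p}{q})\underset{\mathrm{ent}}{\sim}N(\tfrac{-2q-p}{q})$, the pairs with $q=-3$ and $q=3$ are interchanged, so it suffices to treat, say, $q=-3$. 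Among those, $|1+\tfrac{p}{q}|=|1-\tfrac{p}{3}|$ is minimized over $p\in\mathbb{N}$ coprime to $3$ by $p=2$ (giving $r=\tfrac23$) and $p=4$ (giving $r=\tfrac43$), with $|1-\tfrac23|=|1-\tfrac43|=\tfrac13$; these two are the entropy-equivalent ``twins'' $N(\tfrac43)\underset{\mathrm{ent}}{\sim}N(\tfrac23)$ by Proposition~\ref{prop_norm-equiv} and Theorem~\ref{thm_entropy_equiv}. By Lemma~\ref{lem_monotonicity}, for every other admissible $\tfrac{p}{q}$ with $|q|=3$ one has $\min\mathrm{Ent}(N(\tfrac{p}{q}),\Omega_A)>\min\mathrm{Ent}(N(\tfrac23),\Omega_A)$, so the whole lemma reduces to the single estimate $\min\mathrm{Ent}(N(\tfrac23),\Omega_A)>2.0918$.

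For the reference computation I would follow the recipe of Lemma~\ref{lem_bunbo2}(2). Using the description of the Thurston norm ball $B_{2/3}(\cdot)$ from Lemma~\ref{lem_shape} and the identification of its $A$-faces, I would locate the central ray of a chosen $A$-face $\Omega_A$ of $N(\tfrac23)$, writing it as the ray through some explicit primitive class $\overline{a_0}$ with $a_0\in int(C_\Delta)\cap S_\beta(\tfrac23)$. Then, setting $p_\pm(k,\ell)=a_0 k\pm w\,\ell$ for the appropriate ``width'' vector $w$ spanning $\Omega_A$, the classes $\overline{p_\pm(k,\ell)}$ all have the same Thurston norm, and — exactly as in Lemma~\ref{lem_bunbo2}(2) — one checks that $f_{p_+(k,\ell)}(t)$ and $f_{p_-(k,\ell)}(t)$ share the same largest real root (the extra factors being of the form $\pm t^{m}(1+t^{j})$ or $\pm t^{m}(t^{i}+t^{j})$, which have no real root $>1$), so $\mathrm{ent}_{2/3}|_{\Omega_A}$ attains its minimum at the center of $\Omega_A$. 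Evaluating $\min\mathrm{Ent}(N(\tfrac23),\Omega_A)$ at that center via Lemma~\ref{lem_Ent_r}(2) — i.e. $\mathrm{Ent}_{2/3}(\overline{a_0})=(1-|\tfrac{y_0}{q}|)\log\lambda_{[x_0,y_0]}$ with $(x_0,y_0,z_0)$ the projective coordinates of $a_0$ and $q=-3$ — gives a concrete algebraic number of the form $c\log\lambda_{(x_0,y_0,z_0)}$, which one then verifies numerically to exceed $2.0918$ (note $2.0918$ is comfortably above $2\log\tfrac{3+\sqrt5}{2}\approx 1.9248$, so only a genuine computation, not just a soft bound, is needed).

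The main obstacle I anticipate is precisely the identification of the correct central class $a_0$ and width vector $w$ for the $A$-face of $N(\tfrac23)$: one must carefully read off from Lemma~\ref{lem_shape} (case $r\in(-2,0)$ with $p$ even) and Lemma~\ref{lem_boundary}(2) which classes of $int(\Delta)$ map into $int(C_{\Omega_A})$, translate between the $(\overline{\mathfrak{a}_r},\overline{\mathfrak{b}_r})$ coordinates and the $(x,y,z)$ coordinates of $N$, and ensure the chosen $A$-face genuinely satisfies $(*)$ of Theorem~\ref{thm_key} (which it does here since $|q|=3\neq 1$, by Proposition~\ref{prop_goodFiberFace}). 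Once the bookkeeping is set up, the polynomial-factorization step and the final numerical check are routine, entirely parallel to Lemma~\ref{lem_bunbo2}. A mild alternative, if the exact minimum is awkward, is to combine Lemma~\ref{lem_MNE-A} with Lemma~\ref{lem_concave_sym}(2) to bound $\min\mathrm{Ent}(N(\tfrac23),\Omega_A)$ from below by $(1-\tfrac{|y|}{3})\log\lambda_{[x,y]}$ along the segment $int(\Delta)\cap S_\beta(\tfrac23)$ and estimate that one-variable function directly, but I expect the exact computation to be cleaner and to give the sharp constant recorded in the statement.
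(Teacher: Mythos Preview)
Your reduction step is correct and matches the paper: by Lemma~\ref{lem_monotonicity} and Theorem~\ref{thm_entropy_equiv}(2) it suffices to bound $\min\mathrm{Ent}(N(\tfrac{2}{-3}),\Omega_A)$ from below (note the sign: with $q=-3$ and $p=2$ the slope is $\tfrac{2}{-3}$, not $\tfrac{2}{3}$).

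The gap is in your main computation. You propose to transplant the argument of Lemma~\ref{lem_bunbo2}(2): locate the center of $\Omega_A$, exhibit a factorization showing $f_{p_+(k,\ell)}$ and $f_{p_-(k,\ell)}$ share their largest real root, and conclude the minimum of $\mathrm{Ent}_r$ sits at the center. For $N(\tfrac{2}{-3})$ this fails. The center of the relevant $A$-face is the ray through $\overline{(2,3,0)}$, i.e.\ the point $[\tfrac{2}{5},\tfrac{3}{5}]\in int(\Delta)\cap S_\beta(\tfrac{2}{-3})$ with $y=\tfrac{3}{5}$; but the minimum of $\mathrm{Ent}_{2/(-3)}$ on $\Omega_A$ is \emph{not} there. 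Indeed, the paper exhibits three primitive classes $a_1=(201,312,7)$, $a_2=(201,309,5)$, $a_3=(201,306,3)$ in $S_\beta(\tfrac{2}{-3})$ whose $\overline{a_i}$ all have the same Thurston norm $402$, and checks numerically that $\lambda(a_1)>\lambda(a_2)<\lambda(a_3)$. By strict convexity of $\mathrm{Ent}_r$ on $int(\Omega_A)$ this forces the minimizer to lie at some $[x,y]$ with $\tfrac{306}{504}<y<\tfrac{312}{506}$, strictly above $y=\tfrac{3}{5}$. So the hoped-for center symmetry simply does not hold; this is exactly the phenomenon flagged in Remark~\ref{rem_symmetry}. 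The polynomial factorization you anticipate will not materialize.

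What the paper actually does is closer to your ``mild alternative'', but with a crucial extra idea: rather than analyze the one-variable function $(1-\tfrac{y}{3})\log\lambda_{[x(y),y]}$ globally, it first \emph{localizes} the minimizer to the region $0<y<\tfrac{312}{506}$ using the three test points above, and then applies the crude bound
\[
(1-\tfrac{y}{3})\log\lambda_{[x,y]}\;>\;\bigl(1-\tfrac{312}{3\cdot 506}\bigr)\log\lambda_{[\tfrac12,\tfrac12]}\;>\;\tfrac{402}{506}\cdot 2.633\;>\;2.0918,
\]
using $1-\tfrac{y}{3}>1-\tfrac{312}{3\cdot 506}$ on that region together with $\log\lambda_{[x,y]}\ge\log\lambda_{[\tfrac12,\tfrac12]}>2.633$ from Proposition~\ref{prop_MNE-magic}. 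Without the localization step this crude bound is too weak (for $y$ near $1$ the factor $1-\tfrac{y}{3}$ drops to $\tfrac{2}{3}$), so your alternative as stated would still need the same kind of numerical pinning-down of the minimizer that the paper carries out.
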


\begin{proof} 
By Lemma~\ref{lem_monotonicity} and Theorem~\ref{thm_entropy_equiv}, 
$$\min \mathrm{Ent}(N(\tfrac{p}{q}), \Omega_A) >  \min \mathrm{Ent}(N(\tfrac{2}{-3}), \Omega_A)=  \min \mathrm{Ent}(N(\tfrac{4}{-3}), \Omega_A)
\ \mbox{if}\ |q|=3\ \mbox{and\ } \tfrac{p}{q} \ne \tfrac{2}{-3}, \tfrac{4}{-3}. 
$$
Thus it suffices to prove that $\min \mathrm{Ent}(N(\tfrac{2}{-3}), \Omega_A)> 2.0918$. 
We consider the $A$-face $\Omega_A$ (on $ \partial B_{2/-3}(2)$) for $N(\tfrac{2}{-3})$ whose endpoints are 
$\overline{(1,3,1)}, \overline{(1,0,-1)} \in H_2(N(\tfrac{2}{-3}), \partial N(\tfrac{2}{-3}))$. 
Take fibered classes 
$$a_1= (201, 312, 7),\  a_2 = (201, 309, 5),\  a_3= (201, 306, 3) \in int(C_{\Delta}) \cap S_{\beta}(\tfrac{2}{-3}).$$
Then  $\overline{a_1}$, $\overline{a_2}$ and $\overline{a_3}$ are elements of  $int(C_{\Omega_A})$. 
One can check that the Thurston norms of $\overline{a_1}$, $\overline{a_2}$ and $\overline{a_3}$ are the same. 
Note that $\lambda(a)= \lambda_{2/(-3)}(\overline{a})$ for $a \in S_{\beta}(\tfrac{2}{-3})$ such that $\overline{a} \in int(C_{\Omega_A})$. 
One sees that 
\begin{eqnarray*}
&\ & \lambda_{(201, 312, 7)}= 1.00542189 \cdots
\\ 
&>& \lambda_{ (201, 309, 5)}= 1.00542166 \cdots
\\
&<& \lambda_{(201, 306, 3)}= 1.00542185 \cdots. 
\end{eqnarray*}
The fibered class $a_1$ is equal to 
$(\tfrac{201}{506}, \tfrac{312}{506}, \tfrac{7}{506}) \in int(\Delta)$ projectively. 
Hence $\min \mathrm{Ent}(N(\tfrac{2}{-3}), \Omega_A)$ is achieved by a unique point 
$[x,y] \in int(\Delta) \cap  S_{\beta}(\tfrac{2}{-3})$ such that 
$0 < y < \tfrac{312}{506}$. 
This together with Lemma~\ref{lem_MNE-A} implies that 
\begin{eqnarray*}
\min \mathrm{Ent}(N(\tfrac{2}{-3}), \Omega_A) &=& 
\min \{(1 - |\tfrac{y}{-3}| ) \log \lambda_{[x,y]}\ |\ [x,y] \in int (\Delta) \cap S_{\beta} (\tfrac{2}{-3}),\ 0 < y < \tfrac{312}{506}\} 
\\
&>& (1 - \tfrac{312}{3 \times 506}) \log \lambda_{[\tfrac{1}{2}, \tfrac{1}{2}]} 
\\
&>& \tfrac{402}{506} \times 2.633
\\ 
&>&  2.0918. 
\end{eqnarray*}
This completes the proof. 
\end{proof}

\begin{lem}
\label{lem_large-bunbo}
\ 
\begin{enumerate}
\item[(1)] 
Let  $\tfrac{p}{q} \in (- \infty, -2) \cup (0, \infty)$ such that  $p+2q \ne 1$. 
Suppose that $p+q \ge 4$. 
Then  $\min \mathrm{Ent}(N(\tfrac{p}{q}), \Omega_S) > 1.97475 $. 

\item[(2)] 
Suppose that  $|q| \ge 4$. 
Then $\min \mathrm{Ent}(N(\tfrac{p}{q}), \Omega_A) >1.97475$. 
\end{enumerate}
\end{lem}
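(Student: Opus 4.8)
The plan is to reduce both claims to the explicit computation $\log\lambda_{[\frac12,\frac12]} = 2\log(2+\sqrt3) > 2.633$ from Proposition~\ref{prop_MNE-magic}, combined with the formulas in Lemmas~\ref{lem_MNE-S} and~\ref{lem_MNE-A} for $\min\mathrm{Ent}(N(\tfrac{p}{q}),\Omega_S)$ and $\min\mathrm{Ent}(N(\tfrac{p}{q}),\Omega_A)$, and the fact that the relevant normalization factor is bounded below once the denominator (or $p+q$) is large enough. For part (1), Lemma~\ref{lem_MNE-S} gives
$$\min\mathrm{Ent}(N(\tfrac{p}{q}),\Omega_S) = \bigl(1-\tfrac{1}{p+q}\bigr)\log\lambda_{[\frac{p}{2p+2q},\frac{p}{2p+2q}]}.$$
The first step is to note that this center class $[\frac{p}{2p+2q},\frac{p}{2p+2q}]$ lies in $int(\Delta)$ and is distinct from $[\frac12,\frac12]$ (since $\frac{p}{2p+2q}=\frac12$ would force $q=0$, which is excluded); hence by the remark following Proposition~\ref{prop_MNE-magic}, $\log\lambda_{[\frac{p}{2p+2q},\frac{p}{2p+2q}]} > 2.633$. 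Then I would use $p+q\ge 4$ to get $1-\frac{1}{p+q}\ge \frac34$, so that
$$\min\mathrm{Ent}(N(\tfrac{p}{q}),\Omega_S) > \tfrac34\times 2.633 = 1.97475,$$
which is exactly the claimed bound (in fact this computation is presumably why $1.97475$ is the chosen threshold). One caveat to check: the hypothesis $p+2q\ne1$ is exactly the condition (via Lemma~\ref{lem_face-prong}) guaranteeing that the $S$-face enjoys $(*)$, so Lemma~\ref{lem_MNE-S} genuinely applies; I would state this explicitly.

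For part (2), the strategy is the monotonicity Lemma~\ref{lem_monotonicity} together with Lemma~\ref{lem_bunbo3} (the case $|q|=3$). Since $|q|\ge4$ and $(p,q)$ coprime, among all slopes with fixed $|q|$ the quantity $\min\mathrm{Ent}(N(\tfrac{p}{q}),\Omega_A)$ is minimized (by Lemma~\ref{lem_monotonicity}) by the slope with $|1+\tfrac{p}{q}|$ smallest; but more directly, I would argue via Lemma~\ref{lem_MNE-A}: the minimizing class $[x,y]\in int(\Delta)\cap S_\beta(\tfrac{p}{q})$ has $|y|\le 1$ (it lies in $int(\Delta)$, so $0<y<1$), whence the normalization factor satisfies $1-|\tfrac{y}{q}| > 1-\tfrac{1}{|q|}\ge \tfrac34$ when $|q|\ge4$. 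Combining with $\log\lambda_{[x,y]}\ge \log\lambda_{[\frac12,\frac12]}>2.633$ (Lemma~\ref{lem_concave_sym} ensures $[\frac12,\frac12]$ is the global minimizer of the entropy on $int(\Delta)$), one gets
$$\min\mathrm{Ent}(N(\tfrac{p}{q}),\Omega_A) > \tfrac34\times 2.633 = 1.97475.$$
I should be careful that $|q|\ge4$ implies $|q|\ne1$, so by Proposition~\ref{prop_goodFiberFace}/Lemma~\ref{lem_face-prong} the $A$-face does enjoy $(*)$ and Lemma~\ref{lem_MNE-A} is applicable.

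The main obstacle, such as it is, is bookkeeping rather than depth: I must make sure that in part (2) the bound $1-|\tfrac{y}{q}|>\tfrac34$ is legitimate, i.e.\ that the minimizing point of $(1-|\tfrac{y}{q}|)\log\lambda_{[x,y]}$ over $int(\Delta)\cap S_\beta(\tfrac{p}{q})$ indeed has $|y|<1$ strictly and does not degenerate toward $\partial\Delta$ where the entropy blows up (Fried's properness, cited in the introduction, prevents the minimum from escaping to the boundary, so the minimizer is interior). I expect no serious difficulty: both estimates are just "normalization factor at least $3/4$" times "entropy at least $2.633$", and $\tfrac34\cdot 2.633 = 1.97475$. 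The only real care needed is to confirm in each sub-case that the fibered face in question enjoys condition $(*)$, so that the entropy formulas of Lemmas~\ref{lem_MNE-S} and~\ref{lem_MNE-A} are valid — and that is exactly what the hypotheses $p+2q\ne1$ (part 1) and $|q|\ge4$ (part 2) provide.
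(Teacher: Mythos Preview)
Your proposal is correct and matches the paper's proof essentially line for line: part~(1) via Lemma~\ref{lem_MNE-S} and the bound $(1-\tfrac{1}{p+q})\ge \tfrac34$, part~(2) via Lemma~\ref{lem_MNE-A} and the pointwise bound $(1-|\tfrac{y}{q}|)\log\lambda_{[x,y]} > \tfrac34\cdot 2.633$ for every $[x,y]\in int(\Delta)\cap S_\beta(\tfrac{p}{q})$. Your extra remarks (the alternative via Lemma~\ref{lem_monotonicity}, the worry about the minimizer escaping to $\partial\Delta$) are harmless but unnecessary, since the inequality holds uniformly over the whole segment, not just at the minimizing point.
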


\begin{proof} 
The claim (1) is immediate from 
$$\min \mathrm{Ent}(N(\tfrac{p}{q} ), \Omega_S) =  (1- \tfrac{1}{p+q}) \log \lambda_{[\tfrac{p}{2p+2q}, \tfrac{p}{2p+2q}]}
> (1 - \tfrac{1}{4}) \times 2.633 = 1.97475 .$$

Let us turn to the claim (2). 
By Lemma~\ref{lem_MNE-A}, 
$$\min \mathrm{Ent}(N(\tfrac{p}{q}), \Omega_A)  = \min \{(1 - |\tfrac{y}{q}| ) \log \lambda_{[x,y]}\ |\ [x,y] \in int (\Delta) \cap S_{\beta} (\tfrac{p}{q})\}.$$
Since $|q| \ge 4$, one sees that for any $ [x,y] \in int (\Delta) \cap S_{\beta} (\tfrac{p}{q})$, 
$$(1 - |\tfrac{y}{q}| ) \log \lambda_{[x,y]} >   (1 - \tfrac{1}{4}) \times 2.633 = 1.97475.$$
This completes the proof. 
\end{proof}

\begin{proof}[Proof of Theorem~\ref{thm_key-SA}.] 
We have already proved the claim (2), see Lemmas~\ref{lem_bunbo2}, \ref{lem_bunbo3} and \ref{lem_large-bunbo}(2). 
Let us prove the claim (1).  
By Lemma~\ref{lem_large-bunbo}(1), it is enough to consider the case $p+q <4$. 
For $\tfrac{p}{q} \in  {\mathcal Hyp} \cap (- \infty, -2)$ such that $p+2q \ne 1$,  
one has by Theorem~\ref{thm_entropy_equiv}, 
$$\min \mathrm{Ent}(N(\tfrac{p}{q}), \Omega_S) = \min \mathrm{Ent} (N(\tfrac{-2q-p}{q}), \Omega_S).$$ 
If $\tfrac{p}{q} \in {\mathcal Hyp} \cap (0, \infty)$, then  $p+2q \ge 3$ (hence $p+2q \ne 1$). 
Thus it suffices to consider the case $\tfrac{p}{q} \in {\mathcal Hyp} \cap (0, \infty)$ such that $p+q <4$. 
The pairs $(p,q)$ with  $p+q<4$ are given by $(p,q)= (1,1), (2,1), (1,2)$. 
By Lemma~\ref{lem_MNE-S}, 
\begin{eqnarray*}
\min \mathrm{Ent}(N(\tfrac{1}{1}), \Omega_S) &=& 2 \log \lambda_{(1,1,-2)}= 2 \log \delta(D_4)  \approx 1.6628, 
\\
\min \mathrm{Ent}(N(\tfrac{2}{1}), \Omega_S) &=& 2 \log \lambda_{(1,1,-1)} =2 \log( \tfrac{3+ \sqrt{5}}{2}) \approx 1.9248, \ \mbox{and}
\\
\min \mathrm{Ent}(N(\tfrac{1}{2}), \Omega_S) &=& 4 \log \lambda_{(1,1,-4)}  \approx  2.9314.
\end{eqnarray*}
This completes the proof. 
\end{proof}

\subsection{Proof of Theorem~\ref{thm_main}(1)}
\label{subsection_Proof_thm_main1}

The idea of the proof is as follows. 
We define a finite set $L_K \subset {\mathcal Hyp}$ for  $K>2$ which consists of 
irreducible rational numbers  $\tfrac{p}{q} \in {\mathcal Hyp}$ with $p \in {\Bbb N}$ such that 
\begin{itemize}
\item 
$|q| \le K$ if $\tfrac{p}{q} \in (-2,0)$, 

\item 
$p+q \le K$ if $\tfrac{p}{q} \in (- \infty, -2) \cup (0, \infty)$. 
\end{itemize}
We fix $K_0= 100000$. 
First we prove that for a primitive fibered class $a \in H_2(N, \partial N)$ such that  $\phi_a \in \mathcal{M}$, 
the normalized entropy of $\widehat{\phi}_a$ is greater than $2.5803$ 
if all  the slopes $b_{\alpha}(a)$, $b_{\beta}(a)$, $b_{\gamma}(a)$ 
of $a$ enjoy $b_{\alpha}(a) , b_{\beta}(a), b_{\gamma}(a) \in {\mathcal Hyp} \setminus L_{K_0}$. 
Next we prove the following for any $\epsilon>0$: 
For all but finitely many primitive fibered classes $a \in H_2(N, \partial N)$ satisfying $\phi_a \in \mathcal{M}$, 
if one of the boundary slopes of $a$ is an element of $L_{K_0}$, then 
the normalized entropy of $\widehat{\phi}_a$ is greater than $2 \log(\tfrac{3+ \sqrt{5}}{2})-\epsilon$. 
These together with Theorem~\ref{thm_three}  lead to Theorem~\ref{thm_main}(1).

\begin{lem} 
\label{lem_NEnt_hat}
Let $a=(x,y,z)$ be a primitive fibered class of $H_2(N, \partial N)$ such that  $\phi_a \in \mathcal{M}$. 
Then $\phi_{\sigma(a)} \in \mathcal{M}$ for $\sigma(a) \in int(C_{\Delta})$, and 
$$\mathrm{Ent}(\widehat{\phi}_a)= 
\mathrm{Ent}(\widehat{\phi}_{\sigma(a)})= 
(1 - |\tfrac{x'}{q_{\alpha}(\sigma(a))}| -  |\tfrac{y'}{q_{\beta}(\sigma(a))}| -  |\tfrac{z'}{q_{\gamma}(\sigma(a))}|) \log \lambda (a'), $$
where $a' = (x', y', z')$ is the rational class of $int(\Delta)$ which is projectively equal to $\sigma(a)$. 
\end{lem}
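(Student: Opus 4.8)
The plan is to reduce the computation of the normalized entropy of the closed monodromy $\widehat{\phi}_a$ to the Teichm\"uller polynomial data of $N$ via the symmetry map $\sigma$, exactly as was done for single-cusp fillings in Lemma~\ref{lem_Ent_r}, but now tracking all three boundary tori simultaneously. First I would recall that since $\sigma(a)$ is obtained from $a$ by applying a power of $h_*$ (or composing with $-\mathrm{id}$), which are induced by homeomorphisms of the pair $(S^3,\mathcal C_3)$ that merely permute the components $K_\alpha,K_\beta,K_\gamma$, the monodromies $\Phi_a$ and $\Phi_{\sigma(a)}$ are conjugate; in particular the condition \eqref{equation_1plong} defining $\mathcal M$ is preserved, so $\phi_{\sigma(a)}\in\mathcal M$ and $\lambda(a)=\lambda(\sigma(a))=\lambda(a')$ where $a'\in int(\Delta)$ is the projective representative. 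This handles the first assertion and the equality $\mathrm{Ent}(\widehat\phi_a)=\mathrm{Ent}(\widehat\phi_{\sigma(a)})$, once one also notes that the conjugacy identifies the extended fibers $\widehat F_a\cong\widehat F_{\sigma(a)}$.

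Next I would compute $\mathrm{Ent}(\widehat\phi_{\sigma(a)})$ directly. Write $b=\sigma(a)\in int(C_\Delta)$ and let $a'=(x',y',z')$ be its projectivization, so $\|a'\|=x'+y'-z'=1$ by \eqref{equation_int-delta}, and $\mathrm{ent}(\widehat\phi_b)=\log\lambda(b)=\log\lambda(a')$ since filling the holes does not change the dilatation when no boundary $1$-prongs are present (this is the defining property of $\mathcal M$ via \eqref{equation_1plong}). The point is that the normalized entropy uses $|\chi(\widehat F_b)|$, and
\[
|\chi(\widehat F_b)| = |\chi(F_b)| - \sharp(\partial F_b) = \|b\| - \sharp(\partial_\alpha F_b) - \sharp(\partial_\beta F_b) - \sharp(\partial_\gamma F_b),
\]
since capping off a boundary circle with a disk raises $\chi$ by $1$. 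By Lemma~\ref{lem_topological-type}, for the primitive representative the three terms are $\gcd$'s; passing to the projective (rational) class $a'=\tfrac{1}{\|b\|}b$, the ratio $|\chi(\widehat F_b)|/\|b\|$ equals $1 - \tfrac{\sharp(\partial_\alpha F_b)}{\|b\|} - \tfrac{\sharp(\partial_\beta F_b)}{\|b\|} - \tfrac{\sharp(\partial_\gamma F_b)}{\|b\|}$, and $\tfrac{\sharp(\partial_\alpha F_b)}{\|b\|} = \tfrac{\gcd(x, y+z)}{\|b\|}$ is exactly $|x'/q_\alpha(a')|$: indeed the boundary slope on $T_\alpha$ is $b_\alpha(b)=\tfrac{y+z}{-x}=p_\alpha/q_\alpha$ in lowest terms, so $q_\alpha = \pm x/\gcd(x,y+z)$, whence $\tfrac{\gcd(x,y+z)}{\|b\|} = \tfrac{|x|}{|q_\alpha|\,\|b\|}=|x'|/|q_\alpha(a')|$. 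Because $\mathrm{Ent}$ is constant along rays, $q_\alpha(a')=q_\alpha(b)=q_\alpha(\sigma(a))$, and similarly for $\beta,\gamma$. Assembling,
\[
\mathrm{Ent}(\widehat\phi_b) = |\chi(\widehat F_b)|\,\mathrm{ent}(\widehat\phi_b) = \Bigl(1 - \bigl|\tfrac{x'}{q_\alpha(\sigma(a))}\bigr| - \bigl|\tfrac{y'}{q_\beta(\sigma(a))}\bigr| - \bigl|\tfrac{z'}{q_\gamma(\sigma(a))}\bigr|\Bigr)\log\lambda(a'),
\]
which is the claimed formula.

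The steps I expect to be routine are the conjugacy/permutation bookkeeping for $\sigma$ and the elementary Euler-characteristic identity. The one point needing a little care — and the likely main obstacle — is the sign and lowest-terms analysis relating $\gcd(x,y+z)$, the denominators $q_\alpha,q_\beta,q_\gamma$ of the boundary slopes, and the normalization by $\|b\| = x+y-z$; in particular one must check that $q_\gamma$ is well-defined and nonzero (or interpret $\gcd(0,w)=|w|$ correctly when $z=0$, matching the convention in Lemma~\ref{lem_topological-type}), and that the permutation of tori induced by $\sigma$ matches the permutation of the three $\gcd$-terms so that the formula is stated with $x',y',z'$ paired with $q_\alpha,q_\beta,q_\gamma$ of $\sigma(a)$ rather than of $a$. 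Here I would invoke the discussion in Section~\ref{subsection_Fibered} on how the conjugacy homeomorphism $g\colon F_a\to F_{\sigma(a)}$ permutes boundary components, together with Remark~\ref{rem_slope-gam} to control the sign of $b_\gamma$, to pin down all signs.
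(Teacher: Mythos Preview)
Your proposal is correct and follows essentially the same approach as the paper's proof: reduce to $\sigma(a)\in int(C_\Delta)$ via conjugacy, identify $|\chi(\widehat F_{\sigma(a)})|=\|\sigma(a)\|-\sharp(\partial F_{\sigma(a)})$, rewrite each $\gcd$-count as $|x/q_\alpha|$ etc.\ using the boundary-slope reduction, and rescale by $\|\sigma(a)\|$ to the projective class $a'$. The paper's argument is slightly terser (it asserts $\sharp(\partial F_{\sigma(a)})=|\tfrac{x}{q_\alpha}|+|\tfrac{y}{q_\beta}|+|\tfrac{z}{q_\gamma}|$ directly), but your extra justification via the irreducible-form identity $q_\alpha=\pm x/\gcd(x,y+z)$ is exactly what underlies it; the sign/edge-case worries you flag are handled by the conventions already in place in Section~\ref{subsection_Fibered} and Lemma~\ref{lem_topological-type}.
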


\begin{proof} 
Clearly  $\phi_{a} \in \mathcal{M}$ implies that $\phi_{\sigma(a)} \in \mathcal{M}$ and 
$\mathrm{Ent}(\widehat{\phi}_a)= \mathrm{Ent}(\widehat{\phi}_{\sigma(a)})$. 
The dilatation $\lambda(\widehat{\phi}_{\sigma(a)})$ equals $\lambda(\sigma(a))(= \lambda(\phi_{\sigma(a)}))$
since $\phi_{\sigma(a)} \in \mathcal{M}$. 
If we set $\sigma(a)= (x,y,z)$, then 
$$\sharp (\partial F_{\sigma(a)})=  
|\tfrac{x}{q_{\alpha}(\sigma(a))}| +  |\tfrac{y}{q_{\beta}(\sigma(a))}| +  |\tfrac{z}{q_{\gamma}(\sigma(a))}|.$$
By  definition of the normalized entropy, 
$$\mathrm{Ent}(\widehat{\phi}_{\sigma(a)})=
 (\|\sigma(a)\| - |\tfrac{x}{q_{\alpha}(\sigma(a))}| -  |\tfrac{y}{q_{\beta}(\sigma(a))}| -  |\tfrac{z}{q_{\gamma}(\sigma(a))}|) 
 \log \lambda (\sigma(a)).$$
 On the other hand 
$x'= \tfrac{x}{\|\sigma(a)\|}$, $y'= \tfrac{y}{\|\sigma(a)\|}$, $z'= \tfrac{z}{\|\sigma(a)\|} $ and 
$\log \lambda(a')= \|\sigma(a)\| \log \lambda(\sigma(a))$ 
since $a'$ is projectively equal to $\sigma(a)$. 
Substituting these equalities for 
$ (1 - |\tfrac{x'}{q_{\alpha}(\sigma(a))}| -  |\tfrac{y'}{q_{\beta}(\sigma(a))}| -  |\tfrac{z'}{q_{\gamma}(\sigma(a))}|) \log \lambda (a') $, 
one finds that it is equal to $\mathrm{Ent}(\widehat{\phi}_{\sigma(a)})$. 
\end{proof}

%
%
%

\begin{prop}
\label{prop_FiniteSet}
Suppose that $b_{\alpha}(a) , b_{\beta}(a), b_{\gamma}(a) \in {\mathcal Hyp} \setminus L_{K_0}$ for a rational class $a= (x,y,z) \in int (\Delta)$. 
Then 
$$(1 - |\tfrac{x}{q_{\alpha}(a)}| - |\tfrac{y}{q_{\beta}(a)}| - |\tfrac{z}{q_{\gamma}(a)}|) \log \lambda(a)> 2.5803.$$
\end{prop}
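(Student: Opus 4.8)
The plan is to bound the three normalizing terms $|x/q_\alpha(a)|$, $|y/q_\beta(a)|$, $|z/q_\gamma(a)|$ uniformly below one, and simultaneously bound $\log\lambda(a)$ below, so that the product stays above $2.5803$. Recall from Lemma~\ref{lem_topological-type} and the slope formulas that for a rational class $a=(x,y,z)\in int(\Delta)$ the integral multiple $na$ has $\gcd(nx, n(y+z))$ boundary curves on $T_\alpha$, and the number of prongs on each is $\tfrac{x}{\gcd(x,y+z)}$; writing the slope $b_\alpha(a)=\tfrac{y+z}{-x}=\tfrac{p_\alpha}{q_\alpha}$ in lowest terms, one has $|x/q_\alpha(a)|$ equal to a positive number which, after filling, is exactly the contribution $\|a\|$ loses on $T_\alpha$. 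Crucially, $|x/q_\alpha(a)|\le 1$ always (it equals the fraction of the Thurston norm carried by that boundary), with equality forced only in degenerate situations; the same for the other two coordinates. So the factor $(1 - |x/q_\alpha| - |y/q_\beta| - |z/q_\gamma|)$ is what one must keep bounded away from $0$.

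The key point is that demanding $b_\alpha, b_\beta, b_\gamma \in \mathcal{Hyp}\setminus L_{K_0}$ with $K_0=100000$ forces each normalizing fraction to be \emph{small}. Indeed if $b_\gamma(a)=\tfrac{x+y}{-z}=\tfrac{p_\gamma}{q_\gamma}$ with $\tfrac{p_\gamma}{q_\gamma}\notin L_{K_0}$, then by the definition of $L_K$ either $p_\gamma + q_\gamma > K_0$ (when $b_\gamma\in(-\infty,-2)\cup(0,\infty)$, which by Remark~\ref{rem_slope-gam} is the relevant range for $z\ne0$) or $|q_\gamma|>K_0$; in the first case $|z/q_\gamma| = |z|/q_\gamma$, and since for $a\in int(\Delta)$ one has $z = x+y-1 \le 1$ in the normalization $\|a\|=x+y-z=1$, while $q_\gamma$ grows with $p_\gamma+q_\gamma$... — more carefully, I would express $|z/q_\gamma(a)|$ directly: if $b_\gamma = p_\gamma/q_\gamma$ then $-z\,p_\gamma = (x+y)q_\gamma$, and combined with $x+y-z=1$ this gives $z = -q_\gamma/(p_\gamma+q_\gamma)$ (exactly as in the computation in Lemma~\ref{lem_NormChange}), hence $|z/q_\gamma(a)| = 1/(p_\gamma+q_\gamma) \le 1/K_0$ when $p_\gamma+q_\gamma>K_0$. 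Analogous elementary manipulations using the relations $-x\,p_\alpha=(y+z)q_\alpha$ and $-y\,p_\beta=(z+x)q_\beta$ together with $x+y-z=1$ give $|x/q_\alpha(a)| = 1/(\text{something}\ge K_0)$ and $|y/q_\beta(a)|$ likewise small, whenever the corresponding slope avoids $L_{K_0}$. Thus each of the three subtracted terms is at most $1/K_0 = 10^{-5}$, so the parenthetical factor is at least $1 - 3\cdot10^{-5} > 0.99997$.

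It then remains to bound $\log\lambda(a)$ from below. Here I would invoke Proposition~\ref{prop_MNE-magic}: for any $[x,y]\in int(\Delta)$ with $[x,y]\ne[\tfrac12,\tfrac12]$ one has $\log\lambda_{[x,y]} > 2\log(2+\sqrt3) > 2.633$, while at the center $\log\lambda_{[\frac12,\frac12]}=2\log(2+\sqrt3)$ as well — so in fact $\log\lambda(a)\ge 2\log(2+\sqrt3)>2.633$ for \emph{every} rational $a\in int(\Delta)$ (using $\log\lambda(a)=\|a\|\log\lambda(a/\|a\|)$ and $\|a\|\ge1$; but actually on $int(\Delta)$ the normalized class has norm $1$, so $\log\lambda(a)$ for the normalization is exactly $\log\lambda_{[x,y]}$, which is $\ge 2.633$). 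Multiplying, $(0.99997)(2.633) > 2.6329 > 2.5803$, giving the claim with room to spare. The one subtlety to check — and the main obstacle — is making the lower bounds on the individual denominators fully rigorous in \emph{all} the sub-ranges of $\mathcal{Hyp}$ ($b_\alpha$ can lie in $(-\infty,-2)$, $(-2,0)$, or $(0,\infty)$, and the shape of $L_K$ differs on $(-2,0)$ versus the rest), so I would organize the estimate of each of $|x/q_\alpha|, |y/q_\beta|, |z/q_\gamma|$ as a short case analysis on which interval the slope falls in, in each case solving the linear relation for the relevant coordinate as above and reading off that the denominator exceeds $K_0$. Since the needed margin ($2.633$ versus $2.5803$) is large relative to $3/K_0$, even crude bounds suffice.
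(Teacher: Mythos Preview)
Your proposal is correct and follows the same overall architecture as the paper: bound each of $|x/q_\alpha|$, $|y/q_\beta|$, $|z/q_\gamma|$ from above using the hypothesis that the slopes lie outside $L_{K_0}$, then multiply the resulting lower bound on the parenthetical factor by the universal lower bound $\log\lambda(a)\ge 2\log(2+\sqrt{3})>2.633$ from Proposition~\ref{prop_MNE-magic}.

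There is one noteworthy difference. For the $\alpha$ and $\beta$ terms the paper does not solve the linear relation directly; instead it introduces an auxiliary scale $K'=999$ and invokes Lemmas~\ref{lem_FiniteSet-b}(2) and \ref{lem_FiniteSet-a}(2) to obtain only $|x/q_\alpha|,\,|y/q_\beta| < \max\{(1+K')/K_0,\,1/(K'-1)\}=1/100$, giving a parenthetical factor $>0.97999$. Your direct computation (solving $-xp_\alpha=(y+z)q_\alpha$ together with $x+y-z=1$ to get $x/q_\alpha=(1-2y)/(p_\alpha+q_\alpha)$ when $b_\alpha\notin(-2,0)$, and using $0<x<1$ with $|q_\alpha|>K_0$ when $b_\alpha\in(-2,0)$) yields the sharper bound $|x/q_\alpha|,\,|y/q_\beta|<1/K_0$ and hence a parenthetical factor $>0.99997$. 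Both suffice with room to spare; the paper's route is chosen because Lemmas~\ref{lem_FiniteSet-b} and \ref{lem_FiniteSet-a} are formulated for reuse in Lemma~\ref{lem_SmallChange}, whereas your argument is slightly more elementary and self-contained for this particular proposition.
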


\noindent 
We need the following lemma for the proof of Proposition~\ref{prop_FiniteSet}.

\begin{lem}
\label{lem_FiniteSet-b}
Let us take $K> K' >2$. 
Let $p \in {\Bbb N}$ and $q \in {\Bbb Z}$ be coprime such that $\tfrac{p}{q} \in {\mathcal Hyp} \setminus L_K$. 
Then the following holds.  
\begin{enumerate}
\item[(1)] 
If $|\tfrac{p}{q}| \le K'$, then 
$|q| \ge \tfrac{K}{1+K'}$. 
If $|\tfrac{p}{q}| > K'$, then  
$0 <  y < \tfrac{1}{-1+K'}$ for  any $(x,y,z) \in int(\Delta) \cap S_{\beta}(\tfrac{p}{q})$.  

\item[(2)] 
$|\tfrac{y}{q}| < \max \{ \tfrac{1+K'}{K} ,  \tfrac{1}{-1+K'}\}$ for any $(x,y,z) \in int(\Delta) \cap S_{\beta}(\tfrac{p}{q})$. 
\end{enumerate}
 \end{lem}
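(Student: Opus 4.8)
The plan is to unwind the definition of $L_K$ and then feed it into the explicit description of the segment $\Delta \cap S_{\beta}(\tfrac{p}{q})$ supplied by Lemma~\ref{lem_boundary}(2). Recall that $\tfrac{p}{q} \notin L_K$ means $|q| > K$ when $\tfrac{p}{q} \in (-2,0)$, and $p+q > K$ when $\tfrac{p}{q} \in (-\infty,-2) \cup (0,\infty)$. Since $K' > 2$, the interval $(-2,0)$ lies inside $\{\,|r| < K'\,\}$, so the two hypotheses $|\tfrac{p}{q}| \le K'$ and $|\tfrac{p}{q}| > K'$ interact cleanly with the three pieces of ${\mathcal Hyp}$.

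For the first half of (1) I would argue by cases on which interval contains $r = \tfrac{p}{q}$. If $r \in (-2,0)$ then $|q| > K > \tfrac{K}{1+K'}$ immediately. If $r \in (0,\infty)$ with $r \le K'$, then $p \le K'q$, hence $K < p+q \le (1+K')q$ and $q > \tfrac{K}{1+K'}$. If $r \in (-\infty,-2)$ with $|r| \le K'$, then $p \le K'|q|$ and $p+q = p-|q|$, hence $K < (K'-1)|q|$, so $|q| > \tfrac{K}{K'-1} \ge \tfrac{K}{1+K'}$. For the second half of (1), I note first that $|r| > K'$ rules out $r \in (-2,0)$; in the two surviving cases Lemma~\ref{lem_boundary}(2) tells us that on the open segment $int(\Delta) \cap S_{\beta}(r)$ the $y$-coordinate lies strictly between $0$ and the $y$-value of the second endpoint, which is $\tfrac{-1}{1+r} = \tfrac{1}{|1+r|}$ when $r < -2$ and $\tfrac{1}{1+r}$ when $r > 0$. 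When $r < -K'$ one gets $|1+r| > K'-1$, and when $r > K'$ one gets $1+r > K'-1$; either way the endpoint $y$-value is smaller than $\tfrac{1}{-1+K'}$, so $0 < y < \tfrac{1}{-1+K'}$.

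Part (2) then just assembles the two regimes. If $|r| \le K'$, then by the first half of (1) we have $|q| \ge \tfrac{K}{1+K'}$, and since $(x,y,z) \in int(\Delta)$ forces $0 < y < 1$, this gives $|\tfrac{y}{q}| < \tfrac{1}{|q|} \le \tfrac{1+K'}{K}$. If $|r| > K'$, then $|q| \ge 1$ and, by the second half of (1), $0 < y < \tfrac{1}{-1+K'}$, so $|\tfrac{y}{q}| < \tfrac{1}{-1+K'}$. In both cases $|\tfrac{y}{q}|$ is bounded by $\max\{\tfrac{1+K'}{K}, \tfrac{1}{-1+K'}\}$, as claimed.

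I do not expect a genuine obstacle here; the lemma is essentially bookkeeping that packages the earlier structural results. The one place where I would be careful is keeping the chain of inequalities among $\tfrac{K}{1+K'}$, $\tfrac{K}{K'-1}$, $\tfrac{1}{1+r}$, $\tfrac{1}{|1+r|}$ and $\tfrac{1}{-1+K'}$ in the right order — all are finite and positive because $K > K' > 2$ — and making sure the endpoint-coordinate comparisons stay strict, which they do precisely because we work on $int(\Delta) \cap S_{\beta}(r)$, the open segment, and not its closure.
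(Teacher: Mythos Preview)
Your proposal is correct and follows essentially the same approach as the paper: a case split on which interval contains $\tfrac{p}{q}$, feeding the definition of $L_K$ into elementary inequalities for the first half of (1), invoking Lemma~\ref{lem_boundary}(2) for the second half, and then combining the two regimes for (2). The organization and the chain of inequalities match the paper's proof almost line for line.
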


\begin{proof} 
(1) 
If $\tfrac{p}{q} \in (-2,0)$, then 
$|\tfrac{p}{q} |<2$. 
The assumption $\tfrac{p}{q} \not\in L_K$ implies that $|q|>K > \tfrac{K}{1+K'}$. 

Let us consider the case 
$\tfrac{p}{q} \in (- \infty, -2) \cup (0, \infty)$. 
Suppose that $|\tfrac{p}{q}| \le K'$ and $q  < 0$. 
Then $p \le -K'q$. 
One has $p+q > K$ since $\tfrac{p}{q} \not\in L_K$. 
Hence $-q < p-K \le -K'q - K$. 
One obtains 
$(K'-1)q \le -K$. 
Thus $|q|= -q> \tfrac{K}{K'-1}> \tfrac{K}{K'+1}$. 

Suppose that $|\tfrac{p}{q}| \le K'$ and $q  > 0$. 
In this case $p \le K'q$. 
Since $p+q > K$, one has $q> K-p \ge K- K'q$. 
Thus $q > \tfrac{K}{1+K'}$. 
The proof of the first part is done. 

The second part can be proved by using Lemma~\ref{lem_boundary}(2). 
In fact for $[x,y] \in int(\Delta) \cap S_{\beta}(r)$ such that $|r| > K'>2$, 
one has $0 < y < \tfrac{-1}{1+r}$ when $r \in (- \infty, -2)$ 
(resp. $0 < y < \tfrac{1}{1+r}$ when $r \in (0, \infty)$). 
This leads to the second part. 
\medskip

\noindent
(2) 
If $|\tfrac{p}{q}| \le K'$, then $|\tfrac{y}{q}| < \tfrac{1}{|q|} \le \tfrac{1+K'}{K}$ by the first part of (1). 
If $|\tfrac{p}{q}| > K'$, then by the second part of (1), 
we have 
$|\tfrac{y}{q}| < \tfrac{1}{|q|} \times \tfrac{1}{-1+K'} \le \tfrac{1}{-1+K'}$. 
These imply the desired inequality. 
\end{proof} 

\noindent
Similarly, one can prove: 

\begin{lem}
\label{lem_FiniteSet-a}
Let us take $K> K' >2$. 
Let $p \in {\Bbb N}$ and $q \in {\Bbb Z}$ be coprime such that $\tfrac{p}{q} \in {\mathcal Hyp} \setminus L_K$. 
\begin{enumerate}
\item[(1)] 
If $|\tfrac{p}{q}| \le K'$, then 
$|q| \ge \tfrac{K}{1+K'}$. 
If $|\tfrac{p}{q}| > K'$, then  
$0 <  x < \tfrac{1}{-1+K'}$ for  any $(x,y,z) \in int(\Delta) \cap S_{\alpha}(\tfrac{p}{q})$.  

\item[(2)] 
$|\tfrac{x}{q}| < \max \{ \tfrac{1+K'}{K} ,  \tfrac{1}{-1+K'}\}$ for any $(x,y,z) \in int(\Delta) \cap S_{\alpha}(\tfrac{p}{q})$. 
\end{enumerate}
\end{lem}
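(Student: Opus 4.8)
The statement to prove, Lemma~\ref{lem_FiniteSet-a}, is the exact analogue of Lemma~\ref{lem_FiniteSet-b} with the roles of the $\alpha$- and $\beta$-boundary data interchanged, and the proof should be a mechanical transcription of the proof just given, using the symmetry of the Thurston norm ball recorded in Remark~\ref{rem_SymmetrySetXY} together with Lemma~\ref{lem_boundary}(1) in place of Lemma~\ref{lem_boundary}(2). The plan is therefore to spell out the two clauses of (1), then derive (2) as an immediate corollary, exactly as before.

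\begin{proof}
The argument is identical to that of Lemma~\ref{lem_FiniteSet-b}, with $S_\beta$ replaced by $S_\alpha$, the $y$-coordinate replaced by the $x$-coordinate, and Lemma~\ref{lem_boundary}(2) replaced by Lemma~\ref{lem_boundary}(1). For completeness we indicate the changes.

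\emph{Proof of (1).} If $\tfrac{p}{q} \in (-2,0)$ then $|\tfrac{p}{q}| < 2$, and since $\tfrac{p}{q} \notin L_K$ we get $|q| > K > \tfrac{K}{1+K'}$. Next suppose $\tfrac{p}{q} \in (-\infty,-2) \cup (0,\infty)$ and $|\tfrac{p}{q}| \le K'$. If $q<0$ then $p \le -K'q$, and $\tfrac{p}{q} \notin L_K$ gives $p+q>K$, so $-q < p - K \le -K'q - K$, whence $(K'-1)q \le -K$ and $|q| = -q > \tfrac{K}{K'-1} > \tfrac{K}{K'+1}$. If $q>0$ then $p \le K'q$ and $p+q>K$ give $q > K - p \ge K - K'q$, hence $q > \tfrac{K}{1+K'}$. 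This proves the first assertion. For the second, suppose $|\tfrac{p}{q}| > K' > 2$. By Lemma~\ref{lem_boundary}(1), for $[x,y] \in int(\Delta) \cap S_\alpha(r)$ the point lies on the segment $y = (\tfrac{1+r}{-2})x + \tfrac{1}{2}$ with endpoints as listed there; inspecting cases (i) and (iii) of Lemma~\ref{lem_boundary}(1) one reads off $0 < x < \tfrac{-1}{1+r}$ when $r \in (-\infty,-2)$ and $0 < x < \tfrac{1}{1+r}$ when $r \in (0,\infty)$, and in either case $0 < x < \tfrac{1}{-1+K'}$.

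\emph{Proof of (2).} If $|\tfrac{p}{q}| \le K'$ then by the first part of (1), $|\tfrac{x}{q}| < \tfrac{1}{|q|} \le \tfrac{1+K'}{K}$, using $0 < x < 1$ on $int(\Delta)$. If $|\tfrac{p}{q}| > K'$ then by the second part of (1), $|\tfrac{x}{q}| < \tfrac{1}{|q|} \cdot \tfrac{1}{-1+K'} \le \tfrac{1}{-1+K'}$. Combining the two cases yields $|\tfrac{x}{q}| < \max\{ \tfrac{1+K'}{K}, \tfrac{1}{-1+K'}\}$.
\end{proof}

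The only genuine content beyond copying is the verification, from Lemma~\ref{lem_boundary}(1)(i),(iii), that the $x$-coordinate on $int(\Delta)\cap S_\alpha(r)$ is bounded by $\bigl|\tfrac{1}{1+r}\bigr|$ when $|r|$ is large — the counterpart of the bound $0<y<\bigl|\tfrac{1}{1+r}\bigr|$ on $int(\Delta)\cap S_\beta(r)$ used in the previous lemma. I expect no obstacle here: it follows either by direct inspection of the listed endpoints or, more conceptually, from the line symmetry about $y=x$ exchanging $\Delta\cap S_\alpha(r)$ and $\Delta\cap S_\beta(r)$ noted in Remark~\ref{rem_SymmetrySetXY}, which carries the $y$-bound for $S_\beta(r)$ to the $x$-bound for $S_\alpha(r)$. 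Everything else is the same chain of elementary inequalities, so the lemma requires no new ideas.
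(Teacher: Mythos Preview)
Your proposal is correct and takes essentially the same approach as the paper, which simply states ``Similarly, one can prove'' before the lemma without giving a separate argument. Your explicit transcription—swapping $S_\beta$ for $S_\alpha$, $y$ for $x$, and invoking Lemma~\ref{lem_boundary}(1) in place of Lemma~\ref{lem_boundary}(2) (or equivalently the $y=x$ symmetry of Remark~\ref{rem_SymmetrySetXY})—is exactly the intended proof.
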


\begin{proof}[Proof of Proposition~\ref{prop_FiniteSet}.] 
Let $K= K_0 (= 100000)$ and $K' = 999$. 
Since $b_{\gamma}(a) = \tfrac{p_{\gamma}(a)}{q_{\gamma}(a)} \in   {\mathcal Hyp} \setminus L_{K_0}$ and $b_{\gamma}(a) \in   (- \infty, -2) \cup (0, \infty)$, 
the inequality $p_{\gamma}(a) + q_{\gamma}(a) > K_0$ holds.
By using the same argument in the proof of Lemma~\ref{lem_NormChange}, we obtain the upper bound on $|\tfrac{z}{q_{\gamma}(a)}|$: 
$$|\tfrac{z}{q_{\gamma}(a)}| = |\tfrac{1}{p_{\gamma}(a)+ q_{\gamma}(a)}| < \tfrac{1}{K_0}=\tfrac{1}{100000}.$$
By Lemmas~\ref{lem_FiniteSet-b}(2) and \ref{lem_FiniteSet-a}(2), we have 
$$ |\tfrac{x}{q_{\alpha}(a)}| , |\tfrac{y}{q_{\beta}(a)}| <  \max \{\tfrac{1+K'}{K_0}, \tfrac{1}{-1+K'}\} = \tfrac{1+K'}{K_0}= \tfrac{1}{100}.$$
Thus we have  lower bounds 
$1 - |\tfrac{x}{q_{\alpha}(a)}| - |\tfrac{y}{q_{\beta}(a)}| - |\tfrac{z}{q_{\gamma}(a)}| > 1 - \tfrac{1}{50}- \tfrac{1}{100000}= 0.97999$ and 
$\log \lambda(a) \ge \log \lambda_{[\tfrac{1}{2}, \tfrac{1}{2}]}> 2.633$. 
These two bounds  give us the desired inequality. 
\end{proof}


\begin{prop}
\label{prop_finitely-many}
Let $p \in {\Bbb N}$ and $q \in {\Bbb Z}$ be coprime such that $\tfrac{p}{q} \in {\mathcal Hyp} \setminus \{1\}$. 
Let $\epsilon>0$ be any number. 
\begin{enumerate}
\item[(1)] 
Suppose that $\tfrac{p}{q} \in (- \infty, -2) \cup  (0, \infty)$ and $p+2q \ne 1$. 
Then 
$$(1 - |\tfrac{x}{q_{\alpha}(a)}| - |\tfrac{y}{q_{\beta}(a)}| - |\tfrac{z}{q}|) \log \lambda(a) > \min \mathrm{Ent}(N(\tfrac{p}{q}), \Omega_S) - \epsilon$$
for any rational class $a= (x,y,z) \in int (\Delta) \cap S_{\gamma}(\tfrac{p}{q})$ but finitely many exceptions.

\item[(2)] 
Suppose that $|q| \ne 1$. 
Then 
$$(1 - |\tfrac{x}{q_{\alpha}(a)}| - |\tfrac{y}{q}| - |\tfrac{z}{q_{\gamma}(a)}|) \log \lambda(a) >  \min \mathrm{Ent}(N(\tfrac{p}{q}), \Omega_A) - \epsilon$$
for any rational class $a= (x,y,z) \in int (\Delta) \cap S_{\beta}(\tfrac{p}{q})$ but finitely many exceptions. 
\end{enumerate}
\end{prop}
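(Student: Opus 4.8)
The plan is to reduce the displayed inequality, for a rational class $a$, to an inequality about its primitive integral representative $\tilde a$, and to split the left side into a main term bounded below by $\min\mathrm{Ent}$ and an error term tending to $0$. I treat (1); part (2) goes through after replacing the cusp $T_{\gamma}$ by $T_{\beta}$, the face $\Omega_S$ by $\Omega_A$, and the hypothesis $p+2q\ne1$ by $|q|\ne1$. So fix coprime $p\in{\Bbb N}$, $q\in{\Bbb Z}$ as in (1) and let $a=(x,y,z)\in int(\Delta)\cap S_{\gamma}(\tfrac{p}{q})$ be rational, with primitive integral representative $\tilde a=(X,Y,Z)\in int(C_{\Delta})\cap S_{\gamma}(\tfrac{p}{q})$, so $a=\tfrac{1}{\|\tilde a\|}\tilde a$. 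From $-\tfrac{p}{q}Z=X+Y$, $\gcd(p,q)=1$ and $\|\tilde a\|=X+Y-Z>0$ (and $p+q>0$ on this range of slopes) I would write $Z=-qm$, $X+Y=pm$, $\|\tilde a\|=(p+q)m$ with $m\in{\Bbb N}$ and $\gcd(X,m)=1$. Then $|\tfrac{z}{q}|=\tfrac{1}{p+q}$ and $\sharp\partial_{\gamma}F_{\tilde a}=m$, and since $p+2q\ne1$ makes the $S$-faces of $N(\tfrac{p}{q})$ satisfy $(*)$, Lemmas~\ref{lem_NormChange} and~\ref{lem_face-prong} give $\overline{a}\in int(C_{\Omega_S})$ with $\|\overline{a}\|_{p/q}=\|\tilde a\|-m$ and $\lambda_{p/q}(\overline{a})=\lambda(\tilde a)$. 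Using $|\tfrac{x}{q_{\alpha}(a)}|=\sharp\partial_{\alpha}F_{\tilde a}/\|\tilde a\|$, $|\tfrac{y}{q_{\beta}(a)}|=\sharp\partial_{\beta}F_{\tilde a}/\|\tilde a\|$ (Lemma~\ref{lem_topological-type}) and $\log\lambda(a)=\|\tilde a\|\log\lambda(\tilde a)$, these data should combine into
\begin{equation*}
\bigl(1-|\tfrac{x}{q_{\alpha}(a)}|-|\tfrac{y}{q_{\beta}(a)}|-|\tfrac{z}{q}|\bigr)\log\lambda(a)=\mathrm{Ent}_{p/q}(\overline{a})-\bigl(\sharp\partial_{\alpha}F_{\tilde a}+\sharp\partial_{\beta}F_{\tilde a}\bigr)\log\lambda(\tilde a).
\end{equation*}

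Since $\overline{a}\in int(C_{\Omega_S})$ gives $\mathrm{Ent}_{p/q}(\overline{a})\ge\min\mathrm{Ent}(N(\tfrac{p}{q}),\Omega_S)$, the proposition will follow once I show the error term $\bigl(\sharp\partial_{\alpha}F_{\tilde a}+\sharp\partial_{\beta}F_{\tilde a}\bigr)\log\lambda(\tilde a)$ is $<\epsilon$ for all but finitely many $a$. For the boundary counts: $\sharp\partial_{\alpha}F_{\tilde a}=\gcd(X,Y+Z)=\gcd\bigl(X,(p-q)m\bigr)$, and since $\gcd(X,m)=1$ this equals $\gcd(X,|p-q|)$; similarly $\sharp\partial_{\beta}F_{\tilde a}=\gcd(Y,|p-q|)$. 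As $\tfrac{p}{q}\ne1$ forces $p\ne q$, both are bounded by the constant $|p-q|$, so the error is at most $2|p-q|\log\lambda(\tilde a)$. The map $a\mapsto\tilde a$ is a bijection from the rational classes of $int(\Delta)\cap S_{\gamma}(\tfrac{p}{q})$ onto the primitive integral classes of $int(C_{\Delta})\cap S_{\gamma}(\tfrac{p}{q})$, and only finitely many of the latter have $\|\tilde a\|$ below any fixed bound; hence, granting that $\log\lambda(\tilde a)\to0$ as $\|\tilde a\|\to\infty$ along this family, for every $\epsilon>0$ all but finitely many $a$ satisfy $2|p-q|\log\lambda(\tilde a)<\epsilon$, which is exactly what is needed.

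The step I expect to be the main obstacle is precisely the limit $\log\lambda(\tilde a)\to0$ as $\|\tilde a\|\to\infty$ inside a fixed fibered cone — the only genuinely analytic ingredient. One route is the general fact that, by Fried's properness of $\mathrm{ent}$ on the open face $int(\Delta)$ together with the discreteness of $H_2(N,\partial N;{\Bbb Z})$, the normalized entropy $\mathrm{Ent}(\tilde a)=\|\tilde a\|\log\lambda(\tilde a)$ of an integral class grows only logarithmically in $\|\tilde a\|$, whence $\log\lambda(\tilde a)\to0$. For $N$ and the face $\Delta$ one can instead argue by hand from the Teichm\"uller polynomial~(\ref{equation_TpolyMagic}): with $N=\|\tilde a\|=(p+q)m$, the exponents of $f_{\tilde a}$ below $N$ are $X,\,Y,\,X-Z,\,Y-Z$, and on $S_{\gamma}(\tfrac{p}{q})$ at most one of them can be close to $N$ while the others are bounded away from $N$ proportionally to $m$; an elementary estimate of the largest real root then yields $\lambda(\tilde a)=1+O\!\bigl(\tfrac{\log m}{m}\bigr)$. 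A secondary point to be careful about is that the splitting identity genuinely uses $p+2q\ne1$ (resp.\ $|q|\ne1$ in part (2)): this is exactly the hypothesis of Lemma~\ref{lem_face-prong} under which $\lambda_{p/q}(\overline{a})=\lambda(\tilde a)$ rather than merely $\lambda_{p/q}(\overline{a})\le\lambda(\tilde a)$. For (2), the same gcd computation gives $\sharp\partial_{\alpha}F_{\tilde a},\sharp\partial_{\gamma}F_{\tilde a}\le|p-q|$, the splitting identity becomes $\bigl(1-|\tfrac{x}{q_{\alpha}(a)}|-|\tfrac{y}{q}|-|\tfrac{z}{q_{\gamma}(a)}|\bigr)\log\lambda(a)=\mathrm{Ent}_{p/q}(\overline{a})-\bigl(\sharp\partial_{\alpha}F_{\tilde a}+\sharp\partial_{\gamma}F_{\tilde a}\bigr)\log\lambda(\tilde a)$ with $\overline{a}\in int(C_{\Omega_A})$ (Lemma~\ref{lem_Cannonical}), and the rest is unchanged.
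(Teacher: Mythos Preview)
Your splitting identity and the gcd bound $\#\partial_\alpha F_{\tilde a},\,\#\partial_\beta F_{\tilde a}\le|p-q|$ are both correct, and this is a genuinely different route from the paper's: the paper establishes the smallness of $|\tfrac{x}{q_\alpha(a)}|$ and $|\tfrac{y}{q_\beta(a)}|$ via a geometric Lemma~\ref{lem_SmallChange} about how the segments $\Delta\cap S_\alpha(r')$, $\Delta\cap S_\beta(r')$ can meet $\Delta\cap S_\gamma(\tfrac{p}{q})$, rather than by your arithmetic argument (which is essentially Lemma~\ref{lem_num-boundary}, proved later in the paper). Your way is more direct and yields an explicit constant.

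The gap is in the last step, where you want $(\#\partial_\alpha+\#\partial_\beta)\log\lambda(\tilde a)<\epsilon$ via $\log\lambda(\tilde a)\to 0$. Your first justification --- Fried's properness plus discreteness giving ``logarithmic growth of $\mathrm{Ent}$'' --- is not a valid deduction: properness only says $\mathrm{ent}$ blows up at $\partial\Delta$, with no rate. Your Teichm\"uller-polynomial route can be made to work, but not as cheaply as you suggest: the crude bound $\lambda^N\le 4\lambda^{e_{\max}}$ gives only $\log\lambda(\tilde a)\le\tfrac{\log 4}{\min(X,Y)}$, which does \emph{not} tend to $0$ along, say, $X=1$; one has to isolate the top exponent and then use that the \emph{second}-largest exponent is at distance $\ge\min(p,|q|)\,m-O(1)$ from $N$. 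There is, however, a one-line fix that sidesteps all of this: rewrite your identity multiplicatively as
\[
\Bigl(1-\frac{\#\partial_\alpha F_{\tilde a}+\#\partial_\beta F_{\tilde a}}{\|\overline{\tilde a}\|_{p/q}}\Bigr)\,\mathrm{Ent}_{p/q}(\overline a)\ \ge\ \Bigl(1-\frac{2|p-q|}{(p+q-1)m}\Bigr)\min\mathrm{Ent}\bigl(N(\tfrac{p}{q}),\Omega_S\bigr),
\]
which exceeds $\min\mathrm{Ent}-\epsilon$ once $m$ is large. This is exactly the paper's endgame for part~(1), where it factors out $\log\lambda(a)$ and invokes $\log\lambda(a)\ge\log\lambda(a_0)$. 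For part~(2) your approach, with this fix, is actually \emph{simpler} than the paper's: the paper introduces auxiliary points $a_{-1},a_1$ bracketing the minimizer $a_0$ and splits into cases, whereas the multiplicative form together with $\mathrm{Ent}_{p/q}(\overline a)\ge\min\mathrm{Ent}$ and your gcd bound handles (1) and (2) uniformly.
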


\noindent
The following lemma is needed for the proof of Proposition~\ref{prop_finitely-many}. 

\begin{lem}
\label{lem_SmallChange}
Let $\epsilon'>0$ be any number. 
\begin{enumerate}
\item[(1)] 
Let $r \in {\mathcal Hyp} \setminus \{1\}$  and $r \in (- \infty, -2) \cup (0, \infty)$. 
Then 
$|\tfrac{x}{q_{\alpha}(a)}| < \epsilon'$ and $|\tfrac{y}{q_{\beta}(a)}| < \epsilon'$ for any rational class $a= (x,y,z) \in int (\Delta) \cap S_{\gamma}(r)$ 
but finitely many exceptions.

\item[(2)] 
Let $r \in {\mathcal Hyp} \setminus \{1\}$. 
Then 
$|\tfrac{x}{q_{\alpha}(a)}| < \epsilon'$ and $|\tfrac{z}{q_{\gamma}(a)}| < \epsilon'$ for any rational class $a= (x,y,z) \in int (\Delta) \cap S_{\beta}(r)$ 
but finitely many exceptions. 
\end{enumerate}
\end{lem}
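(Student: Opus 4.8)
The plan is to reduce the lemma to an elementary divisibility bound by passing to primitive integral representatives. First I would note that, since $\|\cdot\|\equiv 1$ on $\Delta$, every rational class $a=(x,y,z)\in int(\Delta)$ equals $\tfrac{1}{\|\tilde a\|}\,\tilde a$ for the unique primitive integral class $\tilde a=(X,Y,Z)\in int(C_{\Delta})$ on its ray, so that $b_{\alpha}(a)=b_{\alpha}(\tilde a)=\tfrac{Y+Z}{-X}$ and $q_{\alpha}(a)=q_{\alpha}(\tilde a)$; reducing $\tfrac{Y+Z}{-X}$ (with $X>0$ on $int(C_{\Delta})$) gives $|q_{\alpha}(a)|=X/\gcd(X,Y+Z)$, whence
\[
\left|\tfrac{x}{q_{\alpha}(a)}\right|=\frac{\gcd(X,Y+Z)}{\|\tilde a\|}=\frac{\sharp(\partial_{\alpha}F_{\tilde a})}{\|\tilde a\|}
\]
by Lemma~\ref{lem_topological-type}, and likewise for the $\beta$- and $\gamma$-components. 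Since rational classes in $int(\Delta)\cap S_{\bullet}(r)$ correspond bijectively to primitive integral classes in $int(C_{\Delta})\cap S_{\bullet}(r)$ (as $S_{\bullet}(r)$ is a linear subspace), it then suffices to bound each relevant $\sharp(\partial_{\bullet}F_{\tilde a})$ by a constant depending only on $r$, and to observe that $\|\tilde a\|$ exceeds any prescribed bound outside a finite set of such $\tilde a$.

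For part (1) I would parametrize: writing $r=\tfrac{p}{q}$ with $p\in\mathbb{N}$, $\gcd(p,q)=1$ (so $p+q>0$ on $\Delta$ for $r\in(-\infty,-2)\cup(0,\infty)$ by Remark~\ref{rem_slope-gam}), a class $\tilde a\in int(C_{\Delta})\cap S_{\gamma}(r)$ satisfies $qX+qY+pZ=0$, forcing $X+Y=pW$, $Z=-qW$ for an integer $W>0$, and then $\|\tilde a\|=X+Y-Z=(p+q)W$. The crux is that primitivity of $\tilde a=(X,\,pW-X,\,-qW)$ gives $\gcd(X,W)=\gcd(X,pW,qW)=1$, which collapses $\gcd(X,(p-q)W)$ to $\gcd(X,p-q)\le|p-q|$; combined with $Y+Z=(p-q)W-X$ and the analogous identity for $\partial_{\beta}$, this bounds both $\sharp(\partial_{\alpha}F_{\tilde a})$ and $\sharp(\partial_{\beta}F_{\tilde a})$ uniformly by $|p-q|$. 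Hence $|x/q_{\alpha}(a)|,|y/q_{\beta}(a)|\le|p-q|/((p+q)W)<\epsilon'$ as soon as $W>|p-q|/((p+q)\epsilon')$, and for each of the finitely many smaller $W$ there are only finitely many admissible $X$; so only finitely many $a$ are exceptional. The hypothesis $r\ne1$ enters precisely here, since it is exactly what makes $p-q\ne0$; without it these $\gcd$'s would be unbounded, cf. Lemma~\ref{lem_slope1}.

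Part (2) I would handle identically with $S_{\beta}(r)$ in place of $S_{\gamma}(r)$: now $\tilde a\in int(C_{\Delta})\cap S_{\beta}(r)$ forces $Y=qW$, $X+Z=-pW$, primitivity again yields $\gcd(X,W)=1$, and one computes $\sharp(\partial_{\alpha}F_{\tilde a})=\gcd(X,(p-q)W)=\gcd(X,p-q)$ and $\sharp(\partial_{\gamma}F_{\tilde a})=\gcd(pW+X,(q-p)W)=\gcd(pW+X,p-q)$, both $\le|p-q|$. Since $N$ is hyperbolic, $\|\cdot\|$ restricts to a genuine norm on the plane $S_{\beta}(r)$ and $int(C_{\Delta})\cap S_{\beta}(r)$ is a pointed cone (over the bounded segment of Lemma~\ref{lem_boundary}(2)), so only finitely many primitive integral $\tilde a$ there have $\|\tilde a\|\le|p-q|/\epsilon'$; for all others $|x/q_{\alpha}(a)|,|z/q_{\gamma}(a)|\le|p-q|/\|\tilde a\|<\epsilon'$, and any degenerate class (e.g. the single one with $z=0$) is harmlessly thrown into the finite exceptional set. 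I expect the main obstacle to be organizational rather than mathematical: setting up the bijection of the first paragraph correctly and tracking the sign and positivity conventions ($X>0$, $W>0$, $p+q>0$) so that the identities $\gcd(X,(p-q)W)=\gcd(X,p-q)$ hold verbatim; the genuine content is the single observation that primitivity forces $\gcd(X,W)=1$.
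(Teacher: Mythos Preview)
Your argument is correct and takes a genuinely different route from the paper's own proof. The paper proves Lemma~\ref{lem_SmallChange} by invoking the finite set $L_K$: it chooses $K>K'>2$ with $\max\{\tfrac{1+K'}{K},\tfrac{1}{K'-1}\}<\epsilon'$, observes that for $r\ne 1$ the segment $int(\Delta)\cap S_{\gamma}(r)$ meets each $S_{\alpha}(r')$ or $S_{\beta}(r')$ in at most one point, so only finitely many classes have the other boundary slope in $L_K$, and then appeals to Lemmas~\ref{lem_FiniteSet-a}(2) and \ref{lem_FiniteSet-b}(2) for the remaining classes. Your approach instead passes to the primitive integral representative, parametrizes $int(C_{\Delta})\cap S_{\gamma}(r)$ (respectively $S_{\beta}(r)$) by $(X,W)$ with $\gcd(X,W)=1$, and extracts the uniform bound $\sharp(\partial_{\alpha}F_{\tilde a}),\sharp(\partial_{\beta}F_{\tilde a})\le|p-q|$ directly from this coprimality; dividing by the unbounded norm $\|\tilde a\|$ gives the conclusion. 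This is essentially the argument the paper uses much later for Lemma~\ref{lem_num-boundary}, so you have in effect anticipated that lemma and applied it here. The payoff of your route is an explicit quantitative bound $|x/q_{\alpha}(a)|\le|p-q|/\|\tilde a\|$ and independence from the $L_K$ machinery; the paper's route has the advantage of reusing Lemmas~\ref{lem_FiniteSet-a} and \ref{lem_FiniteSet-b}, which it needs anyway for Proposition~\ref{prop_FiniteSet}. Both make the hypothesis $r\ne 1$ essential at the same place: in the paper, so that the segments are transverse; in yours, so that $p-q\ne 0$.
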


\begin{proof} 
Take $K> K'>2$ so that 
$\max\{\tfrac{1+K'}{K}, \tfrac{1}{-1+K'}\} < \epsilon'$. 
(Note that $1 \in L_K$.) 
We see that 
$r \ne 1$ implies that 
$int(\Delta) \cap S_{\gamma}(r) \cap S_{\alpha}(r')$ or 
$int(\Delta) \cap S_{\gamma}(r) \cap S_{\beta}(r')$ is at most a single point for any $r'$. 
This means that  the set of rational classes $a= (x,y,z) \in int (\Delta) \cap S_{\gamma}(r)$ 
such that $b_{\alpha}(a) \in  L_K$ or $b_{\beta}(a) \in  L_K$ is finite whenever  $r \ne 1$. 
If $b_{\alpha}(a) \in {\mathcal Hyp} \setminus L_K$ (resp.  $b_{\beta}(a) \in {\mathcal Hyp} \setminus L_K$), then 
$|\tfrac{x}{q_{\alpha}(a)}| < \epsilon'$ (resp. $|\tfrac{y}{q_{\beta}(a)}| < \epsilon'$), see 
Lemma~\ref{lem_FiniteSet-a}(2) (resp. Lemma~\ref{lem_FiniteSet-b}(2)). 
Thus the proof of (1) is done. 
(Note that this is not true for $r=1$, since $\Delta \cap S_{\alpha}(1) = \Delta \cap S_{\beta}(1)= \Delta \cap S_{\gamma}(1)$.)

The proof of (2) is similar to that of (1). 
\end{proof}

\begin{proof}[Proof of Proposition~\ref{prop_finitely-many}.] 
(1) 
By Lemma~\ref{lem_NormChange}, one has 
$$(1- |\tfrac{x}{q_{\alpha}(a)}| - |\tfrac{y}{q_{\beta}(a)}| - |\tfrac{z}{q}|) \log \lambda(a) 
= (1- |\tfrac{x}{q_{\alpha}(a)}| - |\tfrac{y}{q_{\beta}(a)}| - \tfrac{1}{p+q}) \log \lambda(a).$$
Lemma~\ref{lem_SmallChange} says that 
for any $\epsilon'>0$, the following inequality holds: 
$$ (1- |\tfrac{x}{q_{\alpha}(a)}| - |\tfrac{y}{q_{\beta}(a)}| - \tfrac{1}{p+q}) \log \lambda(a) >(1- \tfrac{1}{p+q} - 2 \epsilon') \log \lambda(a)$$
for any rational class $a= (x,y,z) \in int(\Delta) \cap S_{\gamma}(\tfrac{p}{q})$ but finitely many exceptions. 
If we set $a_0= [\tfrac{p}{2p+2q}, \tfrac{p}{2p+2q}]$, then 
by Lemma~\ref{lem_MNE-S}, 
$$ (1 - \tfrac{1}{p+q}) \log \lambda (a_0) = \min \mathrm{Ent}(N(\tfrac{p}{q}), \Omega_S).$$ 
For any $\epsilon>0$, choose a small number $\epsilon'>0$ so that 
$$(1 - \tfrac{1}{p+q} - 2 \epsilon') \log \lambda(a_0) >  \min \mathrm{Ent}(N(\tfrac{p}{q}), \Omega_S) - \epsilon.$$
For any $a \in int(\Delta) \cap S_{\gamma}(\tfrac{p}{q})$, 
one has 
\begin{eqnarray*}
(1 - \tfrac{1}{p+q} - 2 \epsilon') \log \lambda(a) &\ge& (1 - \tfrac{1}{p+q} - 2 \epsilon') \log \lambda(a_0) 
\\
&>& \min \mathrm{Ent}(N(\tfrac{p}{q}), \Omega_S) - \epsilon.
\end{eqnarray*}
The proof of (1) is done. 
\medskip

\noindent
(2)
Let $a_0= [x_0, y_0]$ be the unique point of $int(\Delta) \cap S_{\beta}(\tfrac{p}{q})$ which enjoys 
$$\min \mathrm{Ent}(N(\tfrac{p}{q}), \Omega_A)= (1- |\tfrac{y_0}{q}|) \log \lambda(a_0),$$
see Lemma~\ref{lem_MNE-A}. 
The function $\tfrac{1}{\log \lambda}$ restricted on $int(\Delta) \cap S_{\beta}(\tfrac{p}{q})$ is strictly concave, and 
the entropy $\log \lambda (a)$ for $a \in int(\Delta) \cap S_{\beta}(\tfrac{p}{q})$ goes to $\infty$ 
as $a$ goes to a point on $\partial \Delta \cap  S_{\beta}(\tfrac{p}{q})$. 
This ensures the existence of $a_i = [x_i, y_i] \in int(\Delta) \cap S_{\beta}(\tfrac{p}{q})$ for $i \in \{-1,1\}$ satisfying the following. 
\begin{itemize}
\item 
$  \log \lambda(a_{-1})=\log \lambda(a_1)$, 
and $0 < y_{-1}< y_0< y_1< 1$.

\item 
$(1 - \tfrac{1}{|q|}) \log \lambda(a_i)> \min \mathrm{Ent}(N(\tfrac{p}{q}), \Omega_A)$ for $i \in \{ -1,1\}$.
\end{itemize}
For any $\epsilon>0$, one can choose a small number $\epsilon'>0$ such that for $i \in \{-1,1\}$, 
\begin{eqnarray*}
\epsilon &>& 2 \epsilon' \log \lambda(a_i), 
\\
(1 - \tfrac{1}{|q|} - 2 \epsilon') \log \lambda(a_i)&>& \min \mathrm{Ent}(N(\tfrac{p}{q}), \Omega_A).
\end{eqnarray*}
Then for any $a= [x,y] \in int(\Delta) \cap S_{\beta}(\tfrac{p}{q})$ but finitely many exceptions, one has the following 
by a consequence of Lemma~\ref{lem_SmallChange}: 
If either $y > y_1$ or $y < y_{-1}$, then 
\begin{eqnarray*}
(1 - |\tfrac{x}{q_{\alpha}(a)}| - |\tfrac{y}{q}| - \tfrac{z}{q_{\gamma}(a)}) \log \lambda(a) 
&>& (1 - \tfrac{1}{|q|} - 2 \epsilon') \log \lambda(a)
\\
&>&(1 - \tfrac{1}{|q|} - 2 \epsilon') \log \lambda(a_{\pm 1}) 
\\
&>&\min \mathrm{Ent}(N(\tfrac{p}{q}), \Omega_A) 
\\
&>&  \min \mathrm{Ent}(N(\tfrac{p}{q}), \Omega_A)  -  \epsilon.
\end{eqnarray*}

\noindent
If  $y_{-1} \le y \le y_1$, then one has $\log \lambda(a_i) \ge \log \lambda(a)$ for $i \in \{-1,1\}$. 
Thus 
\begin{eqnarray*}
(1 - |\tfrac{x}{q_{\alpha}(a)}| - |\tfrac{y}{q}| - \tfrac{z}{q_{\gamma}(a)}) \log \lambda(a) 
&>& (1 - |\tfrac{y}{q}| - 2 \epsilon') \log \lambda(a)
\\
&>& (1 - |\tfrac{y}{q}|) \log \lambda(a) - 2 \epsilon' \log \lambda(a_{\pm 1}) 
\\
&\ge&  \min \mathrm{Ent}(N(\tfrac{p}{q}), \Omega_A) - 2 \epsilon' \log \lambda(a_{\pm 1}) 
\\
&>&   \min \mathrm{Ent}(N(\tfrac{p}{q}), \Omega_A)  -  \epsilon.
\end{eqnarray*}
The third inequality $\ge$ comes from Lemma~\ref{lem_MNE-A}. 
This completes the proof of (2). 
\end{proof}

We are now ready to prove Theorem~\ref{thm_main}(1). 

\begin{proof}[Proof of Theorem~\ref{thm_main}(1).] 
We start by expressing Theorem~\ref{thm_three} together with \cite[Lemma~4.8]{KT1}  in the following way. 
\begin{claim}
\label{claim_thm_main}
Let $r \in \{-4,  \tfrac{3}{-2}, \tfrac{1}{-2}, 2\}$. 
For each $g \ge 3$, there exists a primitive fibered class $ h_g(r) \in H_2(N, \partial N)$ 
with the following properties. 
\begin{itemize}
\item 
One of the boundary slopes of $ h_g(r) $, say $b_{\beta}( h_g(r) )$ equals $r$, i.e, 
$h_g(r) \in S_{\beta}(r)$. 

\item 
$\phi_{h_g(r)}$ is a mapping class on a surface of genus $g$ such that $\phi_{h_g(r)} \in \mathcal{M}$ for large $g$,  and 
$$\displaystyle \lim_{g \to \infty} g  \log \lambda (\widehat{\phi}_{h_g(r)}) = \log( \tfrac{3+ \sqrt{5}}{2}).$$
In other words 
$\displaystyle \lim_{g \to \infty} \mathrm{Ent}  (\widehat{\phi}_{h_g(r)}) = 2\log( \tfrac{3+ \sqrt{5}}{2})$. 
\end{itemize}
\end{claim}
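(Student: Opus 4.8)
\textbf{Proof plan for Claim~\ref{claim_thm_main}.}
The plan is to unwind Theorem~\ref{thm_three} (i.e.\ \cite[Theorem~1.5]{KT1}) together with the description of its extremal fibered classes given in \cite[Lemma~4.8]{KT1}, and simply re-package the conclusion in the homological language of $N$ rather than of $N(r)$. First I would recall that Theorem~\ref{thm_three} produces, for each $r\in\{\tfrac{3}{-2},\tfrac{1}{-2},2\}$ and each $g\ge 3$, a $\varSigma_g$-bundle over the circle obtained from $N(r)$ by Dehn filling both remaining cusps along boundary slopes of its fiber, whose monodromy $\Phi_g(r):\varSigma_g\to\varSigma_g$ satisfies $\lim_{g\to\infty} g\log\lambda(\Phi_g(r))=\log(\tfrac{3+\sqrt5}{2})$. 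Since $N(-4)\simeq N(\tfrac{3}{-2})$ (see \cite{MP}), the case $r=-4$ is subsumed. The fiber of this $N(r)$-fibration is represented by a primitive fibered class $\overline{h_g(r)}\in H_2(N(r),\partial N(r))$, and via the natural injection $\iota_\beta:H_2(N(r),\partial N(r))\to H_2(N,\partial N)$ of Proposition~\ref{prop_S_r} we set $h_g(r)=\iota_\beta(\overline{h_g(r)})\in S_\beta(r)$. By construction $\mathrm{Im}\,\iota_\beta=S_\beta(r)$, so $h_g(r)\in S_\beta(r)$ is automatic; that is the first bullet, with the identification that the $T_\beta$-boundary slope of $h_g(r)$ is exactly $r$.

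For the second bullet I would argue in two parts. The topological-type assertion, that $\phi_{h_g(r)}$ is a mapping class on a surface of genus $g$, follows because filling the $T_\beta$-cusp of $N$ along $r$ identifies the fiber $F_{h_g(r)}$ of $N$ with a fiber of $N(r)$ after capping off its $\partial_\beta$-boundary circles, and then capping the remaining boundary circles gives the genus-$g$ closed fiber $\varSigma_g$ of Theorem~\ref{thm_three}; thus the extended monodromy $\widehat{\phi}_{h_g(r)}$ coincides with $[\Phi_g(r)]$. The membership $\phi_{h_g(r)}\in\mathcal{M}$ for large $g$ is precisely the content of \cite[Lemma~4.7 and Lemma~4.8]{KT1}: those lemmas verify that the stable foliation associated to the relevant fibered classes has no $1$-prong at any boundary component of $F_{h_g(r)}$, which is condition~\eqref{equation_1plong}. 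Consequently $\lambda(\widehat{\phi}_{h_g(r)})=\lambda(\phi_{h_g(r)})=\lambda(h_g(r))=\lambda(\Phi_g(r))$, so the limit statement $\lim_{g\to\infty}g\log\lambda(\widehat{\phi}_{h_g(r)})=\log(\tfrac{3+\sqrt5}{2})$ is just a restatement of Theorem~\ref{thm_three}, and since $|\chi(\varSigma_g)|=2g-2$ this is equivalent to $\lim_{g\to\infty}\mathrm{Ent}(\widehat{\phi}_{h_g(r)})=2\log(\tfrac{3+\sqrt5}{2})$.

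The only genuine content here, and hence the step I expect to require the most care, is the bookkeeping showing that the extremal families of \cite{KT1} (originally described on $N(r)$) do satisfy condition~\eqref{equation_1plong} on \emph{all three} boundary components of the fiber in $N$, not merely on the $T_\beta$-component, so that $\phi_{h_g(r)}$ genuinely lands in $\mathcal{M}$ and the dilatation is preserved under the double extension $\phi_{h_g(r)}\mapsto\widehat{\phi}_{h_g(r)}$. This is where one invokes Lemma~\ref{lem_sing-data} and Lemma~\ref{lem_face-prong} to read off the prong numbers: one writes $\sigma(h_g(r))\in int(C_\Delta)$, computes the numbers $\tfrac{x}{\gcd(x,y+z)}$, $\tfrac{y}{\gcd(y,x+z)}$, $\tfrac{x+y-2z}{\gcd(z,x+y)}$, and checks each is $\ge 2$ for large $g$; this is exactly the verification carried out in \cite[\S4]{KT1}, so it may be cited. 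Everything else is formal, and the claim follows. As a final remark I note that the existence of such $h_g(r)$ for \emph{every} $g\ge 3$ (as opposed to just large $g$) is guaranteed by the surjectivity of the genus function on fibered classes over the relevant fibered face, established in Theorem~\ref{thm_three}; the restriction ``for large $g$'' is only needed for the $\mathcal{M}$-membership, as stated.
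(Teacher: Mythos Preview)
Your proposal is correct and takes essentially the same approach as the paper: the paper introduces Claim~\ref{claim_thm_main} with the single sentence ``We start by expressing Theorem~\ref{thm_three} together with \cite[Lemma~4.8]{KT1} in the following way,'' and offers no further argument. Your write-up is simply a more detailed unpacking of that citation---lifting the $N(r)$-classes to $N$ via $\iota_\beta$, invoking $N(-4)\simeq N(\tfrac{3}{-2})$, and appealing to \cite[Lemmas~4.7, 4.8]{KT1} for the $\mathcal{M}$-membership---so it matches the paper's approach exactly.
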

Recall that  $a_g$ is a primitive fibered class of $H_2(N, \partial N)$ such that 
$\phi_{a_g} \in \mathcal{M}$ and 
$\widehat{\delta}_g$ is achieved by $\widehat{\phi}_{a_g} \in \widehat{\mathcal{M}} \cap \mathrm{Mod}(\varSigma_g)$. 
Lemma~\ref{lem_NEnt_hat}, Proposition~\ref{prop_FiniteSet} and  Claim~\ref{claim_thm_main} 
tell us  that for large $g$, one of the boundary slopes of $a_g$ is an element of the finite set $L_{K_0}$. 
No boundary slopes of $a_g$ equal $1$ when $g \ge 2$. 
(We will prove Lemma~\ref{lem_PuncturedTorus} which implies this fact.) 
Thus for large $g$,  one of the boundary slopes of $a_g$ is  an element of  $L_{K_0} \setminus \{1\}$.

We shall  prove that 
for large $g$, one of the boundary slopes of $a_g$ must be either $-4$, $ \tfrac{3}{-2}$, $\tfrac{1}{-2}$ or $2$. 
We fix  $\epsilon>0$ so that $1.97475 - \epsilon > 2\log( \tfrac{3+ \sqrt{5}}{2})$. 
Set $L_{K_0}' = L_{K_0} \setminus \{ -4, \tfrac{3}{-2}, \tfrac{1}{-2}, 1,2\}$. 
Let $a \in H_2(N, \partial N)$ be a primitive fibered class such that 
one of the boundary slopes of $a$ is an element of $L_{K_0}'$ and 
$\{b_{\alpha}(a), b_{\beta}(a), b_{\gamma}(a)\} \cap \{ -4, \tfrac{3}{-2}, \tfrac{1}{-2}, 1,2\} = \emptyset$. 
Suppose that $\phi_a \in \mathcal{M}$. 
Then Theorem~\ref{thm_key} (or Theorem~\ref{thm_key-SA}) implies that 
$$\min \mathrm{Ent}(N(b_{\alpha}(a))), \min \mathrm{Ent}(N(b_{\beta}(a))), \min \mathrm{Ent}(N(b_{\gamma}(a)))> 1.97475.$$
It follows that $\mathrm{Ent}(\widehat{\phi}_a) > 1.97475 - \epsilon $ for any such a class $a$ but finitely many exceptions, 
which is ensured by  Lemma~\ref{lem_NEnt_hat} and Proposition~\ref{prop_finitely-many}. 
Thus for large $g$, one of the boundary slopes of $a_g$ must be an element of $\{-4,  \tfrac{3}{-2}, \tfrac{1}{-2}, 2\}$. 

Again by Proposition~\ref{prop_finitely-many}, 
the set of normalized entropies of mapping classes $\widehat{\phi}_a \in \widehat{\mathcal{M}}$ 
such that 
$\{b_{\alpha}(a), b_{\beta}(a), b_{\gamma}(a)\} \cap \{ -4, \tfrac{3}{-2}, \tfrac{1}{-2}, 2\} \ne \emptyset$ 
have no accumulation values  $< 2 \log (\frac{3+\sqrt{5}}{2})$. 
This together with Claim~\ref{claim_thm_main}  leads to  the conclusion.  
\end{proof}

\subsection{Proof of Theorem~\ref{thm_main}(2)} 
\label{subsection_Proof_thm_main2}

For $r \in {\mathcal Hyp} $, 
let  $\widehat{\delta}_g(r)$  be the minimum among dilatations of elements 
$\widehat{\phi}_a \in \widehat{\mathcal{M}} \cap \mathrm{Mod}(\varSigma_g)$ such that 
$a \in S_{\beta}(r)$ and  $\phi_a \in \mathcal{M}$. 
We set $\widehat{\delta}_g(r) =\infty$ when there exist no such elements. 
Clearly $\delta_g \le \widehat{\delta}_g \le \widehat{\delta}_g(r)$. 

The proof of Theorem~\ref{thm_main}(1) implies that 
for large $g$, $\widehat{\delta}_g$ is either 
$\widehat{\delta}_g(\tfrac{3}{-2})$, $\widehat{\delta}_g(\tfrac{1}{-2})$ or $\widehat{\delta}_g(2)$, 
because $N(-4) \simeq N(\tfrac{3}{-2})$. 
We prove: 

\begin{prop}
\label{prop_least-element}
$\min \{\widehat{\delta}_g(\tfrac{3}{-2}), \widehat{\delta}_g(\tfrac{1}{-2}), \widehat{\delta}_g(2)\} <  \widehat{\delta}_g(2)$ 
for each $g \ge 4$. 
\end{prop}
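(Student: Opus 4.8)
The plan is to compare the three manifolds $N(\tfrac{3}{-2})$, $N(\tfrac{1}{-2})$ and $N(2)$ at the level of the associated surface bundles of genus $g$, using the entropy equivalences already established. Recall from Theorem~\ref{thm_entropy_equiv} (and the remarks following it) that
$$(N(2), \Omega_S) \underset{\mathrm{ent}}{\sim} (N(\tfrac{3}{-2}), \Omega_A) \underset{\mathrm{ent}}{\sim} (N(\tfrac{1}{-2}), \Omega_A),$$
so the three pairs realize the same $\min \mathrm{Ent}$ value $2\log(\tfrac{3+\sqrt5}{2})$. The key point, however, is that the quantity $\widehat{\delta}_g(r)$ is not governed by $\min\mathrm{Ent}$ alone: it is the minimum of $\lambda(\widehat{\phi}_a)$ over those primitive fibered classes $a \in S_\beta(r)$ with $\phi_a \in \mathcal{M}$ whose closed-up fiber $\widehat{F}_a$ has genus exactly $g$. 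So I would first translate the condition ``$\widehat{F}_a$ has genus $g$'' into a condition on the integral point $a$ in the relevant cone, using Lemma~\ref{lem_topological-type} (to count $\sharp(\partial F_a)$ componentwise) together with the Euler-characteristic/Thurston-norm formula $\|a\| = x+y-z$ on $int(C_\Delta)$, and the genus formula $2 - 2g = \chi(\widehat F_a) = \chi(F_a) + \sharp(\partial F_a)$.

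Next I would make the comparison explicit via the $\sigma$-translation to $int(C_\Delta)$ and the Lanneau--Thiffeault parametrization from Section~\ref{subsection_LTQuestion}. For each of the three slopes, the classes whose dilatation equals $\lambda_{(k,\ell)}$ (the largest root of $f_{(k,\ell)}(t) = t^{2k} - t^{k+\ell} - t^k - t^{k-\ell} + 1$) are, respectively,
$$(2k\pm\ell, 2k\pm 2\ell, k\pm 2\ell) \in S_\beta(\tfrac{3}{-2}),\quad (k, 2k\pm 2\ell, \pm\ell) \in S_\beta(\tfrac{1}{-2}),\quad (k\pm\ell, k\mp\ell, -k) \in S_\gamma(2).$$
For each such family I would compute $\sharp(\partial F_a)$ by Lemma~\ref{lem_topological-type}, hence the genus of $\widehat F_a$ as a function of $(k,\ell)$, and I would check (using Lemma~\ref{lem_sing-data} and the $1$-prong bookkeeping of Lemma~\ref{lem_face-prong}) that the relevant classes land in $\mathcal{M}$. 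The upshot I expect is a precise description: for a given $g$, the genus-$g$ members coming from $N(2)$ correspond to a strictly coarser set of $(k,\ell)$ (larger minimal $k$, i.e.\ larger dilatation) than those coming from $N(\tfrac{3}{-2})$ or $N(\tfrac{1}{-2})$, because the $\Omega_S$-fiber of $N(2)$ has more boundary components (its closed-up genus grows more slowly in $k$) than the $\Omega_A$-fibers of the other two. Concretely, one exhibits for each $g\ge 4$ an explicit class $a \in S_\beta(\tfrac{3}{-2})$ or $S_\beta(\tfrac{1}{-2})$ with $\phi_a \in \mathcal{M}$, $\widehat F_a$ of genus $g$, and $\lambda(\widehat\phi_a) = \lambda_{(k,\ell)}$ for a $(k,\ell)$ strictly smaller (in the dilatation order) than any $(k',\ell')$ attainable from $N(2)$ at genus $g$; this is essentially the content of the genus-$g$ bundle constructions in \cite{KT1} and can be cited from there.

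The main obstacle will be the bookkeeping in the last step: proving that \emph{every} genus-$g$ fibered class over the $S$-face of $N(2)$ has dilatation strictly larger than some specific genus-$g$ class over $N(\tfrac{3}{-2})$ or $N(\tfrac{1}{-2})$, uniformly for all $g\ge 4$. This requires matching up the $(k,\ell)$-indexing across the three manifolds as the genus varies through residue classes (the statement after Theorem~\ref{thm_main} already shows the answer depends on $g \bmod 10$), handling the parity of $p$ in the $\mathfrak{a}_r,\mathfrak{b}_r$ basis of Lemma~\ref{lem_basis}, and verifying the $\mathcal{M}$-membership (no boundary $1$-prongs on \emph{all three} cusps, not just $T_\beta$) via Lemmas~\ref{lem_topological-type} and \ref{lem_sing-data}. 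I would organize this as a finite case analysis on $g \bmod 10$ (or $\bmod 12$ for the $+$-version), reducing in each case to the strict monotonicity of $t \mapsto \lambda_{(k,\ell)}$ in $k$ together with the explicit genus formulas; the base cases $g = 4,5,\dots$ up to the modulus can be checked by direct computation as in Table~\ref{table_6i}. For the final step, rather than re-deriving the constructions, I would invoke the explicit families of \cite[Section~4]{KT1}, whose genus and dilatation data are tabulated there, and simply record the inequality between the $N(2)$-value and the $N(\tfrac{3}{-2})$- or $N(\tfrac{1}{-2})$-value at each genus.
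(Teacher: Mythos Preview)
Your plan is in the right spirit and tracks the paper's approach closely: both arguments reduce to the Lanneau--Thiffeault parametrization $\lambda_{(k,\ell)}$ and then quote explicit genus-$g$ families from \cite{KT1}. The paper, however, is considerably more streamlined than what you outline. Rather than a $\bmod\ 10$ case analysis comparing all three slopes simultaneously, the paper first quotes the exact closed forms $\widehat{\delta}_g(2)=\lambda_{(g,1)}$ (from \cite[Lemma~4.1, Remark~4.18, Lemma~4.15]{KT1}) and $\widehat{\delta}_g(\tfrac{1}{-2})=\lambda_{(g+1,3)}$ or $\lambda_{(g+1,1)}$ according to $g\bmod 6$ (from \cite[Theorem~1.4]{Hironaka}), and then simply shows $\widehat{\delta}_g(2)>\widehat{\delta}_g(\tfrac{1}{-2})$ for all $g\ge 4$. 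The comparison with $N(\tfrac{3}{-2})$ is never needed for the strict inequality; it only enters in the later refinement (Remark~\ref{rem_main}).

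There is one genuine gap in your outline. You appeal to ``strict monotonicity of $t\mapsto\lambda_{(k,\ell)}$ in $k$'', which indeed gives $\lambda_{(g,1)}>\lambda_{(g+1,1)}$ and handles the residues $g\equiv 2,5\pmod 6$. But for $g\equiv 0,1,3,4\pmod 6$ you must compare $\lambda_{(g,1)}$ with $\lambda_{(g+1,3)}$, and monotonicity in $k$ alone says nothing about this: the two sides have different $\ell$. The paper closes this gap with an induction step quoted from \cite[Proposition~4.17]{KT1} (if $\lambda_{(k+1,\ell)}<\lambda_{(k,1)}$ then $\lambda_{(k+2,\ell)}<\lambda_{(k+1,1)}$), seeded by the single numerical check $\lambda_{(4,1)}\approx 1.2806>\lambda_{(5,3)}\approx 1.2612$. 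Your ``more boundary components'' heuristic is not precise enough to substitute for this; the actual mechanism is the index shift $k=g$ for $N(2)$ versus $k=g+1$ for $N(\tfrac{1}{-2})$, and the bootstrap lemma is what propagates the base case.
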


\begin{proof} 
Set $r= \tfrac{3}{-2}$ and $r' = -2-r = \tfrac{1}{-2}$. 
Recall that $\lambda_{(k,\ell)}$ is the largest real root of  the polynomial $f_{(k,\ell)}(t)$ 
as in Section~\ref{subsection_LTQuestion}. 
Let $k$ and $\ell$ be coprime integers such that $0< \ell < k$. 
By the discussion in Section~\ref{subsection_LTQuestion}, we see that 
$$\lambda(k \overline{\mathfrak{a}_{r}} \pm \ell \overline{\mathfrak{b}_{r}}) = \lambda(k \overline{\mathfrak{a}_{r'}} \pm \ell \overline{\mathfrak{b}_{r'}}) 
= \lambda_{(k,\ell)}.$$ 
The number $\widehat{\delta}_g(\tfrac{1}{-2})$ was computed in \cite[Theorem~1.4]{Hironaka}: 
\begin{eqnarray*}
\widehat{\delta}_g(\tfrac{1}{-2}) &=& \lambda_{(g+1,3)}\ \hspace{2mm}  \mbox{if} \ g \equiv 0,1,3,4 \pmod 6,\ \ g \ge 3,  
\\
\widehat{\delta}_g(\tfrac{1}{-2})  &=& \lambda_{(g+1,1)} \ \hspace{2mm} \mbox{if} \ g \equiv 2,5 \pmod 6,\ g \ge 5. 
\end{eqnarray*}
The following inequalities were proved in \cite[Proposition~4.26]{KT1}: 
\begin{eqnarray*}
\widehat{\delta}_g(\tfrac{3}{-2})&<& \widehat{\delta}_g(\tfrac{1}{-2})\ \hspace{2mm}\mbox{for}\ g \equiv 0,1,5,6,7,9 \pmod{10},\  g \ge 5,
\\
\widehat{\delta}_g(\tfrac{1}{-2}) &<& \widehat{\delta}_g(\tfrac{3}{-2}) \ \hspace{2mm}\mbox{for}\ g \equiv 3,8 \pmod{10},\ g \ge 3.
\end{eqnarray*}
Thus  it suffices to prove that 
$\widehat{\delta}_g(2)> \widehat{\delta}_g(\tfrac{1}{-2})$ for each $g \ge 4$.

Let $a$ be a fibered class of $S_{\beta}(2)$ 
such that $\phi_a \in \mathcal{M}$ and $\widehat{\phi}_a \in \widehat{\mathcal{M}} \cap \mathrm{Mod}(\varSigma_g)$ 
for $g \ge 3$. 
Then $\lambda(a)= \lambda_{(g, \ell)}$ for some $1 \le \ell < g$, 
see \cite[Lemma~4.1]{KT1}. 
There exists such a class $a$ whose dilatation $\lambda(a)$ equals $\lambda_{(g,1)}$. 
This is proved by \cite[Remark~4.18]{KT1} 
together with the monodromy $\Phi_{g \mathfrak{r}+ 1 \mathfrak{s}}$ 
(in the notation of \cite{KT1}). 
The inequality 
$\lambda_{(k,\ell)}< \lambda_{(k, \ell+1)}$ for $1 < \ell+1< k$ (see \cite[Lemma~4.15]{KT1}) 
gives the equality $\widehat{\delta}_g(2)= \lambda_{(g,1)}$ for  $g \ge 3$.

It holds that  $\lambda_{(g,1)}> \lambda_{(g+1,1)}$ for $g \ge 2$, see \cite[Proposition~4.3]{Hironaka}. 
Hence $\widehat{\delta}_g(2)> \widehat{\delta}_g(\tfrac{1}{-2})$ for $g \equiv 2,5 \pmod{6}$. 
We use the following claim  to prove $\widehat{\delta}_g(2)> \widehat{\delta}_g(\tfrac{1}{-2})$ for other cases. 
\begin{claim}[Proposition~4.17 in \cite{KT1}]
If $\lambda_{(k+1, \ell)}< \lambda_{(k,1)}$ for some $k \ge \ell \ge 2$, then 
$$\lambda_{(k+2, \ell)} < \lambda_{(k+1,1)}.$$
\end{claim}

\noindent
One can check that 
$\lambda_{(4,1)} \approx 1.2806 > \lambda_{(5,3)} \approx 1.2612$. 
Thus 
$\lambda_{(g,1)}> \lambda_{(g+1,3)}$ for all $g \ge 4$. 
This implies that 
$\widehat{\delta}_g(2)> \widehat{\delta}_g(\tfrac{1}{-2})$ for $g \equiv 0,1,3,4 \pmod{6}$. 
\end{proof} 

\begin{rem}
\label{rem_main}
From the proof of Proposition~\ref{prop_least-element}, 
we see the following: 
For large $g$ such that $g \equiv 0,1,5,6,7,9 \pmod{10}$ 
(resp.  such that $g \equiv 3,8 \pmod{10}$), 
$\widehat{\delta}_g$ is achieved by the monodromy of some $\varSigma_g$-bundle over the circle 
obtained from $N(\tfrac{3}{-2})$ (resp. $N(\tfrac{1}{-2})$)     
by Dehn filling both cusps.  
For many $g$ such that $g \equiv 2,4 \pmod{10}$, 
we have 
$\widehat{\delta}_g(\tfrac{3}{-2})< \widehat{\delta}_g(\tfrac{1}{-2})$, see \cite[Proposition~4.28]{KT1}. 
It might be true that  
$\widehat{\delta}_g(\tfrac{3}{-2})< \widehat{\delta}_g(\tfrac{1}{-2})$ holds for all $g \equiv 2,4 \pmod{10}$, 
see \cite[Question~4.32]{KT1}. 
\end{rem}

\subsection{Proofs of Theorems~\ref{thm_smallest_dil_ori}, \ref{thm_6+12i}, \ref{thm_bound6+12i} and \ref{thm_LTQuestion}}
\label{subsection_orientable}

First, we prove that there exists an element of $\widehat{\mathcal{M}}^+$ defined on $\varSigma_g$. 

\begin{lem}
\label{lem_nonempty}
For $g \ge 2$, $\widehat{\mathcal{M}}^+ \, \cap \, \mathrm{Mod}(\varSigma_g) \ne \emptyset$. 
\end{lem}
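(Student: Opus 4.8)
The goal is to show that for every $g \ge 2$ the set $\widehat{\mathcal{M}}^+ \cap \mathrm{Mod}(\varSigma_g)$ is nonempty, i.e.\ that there is a pseudo-Anosov with \emph{orientable} invariant foliations living on $\varSigma_g$ that arises as an extension $\widehat\phi_a$ of some $\phi_a \in \mathcal{M}$. The natural plan is to exhibit an explicit family of primitive fibered classes of $N$ whose associated extended monodromies have orientable invariant foliations. The obvious candidates are the classes $(g,g,-1) \in \mathrm{int}(C_\Delta)$, which already appeared in \eqref{equation_MinakawaHK}: by Lemma~\ref{lem_topological-type} the fiber $F_{(g,g,-1)}$ has $\gcd(g,g-1) + \gcd(g,g-1) + \gcd(1,2g) = 1+1+1 = 3$ boundary components, one on each $T_\alpha, T_\beta, T_\gamma$, and by Lemma~\ref{lem_sing-data} the prong counts on these components are $\tfrac{g}{1} = g$, $\tfrac{g}{1}=g$ and $\tfrac{2g+2}{1} = 2g+2$ respectively, with no interior singularities. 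Since every prong number exceeds $1$, condition \eqref{equation_1plong} holds, so $\phi_{(g,g,-1)} \in \mathcal{M}$ and it has a well-defined extension $\widehat\phi_{(g,g,-1)} \in \widehat{\mathcal{M}}$.

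The next step is to identify the genus of the extended fiber $\widehat{F}_{(g,g,-1)}$ and to check orientability of the invariant foliations. Capping off the $3$ boundary circles of $F_{(g,g,-1)}$, whose Euler characteristic is $-\|(g,g,-1)\| = -(g+g+1) = -(2g+1)$, yields a closed surface of Euler characteristic $-(2g+1) + 3 = -(2g-2) = 2 - 2g$, hence genus $g$. So $\widehat\phi_{(g,g,-1)} \in \widehat{\mathcal{M}} \cap \mathrm{Mod}(\varSigma_g)$. For orientability: the standard criterion is that the invariant foliations of a pseudo-Anosov are orientable if and only if the dilatation is a root of the characteristic polynomial of the action on $H_1$ (equivalently, the largest root of the "orientable" factor), and more concretely, for these chain-link monodromies one checks that all singularities (here only the boundary prongs) have an \emph{even} number of prongs, or invokes the known computation. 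Here the prong numbers are $g$, $g$, $2g+2$; when $g$ is even all three are even, and the foliation is orientable after capping (a singularity with an even number of prongs is compatible with a choice of transverse orientation). When $g$ is odd, $g$ is odd, so one must instead pass to the orientation double cover or, better, use a different class. The cleanest uniform route is to recall from \cite{HK,Minakawa} (or \cite[Section on orientable foliations]{KT1}) that $\lambda_{(g,g,-1)}$ is realized by a pseudo-Anosov on $\varSigma_g$ with orientable invariant foliations for \emph{all} $g$; this is precisely the content behind the bound \eqref{equation_MinakawaHK}, which is stated for all $g$, and \eqref{equation_MinakawaHK} is about $\delta_g^+$, so the realizing map must have orientable foliations.

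Thus the structure of the proof I would write is: (i) take $a = (g,g,-1) \in \mathrm{int}(C_\Delta)$, a primitive integral class; (ii) apply Lemma~\ref{lem_topological-type} and Lemma~\ref{lem_sing-data} to compute that $F_a$ has one boundary circle on each cusp torus, with $g$, $g$, $2g+2$ prongs and no interior singularities, whence $\phi_a \in \mathcal{M}$ and \eqref{equation_1plong} holds; (iii) compute $-\chi(F_a) = 2g+1$ and conclude $\widehat{F}_a = \varSigma_g$, so $\widehat\phi_a \in \widehat{\mathcal{M}} \cap \mathrm{Mod}(\varSigma_g)$; (iv) invoke the orientability of the invariant foliations of $\widehat\phi_a$, citing \cite{Minakawa,HK} where these exact examples are shown to realize $\delta_g^+$-type bounds with orientable foliations (this is the substance of \eqref{equation_MinakawaHK}); conclude $\widehat\phi_a \in \widehat{\mathcal{M}}^+ \cap \mathrm{Mod}(\varSigma_g)$, which is therefore nonempty.

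The main obstacle is step (iv): verifying orientability of the invariant foliations rather than just the topological type of the fiber. The Euler-characteristic and prong-count computations in (ii)--(iii) are routine applications of the two cited lemmas, but establishing orientability either requires a short argument (all singular data are even-pronged, so a consistent transverse orientation exists — this needs the $g$ odd case handled, perhaps via a slightly different class or via the Teich\-m\"uller-polynomial factorization $f_{(g,g,-1)}(t) = t^{2g+1} - 2t^{g+1} - 2t^g + 1$ and checking it has the right symmetry/sign pattern forcing the homological dilatation to equal the geometric one) or a clean citation to \cite{Minakawa,HK} where precisely these monodromies on $\varSigma_g$ are constructed with orientable foliations. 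I would lean on the citation to keep the lemma short, since \eqref{equation_MinakawaHK} already presupposes exactly this fact.
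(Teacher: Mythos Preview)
Your overall strategy---exhibit an explicit primitive fibered class in $\mathrm{int}(C_\Delta)$ whose extended monodromy lies in $\widehat{\mathcal M}^+ \cap \mathrm{Mod}(\varSigma_g)$---is exactly the paper's approach, and your computations for $(g,g,-1)$ (boundary components, prong numbers, genus of the cap-off) are correct. The gap is in step (iv), and it is genuine for odd $g$.

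First, the criterion ``all singularities have an even number of prongs'' is necessary but not sufficient for orientability of the foliation; one still needs a globally consistent transverse orientation, so your even-$g$ argument via prong parity is incomplete without more. More seriously, for odd $g$ the prong numbers $g,g,2g+2$ include odd entries, so $(g,g,-1)$ simply does \emph{not} yield orientable invariant foliations. Your fallback---citing \cite{Minakawa,HK} through \eqref{equation_MinakawaHK}---does not rescue this: the bound \eqref{equation_MinakawaHK} is stated in the paper only for $g \equiv 0 \pmod 6$, and in any case those references produce pseudo-Anosovs on $\varSigma_g$ with orientable foliations, not elements of $\widehat{\mathcal M}^+$; membership in $\widehat{\mathcal M}^+$ requires that the example arise as an extension $\widehat\phi_a$ of some $\phi_a \in \mathcal M$ coming from a fibration on $N$, which is not what those papers assert.

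The paper closes both gaps cleanly. It invokes Lemma~\ref{lem_criterionOri} (Proposition~3.5 of \cite{KT1}): $\phi_{(x,y,z)}$ has orientable invariant foliations if and only if $x,y$ are even and $z$ is odd. This immediately certifies orientability for $(g,g,-1)$ when $g$ is even, and shows it fails when $g$ is odd. For odd $g \ge 3$ the paper instead takes $u_g = (g+1,g+1,1)$: here $x=y=g+1$ are even and $z=1$ is odd, so the foliations are orientable; one checks as you did that $F_{u_g}$ has three boundary components and genus $g$. In both parities orientability already forces every boundary prong number to be even, hence $\ge 2$, so condition \eqref{equation_1plong} holds and $\phi_{u_g}\in\mathcal M$ automatically. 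The fix you need is thus to split into parities, use $(g+1,g+1,1)$ for odd $g$, and replace the prong-parity/citation argument by Lemma~\ref{lem_criterionOri}.
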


\noindent
We recall 

\begin{lem}[Proposition~3.5 in \cite{KT1}]
\label{lem_criterionOri}
 The mapping class $\phi_{(x,y,z)}$ associated to a primitive fibered class $(x,y,z) \in int(C_{\Delta})$ has orientable invariant foliations 
if and only if $x$ and $y$ are even and $z$ is odd. 
\end{lem}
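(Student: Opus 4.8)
The proof will establish the two implications separately; the ``only if'' direction is immediate from the singularity data of Lemma~\ref{lem_sing-data}, while essentially all of the work is in the ``if'' direction.

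\emph{The ``only if'' direction.} Because the fiber is oriented, $\mathcal{F}^s$ is orientable exactly when $\mathcal{F}^u$ is, so it suffices to argue with the stable foliation $\mathcal{F}_a$. An orientable measured foliation admits a coherent orientation of its leaves, and near a prong point with $p$ prongs such an orientation can exist only when $p$ is even (the $p$ separatrices must alternate inward/outward). By Lemma~\ref{lem_sing-data}, $\mathcal{F}_a$ has no singularities in the interior of $F_a$, so its only prongs lie along $\partial F_a$; a component of $\partial_{\alpha} F_a$ carries $\tfrac{x}{\gcd(x,y+z)}$ prongs and a component of $\partial_{\beta} F_a$ carries $\tfrac{y}{\gcd(y,x+z)}$ prongs. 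Each of these is a divisor of $x$, respectively of $y$, hence can be even only when $x$, respectively $y$, is even. Thus $x$ and $y$ are even, and then primitivity, $\gcd(x,y,z)=1$, forces $z$ to be odd.

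\emph{The ``if'' direction.} Assume $x$, $y$ even and $z$ odd. First, the same formulas from Lemma~\ref{lem_sing-data}, together with the fact that $y+z$ and $x+z$ are odd and $\gcd(z,x+y)$ is odd, show that every prong number of $\mathcal{F}_a$ is even; this removes the local obstruction to orienting the leaves, and the remaining obstruction is a single class in $H^1(\,\cdot\,;\mathbb{Z}/2)$. To see that it vanishes I pass to homology and use the well-known fact that a pseudo-Anosov map has orientable invariant foliations precisely when $\pm\lambda$, where $\lambda$ is the dilatation, is an eigenvalue of the action on the first real homology of the surface. Hence it is enough to exhibit $\lambda(a)$ --- the largest root of $f_{(x,y,z)}$ --- up to sign as a root of the characteristic polynomial of $(\Phi_a)_*$ on $H_1(F_a;\mathbb{R})$ (equivalently, on $H_1(\widehat{F}_a;\mathbb{R})$ after removing the boundary contribution). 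That characteristic polynomial is obtained from the multivariable Alexander polynomial of the magic manifold $N$ specialized along the class $a=(x,y,z)$, after dividing out the factors $t^{\gcd(x,y+z)}-1$, $t^{\gcd(y,x+z)}-1$, $t^{\gcd(z,x+y)}-1$ that record the $\Phi_a$-orbits of the boundary curves (their number being given by Lemma~\ref{lem_topological-type}). The parity hypotheses on $(x,y,z)$ are exactly what make $\lambda(a)$ survive this reduction with a well-defined sign; symmetrically, when one of $x,y$ is odd the root is absorbed into the cyclotomic-type part, which re-derives the ``only if'' direction homologically as a consistency check.

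\emph{Main difficulty and an alternative.} The substantive point is the homological bookkeeping just described: pinning down the relevant irreducible factor of the specialized Alexander polynomial of $N$ and tracking how its sign and its cyclotomic cofactors depend on $(x,y,z)$ modulo $2$. An approach that avoids the Alexander polynomial is to work with the $\Phi_a$-invariant train track carrying the suspended stable foliation $\mathcal{S}$, which depends only on the fibered face $\Delta$ (cf.\ the proof of Lemma~\ref{lem_face-prong}): $\mathcal{F}_a$ is orientable iff this train track is orientable, a property one reads off from the signs of its incidence matrix, and the outcome is again the parity condition on $(x,y,z)$. Either way, the ``only if'' direction is automatic and the ``if'' direction absorbs the effort.
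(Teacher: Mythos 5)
First, a structural point: this paper does not prove the lemma at all --- it is imported verbatim as Proposition~3.5 of \cite{KT1} --- so there is no internal argument to compare yours against, and I assess your proposal on its own terms. Your ``only if'' direction is correct and complete: a coherent orientation of the leaves forces every prong number to be even, Lemma~\ref{lem_sing-data} writes $x$ and $y$ as (prong number)$\times\gcd$, so $x$ and $y$ must be even, and primitivity then forces $z$ odd. That half is fine.

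The ``if'' direction, however, is a plan rather than a proof. You correctly reduce the problem (via the evenness of all prong data, so that $\widehat{\Phi}_a$ is pseudo-Anosov and the closed-surface criterion applies) to showing that $\pm\lambda(a)$ is an eigenvalue of $(\Phi_a)_*$ on first homology, and you correctly identify that this should be read off from the specialization at $a=(x,y,z)$ of the multivariable Alexander polynomial of $N$. But the entire content of the lemma is the verification that this happens exactly when $x,y$ are even and $z$ is odd, and at precisely that point you write that ``the parity hypotheses on $(x,y,z)$ are exactly what make $\lambda(a)$ survive this reduction with a well-defined sign'' --- an assertion of the conclusion, not an argument. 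Closing the gap requires: (i) an explicit form of $\Delta_N$ and its relation to the Teichm\"uller polynomial $P_\Delta$ (e.g.\ a sign-twist identity $\Delta_N(t_\alpha,t_\beta,t_\gamma)\doteq P_\Delta(\pm t_\alpha,\pm t_\beta,\pm t_\gamma)$, which is what would make the parities of $x,y,z$ enter through $(-1)^x,(-1)^y,(-1)^z$); (ii) the check that the largest root of $f_{(x,y,z)}$ is, up to sign, a root of the specialized Alexander polynomial under exactly those parities; and (iii) control of the boundary contribution --- your division by $t^{\gcd(x,y+z)}-1$, $t^{\gcd(y,x+z)}-1$, $t^{\gcd(z,x+y)}-1$ presupposes that the boundary circles on each cusp torus form a single $\Phi_a$-orbit and ignores the one relation among the boundary classes, and a stray factor of $t\pm1$ or a sign error is precisely what would decide the question the wrong way. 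The suggested alternative via an oriented invariant train track is likewise only named, not executed. As written, the easy implication is proved and the substantive one is deferred.
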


\begin{proof}[Proof of Lemma~\ref{lem_nonempty}]
For $g \ge 2$ even, let $u_g= (g,g,-1) \in int(C_{\Delta})$. 
For $g \ge 3$ odd, let  $u_g= (g+1,g+1,1) \in int(C_{\Delta})$. 
The minimal representative $F_{u_g}$ is a genus $g$ surface with $3$ boundary components. 
By Lemma~\ref{lem_criterionOri}, we see that $\phi_{u_g} \in \mathcal{M}$, 
since $\phi_{u_g}$ has orientable invariant foliations. 
In particular $\widehat{\phi}_{u_g} \in \widehat{\mathcal{M}}^+ \, \cap \, \mathrm{Mod}(\varSigma_g)$. 
\end{proof}

\noindent
From the proof of Lemma~\ref{lem_nonempty}, 
we have  $\delta_g^+ \le \lambda_{(g,g,-1)}$ for $g$ even, and 
$\lambda_{(g,g,-1)}$ is the largest real root of 
$f_{(g,g,-1)}(t)= t^{2g+1} - 2t^{g+1}- 2t^g+1$. 
Thus $\widehat{\phi}_{u_g} \in  \widehat{\mathcal{M}}^+ \cap  \mathrm{Mod}(\varSigma_g)$ 
have the same dilatation as examples by Minakawa and Hironaka-Kin, see \cite{Minakawa,HK}. 

Next, we recall upper bounds on $\delta_g^+$ when $g \not\equiv 0 \pmod{6}$ by Hironaka, Aaber-Dunfield and Kin-Takasawa 
which are sharper than the bound $\delta_g^+ \le \lambda_{(g,g,-1)}$. 
To do this, let us define $\widehat{\delta}_g^+(r)$ for $r \in {\mathcal Hyp} $. 
Let $\widehat{\delta}^+_g(r)$  be the minimum among dilatations of elements 
$\widehat{\phi}_a \in \widehat{\mathcal{M}}^+ \cap \mathrm{Mod}(\varSigma_g)$ such that 
$a \in S_{\beta}(r)$ and  $\phi_a \in \mathcal{M}$. 
We set $\widehat{\delta}_g^+(r) = \infty$ when  there exist no such elements. 
Clearly $\delta_g^+ \le \widehat{\delta}^+_g \le \widehat{\delta}^+_g(r)$.

\begin{lem}
\label{lem_compare_ori}
\ 
\begin{enumerate}
\item[(1)] 
$\widehat{\delta}^+_g(\tfrac{3}{-2}) = \widehat{\delta}^+_g(\tfrac{1}{-2})= \widehat{\delta}^+_g(2)= \infty$ 
if $g\equiv 0 \pmod{6}$. 

\item[(2)] 
$ \widehat{\delta}^+_g(\tfrac{1}{-2}) = \lambda_{(g,1)}$, 
$\widehat{\delta}^+_g(\tfrac{3}{-2}) =  \widehat{\delta}^+_g(2)= \infty$ 
if $g\equiv 2,4 \pmod{6}$. 

\item[(3)] 
$\min \{\widehat{\delta}^+_g(\tfrac{3}{-2}), \widehat{\delta}^+_g(\tfrac{1}{-2}), \widehat{\delta}^+_g(2)\} 
= \widehat{\delta}^+_g(\tfrac{3}{-2}) = \lambda_{(g+2,4)}$ 
if $g \equiv 1,5 \pmod{10}$. 

\item[(4)] 
$\min \{\widehat{\delta}^+_g(\tfrac{3}{-2}), \widehat{\delta}^+_g(\tfrac{1}{-2}), \widehat{\delta}^+_g(2)\} 
= \widehat{\delta}^+_g(\tfrac{1}{-2}) = \lambda_{(g+1,3)}$ 
if $g \equiv 3 \pmod{10}$ and $g \equiv 1,3 \pmod {6}$. 

\item[(5)] 
$\min \{\widehat{\delta}^+_g(\tfrac{3}{-2}), \widehat{\delta}^+_g(\tfrac{1}{-2}), \widehat{\delta}^+_g(2)\} 
= \widehat{\delta}^+_g(\tfrac{1}{-2}) = \lambda_{(g+1,1)}$ 
if $g \equiv 3 \pmod{10}$ and $g \equiv 5 \pmod {6}$. 

\item[(6)] 
$\min \{\widehat{\delta}^+_g(\tfrac{3}{-2}), \widehat{\delta}^+_g(\tfrac{1}{-2}), \widehat{\delta}^+_g(2)\} 
= \widehat{\delta}^+_g(\tfrac{3}{-2}) = \lambda_{(g+2,2)}$ 
if $g \equiv 7,9 \pmod{10}$. 
\end{enumerate}
\end{lem}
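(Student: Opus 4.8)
The plan is to combine the orientability criterion of Lemma~\ref{lem_criterionOri} with the classification, due to Hironaka~\cite{Hironaka} and Kin--Takasawa~\cite{KT1}, of the fibered classes in $S_\beta(r)$ whose monodromies extend to closed pseudo-Anosovs lying in $\widehat{\mathcal M}$, for $r\in\{\tfrac{3}{-2},\tfrac{1}{-2},2\}$. Recall from Section~\ref{subsection_LTQuestion} that, after applying the canonical map $\sigma$ to bring a class into $int(C_\Delta)$, these classes are parametrized by the families $(2k\pm\ell,2k\pm2\ell,k\pm2\ell)\in S_\beta(\tfrac{3}{-2})$, $(k,2k\pm2\ell,\pm\ell)\in S_\beta(\tfrac{1}{-2})$ and, via a symmetry of $\mathcal C_3$, $(k\pm\ell,k\mp\ell,-k)$ for $r=2$, each of dilatation $\lambda_{(k,\ell)}$. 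First I would record, for each $r$, three pieces of data in terms of $(k,\ell)$: (i) the genus of the closed fiber, computed from $\|a\|=x+y-z$ and Lemma~\ref{lem_topological-type} (it equals $k$ up to a shift governed by $\gcd(k,3),\gcd(\ell,3)$ when $r=\tfrac{1}{-2}$ and by $\gcd(k\pm2\ell,5)$ when $r=\tfrac{3}{-2}$); (ii) the ``no $1$ prong'' restriction defining $\mathcal M$, which by Lemma~\ref{lem_sing-data} reduces to requiring that none of three explicit quotients equals $1$, and which holds automatically in the relevant range; (iii) the parities of the coordinates of $\sigma(a)$.

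The next step is to impose orientability. Since orientability of the invariant foliation is unchanged by the conjugacies defining $\sigma$ and by passing to the inverse monodromy, Lemma~\ref{lem_criterionOri} applies to $\sigma(a)$, and unwinding ``$x,y$ even, $z$ odd'' over the three families yields: $k$ odd and $\ell$ even for $r=\tfrac{3}{-2}$; $k$ even and $\ell$ odd for $r=\tfrac{1}{-2}$; and $k,\ell$ both odd for $r=2$. Feeding this back into the genus formulas of Step~1 forces a congruence on $g$. When $g\equiv0\pmod6$ none of the three parity conditions is compatible with the closed genus being $g$: for $r=\tfrac{3}{-2}$ and $r=2$ the genus is forced odd, while for $r=\tfrac{1}{-2}$ the genus is either $k$ with $3\nmid k$ (hence even and $3\nmid g$) or $k-1$ (hence odd), both incompatible with $6\mid g$; this gives (1). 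When $g\equiv2,4\pmod6$ only the family for $r=\tfrac{1}{-2}$ with $k=g$ survives; using $\lambda_{(k,\ell)}<\lambda_{(k,\ell+1)}$ to lower $\ell$ to $1$ (which meets $\gcd(g,1)=1$ and the residue condition) gives $\widehat\delta^+_g(\tfrac{1}{-2})=\lambda_{(g,1)}$ and $\widehat\delta^+_g(\tfrac{3}{-2})=\widehat\delta^+_g(2)=\infty$, which is (2).

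In the remaining cases $g$ is odd and each family can contribute; using the monotonicity of $\lambda_{(k,\ell)}$ in $\ell$ (increasing) and in $k$ (decreasing) recalled in the proof of Proposition~\ref{prop_least-element}, I would select for each $r$ the realizable pair of smallest dilatation: $k=g+2$ with $\ell\in\{2,4\}$ according to $g\bmod5$ for $r=\tfrac{3}{-2}$, $k=g+1$ with $\ell\in\{1,3\}$ according to $g\bmod6$ for $r=\tfrac{1}{-2}$, and $k=g$ with $\ell=1$ for $r=2$; the resulting candidates are exactly $\lambda_{(g+2,4)}$, $\lambda_{(g+2,2)}$, $\lambda_{(g+1,3)}$, $\lambda_{(g+1,1)}$ and $\lambda_{(g,1)}$. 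Assertions (3)--(6) then amount to deciding which of the three candidates is smallest in each residue class. This final comparison is the main obstacle: it mixes changes in both indices, so it is not a consequence of one-variable monotonicity. I would handle it with the two-variable inequality \cite[Proposition~4.17]{KT1} (``$\lambda_{(k+1,\ell)}<\lambda_{(k,1)}$ implies $\lambda_{(k+2,\ell)}<\lambda_{(k+1,1)}$'') together with a few explicit numerical checks at small $k$ to anchor the induction, precisely as in the proof of Proposition~\ref{prop_least-element}; and along the way I would confirm via Lemma~\ref{lem_sing-data} that the ``no $1$ prong'' hypothesis holds for each minimizing class (true for all large $g$, and checkable directly for the finitely many small exceptions).
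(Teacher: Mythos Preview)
Your proposal is correct and is essentially an unpacked version of the paper's own proof. The paper's argument is almost entirely by citation: it invokes \cite[Corollary~4.5, Lemma~4.11]{KT1} and \cite{Hironaka} for parts (1) and (2), and \cite[Propositions~4.23, 4.34]{KT1} together with the argument of Proposition~\ref{prop_least-element} for parts (3)--(6). What you outline---parametrizing $S_\beta(r)\cap int(C_\Delta)$ by the basis $\{\mathfrak a_r,\mathfrak b_r\}$, reading off the parity constraints from Lemma~\ref{lem_criterionOri}, computing the closed genus via Lemma~\ref{lem_topological-type}, and then minimizing $\lambda_{(k,\ell)}$ using its monotonicity and \cite[Proposition~4.17]{KT1}---is exactly the content of those cited results, not a different route.

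Two minor remarks. First, your last step (verifying the no--$1$--prong condition for the minimizers) is in fact automatic here: orientability of the invariant foliation forces every prong number to be even, so $\phi_a\in\mathcal M$ holds for free once Lemma~\ref{lem_criterionOri} is satisfied. Second, for the final comparisons in (3)--(6) the paper cites \cite[Propositions~4.23, 4.34]{KT1} directly rather than rebuilding them from \cite[Proposition~4.17]{KT1} plus base cases; either way works, but if you carry out your plan you are effectively reproving those propositions.
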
 

\begin{proof} 
We have $\widehat{\delta}^+_g(\tfrac{3}{-2})= \widehat{\delta}_g^+(2)= \infty$ if $g$ is even 
by \cite[Corollary~4.5, Lemma~4.11]{KT1}. 
As a consequence of \cite{Hironaka}, $\widehat{\delta}^+_g(\tfrac{1}{-2})=  \infty$ if $g\equiv 0 \pmod{6}$. 
By \cite[Theorem~1.5]{Hironaka}, 
$\widehat{\delta}^+_g(\tfrac{1}{-2}) = \lambda_{(g,1)} $ if $g\equiv 2,4 \pmod{6}$. 
This completes the proofs of (1) and (2). 

By using the same argument as in Proposition~\ref{prop_least-element}, 
one can prove that 
$$\min \{\widehat{\delta}^+_g(\tfrac{3}{-2}), \widehat{\delta}^+_g(\tfrac{1}{-2}), \widehat{\delta}^+_g(2)\} <  \widehat{\delta}^+_g(2) \hspace{2mm} 
\mbox{if\ }g\ \mbox{is\ odd}.$$ 
This together with \cite[Propositions~4.23, 4.34]{KT1} implies the claims  (3)--(6). 
\end{proof}

\noindent
If we fix $\ell>0$, then 
$k  \log \lambda_{(k,\ell)} $ and $k \log \lambda_{(k,k,-1)}$ go to 
$\log  (\tfrac{3 + \sqrt{5}}{2})$ and $\log (2 + \sqrt{3})$ respectively if $k $ goes to $\infty$. 
Thus the upper bound on $\delta_g^+$ in Lemma~\ref{lem_compare_ori} when $g\not\equiv 0 \pmod{6}$ is sharper than 
the bound  $\delta_g^+ \le \lambda_{(g,g,-1)}$ for large $g$.

Putting all things together, we have the following upper bound on $\delta_g^+$ after \cite{Hironaka,AD,KT1,Minakawa,HK}. 
\begin{itemize}
\item 
$\delta_g^+ \le \lambda_{(g,g,-1)}$ when $g \equiv 0 \pmod{6}$, 
\item 
$\delta_g^+ \le \lambda_{(g,1)}$ when $g \equiv 2,4 \pmod{6}$, 
\end{itemize}
and for $g$ odd, 
\begin{itemize}
\item 
$\delta_g^+ \le \lambda_{(g+2,4)}$ when $g \equiv 1,5 \pmod{10}$, 

\item 
$\delta_g^+ \le \lambda_{(g+1,3)}$ when $g \equiv 3 \pmod{10}$ and $g \equiv 1,3 \pmod {6}$, 

\item 
$\delta_g^+ \le \lambda_{(g+1,1)}$ when $g \equiv 3 \pmod{10}$ and $g \equiv 5  \pmod {6}$, 

\item
$\delta_g^+ \le \lambda_{(g+2,2)}$ when $g \equiv 7,9 \pmod{10}$. 
\end{itemize}

\begin{proof}[Proof of Theorems~\ref{thm_smallest_dil_ori}]
The proof of the claim (1) is similar to that of  Theorem~\ref{thm_main}(1). 
The claims (2),(3) hold by Lemma~\ref{lem_compare_ori}(2)--(6) and 
by the same argument as in the proof of Theorem~\ref{thm_main}(2).  
\end{proof}

We are ready to prove Theorem~\ref{thm_LTQuestion}. 

\begin{proof}[Proof of Theorem~\ref{thm_LTQuestion}.] 
Let $a_g^+$ be a primitive fibered class of  $H_2(N, \partial N)$  
such that  $\phi_{a_g^+} \in \mathcal{M}$  and  $\widehat{\delta}^+_g$  is achieved by  
$\widehat{\phi}_{a^+_g} \in \widehat{\mathcal{M}}^+ \cap \mathrm{Mod}(\varSigma_g)$. 
We prove the claim (2) first. 
\medskip

\noindent
(2) Suppose that $g\equiv 2,4 \pmod{6}$. 
By Lemma~\ref{lem_compare_ori}(2), 
we have  $ \widehat{\delta}^+_g(\tfrac{1}{-2}) = \lambda_{(g,1)}$.  
By Theorem~\ref{thm_smallest_dil_ori}(2), 
the fibered class $a_{g}^+$ must have a  boundary slope $ \tfrac{1}{-2}$ for large $g$. 
Thus  $\widehat{\delta}_g^+ =  \widehat{\delta}^+_g(\tfrac{1}{-2}) = \lambda_{(g,1)} $. 
\medskip

\noindent
(1) 
Suppose that $g\equiv 0 \pmod{6}$. 
By Lemma~\ref{lem_compare_ori}(1), 
no boundary slope of $a_g^+$ is an element of $\{-4, \frac{3}{-2}, \frac{1}{-2}, 2 \}$. 
From the proof of Theorem~\ref{thm_main}(1), 
we know that  $\mathrm{Ent}(\widehat{\phi}_{a_g^+}) > 1.97475 - \epsilon $ for any $g\equiv 0 \pmod{6} $ but finitely many exceptions. 
Thus
$$\mathrm{Ent}(\widehat{\phi}_{a_g^+}) = |\chi(\varSigma_g)| \log \widehat{\delta}_g^+> 1.97475 - \epsilon > 2\log( \tfrac{3+ \sqrt{5}}{2})$$ 
for large $g$ such that $g\equiv 0 \pmod{6}$. 
Since $\displaystyle \lim_{g \to \infty} |\chi(\varSigma_g)| \log \lambda_{(g,1)} =  2\log( \tfrac{3+ \sqrt{5}}{2})$, 
the inequality $\widehat{\delta}^+_g > \lambda_{(g,1)}$ holds for such a large $g$. 
\end{proof}

We now prove Theorem~\ref{thm_6+12i} 
which improves  the  upper bound  (\ref{equation_MinakawaHK2}) in Section~\ref{subsection_MainResults}. 
To do this, 
it suffices to prove the following two lemmas.

\begin{lem}
\label{lem_minEnt4}
$\min \mathrm{Ent}(N(-6), \Omega_S)= \min \mathrm{Ent}(N(4), \Omega_S)= 4 \log \delta(D_5)$.
\end{lem}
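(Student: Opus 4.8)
\textbf{Proof proposal for Lemma~\ref{lem_minEnt4}.}

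The plan is to invoke the formula for $\min\mathrm{Ent}(N(\tfrac{p}{q}),\Omega_S)$ established in Lemma~\ref{lem_MNE-S}, since both $-6$ and $4$ lie in $(-\infty,-2)\cup(0,\infty)$ and satisfy $p+2q\neq 1$. For $r=4=\tfrac{4}{1}$ we have $p=4$, $q=1$, so $p+2q=6\neq 1$, and the lemma gives $\min\mathrm{Ent}(N(4),\Omega_S)=(1-\tfrac{1}{p+q})\log\lambda_{[\tfrac{p}{2p+2q},\tfrac{p}{2p+2q}]}=(1-\tfrac{1}{5})\log\lambda_{[\tfrac{4}{10},\tfrac{4}{10}]}=\tfrac{4}{5}\log\lambda_{[\tfrac{2}{5},\tfrac{2}{5}]}$. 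Unwinding the $[x,y]$ notation: the class $[\tfrac{2}{5},\tfrac{2}{5}]\in int(\Delta)$ corresponds, after clearing denominators via $z=x+y-1$ and scaling, to the primitive integral class $(2,2,-1)\in int(C_\Delta)$, whose dilatation is the largest real root of $f_{(2,2,-1)}(t)=t^{5}-2t^{3}-2t^{2}+1$ by the Teichm\"uller polynomial formula~(\ref{equation_TpolyMagic}). By the paragraph in Section~\ref{subsection_Introduction} listing the small-complexity values, this root is exactly $\delta(D_5)$. Since $\|{(2,2,-1)}\|=2+2-(-1)=5$, the entropy of the rational class $[\tfrac{2}{5},\tfrac{2}{5}]$ equals $\tfrac{1}{5}\log\delta(D_5)$, hence $\tfrac{4}{5}\log\lambda_{[\tfrac{2}{5},\tfrac{2}{5}]} = \tfrac{4}{5}\cdot\log\delta(D_5)$. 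Wait — I must be careful about normalization: $\lambda_{[x,y]}$ denotes the dilatation of the \emph{class} $[x,y]$, i.e. $\log\lambda_{[x,y]}=\mathrm{ent}([x,y])$, and since $[x,y]$ is projectively $(2,2,-1)$ with $\|(2,2,-1)\|=5$, we get $\log\lambda_{[\tfrac{2}{5},\tfrac{2}{5}]}=5\log\lambda_{(2,2,-1)}=5\log\delta(D_5)$. Therefore $\min\mathrm{Ent}(N(4),\Omega_S)=\tfrac{4}{5}\cdot 5\log\delta(D_5)=4\log\delta(D_5)$, as claimed.

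For $r=-6=\tfrac{6}{-1}$ we have $p=6$, $q=-1$, so $p+2q=4\neq 1$ and $p+q=5$; Lemma~\ref{lem_MNE-S} again applies and yields $\min\mathrm{Ent}(N(-6),\Omega_S)=(1-\tfrac{1}{5})\log\lambda_{[\tfrac{6}{10},\tfrac{6}{10}]}=\tfrac{4}{5}\log\lambda_{[\tfrac{3}{5},\tfrac{3}{5}]}$. The class $[\tfrac{3}{5},\tfrac{3}{5}]$ is projectively $(3,3,1)$ (using $z=x+y-1=\tfrac{1}{5}$, scaling by $5$), with $\|(3,3,1)\|=3+3-1=5$, and $f_{(3,3,1)}(t)=t^{5}-2t^{3}-2t^{2}+1$ as well — indeed by Lemma~\ref{lem_symmetry} the class $(3,3,1)$ is one of the four symmetric partners of $(2,2,-1)$ (take $(x,y,z)=(2,2,-1)$; then $(x,x-z,x-y)=(2,3,0)$, and checking the orbit one lands on a class with the same polynomial, or one simply verifies $f_{(3,3,1)}=f_{(2,2,-1)}$ directly by (\ref{equation_TpolyMagic})). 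Hence $\log\lambda_{[\tfrac{3}{5},\tfrac{3}{5}]}=5\log\delta(D_5)$ and $\min\mathrm{Ent}(N(-6),\Omega_S)=4\log\delta(D_5)$. This also matches the general principle $N(r)\underset{\mathrm{ent}}{\sim}N(-2-r)$ (here $-2-(-6)=4$) from Theorem~\ref{thm_entropy_equiv}(1), which forces the two $\min\mathrm{Ent}(\cdot,\Omega_S)$ values to agree; I will cite this as a consistency check but the direct computation via Lemma~\ref{lem_MNE-S} is self-contained.

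The only genuine point requiring care — and the one I would write out carefully — is the \emph{bookkeeping between the three notations} for a class: the projective point $[x,y]\in int(\Delta)$, the primitive integral representative in $int(C_\Delta(\mathbb{Z}))$, and the normalization convention for $\lambda_{[x,y]}$ versus $\lambda$ of the integral class. Getting the Thurston-norm factor right (here the norm is $5$) is what converts $\tfrac{4}{5}\log\lambda_{[x,y]}$ into $4\log\delta(D_5)$ rather than $\tfrac{4}{5}\log\delta(D_5)$. A secondary routine check is verifying that $f_{(2,2,-1)}(t)=t^5-2t^3-2t^2+1$ really is the defining polynomial of $\delta(D_5)$, which is stated verbatim in the introduction, so no new work is needed. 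There is no hard step here; the lemma is essentially an application of Lemma~\ref{lem_MNE-S} plus the identification of the resulting root with $\delta(D_5)$.
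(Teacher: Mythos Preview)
Your proof is correct and follows essentially the same route as the paper: apply Lemma~\ref{lem_MNE-S} to $N(4)$, identify $\lambda_{[\frac{2}{5},\frac{2}{5}]}$ with the root of $f_{(2,2,-1)}(t)=t^5-2t^3-2t^2+1$, and recognize that root as $\delta(D_5)$. The only minor difference is in handling $N(-6)$: the paper simply cites the entropy equivalence $(N(4),\Omega_S)\underset{\mathrm{ent}}{\sim}(N(-6),\Omega_S)$ from Theorem~\ref{thm_entropy_equiv}(1), whereas you re-apply Lemma~\ref{lem_MNE-S} directly and verify $f_{(3,3,1)}=f_{(2,2,-1)}$ (which is indeed immediate from~(\ref{equation_TpolyMagic}), and incidentally $(3,3,1)=(y-z,x-z,-z)$ for $(x,y,z)=(2,2,-1)$, so it is in the Lemma~\ref{lem_symmetry} orbit). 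Both routes are equally short; your direct computation is self-contained, while the paper's use of the equivalence theorem emphasizes the structural reason the two values agree.
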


\begin{proof} 
Both minimal dilatations $\delta_2$ and $\delta(D_5)$ are the largest real root of $t^5 -2t^3-2t^2+1$, see \cite{CH, HS}. 
By  Lemma~\ref{lem_MNE-S}, 
$$\min \mathrm{Ent}(N(4), \Omega_S)=\tfrac{4}{5} \log \lambda_{[\tfrac{2}{5}, \tfrac{2}{5}]}= 4 \log \lambda_{(2,2,-1)}.$$ 
Since $f_{(2,2,-1)}(t)= t^5 -2t^3-2t^2+1$, we have the identities 
$\lambda_{(2,2,-1)}= \delta_2 = \delta(D_5).$ 
By Theorem~\ref{thm_entropy_equiv}(1), it follows that 
$(N(4), \Omega_S)  \underset{\mathrm{ent}}{\sim}  (N(-6), \Omega_S)  $. 
Hence 
$$\min \mathrm{Ent}(N(4), \Omega_S)= \min \mathrm{Ent}(N(-6), \Omega_S).$$
This completes the proof. 
\end{proof}

\begin{lem}
\label{lem_6+12i}
For each $i \ge 0$, there exists a $\Sigma_{6+12i}$-bundle over the circle 
which satisfies the following. 
It is obtained from $N(4)$ by Dehn filling both cusps along boundary slopes of a fiber of $N(4)$, 
and the monodromy $\Phi_i: \Sigma_{6+12i} \rightarrow \Sigma_{6+12i}$ of the fibration has orientable invariant foliations. 
 Moreover 
$$ \min \mathrm{Ent}(N(4), \Omega_S)=\lim_{i \to \infty} \mathrm{Ent}(\Phi_i)  .$$
\end{lem}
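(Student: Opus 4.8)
The plan is to exhibit an explicit one-parameter family of primitive fibered classes in $S_\beta(4)$, check they are in $\mathcal M$, identify the genus of their extended fibers, and verify the entropy convergence using the concavity/properness of $\mathrm{ent}$ together with Lemma~\ref{lem_MNE-S}. First I would locate the minimizing ray: by Lemma~\ref{lem_MNE-S}, $\min\mathrm{Ent}(N(4),\Omega_S)$ is achieved along the ray through the class that maps to $[\tfrac{p}{2p+2q},\tfrac{p}{2p+2q}]\in int(\Delta)\cap S_\gamma(4)$, i.e. with $(p,q)=(4,1)$ this is $[\tfrac25,\tfrac25]$, projectively $(2,2,-1)$. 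So I want a sequence $a_i\in S_\beta(4)$ with $\sigma(a_i)\in int(C_\Delta)\cap S_\gamma(4)$ whose projectivizations approach $[\tfrac25,\tfrac25]$, such that the extended fiber $\widehat F_{a_i}$ has genus $6+12i$, and such that $\Phi_i=\widehat\Phi_{a_i}$ has orientable invariant foliations. Using the map $\sigma$ from Section~\ref{subsection_Fibered} and Lemma~\ref{lem_Cannonical}(2), working with $\Omega_S$ amounts to working on $\Delta\cap S_\gamma(4)$; the natural candidate for $\sigma(a_i)$ is a class of the form $(2,2,-1)k + w\,\ell$ for a suitable period vector $w$ lying in the span of $\Delta\cap S_\gamma(4)$, chosen so that increasing $\ell$ by a fixed step raises the genus of $\widehat F$ by $12$. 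Concretely I expect something like $\sigma(a_i)=(2m_i+\cdots,\,2m_i+\cdots,\,-m_i+\cdots)$ with $m_i$ linear in $i$; the precise coefficients come from solving $\gcd$ and Euler-characteristic bookkeeping via Lemma~\ref{lem_topological-type} and Lemma~\ref{lem_sing-data}.

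Next I would do the bookkeeping. Given a candidate $a_i=(x_i,y_i,z_i)$, the Thurston norm is $x_i+y_i-z_i$ on $\Delta$ after rescaling, the number of boundary components of $F_{a_i}$ is the sum of three gcd's by Lemma~\ref{lem_topological-type}, and the number of boundary components of the fiber lying on $T_\beta$ is $\gcd(y_i,z_i+x_i)$. The genus of the closed extended fiber $\widehat F_{a_i}$ is then read off from $\chi(F_{a_i}) = -\|a_i\|$ and $\sharp(\partial F_{a_i})$: $2-2g(\widehat F_{a_i}) = \chi(F_{a_i})+\sharp(\partial F_{a_i})$. I would choose the parametrization so this equals $2-2(6+12i)$. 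For membership in $\mathcal M$ I need, by condition~(\ref{equation_1plong}), that no boundary component of $F_{a_i}$ is $1$-pronged; Lemma~\ref{lem_sing-data} (applied to $\sigma(a_i)\in int(C_\Delta)$, then translated back by permuting boundary tori as explained in Section~\ref{subsection_Fibered}) gives the prong counts as explicit ratios, and I would check each is $\ge 2$ for all $i$. For orientability of the invariant foliations I would use Lemma~\ref{lem_criterionOri}: it suffices that the two "$x,y$-type" coordinates of $\sigma(a_i)$ are even and the remaining one is odd — this is why $(2,2,-1)$ is the right target, and it forces the congruence class of the period, which is exactly what produces the modulus $12$ in $6+12i$ rather than $6+6i$.

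Finally, the entropy convergence. Since $\phi_{a_i}\in\mathcal M$, the extended monodromy $\Phi_i$ has the same dilatation as $\phi_{a_i}$, and by Lemma~\ref{lem_conjugateXY} and the conjugacy of $\Phi_{a_i}$ with $\Phi_{\sigma(a_i)}$ we get $\mathrm{ent}(\Phi_i)=\log\lambda(\sigma(a_i))$, while $\|a_i\|_r$-type normalization plus Lemma~\ref{lem_NormChange} give $\mathrm{Ent}(\Phi_i) = \mathrm{Ent}(\widehat\phi_{a_i})$ equal to $\|\overline{\sigma(a_i)}\|_{4}\log\lambda(\sigma(a_i))$ up to the boundary-filling defect, which tends to $0$ because $\sharp(\partial F_{a_i})$ stays bounded while $\|a_i\|\to\infty$. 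As $i\to\infty$ the projectivized classes $\sigma(a_i)$ converge to the ray of $[\tfrac25,\tfrac25]$, so by continuity of $\mathrm{Ent}$ on $int(C_\Delta)$ and Lemma~\ref{lem_MNE-S} we get $\mathrm{Ent}(\Phi_i)\to \min\mathrm{Ent}(N(4),\Omega_S)$, and each $\mathrm{Ent}(\Phi_i)$ exceeds this infimum by strict convexity, so the limit is exactly $\min\mathrm{Ent}(N(4),\Omega_S)$. The main obstacle I anticipate is the combinatorial design of the family: simultaneously pinning down the genus to be $6+12i$, keeping $\sharp(\partial_\beta F)$ (and the other boundary counts) controlled, ensuring primitivity, ensuring no $1$-prong, and ensuring orientability all constrain the coefficients of $a_i$ modulo small numbers at once; threading all these congruences through a single clean formula — and verifying the resulting classes actually lie in $int(C_\Delta)$ and in $S_\beta(4)$ — is where the real work lies, whereas the limit computation itself is a soft consequence of the structural results already established.
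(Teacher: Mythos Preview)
Your outline is essentially the paper's approach, and it is correct, but two points are worth flagging.

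First, the paper works directly with classes in $int(C_\Delta)\cap S_\gamma(4)$ (treating $N(4)$ as $N_\gamma(4)$), bypassing the detour through $S_\beta(4)$ and $\sigma$. The explicit family is simply $a_q=(4q+8,\,4q+4,\,-2q-3)$ with $q=3i$: these classes have $x,y$ even and $z$ odd, so Lemma~\ref{lem_criterionOri} gives orientable foliations; since orientable foliations have only even-pronged singularities, membership in $\mathcal M$ is automatic and no separate prong-count check via Lemma~\ref{lem_sing-data} is needed. The gcd bookkeeping (Lemma~\ref{lem_topological-type}) yields $\sharp(\partial_\alpha F_{a_q})=\sharp(\partial_\beta F_{a_q})=1$ and $\sharp(\partial_\gamma F_{a_q})=2q+3$, whence the genus is $4q+6=12i+6$. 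So the ``main obstacle'' you anticipate is in fact a one-line verification once the family is written down.

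Second, there is a small slip in your convergence step: it is not $\sharp(\partial F_{a_i})$ that stays bounded --- in the paper's family this is $2q+5$, which grows --- but rather $\sharp(\partial F_{\overline{a_i}})$, the number of boundary components of the fiber of $N(4)$ remaining after the first Dehn filling (here exactly $2$). That is what makes the defect $\mathrm{Ent}_4(\overline{a_i})-\mathrm{Ent}(\widehat\phi_{a_i})$ tend to zero. With that correction your limit argument goes through exactly as in the paper.
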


\begin{proof}
Consider a primitive fibered class  
$$a_q =(4q+8,4q+4, -2q-3) \in int(C_{\Delta}) \cap S_{\gamma}(4) \hspace{2mm}\mbox{for} \ q \ge 0.$$
 Lemma~\ref{lem_criterionOri} tells us that 
the monodromy  of the fibration on $N$ associated to $a_q$ has orientable invariant foliations. 
In particular, $\phi_{a_q} \in \mathcal{M}$ and $\widehat{\phi}_{a_q} \in \widehat{\mathcal{M}}^+$. 
 Now let $q = 3i$ for  $i \ge 0$. 
 Then  the numbers of the boundary components of $F_{a_q}$ lying on $T_{\alpha}$, $T_{\beta}$ and $T_{\gamma}$ are given by 
 $1$,  $1$, $2q+3$ respectively (see Lemma~\ref{lem_topological-type}), and  
 the genus of $F_{a_q}$ is equal to $6+ 12i$. 
 
The ray of $\overline{a_q} \in H_2(N(4), \partial N((4)))$ goes to the ray of $\overline{(2,2,-1)} \in H_2(N(4), \partial N((4)))$ 
 as $q$ goes to $\infty$. 
Hence 
$$\mathrm{Ent}_4( \overline{(2,2,-1)}) = \lim_{q \to \infty} \mathrm{Ent}_4(\overline{a_q}).$$
On the other hand by Lemma~\ref{lem_minEnt4}, we have 
 $$\min \mathrm{Ent}(N(4), \Omega_S)= 4 \log \lambda_{(2,2,-1)}=\mathrm{Ent}_4( \overline{(2,2,-1)}).$$
 Since the number of the boundary components of $F_{\overline{a_q} }$ is bounded 
 (in fact, it is exactly $2$), 
it follows that 
 $$\mathrm{Ent}_4( \overline{(2,2,-1)}) = \lim_{q \to \infty} \mathrm{Ent}_4(\overline{a_q})
 = \lim_{q \to \infty} \mathrm{Ent}(\widehat{\phi}_{a_q} ).$$
This completes the proof. 
\end{proof}

%
%
%

\begin{proof}[Proof of Theorem~\ref{thm_bound6+12i}]
In the proof of Lemma~\ref{lem_6+12i}, 
we proved that  
for $g \equiv 6 \pmod{12}$ ($\Longleftrightarrow g \equiv 6,18,30,42,54,66,78 \pmod{84}$), 
$$\delta_g^+ \le  \lambda(g+2,  g-2,  -\tfrac{g}{2}).$$
Let us prove that monodromies of the fibrations on  $N(-6)$ give sharper upper bounds on $\delta_g^+$ for some $g$.  
Let 
$$a_q'= (6q+4, 6q+2, 2q+1) \in int(C_{\Delta}) \cap S_{\gamma}(-6) \ \hspace{2mm}\mbox{for}\ q \ge 1.$$
The monodromy  of the fibration on $N$ associated to $a'_q$ has orientable invariant foliations by Lemma~\ref{lem_criterionOri}. 
Hence $\phi_{a'_q} \in \mathcal{M}$ and $\widehat{\phi}_{a'_q} \in \widehat{\mathcal{M}}^+$. 
For  $g \equiv 6,30,42,54,78 \pmod{84}$ and $g >0$, we set $q= \tfrac{g-2}{4}$. 
Then  the numbers of the boundary components of $F_{a'_q}$ lying on $T_{\alpha}$, $T_{\beta}$ and $T_{\gamma}$ are given by 
 $1$,  $1$, $2q+1$ respectively. 
 The genus of $F_{a'_q}$ is equal to $g$. 
 Thus we have 
$$\delta_g^+ \le \lambda(\tfrac{3g}{2}+1, \tfrac{3g}{2}-1,  \tfrac{g}{2}).$$
To check that this bound is sharper than the one above, 
we now prove the inequality 
$$\lambda(\tfrac{3g}{2}+1, \tfrac{3g}{2}-1,  \tfrac{g}{2}) <  \lambda(g+2,  g-2,  -\tfrac{g}{2}).$$
Recall that 
$(N(4), \Omega_S)$ and $(N(-6), \Omega_S)$ are entropy equivalent, and 
$\min \mathrm{Ent}$ is attained by  $\overline{(2,2,-1)}$ for $N(4)$ (resp. $\overline{(3,3,1)}$ for $N(-6)$). 
We note that 
the ray of $\overline{a_q'} \in H_2(N(-6), \partial N((-6)))$ goes to the ray of $\overline{(3,3,1)} \in H_2(N(-6), \partial N((-6)))$ 
 as $q$ goes to $\infty$. 
We have the identity on the Thurston norm: 
$$\|\overline{(\tfrac{3g}{2}+1, \tfrac{3g}{2}-1,  \tfrac{g}{2})}\|_{-6}
= \|\overline{(g+2,  g-2,  -\tfrac{g}{2})}\|_4.$$
One can check that 
the ray of   $\overline{(\tfrac{3g}{2}+1, \tfrac{3g}{2}-1,  \tfrac{g}{2})}$ is closer to the minimal ray than 
the ray  of  $\overline{(g+2,  g-2,  -\tfrac{g}{2})}$. 
Because of the strict concavity of  $\frac{1}{\mathrm{ent}}$, 
we have the desired inequality. 
\end{proof}

Let us turn to the case $g \equiv 0 \pmod{12}$. 
We have not obtained an explicit upper bound on $g \log \delta_g^+$ or $\delta_g^+$ 
as in Theorem~\ref{thm_6+12i} or \ref{thm_bound6+12i} in this case. 
However for many  such $g$, 
we improve the previous bound $\delta_g^+ \le \lambda_{(g,g,-1)}$, see Table~\ref{table_6i}. 
We note that our bound $\delta_{12}^+ \le \lambda_{(12,20,3)}$ in Table~\ref{table_6i} is given by the example 
which occurs as the monodromy of the fibration on a manifold obtained from $N(\tfrac{3}{-4})$ by Dehn filling both cusps. 
 In the left column of Table~\ref{table_6i}, 
 other upper bounds on $\delta_g^+$ when $g \equiv 0 \pmod{12}$ are given by examples 
which occur as the monodromies of  fibrations on manifolds obtained from $N(\tfrac{5}{-4})$. 
By our computer experiments, 
it seems that $\widehat{\delta}_g^+$  is realized by the example obtained from $N(\tfrac{5}{-4})$ by Dehn filling both cusps  
for any  $g \equiv 0 \pmod{12}$ and $g >12$. 
We ask the following.

\begin{ques}
\label{ques_genus12}
Does there exist a primitive fibered class 
$b_i \in int(C_{\Delta}) \cap S_{\beta}(\tfrac{5}{-4})$ for large $i$ 
which enjoys the following? 
\begin{itemize}
\item 
The minimal representative $F_{b_i}$ has genus $12i$, and 
$\phi_{b_i}$ has orientable invariant foliations. 

\item 
$\min \mathrm{Ent}(N(\tfrac{5}{-4})) = \displaystyle \lim_{i \to \infty} \mathrm{Ent}_{5/(-4)}(\overline{b_i})$.
\end{itemize}
\end{ques}

\begin{prop}
If Question~\ref{ques_genus12} is true, then 
$$\displaystyle \limsup_{\substack{g \equiv 0 \pmod{12} \\ g \to \infty}}  g  \log \delta^+_g  \le \tfrac{1}{2} \min \mathrm{Ent}(N(\tfrac{5}{-4})) 
< 1.1466.$$
\end{prop}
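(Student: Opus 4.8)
The plan is to run, conditionally, the same argument that carries Lemma~\ref{lem_6+12i} into Theorem~\ref{thm_6+12i}, with $N(4)$ replaced by $N(\tfrac{5}{-4})$ and the explicit family $(4q+8,4q+4,-2q-3)$ replaced by the family $\{b_i\}$ postulated in Question~\ref{ques_genus12}. So assume Question~\ref{ques_genus12} is true.

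First I would fix $i$ large enough that a primitive fibered class $b_i\in int(C_{\Delta})\cap S_{\beta}(\tfrac{5}{-4})$ with the two listed properties exists, and set $g=12i$. Since $\phi_{b_i}$ has orientable invariant foliations, orientability forbids odd-pronged boundary singularities, so the stable foliation of $\phi_{b_i}$ has no $1$-prong on any boundary component of $F_{b_i}$; hence $\phi_{b_i}\in\mathcal{M}$ (cf.\ the proof of Lemma~\ref{lem_nonempty}). Filling the two remaining cusps of $N(\tfrac{5}{-4})$ along the boundary slopes of $F_{\overline{b_i}}$, and using that $F_{b_i}$ has genus $12i=g$, we obtain a $\varSigma_g$-bundle over the circle whose monodromy $\widehat{\phi}_{b_i}$ lies in $\widehat{\mathcal{M}}^+\cap\mathrm{Mod}(\varSigma_g)$, so that
\[
\delta_g^+\le\widehat{\delta}_g^+\le\lambda(\widehat{\phi}_{b_i})=\lambda_{5/(-4)}(\overline{b_i}),
\]
the last equality using $\phi_{b_i}\in\mathcal{M}$ together with Lemma~\ref{lem_face-prong}.

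Next I would pass to normalized entropies. The minimal representative $F_{\overline{b_i}}$ has genus $g$ and at least one boundary component, so $\|\overline{b_i}\|_{5/(-4)}=-\chi(F_{\overline{b_i}})\ge-\chi(\varSigma_g)=2g-2$, whence
\[
g\log\delta_g^+\le g\log\lambda_{5/(-4)}(\overline{b_i})=\frac{g}{\|\overline{b_i}\|_{5/(-4)}}\,\mathrm{Ent}_{5/(-4)}(\overline{b_i})\le\frac{g}{2g-2}\,\mathrm{Ent}_{5/(-4)}(\overline{b_i}).
\]
Letting $g=12i\to\infty$, the factor $\tfrac{g}{2g-2}$ decreases to $\tfrac12$, and by hypothesis $\mathrm{Ent}_{5/(-4)}(\overline{b_i})\to\min\mathrm{Ent}(N(\tfrac{5}{-4}))$, so the right-hand side converges to $\tfrac12\min\mathrm{Ent}(N(\tfrac{5}{-4}))$. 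Since $g\equiv0\pmod{12}$ is the same as $g=12i$, this gives $\limsup_{g\equiv0\,(12),\,g\to\infty}g\log\delta_g^+\le\tfrac12\min\mathrm{Ent}(N(\tfrac{5}{-4}))$. Finally I would check $\tfrac12\min\mathrm{Ent}(N(\tfrac{5}{-4}))<1.1466$, i.e.\ $\min\mathrm{Ent}(N(\tfrac{5}{-4}))<2.2932$: as $\tfrac{5}{-4}\in(-\infty,-1)$ with $p+2q=-3\ne1$ and $|q|=4\ne1$, Theorem~\ref{thm_entropy_equiv}(3) yields $N(\tfrac{5}{-4})\underset{\mathrm{ent}}{\sim}N(\tfrac{3}{-4})$, and since $\tfrac{3}{-4}\in(-2,0)$ every face of $N(\tfrac{3}{-4})$ is an $A$-face, so $\min\mathrm{Ent}(N(\tfrac{5}{-4}))=\min\mathrm{Ent}(N(\tfrac{3}{-4}),\Omega_A)$ is computed from Lemma~\ref{lem_MNE-A} by locating the minimizing point on $int(\Delta)\cap S_{\beta}(\tfrac{3}{-4})$ via the Teichm\"uller polynomial~(\ref{equation_TpolyMagic}); this finite computation returns a value below $2.2932$, consistent with the entries of Table~\ref{table_6i} in the column $g\equiv0\pmod{12}$, whose normalized entropies increase toward that limit.

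Conceptually there is no real obstacle here: the whole argument is a conditional repackaging of the unconditional $g\equiv6\pmod{12}$ case, and the only genuinely new inputs are the existence of the family $\{b_i\}$ — which is precisely what Question~\ref{ques_genus12} asserts — and the numerical estimate $\min\mathrm{Ent}(N(\tfrac{5}{-4}))<2.2932$. The one point worth flagging is that, unlike in Lemma~\ref{lem_6+12i}, one needs \emph{no} control on the number of boundary components of $F_{\overline{b_i}}$: the crude inequality $\|\overline{b_i}\|_{5/(-4)}\ge 2g-2$ already suffices, since it is used only in the direction producing the desired upper bound.
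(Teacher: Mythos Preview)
Your treatment of the left inequality is correct and in fact more explicit than the paper's, which simply says ``the existence of primitive integral classes $b_i$ implies the left inequality''; your observation that the crude bound $\|\overline{b_i}\|_{5/(-4)}\ge 2g-2$ suffices (so no control on $\sharp(\partial F_{\overline{b_i}})$ is needed) is a nice point.

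For the right inequality your route diverges from the paper's and leaves a gap. The paper does not attempt to compute $\min\mathrm{Ent}(N(\tfrac{5}{-4}))$ at all: it simply exhibits a single explicit class $a=(292,300,83)\in int(C_{\Delta})\cap S_{\beta}(\tfrac{5}{-4})$, computes $\|\overline{a}\|_{5/(-4)}=\|a\|-\gcd(300,375)=509-75=434$, and then $\min\mathrm{Ent}(N(\tfrac{5}{-4}))\le \mathrm{Ent}_{5/(-4)}(\overline{a})=434\log\lambda(a)\approx 2.2930<2.2932$. This is a single numerical evaluation of one root of one polynomial. By contrast, you reduce to $\min\mathrm{Ent}(N(\tfrac{3}{-4}),\Omega_A)$ via entropy equivalence and then assert that ``this finite computation returns a value below $2.2932$''. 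Two comments: first, the detour through $N(\tfrac{3}{-4})$ is unnecessary, since $\tfrac{5}{-4}\in(-2,0)$ already forces every face of $N(\tfrac{5}{-4})$ to be an $A$-face and $|q|=4\ne1$, so Lemma~\ref{lem_MNE-A} applies directly. Second, and more importantly, Lemma~\ref{lem_MNE-A} does \emph{not} yield a closed formula for $\min\mathrm{Ent}(N(r),\Omega_A)$ (cf.\ Remark~\ref{rem_symmetry}); it expresses the minimum as a one-parameter continuous optimization over a segment, which you never actually carry out. Pointing to Table~\ref{table_6i} is not a substitute, since that table records dilatations, not the normalized entropies $\mathrm{Ent}_{5/(-4)}(\overline{\,\cdot\,})$ you need. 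So the numerical bound remains unverified in your argument, whereas the paper's single-class computation disposes of it cleanly.
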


\begin{proof}
The existence of primitive integral classes $b_i$ implies the left inequality. 
To see the right inequality, 
we take $a= (292,300,83) \in int(C_{\Delta}) \cap S_{\beta}(\tfrac{5}{-4})$. 
The Thurston norm of $\overline{a} $ equals $\|a\| - \gcd(300,375)= 434$. 
Thus 
$$\min \mathrm{Ent}(N(\tfrac{5}{-4})) \le \mathrm{Ent}_{5/(-4)}(\overline{a}) =434 \log\lambda(a) \approx 2.2930.$$
\end{proof}

\subsection{Proof of Theorem~\ref{thm_tsai}} 
\label{subsection_WhiteheadLink}

This subsection concerns the monodromies of fibrations on the Whitehead link exterior $N(1)$.

\begin{prop}
\label{prop_whitehead}
An $S$-face of $N(1)$  and an $A$-face of $N(1)$ are entropy equivalent. 
\end{prop}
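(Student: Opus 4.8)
The plan is to exhibit a Thurston norm preserving isomorphism $f\colon H_2(N(1),\partial N(1);{\Bbb Z})\to H_2(N(1),\partial N(1);{\Bbb Z})$ which carries $int(C_{\Omega_S}({\Bbb Z}))$ onto $int(C_{\Omega_A}({\Bbb Z}))$ and preserves $\mathrm{ent}_1$; by the definition of entropy equivalence, together with Lemma~\ref{lem_EeqSA} (which lets us take \emph{any} $S$-face and \emph{any} $A$-face), this is exactly Proposition~\ref{prop_whitehead}. First I would set up the combinatorics. By Lemma~\ref{lem_slope1}, $1$ is the only slope with $S_{\alpha}(1)=S_{\beta}(1)=S_{\gamma}(1)$, so $\iota_{\beta}$ identifies $H_2(N(1),\partial N(1))$ with $V=\{(x,y,z)\mid x+y+z=0\}$, and by Lemma~\ref{lem_basis} $\{\overline{\mathfrak{a}_1},\overline{\mathfrak{b}_1}\}$ is a basis, with $\mathfrak{a}_1=(1,-1,0)$, $\mathfrak{b}_1=(0,-1,1)$. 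By Lemma~\ref{lem_shape}(3)(ii) the unit ball $B_1(1)$ is the square with vertices $\pm\overline{\mathfrak{a}_1},\pm\overline{\mathfrak{b}_1}$; from the definitions its two $A$-faces are the edges whose open cones contain $\overline{\mathfrak{a}_1}-\overline{\mathfrak{b}_1}$ projectively, and its two $S$-faces are the other two edges, so an $S$-face and an $A$-face are \emph{adjacent} edges. Hence the reflection $\tau$ of $V$ fixing $\overline{\mathfrak{a}_1}$ and sending $\overline{\mathfrak{b}_1}$ to $-\overline{\mathfrak{b}_1}$ — explicitly $\tau(x,y,z)=(x,-x+z,-z)$ on $V$ in $(\alpha,\beta,\gamma)$-coordinates — maps the $S$-edge $\{\overline{\mathfrak{a}_1},\overline{\mathfrak{b}_1}\}$ onto the $A$-edge $\{\overline{\mathfrak{a}_1},-\overline{\mathfrak{b}_1}\}$; equivalently, using $\sigma$ and the symmetries of $\Delta$ of Section~\ref{subsection_Ent-symmteries}, $\tau$ carries $C_{\Delta_1}\cap V$ onto $C_{\Delta_2'}\cap V$. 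Since $\tau$ is a symmetry of $B_1(1)$ it preserves $\|\cdot\|_1$, so $f:=\tau$ is a Thurston norm preserving isomorphism carrying $int(C_{\Omega_S}({\Bbb Z}))$ onto $int(C_{\Omega_A}({\Bbb Z}))$.

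It then remains to verify that $\tau$ preserves the entropy function of $N(1)$. I would do this by realizing $\tau$ as $\psi_{*}$ for a self-homeomorphism $\psi$ of the Whitehead link exterior $N(1)$: the symmetry group of $N(1)$ surjects, under its action on $H_2(N(1),\partial N(1))\cong{\Bbb Z}^2$, onto the symmetry group of the square $B_1(1)$, so a $\psi$ with $\psi_{*}=\tau$ exists. Being induced by a homeomorphism, $\psi_{*}$ sends fibered classes to fibered classes and the corresponding monodromies are conjugate, whence $\lambda_{1}(\tau(\overline{a}))=\lambda_{1}(\overline{a})$ for every fibered class $\overline{a}$ and therefore $\mathrm{ent}_1$ is preserved; this gives $(N(1),\Omega_S)\underset{\mathrm{ent}}{\sim}(N(1),\Omega_A)$.

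The real content — and the main obstacle — is this last step. It cannot be obtained from a symmetry of the chain link $\mathcal{C}_3$ in the manner of Lemma~\ref{lem_EeqSA}: every homeomorphism of $(S^3,\mathcal{C}_3)$ permutes the three cusps of $N$ and so fails to descend to the single–cusp Dehn filling $N(1)$ while moving an $S$-face to an $A$-face, so the required $\psi$ is a ``hidden'' symmetry special to the slope $1$ filling. I expect the cleanest rigorous verification to be computational rather than geometric: write down the Teichm\"uller polynomial $\theta_{N(1)}$ of the Whitehead link exterior (it appears in McMullen~\cite{McMullen}) and check that $\theta_{N(1)}$ is invariant, up to a unit, under the change of variables on $H^{1}(N(1);{\Bbb Z})$ dual to $\tau$; since $\lambda_{1}(\overline{a})$ is read off from $\theta_{N(1)}$, this directly yields $\lambda_{1}(\tau(\overline{a}))=\lambda_{1}(\overline{a})$ for all fibered classes $\overline{a}$, and combined with the norm–invariance already noted it gives the entropy equivalence without exhibiting $\psi$ geometrically.
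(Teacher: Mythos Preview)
Your overall strategy matches the paper's exactly: define the reflection $\tau$ fixing $\overline{\mathfrak{a}_1}$ and negating $\overline{\mathfrak{b}_1}$, observe it swaps an $S$-face with an $A$-face and preserves $\|\cdot\|_1$, and then argue it preserves $\mathrm{ent}_1$ by realizing it as $\psi_*$ for a self-homeomorphism $\psi$ of $N(1)$. The paper's proof is precisely this, and you have correctly identified that the only nontrivial content is the existence of $\psi$.

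Where you and the paper diverge is in how this last point is settled. You assert that the isometry group of $N(1)$ surjects onto the symmetry group of the square $B_1(1)$, and as a fallback propose checking invariance of the Teichm\"uller polynomial of $N(1)$ under the dual change of variables. Both are viable in principle, but neither is carried out, and the first is exactly the claim that needs proof. The paper instead exhibits $\psi$ directly and elementarily: it draws three projections of the oriented Whitehead link, observes that two of them (differing only in the orientation of the non-axis component) are isotopic in $S^3$ \emph{fixing the other component}, and reads off from this isotopy an involution $f\colon N(1)\to N(1)$ with $f_*(\overline{\mathfrak{a}_1})=\overline{\mathfrak{a}_1}$ and $f_*(\overline{\mathfrak{b}_1})=-\overline{\mathfrak{b}_1}$. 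This is shorter and more concrete than either of your proposed routes, and in particular avoids any appeal to the (true but nontrivial) classification of symmetries of the Whitehead link or to a separate computation of $\theta_{N(1)}$. Note also that your Teichm\"uller-polynomial route really must use $\theta_{N(1)}$ itself rather than the magic-manifold polynomial, since by Lemma~\ref{lem_face-prong} the $A$-face of $N(1)$ has $|q|=1$ prong on $T_\beta$ and hence $\lambda_1(\overline{a})$ need not equal $\lambda(a)$ there; you seem aware of this, but it is worth making explicit.
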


\begin{proof} 
For each $k, \ell \in {\Bbb N}$, the class $k \overline{\mathfrak{a}_1} + \ell \overline{\mathfrak{b}_1}$ 
(resp. $k \overline{\mathfrak{a}_1} - \ell \overline{\mathfrak{b}_1}$) is an element of $int(C_{\Omega_S})$ (resp. $int(C_{\Omega_A})$). 
(See Figure~\ref{fig_ExBall_Nr}.) 
Further $k \overline{\mathfrak{a}_1} \pm \ell \overline{\mathfrak{b}_1} $ have the same Thurston norm. 
Thus it suffices to show that 
$k \overline{\mathfrak{a}_1} \pm \ell \overline{\mathfrak{b}_1} $ have the same entropy. 
Figure~\ref{fig_whitehead} illustrates the projections of the Whitehead link. 
The minimal representatives of $\overline{\mathfrak{a}_1}$ and $\overline{\mathfrak{b}_1}$ are depicted as in Figure~\ref{fig_whitehead}(a). 
One can check that all three oriented links  in this figure are isotopic in $S^3$ to each other. 
In particular  the two oriented links (b) and (c)  are isotopic fixing the trivial component. 
This implies that there exists an involution  $f: N(1) \rightarrow N(1)$. 
This involution induces an   isomorphism $f_*: H_2(N(1), \partial N(1)) \rightarrow H_2(N(1), \partial N(1))$ 
which sends $\overline{\mathfrak{a}_1}$ to itself and $\overline{\mathfrak{b}_1}$ to $-\overline{\mathfrak{b}_1}$. 
Because $f_*$ is induced by the involution on the manifold $N(1)$, 
the entropy of $k \overline{\mathfrak{a}_1} + \ell \overline{\mathfrak{b}_1}$ must be equal to that of 
$k \overline{\mathfrak{a}_1} - \ell \overline{\mathfrak{b}_1}$. 
\end{proof} 

\noindent
By Lemma~\ref{lem_MNE-S}, one sees that 
$\min \mathrm{Ent}(N(1), \Omega_S)= 2 \log \delta(D_4)$. 
This together with Proposition~\ref{prop_whitehead} leads to 
$\min \mathrm{Ent}(N(1), \Omega_A)= 2 \log \delta(D_4)$. 
Thus we obtain

\begin{cor}
\label{cor_MNE-Whitehead}
$\min \mathrm{Ent}(N(1))   = 2 \log \delta(D_4) \approx 1.6628$.  
\end{cor}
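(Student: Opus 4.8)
The plan is to assemble the statement from Proposition~\ref{prop_whitehead}, Lemma~\ref{lem_MNE-S}, and the description of the Thurston norm ball of $N(1)$ in Lemma~\ref{lem_shape}. Recall that $\min\mathrm{Ent}(N(1))$ is by definition the minimum of $\min\mathrm{Ent}(N(1),\Omega)$ as $\Omega$ ranges over all fibered faces of $N(1)$, so the first task is to list these faces. Since $1=\tfrac11$ has $p=q=1$ and therefore falls under Lemma~\ref{lem_shape}(3)(ii), the Thurston norm ball of $N(1)$ is a rectangle; by the remark following Lemma~\ref{lem_shape}, each of its four top dimensional faces is a fibered face, and by the definitions in Section~\ref{subsubsection_EntropyEqu} two of them are $S$-faces and two are $A$-faces. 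By Lemma~\ref{lem_EeqSA} any two $S$-faces of $N(1)$ are entropy equivalent and any two $A$-faces of $N(1)$ are entropy equivalent, so
\[
\min\mathrm{Ent}(N(1)) = \min\{\,\min\mathrm{Ent}(N(1),\Omega_S),\ \min\mathrm{Ent}(N(1),\Omega_A)\,\}.
\]

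Next I would compute the $S$-face contribution. Here $p+2q=3\ne 1$, so by Lemma~\ref{lem_face-prong} an $S$-face of $N(1)$ satisfies the condition $(*)$ of Theorem~\ref{thm_key}, and Lemma~\ref{lem_MNE-S} applies with $\tfrac pq=\tfrac11$, giving
\[
\min\mathrm{Ent}(N(1),\Omega_S) = \bigl(1-\tfrac12\bigr)\,\log\lambda_{[\tfrac14,\tfrac14]} = 2\log\lambda_{(1,1,-2)}.
\]
By the Teichm\"uller polynomial formula (\ref{equation_TpolyMagic}) one has $f_{(1,1,-2)}(t)=t^{4}-2t^{3}-2t+1$, whose largest real root is $\delta(D_4)$; hence $\min\mathrm{Ent}(N(1),\Omega_S)=2\log\delta(D_4)$. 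Then I would invoke Proposition~\ref{prop_whitehead}, which yields $\min\mathrm{Ent}(N(1),\Omega_A)=\min\mathrm{Ent}(N(1),\Omega_S)=2\log\delta(D_4)$. Substituting both values into the displayed minimum gives $\min\mathrm{Ent}(N(1))=2\log\delta(D_4)\approx 1.6628$.

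The corollary is essentially bookkeeping once Proposition~\ref{prop_whitehead} and Lemma~\ref{lem_MNE-S} are available; the only point where a reader should pause is the $A$-face estimate. Indeed, $A$-faces of $N(1)$ do \emph{not} satisfy $(*)$, since each boundary component on $T_\beta$ has $|q|=1$ prong, so $\min\mathrm{Ent}(N(1),\Omega_A)$ cannot be read off the Teichm\"uller polynomial of $N$ via Lemma~\ref{lem_MNE-A}; this is precisely why one needs Proposition~\ref{prop_whitehead}, whose proof exploits the internal involution of the Whitehead link exterior interchanging the $S$- and $A$-directions. Thus the substantive work lies in Proposition~\ref{prop_whitehead} rather than in Corollary~\ref{cor_MNE-Whitehead} itself.
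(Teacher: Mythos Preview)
Your proof is correct and follows essentially the same approach as the paper: compute $\min\mathrm{Ent}(N(1),\Omega_S)=2\log\delta(D_4)$ via Lemma~\ref{lem_MNE-S}, transfer this to $\Omega_A$ via Proposition~\ref{prop_whitehead}, and take the minimum. You supply more detail (the enumeration of faces via Lemma~\ref{lem_shape} and Lemma~\ref{lem_EeqSA}, and the explicit observation that $A$-faces fail $(*)$ so Proposition~\ref{prop_whitehead} is genuinely needed), but the logical skeleton is identical to the paper's.
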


\begin{figure}
\begin{center}
\includegraphics[width=4in]{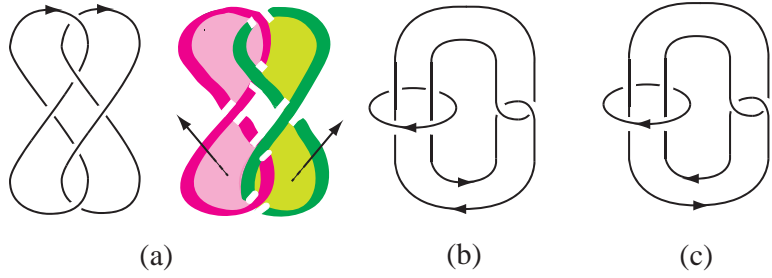}
\caption{projections of the Whitehead link. 
[the minimal representatives of  $\overline{\mathfrak{a}_1}$ and $\overline{\mathfrak{b}_1}$ are illustrated in (a).] }
\label{fig_whitehead}
\end{center}
\end{figure}


The following lemma is easy to verify by using Lemma~\ref{lem_topological-type}. 

\begin{lem} 
\label{lem_PuncturedTorus}
The genus of each fiber of $N(1)$ equals $1$. 
More precisely, for coprime integers $k, \ell \in {\Bbb N}$, 
the minimal representative  of $k \overline{\mathfrak{a}_1} + \ell \overline{\mathfrak{b}_1}$ 
is a $(k+\ell)$-holed  torus.  
\end{lem}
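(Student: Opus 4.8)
The statement to prove is Lemma~\ref{lem_PuncturedTorus}: every fiber of $N(1)$ has genus $1$, and the minimal representative of $k\overline{\mathfrak{a}_1}+\ell\overline{\mathfrak{b}_1}$ is a $(k+\ell)$-holed torus when $\gcd(k,\ell)=1$. The whole point is that this is a direct computation with the formulas already assembled, so the proof is short; the only care needed is in tracking how the basis $\{\overline{\mathfrak{a}_1},\overline{\mathfrak{b}_1}\}$ of $H_2(N(1),\partial N(1))$ sits inside $H_2(N,\partial N)$ via $\iota_\beta$.

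First I would record, from the definitions in Section~2.4, that since $r=1=\tfrac11$ we have $p=q=1$, and $p$ is odd, so $\mathfrak{a}_1=(\tfrac{p+1}{2},-q,\tfrac{p-1}{2})=(1,-1,0)$ and $\mathfrak{b}_1=(\tfrac{p-1}{2},-q,\tfrac{p+1}{2})=(0,-1,1)$. Hence the general integral class of $S_\beta(1)$ is $a=k\mathfrak{a}_1+\ell\mathfrak{b}_1=(k,-k-\ell,\ell)\in S_\beta(1)\subset H_2(N,\partial N)$ (one checks $-r y = -(-k-\ell)=k+\ell = x+z$, confirming $a\in S_\beta(1)$ as it must be by Proposition~\ref{prop_S_r}). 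Primitivity of $\overline{a}$ corresponds to $\gcd(k,\ell)=1$.

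Next I would apply Lemma~\ref{lem_topological-type} to $a=(k,-k-\ell,\ell)$. Writing $x=k$, $y=-k-\ell$, $z=\ell$, we get $x+y=-\ell$, $y+z=-k$, $z+x=k+\ell$, so
\begin{align*}
\sharp(\partial_\alpha F_a)&=\gcd(k,-k)=|k|=k,\\
\sharp(\partial_\beta F_a)&=\gcd(-k-\ell,k+\ell)=|k+\ell|=k+\ell,\\
\sharp(\partial_\gamma F_a)&=\gcd(\ell,-\ell)=|\ell|=\ell,
\end{align*}
so $\sharp(\partial F_a)=2(k+\ell)$. Now $F_{\overline{a}}$ is obtained from $F_a$ by filling the $k+\ell$ boundary components that lie on $T_\beta$ (the cusp that gets Dehn filled to form $N(1)$), leaving $k+\ell$ boundary components. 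For the Euler characteristic I would use Lemma~\ref{lem_NormChange2}: $\|\overline{a}\|_1 = \|a\| - |\tfrac{y}{q}| = \|a\| - |y|$. Here $a\in int(C_\Delta)$ since $x=k>0$, $y<0$ so certainly $x>z$, $y>z$ fails only if... actually I should verify $a\in int(C_\Delta)$ directly via \eqref{equation_int-delta}: need $x>0$ ($k>0$ ✓), $y>0$ — but $y=-k-\ell<0$, so $a\notin int(C_\Delta)$; instead $a\in int(C_{\Delta_1})$ or $int(C_{\Delta_1'})$. In any case the Thurston norm is a genuine norm on $N$ (it is hyperbolic), and $\|a\|$ equals $|{-\chi(F_a)}|$; from Lemma~\ref{lem_shape}(3-ii) (or just from the symmetry, $\|\cdot\|$ on $N$ restricted to $S_\beta(1)$) the norm of $a$ works out so that $\|\overline{a}\|_1 = -\chi(F_{\overline a})$. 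Then $-\chi(F_{\overline a}) = 2g-2+(k+\ell)$ where $g$ is the genus of $F_{\overline a}$; solving and checking against $\|\overline a\|_1$ forces $g=1$.

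The only genuine obstacle is bookkeeping: making sure the orientations/signs in $\mathfrak{a}_1,\mathfrak{b}_1$ are the ones in the paper's conventions, and that $\|a\|$ is computed in the right chamber ($\Delta_1$ versus $\Delta$) — I would resolve this by invoking the symmetry map $\sigma$ of Section~\ref{subsection_Fibered}, which sends $a\in int(C_{\Delta_1})$ to $\sigma(a)\in int(C_\Delta)\cap S_\gamma(1)$ with the same-topology fiber, then reading $\|\cdot\|$ off \eqref{equation_int-delta} there; alternatively, since the claim is about the topological type of $F_{\overline a}$ and we already have $\sharp(\partial F_{\overline a})=k+\ell$ from Lemma~\ref{lem_topological-type}, it suffices to compute $-\chi(F_{\overline a})=\|\overline a\|_1$ via Lemma~\ref{lem_NormChange2} and conclude $g=1$ by the Euler-characteristic identity. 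Everything else is arithmetic of $\gcd$'s, which I would not belabor.
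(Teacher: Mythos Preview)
Your approach is correct and is exactly what the paper intends (its entire proof reads ``easy to verify by using Lemma~\ref{lem_topological-type}''): compute $\sharp(\partial F_a)$ from Lemma~\ref{lem_topological-type}, compute $\|a\|$ from the norm formula on the relevant face, and solve $2g-2+\sharp(\partial F_a)=\|a\|$ for $g$. To close the one loose end you flagged: $a=(k,-k-\ell,\ell)\in int(C_{\Delta_1})$ because $\sigma(a)=(\ell,k,-k-\ell)$ satisfies the inequalities of \eqref{equation_int-delta}, so $\|a\|=\|\sigma(a)\|=\ell+k-(-k-\ell)=2(k+\ell)$, and then $2g-2+2(k+\ell)=2(k+\ell)$ gives $g=1$ as claimed.
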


\begin{rem}
\label{rem_Whitehead}
For $k$ and $\ell$ as in Lemma~\ref{lem_PuncturedTorus}, 
the stable foliation of the monodromy $\Phi_{k \mathfrak{a}_1 + \ell \mathfrak{b}_1}$ of the fibration on $N$ 
associated to $k \mathfrak{a}_1 + \ell \mathfrak{b}_1$ 
has the following property. 
Each boundary component of the fiber $F_{k \mathfrak{a}_1 + \ell \mathfrak{b}_1}$  
lying on the torus specified by $\alpha$, $\beta$ and $\gamma$ 
has a $1$ prong, $3$ prongs and a $1$ prong respectively. 
Hence $\phi_{k \mathfrak{a}_1 + \ell \mathfrak{b}_1} \notin \mathcal{M}$. 
\end{rem}

For $n \ge 2$, 
let $\mathcal{W}_n \subset H_2(N(1), \partial N(1); {\Bbb Z})$  be the set of primitive fibered classes 
whose minimal representatives are $n$-holed tori. 

\begin{prop}
\label{prop_MiniDil_W}
The following class achieves the minimal dilatation among elements of $\mathcal{W}_n$. 

\begin{enumerate}
\item[(1)] 
$\overline{\mathfrak{a}_1} + \overline{\mathfrak{b}_1}$ when $n= 2$. 
Its  dilatation equals the largest real root of 
$$f_{(1,1,-2)}(t)= t^4 -2t^3 - 2t+1.$$

\item[(2)] 
$k \overline{\mathfrak{a}_1} + (k-1) \overline{\mathfrak{b}_1}$ when $n= 2k-1$ for $k \ge 2$. 
Its  dilatation equals the largest  real root of 
$$f_{(k, k-1, -2k+1)}(t) = t^{4k-2} - t^{3k-1} - t^{3k-2} - t^k - t^{k-1}+1.$$ 

\item[(3)]
$(2k+1) \overline{\mathfrak{a}_1} + (2k-1) \overline{\mathfrak{b}_1}$ when $n= 4k$ for $k \ge 1$. 
Its  dilatation equals the largest  real root of  
$$f_{(2k+1, 2k-1, -4k)}(t) = t^{8k} -t^{6k+1}-t^{6k-1} - t^{2k+1} - t^{2k-1}+1.$$

 \item[(4)]
 $(2k+3) \overline{\mathfrak{a}_1} + (2k-1) \overline{\mathfrak{b}_1}$ when $n= 4k+2$ for $k \ge 1$. 
 Its  dilatation equals the largest  real root of   
 $$f_{(2k+3, 2k-1, -4k-2)}(t) = t^{8k+4} -t^{6k+5}-t^{6k+1} - t^{2k+3} - t^{2k-1}+1. $$
\end{enumerate}
\end{prop}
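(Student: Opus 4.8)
The plan is to leverage the structure already developed for the magic manifold: namely, that $N(1)$ is the Whitehead link exterior, its fibered classes are parametrized by pairs of coprime naturals $(k,\ell)$ via $k\overline{\mathfrak{a}_1}+\ell\overline{\mathfrak{b}_1}$, and (by Lemma~\ref{lem_PuncturedTorus}) the minimal representative of such a class is a $(k+\ell)$-holed torus. Thus the set $\mathcal{W}_n$ consists precisely of the classes with $k+\ell=n$ and $\gcd(k,\ell)=1$, and the task is to minimize the dilatation over this finite one-parameter family. First I would pull these fibered classes back to $H_2(N,\partial N)$. Since $\mathfrak{a}_1,\mathfrak{b}_1\in S_\beta(1)$ and, by Remark~\ref{rem_Whitehead}, the boundary components of fibers of $N$ lying over $T_\beta$ have $3$ prongs (while those over $T_\alpha,T_\gamma$ have $1$ prong), the dilatation of $k\overline{\mathfrak{a}_1}+\ell\overline{\mathfrak{b}_1}$ for $N(1)$ equals the dilatation of the corresponding class $a_{k,\ell}\in S_\beta(1)\subset H_2(N,\partial N)$ for $N$ itself — one $1$-prong direction survives but it is on a boundary circle, and filling $T_\beta$ only affects $3$-prong boundaries, so no $1$-prong obstruction arises to the equality $\lambda_1(\overline{a_{k,\ell}})=\lambda(a_{k,\ell})$. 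Using the symmetry map $\sigma$ of Section~\ref{subsection_Fibered}, I would identify $\sigma(a_{k,\ell})$ explicitly as a class in $int(C_\Delta)$ (expected to be something like $(k+\ell,\,|k-\ell|,\,-(k+\ell))$ up to the permutation/sign bookkeeping), so that by~(\ref{equation_TpolyMagic}) its dilatation is the largest real root of the displayed polynomial $f_{(k,\ell,-(k+\ell))}(t)$. That matches the four displayed polynomials once one substitutes $(k,\ell)=(1,1)$, $(k,k-1)$, $(2k+1,2k-1)$, $(2k+3,2k-1)$ in cases (1)--(4) respectively, and one checks these are exactly the coprime pairs with $k+\ell=n$ that minimize.

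The heart of the argument is therefore a monotonicity/extremality statement: among coprime $(k,\ell)$ with $k+\ell=n$, the dilatation $\lambda_{(k,\ell,-n)}$ is minimized when $\ell$ is as close to $k$ as the coprimality constraint allows. For $n=2$ the only pair is $(1,1)$. For $n$ odd, $\gcd(k,\ell)=1$ forces $\{k,\ell\}=\{\tfrac{n+1}{2},\tfrac{n-1}{2}\}$ essentially uniquely (any balanced split), giving case (2). For $n\equiv 0\pmod 4$, say $n=4k$, the pair $(2k,2k)$ is not coprime, nor is any even/even split; the closest coprime balanced pair is $(2k+1,2k-1)$, giving case (3); one must also rule out that some more unbalanced coprime pair like $(2k-1,2k+1)$ — same by symmetry — or $(n-1,1)$ does better, which follows from convexity. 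For $n\equiv 2\pmod 4$, $n=4k+2$, again $(2k+1,2k+1)$ is coprime but... wait, $2k+1$ and $2k+1$ share the factor $2k+1$, so it is \emph{not} coprime; the nearest coprime pair turns out to be $(2k+3,2k-1)$, giving case (4). So the combinatorial step is: for each residue of $n$ mod $4$, determine the coprime pair $(k,\ell)$ with $k+\ell=n$ minimizing $|k-\ell|$, and then invoke that $\lambda_{(k,\ell,-n)}$ is an increasing function of $|k-\ell|$ on this slice.

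To prove that increasing behavior I would work on the fibered face $\Delta$ (or $\Delta'$, wherever the $a_{k,\ell}$ land after applying $\sigma$) and restrict the entropy to the ray through $\sigma(a_{k,\ell})$. In the $[x,y]$-parametrization of $int(\Delta)$ from Section~\ref{subsection_Ent-symmteries}, the classes with $k+\ell=n$ fixed correspond to points on a line of the form $y=x+\text{const}$ or $y=-x+c$ — I expect $y=-x+c$ for an appropriate constant $c$ depending on $n$ — and Lemma~\ref{lem_concave_sym}(3) together with Corollary~\ref{cor_symmetry} says precisely that on such a line the dilatation has a strict minimum at the center $[\tfrac c2,\tfrac c2]$ and is symmetric, hence strictly increasing as one moves away from the center, i.e. as $|k-\ell|$ grows. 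This reduces everything to a finite check of which coprime pair sits closest to the center for each congruence class of $n$. The main obstacle, I anticipate, is the bookkeeping in the second step: correctly tracking how $\mathfrak{a}_1,\mathfrak{b}_1$ and their combinations map under $\sigma$ and the permutations of the $T_\alpha,T_\beta,T_\gamma$ boundaries (and the resulting sign/coordinate identifications needed to match the exact exponents in $f_{(1,1,-2)},\,f_{(k,k-1,-2k+1)},\,f_{(2k+1,2k-1,-4k)},\,f_{(2k+3,2k-1,-4k-2)}$), together with verifying that the $1$-prong direction living on a $T_\beta$-boundary (Remark~\ref{rem_Whitehead}) does not obstruct the equality $\lambda_1(\overline{a})=\lambda(a)$ — since Lemma~\ref{lem_face-prong} was phrased for $A$- and $S$-faces with generic denominators, I would double-check the $q=\pm1$ (here $r=1$, $q=1$) degenerate case directly, confirming that the prong count on $T_\beta$-boundaries after filling is $p+2q=3\neq1$, so the dilatation is genuinely preserved.
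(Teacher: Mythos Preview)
Your approach is correct and matches the paper's: reduce to the $S$-face (the paper does this via Proposition~\ref{prop_whitehead}, you via the equivalent pullback through $\sigma$ to $int(C_\Delta)\cap S_\gamma(1)$), use the symmetry/strict concavity on the line $y=-x+\tfrac12$ (your Lemma~\ref{lem_concave_sym}(3), the paper's ``strict concavity of $1/\mathrm{ent}_1$''), and then pick the coprime pair $(k,\ell)$ with $k+\ell=n$ closest to the center. One small bookkeeping slip: $\sigma(k\mathfrak{a}_1+\ell\mathfrak{b}_1)=\sigma(k,-(k+\ell),\ell)=(\ell,k,-(k+\ell))$, not $(k+\ell,|k-\ell|,-(k+\ell))$; once you compute this correctly the displayed polynomials fall out immediately.
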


\begin{proof} 
Proposition~\ref{prop_whitehead} says that 
for the study of monodromies of fibrations on $N(1)$, 
it is enough to deal with fibers whose homology classes are in the cone over an $S$-face. 
From the proof of Lemma~\ref{lem_MNE-S}, 
the center of $\Omega_S$ achieves $\min \mathrm{Ent}(N(1), \Omega_S)$. 
Then the proposition holds from  the strict concavity of the function 
$\tfrac{1}{\mathrm{ent}_1}= \tfrac{1}{\log \lambda_1}: int(C_{\Omega_S}) \rightarrow {\Bbb R}$ together with 
Lemma~\ref{lem_PuncturedTorus}. 
\end{proof}

\begin{lem}
\label{lem_direction_W}
Suppose that $ \tfrac{\ell}{k}$ goes to $1$ as  both $k$ and $\ell$ go to $\infty$. 
Then 
$ \mathrm{Ent}_1(k \overline{\mathfrak{a}_1} + \ell \overline{\mathfrak{b}_1})) $ goes to $\min \mathrm{Ent}(N(1)) =2 \log \delta(D_4)$ 
as both $k$ and $\ell$ go to $\infty$. 
\end{lem}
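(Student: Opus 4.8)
The plan is to package the statement as a direct consequence of the strict concavity of $\tfrac{1}{\mathrm{ent}}$ on the fibered cone $int(C_{\Omega_S})$ together with the fact, established in the proof of Lemma~\ref{lem_MNE-S}, that the center of $\Omega_S$ attains $\min\mathrm{Ent}(N(1),\Omega_S)=2\log\delta(D_4)$, which by Corollary~\ref{cor_MNE-Whitehead} also equals $\min\mathrm{Ent}(N(1))$. First I would record that the ray through the class $k\overline{\mathfrak a_1}+\ell\overline{\mathfrak b_1}$ is determined by the projective point $[\ell:k]$ (equivalently, the slope of the segment $int(\Omega_S)$ cut out by varying $k,\ell$), so that the hypothesis $\tfrac{\ell}{k}\to 1$ says precisely that this ray converges to the ray through the center $\mathfrak a_1+\mathfrak b_1$ of $\Omega_S$.

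Next I would invoke the continuity of the normalized entropy $\mathrm{Ent}_1$ on $int(C_{\Omega_S})$: by Fried and by the strict concavity refinement of Matsumoto and McMullen recalled in Section~\ref{subsection_Tnorm}, $\mathrm{Ent}_1$ extends to a continuous, strictly convex function on the open fibered face $int(\Omega_S)$, and it is constant along rays through the origin, so it descends to a continuous function on $int(\Omega_S)$. Since the rays through $k\overline{\mathfrak a_1}+\ell\overline{\mathfrak b_1}$ converge inside $int(\Omega_S)$ to the ray through the center, and the center realizes the minimum value $2\log\delta(D_4)$ (from the proof of Lemma~\ref{lem_MNE-S}, via Lemmas~\ref{lem_concave_sym}(3) and \ref{lem_boundary}(3)), continuity gives $\mathrm{Ent}_1(k\overline{\mathfrak a_1}+\ell\overline{\mathfrak b_1})\to 2\log\delta(D_4)$. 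The identity $\min\mathrm{Ent}(N(1))=\min\mathrm{Ent}(N(1),\Omega_S)=2\log\delta(D_4)$ from Corollary~\ref{cor_MNE-Whitehead} then yields the stated limit.

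I expect the only genuine subtlety to be the bookkeeping that the limit ray actually stays in the open face and lands on the center, i.e.\ that the parametrization $(k,\ell)\mapsto$ (ray through $k\overline{\mathfrak a_1}+\ell\overline{\mathfrak b_1}$) is the one under which ``$\tfrac{\ell}{k}\to 1$'' corresponds to ``converges to the center''; this amounts to checking that $\overline{\mathfrak a_1}+\overline{\mathfrak b_1}$ is projectively the center of $\Omega_S$, which one reads off from the vertex description of $B_1(d)$ in Lemma~\ref{lem_shape} and Figure~\ref{fig_ExBall_Nr}(e). Once that identification is in place, the convergence is immediate from continuity and there is no further estimate to make; in particular no appeal to the explicit polynomials of Proposition~\ref{prop_MiniDil_W} is needed, though one could alternatively deduce the limit directly from those polynomials by the standard computation that if one exponent is fixed (here the ``defect'' coming from the bounded number $k+\ell$ of boundary components relative to the norm) the normalized entropy converges to the value at the center.
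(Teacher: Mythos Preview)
Your proposal is correct and follows essentially the same approach as the paper: the paper's proof is the single line ``$\min\mathrm{Ent}(N(1),\Omega_S)$ is achieved by the center of $\Omega_S$. This leads to the lemma,'' and you have unpacked exactly the two ingredients behind that sentence---continuity of $\mathrm{Ent}_1$ on the open face and the identification of $\overline{\mathfrak a_1}+\overline{\mathfrak b_1}$ with the center. Your aside about a ``bounded number $k+\ell$ of boundary components'' in the alternative route is slightly off (that count is unbounded, but it is irrelevant here since we are computing $\mathrm{Ent}_1$ on $N(1)$, not passing to a closed surface), so I would drop that parenthetical.
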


\begin{proof} 
$\min \mathrm{Ent}(N(1), \Omega_S)$ is achieved by the  center of $\Omega_S$. 
This leads to the lemma. 
\end{proof}

\begin{proof}[Proof of Theorem~\ref{thm_tsai}.] 
See Proposition~\ref{prop_MiniDil_W} and Lemma~\ref{lem_direction_W}. 
\end{proof}

\section{1-cusped manifolds with small volume} 
\label{section_Cusped-manifolds}

The magic manifold plays a central role not only for the minimizing problem on   dilatations but also 
for the minimizing problem on volumes of hyperbolic $3$-manifolds. 
It was proved by Agol that the smallest volume among orientable $2$-cusped hyperbolic  $3$-manifold is achieved by 
either the Whitehead link exterior  $N(1)$ or 
the Whitehead sister link exterior $N(\tfrac{3}{-2})$, see \cite{Agol}. 
Gabai, Meyerhoff and Milley proved that $1$-cusped orientable hyperbolic $3$-manifolds 
with  volume at most $  2.848$ are obtained from $N$ by 
Dehn filling two cusps, and they identified these $1$-cusped manifolds, which we recall in Theorem~\ref{thm_GMM2}. 
In the end of this section, 
we compute the normalized entropy of the monodromy of the fibration on each of them. 

First we recall

\begin{thm}[Theorem 1.1 in \cite{GMM}]
\label{thm_GMM}
Let $M$ be a $1$-cusped orientable hyperbolic $3$-manifold whose volume is at most $2.848$. 
Then $M$ can be obtained from $M'$ by Dehn filling all but one of the cusps, where 
$M'$ is one of the  $21$ manifolds in the SnapPea census: 
$m125$, $m129$, $m202$, $m203$, $m292$, $m295$, $m328$, $m329$, $m359$, $m366$, $m367$, $m391$, 
$m412$, $s596$, $s647$, $s774$, $s776$, $s780$, $s785$, $s898$, $s959$.
\end{thm}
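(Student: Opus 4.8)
Since Theorem~\ref{thm_GMM} is quoted verbatim from Gabai--Meyerhoff--Milley \cite{GMM}, the plan is not to reprove it but to outline the line of attack behind it, which lies entirely outside the fibered/entropy circle of ideas used in the rest of the paper and rests on the \emph{Mom technology} developed in \cite{GMM} and its predecessors. Recall that a \emph{Mom-$n$ structure} on a compact orientable $3$-manifold $M$ with a distinguished torus boundary component $T$ is a handle decomposition of $M$ built from $T \times I$ by attaching $n$ one-handles and $n$ two-handles, subject to combinatorial valence conditions (each two-handle runs over two or three one-handles, with no duplications), and a cusped hyperbolic manifold is a \emph{hyperbolic Mom-$n$ manifold} if it carries such a structure realized geometrically in a neighborhood of the cusp. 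The argument splits into two essentially independent facts together with a finite check.

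\textbf{Step 1: enumeration.} First I would invoke the classification of hyperbolic Mom-$n$ manifolds for small $n$: there are only finitely many hyperbolic Mom-$2$ and Mom-$3$ manifolds with ``internal'' Mom structure, and they constitute an explicit short list of low-volume cusped census manifolds. This reduces to enumerating the admissible handle gluings combinatorially and then verifying, by rigorous interval arithmetic on the resulting Dehn-surgery descriptions, that the hyperbolic members are precisely the claimed census examples.

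\textbf{Step 2: geometric input.} Next I would show that the volume hypothesis forces such a structure to be present: if $M$ is a $1$-cusped orientable hyperbolic $3$-manifold with $\mathrm{vol}(M) \le 2.848$, then the maximal cusp neighborhood of $M$ together with the shortest orthogeodesic return arcs from the cusp to itself assembles into an embedded geometric Mom-$2$ or Mom-$3$ structure based at the cusp. This is where the substance sits: one controls the cusp-torus shape and the combinatorics of the short return arcs via B\"{o}r\"{o}czky-type horoball packing estimates and the volume/cusp-size inequalities of Gabai--Meyerhoff--N.~Thurston, and then eliminates the remaining configurations by a verified, computer-assisted case analysis; the constant $2.848$ is exactly the threshold below which this forcing goes through.

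\textbf{Step 3: conclusion.} Combining Steps 1 and 2, $M$ is obtained by Dehn filling all but one cusp of one of the finitely many Mom manifolds of Step 1; discarding those whose relevant fillings all exceed volume $2.848$ and identifying the survivors with SnapPea census names then yields exactly the $21$ manifolds $m125,\dots,s959$ listed. The main obstacle, by a wide margin, is Step 2: converting a bound on volume into the existence of an embedded Mom structure requires the delicate packing arguments and the verified numerical case check that make up the bulk of \cite{GMM}, whereas Step 1's enumeration and Step 3's finite bookkeeping over Dehn fillings are routine by comparison.
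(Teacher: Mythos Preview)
Your reading is correct: the paper does not prove Theorem~\ref{thm_GMM} at all --- it is simply cited from \cite{GMM} and used as background in Section~\ref{section_Cusped-manifolds}. Your outline of the Mom-technology argument (enumeration of hyperbolic Mom-$2$ and Mom-$3$ manifolds, the volume bound forcing an embedded Mom structure via horoball packing estimates and verified case analysis, and the final Dehn-filling bookkeeping) is an accurate high-level summary of the Gabai--Meyerhoff--Milley proof, so nothing further is required here.
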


\noindent
$s776$ is homeomorphic to the magic manifold $N$. 
All manifolds listed above other than $s776$   have exactly $2$ cusps.  
The $12$ manifolds $m125$, $m129, \cdots, m391$ are obtained from $s776$ by Dehn filling a cusp. 
We compute the quantities $\min \mathrm{Ent}$ etc. for these manifolds  by using results in this paper, 
see Table~\ref{table_2cusp_census_MNE}. 
(One can check that the first column in Table~\ref{table_2cusp_census_MNE} by using SnapPy~\cite{CDW}.)

\begin{table}[hbtp]
\caption{$\min \mathrm{Ent}$ for some fibered $3$-manifolds in Theorem~\ref{thm_GMM}.} 
\label{table_2cusp_census_MNE}
\begin{center}
$\left|\begin{array}{c|c|c|c}
\hline
\mbox{manifold}\ M  & \min \mathrm{Ent}(M, \Omega_S) &  \min \mathrm{Ent}(M, \Omega_A) & \min \mathrm{Ent}(M) \\
\hline
\hline
m125 \simeq N(\tfrac{3}{-2})   & none & 2 \log( \tfrac{3+ \sqrt{5}}{2}) & 2 \log( \tfrac{3+ \sqrt{5}}{2})   \\
 \hline 
m129 \simeq N(1) & 2 \log \lambda_{(1,1,-2)}  \approx 1.6628 & 2 \log \lambda_{(1,1,-2)} &  2 \log \lambda_{(1,1,-2)}   \\
 \hline 
m202 \simeq N(\tfrac{5}{-2})  & ?   &  4 \log \lambda_{(4,2,1)} \approx   2.5318 &   \\
 \hline 
m203 \simeq  N(\tfrac{1}{-2}) & none   &2 \log( \tfrac{3+ \sqrt{5}}{2})  & 2 \log( \tfrac{3+ \sqrt{5}}{2})\\
 \hline 
m292 \simeq N(-5)          &       6 \log \lambda_{(5,5,2)}  \approx 2.0761      & ?         &   \\
  \hline 
m295 \simeq N(2)                &   2 \log( \tfrac{3+ \sqrt{5}}{2})                                   & ?         &    \\  
 \hline 
 m328 \simeq N(\tfrac{4}{-3})   &  none                                                  &  \clubsuit       &  \clubsuit    \\  
 \hline 
m329 \simeq  N(\tfrac{5}{-3})  &   none                                          & \spadesuit       &  \spadesuit     \\  
     \hline 
m359 \simeq  N(\tfrac{2}{-3})  &  none                                                          &  \clubsuit      &  \clubsuit    \\  
     \hline 
 m366 \simeq   N(\tfrac{7}{-2}) & 8 \log \lambda_{(7,7,4)} \approx 2.4181   &   \diamondsuit  & 8 \log \lambda_{(7,7,4)}     \\  
     \hline 
m367 \simeq   N(\tfrac{1}{2})    &      4 \log \lambda_{(1,1,-4)} \approx 2.9314     & 4 \log \lambda_{(4,2,1)} \approx 2.5318         &4 \log \lambda_{(4,2,1)}    \\  
 \hline 
m391 \simeq  N(\tfrac{1}{-3})   &   none                                              &     \spadesuit      &  \spadesuit     \\
     \hline  
\end{array}\right|$
\end{center} 
\end{table}

\begin{rem}
We make comments on Table~\ref{table_2cusp_census_MNE}. 
\begin{enumerate}
\item[(1)] 
The filling slopes $\tfrac{p}{q}$ of $N(\tfrac{p}{q})$ marked ``?" do not enjoy the assumption of Theorem~\ref{thm_key-SA}.

\item[(2)]
By Theorem~\ref{thm_entropy_equiv}, 
$ N( \tfrac{5}{-3}) \underset{\mathrm{ent}}{\sim}  N( \tfrac{1}{-3} )$ and $N( \tfrac{4}{-3}) \underset{\mathrm{ent}}{\sim}  N( \tfrac{2}{-3} )$. 
This together with Lemma~\ref{lem_monotonicity} implies that 
$$ \spadesuit  =
 \min \mathrm{Ent}(N( \tfrac{5}{-3})) = 
\min \mathrm{Ent}(N( \tfrac{1}{-3} )) > 
\min \mathrm{Ent}(N( \tfrac{2}{-3} )) =  
\min \mathrm{Ent}(N( \tfrac{4}{-3} ))= \clubsuit .$$

\item[(3)] 
We know $ \min \mathrm{Ent}(N(\tfrac{7}{-2}), \Omega_S) = 8 \log \lambda_{(7,7,4)} \approx  2.4181$ by Lemma~\ref{lem_MNE-S}, and 
 $$  \diamondsuit = 
 \min \mathrm{Ent}(N(\tfrac{7}{-2}), \Omega_A)> 
 \min \mathrm{Ent}(N(\tfrac{5}{-2}), \Omega_A) \approx 2.5318$$ 
by  Lemmas~\ref{lem_monotonicity} and \ref{lem_bunbo2}(2). 
 Thus we conclude that $ \min \mathrm{Ent}(N(\tfrac{7}{-2}))=8 \log \lambda_{(7,7,4)}$. 
\end{enumerate}
\end{rem}

\begin{table}[hbtp]
\caption{$s776 \simeq N$ and manifolds obtained from $N$ by Dehn filling in Theorem~\ref{thm_GMM}.} 
\label{table_2cusp_census}
\begin{center}
$\left|\begin{array}{c|c}
\hline
m125 \simeq N(\tfrac{3}{-2}) \simeq S^3 \setminus br(T_{6,2}') & m129 \simeq N(1)\\
\hline
m202 \simeq N(\tfrac{5}{-2}) \simeq S^3 \setminus br(T_{8,3}') & m203 \simeq N(\tfrac{1}{-2}) \simeq S^3 \setminus br(T_{4,1}') \\
 \hline 
 m292 \simeq N(-5)  \simeq S^3 \setminus br(T_{7,5}') & m295 \simeq N(2) \\
 \hline
 m328 \simeq N(\tfrac{4}{-3})  \simeq S^3 \setminus br(T_{8,2}')  & m329 \simeq N(\tfrac{5}{-3})  \simeq S^3 \setminus br(T_{9,5}') \\
 \hline 
 m359 \simeq N(\tfrac{2}{-3})  \simeq S^3 \setminus br(T_{6,3}') & m366 \simeq N(\tfrac{7}{-2})  \simeq S^3 \setminus br(T_{10,4}') \\
 \hline 
 m367 \simeq N(\tfrac{1}{2}) & m391 \simeq N(\tfrac{1}{-3})  \simeq S^3 \setminus br(T_{5,1}') \\
 \hline 
 s776 \simeq N \simeq  S^3 \setminus br(T_{6,3}) & \\
  \hline 
   \end{array}\right|$
   \end{center}
   \end{table}

Now we  would like to point out that many manifolds in Table~\ref{table_2cusp_census_MNE} are braided link exteriors. 
To do this, we first recall the definition of the braided link. 
Let $B_n$ be the $n$-braid group, and 
let $\sigma_1$, $\sigma_2, \cdots, \sigma_{n-1} \in B_n$ be the Artin generators of $B_n$, 
see Figure~\ref{fig_artin}. 
The {\it braided link} $br(b)$ of a braid $b$ is the union of the closed braid of $b$ and its axis, 
see Figure~\ref{fig_braidedlink}. 
For example, the link $6_2^2$  is the braided link $br(\sigma_1^{-1} \sigma_2)$, see Figure~\ref{fig_3chain_etc}. 
Let $T_{m,p}$ be the following $m$-braid for $m \ge 3$ and $p \ge 1$: 
$$T_{m,p} = (\sigma_1^2  \sigma_2 \sigma_3 \cdots \sigma_{m-1})^p \sigma_{m-1}^{-2}.$$
For example $T_{6,2} = (\sigma_1^2 \sigma_2 \sigma_3 \sigma_4 \sigma_5)^2 \sigma_{5}^{-2} = 
\sigma_1^2 \sigma_2 \sigma_3 \sigma_4 \sigma_5 \sigma_1^2 \sigma_2 \sigma_3 \sigma_4 \sigma_5^{-1}$, see Figure~\ref{fig_Tmp}.  
Forgetting the 1st strand of  $T_{m,p}$, one obtains the $(m-1)$-braid, call it $T'_{m,p}$.  
In \cite[Corollary~3.2]{KT} it was shown that if $m-1$ and $p$ are coprime, then 
$N \simeq S^3 \setminus br(T_{m,p})$. 
By \cite[Theorem~3.4]{KT}, one sees that 
if $x, y \in {\Bbb N}$ are coprime such that $\tfrac{y}{-x} \in {\mathcal Hyp}$, 
then there exists $p(x,y) \in {\Bbb N}$ such that 
$N(\tfrac{y}{-x}) \simeq S^3 \setminus br(T'_{x+y+1, p(x,y)})$. 
Some manifolds in Table~\ref{table_2cusp_census_MNE} can be described 
as the exterior  of the braided link of the form $S^3 \setminus br(T'_{m,p})$, see Table~\ref{table_2cusp_census}. 
It is intriguing that some  braids appearing in this table reach the minimal dilatations. 
Table~\ref{table_minimal_dil} shows the minimal dilatation $\delta(D_n)$ and  an $n$-braid 
(equivalently an element of $\mathrm{Mod}(D_n)$) 
realizing $\delta(D_n)$. 
Here $b \sim b'$ means that $b$ is conjugate to $b'$.

\begin{figure}[htbp]
\begin{center}
\includegraphics[width=1in]{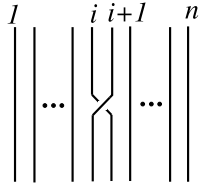}
\caption{braid $\sigma_i \in B_n$.}
\label{fig_artin}
\end{center}
\end{figure}

\begin{figure}[htbp]
\begin{center}
\includegraphics[width=2in]{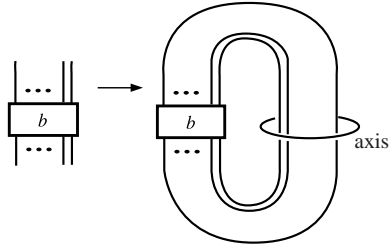}
\caption{braid $b$ $\rightarrow $ braided link $br(b)$.}
\label{fig_braidedlink}
\end{center}
\end{figure}

\begin{figure}[htbp]
\begin{center}
\includegraphics[width=0.7in]{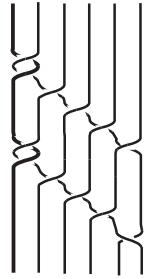}
\caption{braid $T_{6,2}$.}
\label{fig_Tmp}
\end{center}
\end{figure}

\begin{table}[hbtp]
\caption{minimal dilatations of braids.} 
\label{table_minimal_dil}
\begin{center}
$\left|\begin{array}{c|c|c|c}
\hline
n  & \delta(D_n) & n\mbox{-braid realizing}\   \delta(D_n)  & \mbox{reference} \\
\hline
\hline
3  & \tfrac{3+ \sqrt{5}}{2}  \approx 2.6180 & T_{4,1}'  = \sigma_1 \sigma_2^{-1}  & \text{cf. \cite{Matsuoka,Handel}} \\
 \hline 
4  & \lambda_{(3,1,0)} \approx 2.2966  &  T_{5,1}' = \sigma_1 \sigma_2 \sigma_3^{-1} &  \text{\cite{KLS,HS}}  \\
 \hline 
 5  & \lambda_{(2,3,0)} \approx 1.7220 &  T_{6,2}' \sim \sigma_1 \sigma_2 \sigma_3 \sigma_4 \sigma_1 \sigma_2 &  \text{\cite{HS}}   \\
 \hline 
 6  & \lambda_{(3,2,0)} \approx 1.7220  &  T_{6,3} \sim (\sigma_2 \sigma_1 \sigma_2 \sigma_1 (\sigma_1 \sigma_2 \sigma_3 \sigma_4 \sigma_5)^2)^{-1} &  \text{\cite{LT1}}  \\
 \hline 
 7  & \lambda_{(3,4,0)} \approx 1.4655  &  T_{8,2}' \sim \sigma_4^{-2} (\sigma_1 \sigma_2 \sigma_3 \sigma_4 \sigma_5 \sigma_6)^2 &  \text{\cite{LT1}}   \\
  \hline 
 8 & \lambda_{(3,5,0)} \approx 1.4134  & T_{9,5}' \sim \sigma_2^{-1} \sigma_1^{-1} (\sigma_1 \sigma_2 \sigma_3 \sigma_4 \sigma_5 \sigma_6 \sigma_7)^5 & \text{\cite{LT1}}  \\
   \hline
  \end{array}\right|$
\end{center} 
\end{table}

By using Theorem~\ref{thm_GMM}, Gabai, Meyerhoff and Milley proved  

\begin{thm}[Corollary~1.2 in \cite{GMM}] 
\label{thm_GMM2}
Let $M$ be a $1$-cusped orientable hyperbolic $3$-manifold whose volume is at most $ 2.848$. 
Then $M$ is one of $m003$, $m004$, $m006$, $m007$, $m009$, $m010$, $m011$, $m015$, $m016$ and $m017$. 
In particular, every $1$-cusped orientable hyperbolic $3$-manifold whose volume is at most $ 2.848$ 
can be obtained from  the magic manifold by Dehn filling  two cusps. 
\end{thm}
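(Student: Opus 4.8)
The plan is to deduce the statement from Theorem~\ref{thm_GMM} together with a finite enumeration of small-volume Dehn fillings. Theorem~\ref{thm_GMM} already tells us that any $1$-cusped orientable hyperbolic $3$-manifold $M$ with $\mathrm{vol}(M) \le 2.848$ is obtained from one of the $21$ census manifolds $M'$ listed there by Dehn filling all but one of its cusps. So it suffices to run through each of those $21$ manifolds $M'$ and decide which of its $1$-cusped Dehn fillings can have volume at most $2.848$.

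First I would invoke Thurston's hyperbolic Dehn surgery theorem in a quantitative form (for instance the Neumann--Zagier expansion, or the explicit volume lower bounds of Gabai--Meyerhoff--Milley): the volume of a filled manifold increases monotonically toward $\mathrm{vol}(M')$ as the filling slopes on the filled cusps grow, and all the slopes whose filling has volume $\le 2.848$ lie in an explicit finite set. Since each of the $21$ manifolds $M'$ has volume only moderately larger than $2.848$, this finite set is small on each $M'$, and the enumeration can be carried out rigorously with SnapPea/SnapPy as in \cite{GMM,CDW}. After discarding non-hyperbolic and non-orientable fillings, one is left with exactly the ten manifolds $m003$, $m004$, $m006$, $m007$, $m009$, $m010$, $m011$, $m015$, $m016$, $m017$; a certified hyperbolic-structure/isometry check (canonical cell decomposition or verified interval arithmetic) then confirms that the surviving fillings are genuinely isometric to the named census manifolds and that no two of them coincide.

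For the ``in particular'' clause I would use the structural facts recorded just after Theorem~\ref{thm_GMM} and in Table~\ref{table_2cusp_census}: $s776$ is homeomorphic to the magic manifold $N$, and the twelve manifolds $m125,\dots,m391$ are themselves obtained from $s776\simeq N$ by Dehn filling a single cusp (with the explicit slopes $N(\tfrac{3}{-2})\simeq m125$, $N(1)\simeq m129$, and so on). The enumeration above shows that every volume-$\le 2.848$, $1$-cusped filling among the $21$ candidates actually arises as a filling of $s776$ or of one of $m125,\dots,m391$; composing with the identification of the latter as one-cusp fillings of $N$ exhibits each of the ten manifolds as a manifold obtained from $N$ by Dehn filling exactly two cusps, with slopes one can read off concretely.

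The main obstacle is not conceptual but the verification burden. One must be certain the finite list of small-volume fillings is complete --- that no filling slope slipping just below the $2.848$ threshold was overlooked --- and that each surviving manifold is correctly matched with its census name; this rests on rigorous volume lower bounds for the discarded long fillings and on a certified isometry check for the survivors. Making the ``monotonicity of volume under filling'' and ``all sufficiently long fillings exceed the cutoff'' steps fully rigorous rather than merely numerical is precisely the technical heart of \cite{GMM}, and it is the part one would have to cite or reproduce with care.
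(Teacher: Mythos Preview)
The paper does not prove this statement: Theorem~\ref{thm_GMM2} is quoted verbatim as Corollary~1.2 of \cite{GMM} and is used as a black box, so there is no proof in the paper to compare against. Your outline---reduce to the 21 parent manifolds via Theorem~\ref{thm_GMM}, then rigorously enumerate the finitely many short fillings of each and identify survivors---is precisely the strategy of \cite{GMM}, and your caveat about the verification burden correctly locates the nontrivial content there rather than in this paper.
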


\noindent 
Among the $10$ manifolds in Theorem~\ref{thm_GMM2}, 
$m006$, $m007$, $m015$, $m017$ are non-fibered and the others are fibered \cite{Button,Dunfield}. 
Each of the fibered manifolds in Theorem~\ref{thm_GMM2} has the second Betti number $1$, 
and hence it admits a unique fibration. 
The entropies and normalized entropies of their fibrations are given in Table~\ref{table_1cusp_census}. 
Here are comments on the table. 
\begin{enumerate}

\item[(1)] 
The first column ``manifold" can be checked by using SnapPy \cite{CDW}.

\item[(2)] 
$m003$ and $m004$ are the figure $8$ sister manifold and the figure $8$ knot exterior respectively (\cite[Table A.2]{MP} or \cite{CDW}). 
It is well-known that 
each of $m003$ and $m004$ has the fiber $\varSigma_{1,1}$ 
and the monodromy of its fibration achieves the minimal dilatation $\delta_{1,1}=  \tfrac{3+ \sqrt{5}}{2}$

\item[(3)]
SnapPy tells us that 
the once punctured torus bundles whose monodroies are given by $2 \times 2$ matrices 
$  \left(\begin{array}{cc}3 & 2 \\1 & 1\end{array}\right)$ and  $ \left(\begin{array}{cc}-3 & -2 \\1 & 1\end{array}\right)$ 
are homeomorphic to $m009$ and $m010$ respectively.  
Hence their dilatations equal $2+ \sqrt{3}$ 
which is the largest eigenvalue of $ \left(\begin{array}{cc}| \pm 3| & | \pm 2| \\1 & 1\end{array}\right)$. 

\item[(4)]
The fibered class $a= (13,12,5) \in int(C_{\Delta})$ has the boundary slopes 
$b_{\alpha}(a) = \tfrac{17}{-13}$, $b_{\beta}(a) = \tfrac{3}{-2}$, $b_{\gamma}(a) = \tfrac{5}{-1}$, 
and the genus of $F_a$ is $5$. 
As a consequence of this paper, 
$N(\tfrac{3}{-2}, -5) (\simeq m011)$ has a fiber $\varSigma_{5,1}$ and 
the dilatation of the monodromy of its fibration  equals $\lambda(a)$ 
which is the largest real root $\lambda_{(13,12,5)}   \approx 1.1487 $ of $f_{(13,12,5)}(t)$.

\item[(5)]
The manifold 
$N(\tfrac{3}{-2}, \tfrac{8}{-3})(\simeq m016) $ is homeomorphic to the $(-2,3,7)$-pretzel knot exterior, see \cite[Table~A.4]{MP}. 
Because $a= (18,22,15) \in int(C_{\Delta})$ has the boundary slopes 
$b_{\alpha}(a)= \tfrac{37}{-18}$, $b_{\beta}(a)= \tfrac{3}{-2}$, $b_{\gamma}(a)= \tfrac{8}{-3}$ 
and the genus of $F_a$ equals $5$,  
the fiber of the fibration on $m016$ is $\varSigma_{5,1}$. 
We see that 
the dilatation of the monodromy of its fibration  equals $\lambda(a)$ which is  the largest real root $ \lambda_{(18,22,15)} \approx 1.1762$ of 
$$f_{(18,22,15)}(t)= (t^{11}+1) (t^4 - t^3 + t^2 - t +1)(t^{10} + t^9- t^7-t^6-t^5-t^4-t^3 + t +1).$$ 
Namely it is the largest real root of the last factor. 
Thus the dilatation equals  the so called {\it Lehmer's number}. 
The monodromy of the fibration is described in \cite{Leininger}.

\item[(6)] 
The monodromy of the fibration on $m016 \simeq N(\tfrac{3}{-2}, \tfrac{8}{-3})$ (resp. $m011 \simeq N(\tfrac{3}{-2}, -5)$) 
can extend to the pseudo-Anosov homeomorphism on the closed surface of genus $5$ 
with dilatation $\lambda_{(18,22,15)}$ (resp. $\lambda_{(13,12,5)}$). 
This pseudo-Anosov  is a representative of  $\widehat{\phi}_{(18,22,5)} \in \widehat{\mathcal{M}}$ 
(resp. $\widehat{\phi}_{(13,12,5)} \in \widehat{\mathcal{M}}$). 
On the other hand, 
Lanneau and Thiffeault proved that $\delta_5^+$ equals the Lehmer's number \cite{LT}. 
The pseudo-Anosov representative of $\widehat{\phi}_{(18,22,5)}$ has orientable stable foliation 
(see Lemma~\ref{lem_criterionOri}) and it achieves  $\delta_5^+$. 
The mapping class $\widehat{\phi}_{(13,12,5)}$ was  the example in  \cite{AD,KT1} used to prove $\delta_5< \delta_5^+$. 
\end{enumerate}

\begin{table}[h]
\caption{} 
\label{table_1cusp_census}
\begin{center}
$\left|\begin{array}{c|c|c|c}
\hline
\mbox{manifold}  & \mbox{fiber}\ \varSigma & \mbox{entropy}\ \log \lambda(\phi_{[\varSigma]}) & \mbox{normalized\  entropy}\ |\chi(\varSigma)| \log \lambda( \phi_{[\varSigma]} ) \\
\hline
\hline
m003 \simeq N(1,-4) & \varSigma_{1,1} &  \log( \tfrac{3+ \sqrt{5}}{2})&  \log( \tfrac{3+ \sqrt{5}}{2}) \approx   0.9624  \\
 \hline 
m004 \simeq N(1,2) & \varSigma_{1,1} &  \log( \tfrac{3+ \sqrt{5}}{2})&  \log( \tfrac{3+ \sqrt{5}}{2}) \approx   0.9624  \\
 \hline 
 m009  \simeq N(1,3) & \varSigma_{1,1} &  \log (2+ \sqrt{3})&  \log (2+ \sqrt{3}) \approx 1.3169   \\
 \hline 
 m010 \simeq N(1, -5)  & \varSigma_{1,1} &  \log (2+ \sqrt{3})&  \log (2+ \sqrt{3}) \approx 1.3169  \\
 \hline 
 m011  \simeq N(\tfrac{3}{-2}, -5)& \varSigma_{5,1} &  \log \lambda_{(13,12,5)}  \approx \log(1.1487) & 9  \log \lambda_{(13,12,5)}  \approx 1.2484   \\
  \hline 
   m016 \simeq N(\tfrac{3}{-2}, \tfrac{8}{-3})& \varSigma_{5,1} &  \log  \lambda_{(18,22,15)} \approx \log(1.1762) & 9  \log \lambda_{(18,22,15)}   \approx 1.4612  \\
   \hline
  \end{array}\right|$
\end{center} 
\end{table}

The number $ \log( \tfrac{3+ \sqrt{5}}{2}) $ is the minimal normalized entropy 
among $1$-cusped hyperbolic fibered $3$-manifolds with volume $\le 2.848$, see Table~\ref{table_1cusp_census}. 
In practice, pseudo-Anosov homeomorphisms with small dilatation occur as monodromies of fibrations 
on  fibered $3$-manifolds with small volume, see \cite{KKT}. 
Thus it is natural to ask

\begin{ques}
\label{ques_m003}
Let $M$ be a $1$-cusped  hyperbolic fibered $3$-manifold with the second Betti number $1$. 
Is it true that the normalized entropy of the monodromy of the fibration on $M$ is greater than or equal to 
$ \log( \tfrac{3+ \sqrt{5}}{2}) $?
Is it  true that the equality is achieved only by either $m003$ or $m004$?
\end{ques}

\section{Remarks}

We find from the next lemma that 
the set $\mathcal{M}$ is very large.

\begin{lem} 
\label{lem_LargeSet}
Let $p \in {\Bbb N}$ and $q \in {\Bbb Z}$ be coprime such that  $\tfrac{p}{q} \in {\mathcal Hyp}$. 
\begin{enumerate}
\item[(1)] 
Suppose that $\tfrac{p}{q} \in (-\infty, -2)$ such that 
 $p+2q \ne 1$ (resp. Suppose that $\tfrac{p}{q} \in (-\infty, -2)$ such that   $|q| \ne 1$).  
 Let $a \in S_{\beta}( \tfrac{p}{q}) $ be a primitive fibered class of $N$ such that 
$\overline{a} \in int(C_{\Omega})$, where $\Omega$ is an $S$-face (resp. $A$-face) of $N( \tfrac{p}{q})$. 
Then $\phi_{a} \in \mathcal{M}$ for any such $a \in S_{\beta}( \tfrac{p}{q})$ but finitely many exceptions. 

\item[(2)]
Suppose that $\tfrac{p}{q} \in (-2, 0)$.  
 Let $a \in S_{\beta}( \tfrac{p}{q}) $ be a primitive fibered class of $N$.  
Then $\phi_{a} \in \mathcal{M}$ for any such $a \in S_{\beta}( \tfrac{p}{q})$ but finitely many exceptions.

\item[(3)] 
Suppose that $\tfrac{p}{q} \in (0, \infty)$ such that  $\tfrac{p}{q} \ne 1$. 
Let $\Omega$ be an $S$-face of $N( \tfrac{p}{q})$ if $|q|=1$ and 
let $\Omega$ be any  face  of $N( \tfrac{p}{q})$ if $|q| \ne 1$.  
Let  $a_n \in S_{\beta}( \tfrac{p}{q}) $ be a primitive fibered class of $N$  such that 
$\overline{a_n} \in int(C_{\Omega})$ for each $n$. 
If  $\overline{a_i} \ne \overline{a_j}$ for $i \ne j$ and 
 $\overline{a_n}$ converges projectively  to a point of $int(\Omega)$ as $n$ goes to $\infty$, then 
$\phi_{a_n} \in \mathcal{M}$ for large $n$. 
 \end{enumerate}
\end{lem}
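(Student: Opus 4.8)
The key point is that membership in $\mathcal{M}$ is an \emph{open} condition that fails only on a codimension-$1$ locus. Recall that $\phi_a \in \mathcal{M}$ exactly when the stable foliation $\mathcal{F}_a$ has no $1$-prong on any boundary component of $F_a$, i.e.\ on all of $T_\alpha$, $T_\beta$, $T_\gamma$. By Lemma~\ref{lem_face-prong} (and its analogues under the symmetries $h_*$, $(h^2)_*$), the number of prongs along a boundary component of $F_a$ lying on a given cusp torus depends only on the fibered face containing $\overline{a}$ and on the corresponding boundary slope of $a$; it is constant as $a$ ranges over the integral classes in the cone over a fixed face of a fixed $N(r)$. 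So the plan is: for each cusp $T_\alpha$, $T_\beta$, $T_\gamma$, I will read off the boundary slope of $a$ at that cusp, determine which fibered face of the corresponding Dehn-filled manifold it lands in, and check the prong count there; the count is $\neq 1$ for all but finitely many slopes, and the exceptional slopes cut out finitely many lines inside the relevant segment of $int(\Delta)$, hence finitely many (or controllably many) primitive classes $a$.

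\textbf{Carrying this out, case (2).} Here $r=\tfrac{p}{q}\in(-2,0)$, and by Lemma~\ref{lem_shape} every face of $N(r)$ is an $A$-face, so by Lemma~\ref{lem_face-prong} every boundary component of $F_a$ on $T_\beta$ has $|q|\geq 1$ prongs — but since $\tfrac{p}{q}\in(-2,0)$ is not an integer unless $p=1$, and $N(0),N(-1),N(-2)$ are excluded, in fact $|q|\geq 2$ except when $r\in\{-1/1?\}$; one must separately inspect the small cases where $|q|=1$, namely $r=-1$ (excluded). So the $T_\beta$ prong count is $\geq 2$ for all $a\in S_\beta(r)$. For the other two cusps: the slopes $b_\alpha(a)$ and $b_\gamma(a)$ vary as $a$ moves along the segment $int(\Delta)\cap S_\beta(r)$ (after applying $\sigma$), and by Lemma~\ref{lem_face-prong} applied to $N_\alpha(b_\alpha(a))$ and $N_\gamma(b_\gamma(a))$, the prong count at $T_\alpha$ (resp.\ $T_\gamma$) equals $1$ only when $b_\alpha(a)$ (resp.\ $b_\gamma(a)$) lies in a finite set of ``bad'' slopes — explicitly, those slopes $\tfrac{p'}{q'}$ with $|q'|=1$ landing in an $A$-face, or with $p'+2q'=1$ landing in an $S$-face. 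Each such bad slope at a given cusp imposes one linear equation on $a\in S_\beta(r)$, hence picks out at most one projective point of the segment; summing over the finitely many bad slopes gives finitely many exceptional $a$. This proves (2), and cases (1) and (3) are the same argument restricted to the cone over the prescribed face $\Omega$ ($S$-face or $A$-face), using that $\Omega$ itself already enjoys the ``no $1$-prong on $T_\beta$'' condition by the hypothesis $p+2q\neq 1$ resp.\ $|q|\neq 1$ (cf.\ Proposition~\ref{prop_goodFiberFace}), so only the $T_\alpha$ and $T_\gamma$ cusps need to be controlled.

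\textbf{The $(0,\infty)$ subtlety, case (3).} When $\tfrac{p}{q}\in(0,\infty)$ the segment $int(\Delta)\cap S_\beta(r)$ and $int(\Delta)\cap S_\gamma(r)$ are nonempty and the cone over $\Omega$ is not finite-dimensional-rigid in the same way, so I cannot immediately say ``finitely many exceptions'' over the whole face. This is why the statement (3) is phrased in terms of a convergent sequence $\overline{a_n}\to$ interior point: near an interior point of $\Omega$ the boundary slopes $b_\alpha(a_n)$ and $b_\gamma(a_n)$ tend to definite limits determined by that interior point, and in particular their denominators tend to $\infty$, so eventually they avoid the finite bad set; hence $\phi_{a_n}\in\mathcal{M}$ for large $n$. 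Concretely I will use Lemma~\ref{lem_SmallChange}-type estimates: as $\overline{a_n}$ stays bounded away from $\partial\Omega$, the slopes $b_\alpha(a_n),b_\gamma(a_n)$ cannot be among finitely many fixed rationals for more than finitely many $n$, except possibly the constant slope $1$ — but $b_\alpha(a)=1$ or $b_\gamma(a)=1$ forces (Lemma~\ref{lem_slope1}) all slopes to be $1$, i.e.\ $r=1$, which is excluded.

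\textbf{Main obstacle.} The real work is the bookkeeping of ``bad slopes'': for each cusp I must enumerate exactly which slopes $\tfrac{p'}{q'}$ give a $1$-prong — that is, I need the precise dictionary (via the maps $\sigma$, $h_*$, $(h^2)_*$, $\iota_\bullet$) between a class $a\in S_\beta(r)$, its image $\sigma(a)\in int(C_\Delta)$, the boundary slopes $b_\alpha(a),b_\gamma(a)$, and the face of $N_\alpha(b_\alpha(a))$ resp.\ $N_\gamma(b_\gamma(a))$ that $\overline{\sigma(a)}$ lands in — and then verify via Lemma~\ref{lem_sing-data}/Lemma~\ref{lem_face-prong} that the $1$-prong condition is a single linear (projective) constraint in each case. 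Once that dictionary is in place, ``finitely many exceptions'' (cases (1),(2)) and ``large $n$'' (case (3)) follow formally. I expect the degenerate sub-cases $|q|=1$ and $p+2q=1$ (where faces collapse, cf.\ Lemma~\ref{lem_shape}) to require the most careful separate treatment, exactly as in Proposition~\ref{prop_goodFiberFace}.
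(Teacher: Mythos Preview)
Your overall strategy matches the paper's: reduce $\phi_a \notin \mathcal{M}$ to a $1$-prong at $T_\alpha$ or $T_\gamma$ (the $T_\beta$ side being handled by the hypothesis via Lemma~\ref{lem_face-prong}), and then show that the $1$-prong condition at each remaining cusp cuts out only finitely many projective points of the relevant segment in $int(\Delta)$. However, there is a genuine gap in your execution.

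\medskip

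\textbf{The gap.} You assert that the prong count at $T_\alpha$ (or $T_\gamma$) equals $1$ only when the boundary slope lies in a \emph{finite} set of bad slopes, and then conclude finiteness by intersecting with finitely many lines. But the bad sets are \emph{infinite}: the slopes $\tfrac{p'}{q'}$ with $|q'|=1$ are exactly the integers, and those with $p'+2q'=1$ form another infinite family. So ``summing over the finitely many bad slopes'' is not available as stated. What the paper actually proves (and what you are missing) is a geometric fact about how these infinitely many bad loci sit inside $\Delta$: for fixed $r\in(-2,0)$, the segments $\Delta\cap S_\alpha(t)$ with $t\in\mathbb{Z}$ shrink toward the edge $x=0$ of $\Delta$ as $|t|\to\infty$ (cf.\ Lemma~\ref{lem_boundary}(1)), while $\Delta\cap S_\beta(r)$ has both endpoints with $x$-coordinate bounded away from $0$; hence only finitely many integers $t$ give a nonempty intersection. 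An analogous argument handles the $S_\gamma$ family with $v+2w=1$. This is the content of the paper's proof of (2), and the same mechanism proves (1).

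\medskip

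\textbf{Case (3).} Your heuristic ``the denominators of $b_\alpha(a_n), b_\gamma(a_n)$ tend to $\infty$'' is not correct in general: the limit point in $int(\Omega)$ may well have a rational boundary slope, so there is no reason the denominators blow up. The paper's argument is again geometric and different in character from cases (1)--(2): here the bad loci $\Delta\cap S_\alpha(t)\cap S_\gamma(\tfrac{p}{q})$ for $t\in\mathbb{Z}$ are nonempty for infinitely many $t$, but as $t\to\infty$ these intersection points run off to $\partial\Delta$ (see Figure~\ref{fig_appendix}). Thus a sequence $\overline{a_n}$ converging projectively to an \emph{interior} point of $\Omega$ is eventually disjoint from every bad locus. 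Your Lemma~\ref{lem_SmallChange} reference is in the right spirit but does not directly give this; you need the explicit endpoint computation of Lemma~\ref{lem_boundary} to see where the bad loci accumulate.

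\medskip

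In short: your decomposition into per-cusp $1$-prong conditions is exactly right, but the finiteness in (1)--(2) and the ``eventually'' in (3) both require the geometric observation that the bad-slope segments in $\Delta$ accumulate only on $\partial\Delta$, which you have not supplied.
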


\begin{proof} 
We first prove the claim (2). 
\medskip
\\
(2) 
In the case $\tfrac{p}{q} \in (-2, 0)$, 
a primitive fibered class $\widehat{a} \in S_{\beta}(\tfrac{p}{q} )$ is in $int(C_{\widehat{\Delta}})$, 
where $\widehat{\Delta}$ is a fibered face $N$ which is either $\Delta$, $\Delta'$, $\Delta_2$ or $\Delta_2'$. 
We note that $\sigma(int(C_{\widehat{\Delta}}) \cap S_{\beta} (\tfrac{p}{q}))= int(C_{\Delta}) \cap S_{\alpha} (\tfrac{p}{q})$ or 
$int(C_{\Delta}) \cap S_{\beta} (\tfrac{p}{q})$. 
By Lemma~\ref{lem_conjugateXY} and Remark~\ref{rem_SymmetrySetXY}, we may assume that 
$\sigma(int(C_{\widehat{\Delta}}) \cap S_{\beta} (\tfrac{p}{q}))= int(C_{\Delta}) \cap S_{\beta} (\tfrac{p}{q})$, and 
it is enough to consider primitive fibered classes $a \in int(C_{\Delta}) \cap S_{\beta} (\tfrac{p}{q})$.

If $\phi_a \not\in \mathcal{M}$, then one of the following two cases  occur. 
\begin{itemize}
\item 
$a \in S_{\alpha} (\tfrac{t}{u}) \cap S_{\beta}(\tfrac{p}{q})$ for some $\tfrac{t}{u}$ such that $|u|=1$. 

\item 
$a \in S_{\beta}(\tfrac{p}{q})  \cap S_{\gamma}(\tfrac{v}{w})$ for some $\tfrac{v}{w}$ such that $v+ 2w=1$. 
\end{itemize}
It suffices to show that  the following two sets are finite: 
Fixing $\tfrac{p}{q}$, 
\begin{eqnarray*}
&\ &\{[x,y] \in int(\Delta)  \cap  S_{\alpha} (\tfrac{t}{u}) \cap S_{\beta}(\tfrac{p}{q}) \ |\  |u|=1\}\ \mbox{and}
\\
&\ & \{[x,y] \in  int(\Delta) \cap S_{\beta}(\tfrac{p}{q})  \cap S_{\gamma}(\tfrac{v}{w})\ |\  v+ 2w = 1\}. 
\end{eqnarray*}
One can prove the first set is finite as follows. 
Recall from Lemma~\ref{lem_boundary} that 
\begin{eqnarray*}
\Delta \cap S_{\alpha}(r)&=&  \{[x,y] \in \Delta\ |\ y=( \tfrac{1+r}{-2})x+ \tfrac{1}{2}\}, \ \mbox{and}
\\
\Delta \cap S_{\beta}(r)&=&  \{[x,y] \in \Delta\ |\ y=( \tfrac{-2}{1+r})x+ \tfrac{1}{1+r}\}. 
\end{eqnarray*}
Suppose that $|u|=1$. 
Then 
$$(\Delta \cap S_{\alpha}(\tfrac{t}{u})) \cap (\Delta \cap S_{\beta}(\tfrac{p}{q})) = \emptyset\ \mbox{for large}\ |\tfrac{t}{u}|.$$
The finiteness of  the second set  can be proved similarly.

The proof of (1) is similar to that of (2). 
\medskip

\noindent
(3) 
Let us consider the case that $\Omega$ is an $S$-face. 
The primitive fibered classes $a_n$ enjoying the assumption of the claim must be 
in the interior of the cone over the fibered face $\Delta_1$ or $\Delta_1'$. 
The images of  $int(C_{\Delta_1}) \cap S_{\beta}(\tfrac{p}{q})$ and $ int(C_{\Delta_1'}) \cap S_{\beta}(\tfrac{p}{q})$ 
under $\sigma$ are the same, and it is the set $int(C_{\Delta}) \cap  S_{\gamma}(\tfrac{p}{q})$. 
Hence it is enough to consider the primitive fibered classes $a_n \in int(C_{\Delta}) \cap S_{\gamma}(\tfrac{p}{q})$ 
which enjoy the assumption of the claim (3).

We consider the following two infinite sets: 
Fixing $\tfrac{p}{q}$, 
\begin{eqnarray*}
&\ & \{[x,y] \in  int(\Delta) \cap S_{\alpha} (\tfrac{t}{u}) \cap S_{\gamma} (\tfrac{p}{q}) \ |\ |u| = 1\}\ \mbox{and}
\\
&\ & \{[x,y] \in int(\Delta) \cap S_{\beta} (\tfrac{t}{u}) \cap S_{\gamma} (\tfrac{p}{q})\ |\ |u| = 1\}.
\end{eqnarray*}
Let us consider the case $\tfrac{p}{q} \in (0,1)$. 
If $\tfrac{t}{u} \in (- \infty, -1]$, then 
$int(\Delta) \cap S_{\alpha} (\tfrac{t}{u}) \cap S_{\gamma} (\tfrac{p}{q}) =  \emptyset$ and 
$int(\Delta) \cap S_{\beta} (\tfrac{t}{u}) \cap S_{\gamma} (\tfrac{p}{q}) = \emptyset$. (See Figure~\ref{fig_appendix}(left).)  
Suppose that $\tfrac{t}{u} \in (-1, \infty)$ such that $|u|=1$. 
(Then $\tfrac{t}{u} = \tfrac{t}{1} \in [0, \infty)$.) 
Fixing $t_0 >0$, the sets
$\{int(\Delta) \cap S_{\alpha} (\tfrac{t}{1}) \cap S_{\gamma} (\tfrac{p}{q}) \ |\ t \in {\Bbb N},\ t < t_0\}$ and 
$\{ int(\Delta) \cap S_{\beta} (\tfrac{t}{1}) \cap S_{\gamma} (\tfrac{p}{q}) \ |\ t \in {\Bbb N},\ t < t_0\}$ are  finite clearly. 
Observe that for large $t \in {\Bbb N}$, 
$int(\Delta) \cap S_{\alpha} (\tfrac{t}{1}) \cap S_{\gamma} (\tfrac{p}{q}) \ne \emptyset$  and 
$ int(\Delta) \cap S_{\beta} (\tfrac{t}{1}) \cap S_{\gamma} (\tfrac{p}{q}) \ne \emptyset$, 
but each point of these nonempty sets goes to a point of $\partial \Delta \cap S_{\beta}(\tfrac{p}{q})$  
as $t$ goes to $\infty$.  (See Figure~\ref{fig_appendix}(right).)    
This leads to the claim (3).  

The proof in the case $\tfrac{p}{q} \in (1,\infty)$ is similar. 

By a similar argument, one can prove the claim (3) when $\Omega$ is an $A$-face. 
\end{proof}

\begin{figure}[htbp]
\begin{center}
\includegraphics[width=3.5in]{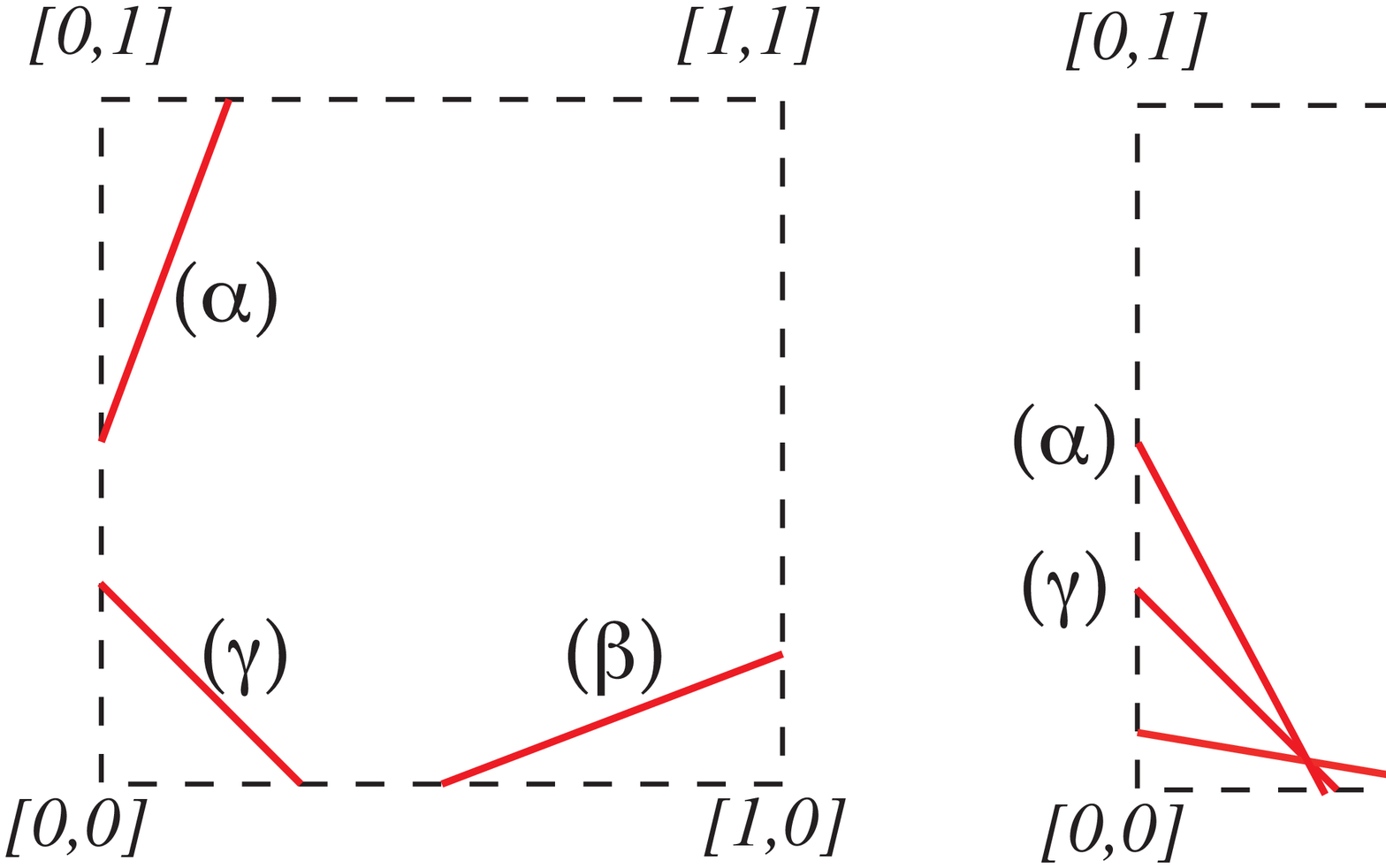}
\caption{(left) $\tfrac{t}{u} \in (- \infty, -1)$. (right) $\tfrac{t}{u} \in (-1 , \infty)$. 
[$(\alpha):= int(\Delta) \cap S_{\alpha}(\tfrac{t}{u})$, 
$(\beta):= int(\Delta) \cap S_{\beta}(\tfrac{t}{u})$, 
$(\gamma):= int(\Delta) \cap S_{\gamma}(\tfrac{p}{q})$.]} 
\label{fig_appendix}
\end{center}
\end{figure}

In contrast with Lemma~\ref{lem_PuncturedTorus}, 
we have

\begin{lem}
\label{lem_num-boundary}
Suppose that $\tfrac{p}{q} \ne 1$. 
 For a primitive fibered class $a \in S_{\alpha}(\tfrac{p}{q})$, 
the number of the boundary components of $F_{\overline{a}}$ 
for $\overline{a} \in  H_2(N_{\alpha}(\tfrac{p}{q}), \partial N_{\alpha}(\tfrac{p}{q}))$ 
is bounded by $2|p|+ 2 |q|$. 
\end{lem}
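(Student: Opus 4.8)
The plan is to bound the number of boundary components of $F_{\overline{a}}$ directly from the homological formula in Lemma~\ref{lem_topological-type}, applied to $N$, together with the fact that filling the cusp along $T_\alpha$ deletes exactly the boundary components of $F_a$ lying on $T_\alpha$. So first I would write, for a primitive fibered class $a=(x,y,z)\in S_\alpha(\tfrac{p}{q})$, that
$$\sharp(\partial F_{\overline{a}}) \;=\; \sharp(\partial F_a) - \sharp(\partial_\alpha F_a) \;=\; \gcd(y,z+x) + \gcd(z,x+y),$$
using Lemma~\ref{lem_topological-type}. The task is then to show $\gcd(y,z+x)+\gcd(z,x+y)\le 2|p|+2|q|$ whenever $a\in S_\alpha(\tfrac{p}{q})$, i.e. when $-px = q(y+z)$ with $p\in\mathbb N$, $q\in\mathbb Z$ coprime.

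Next I would extract the structural consequence of the relation $-px=q(y+z)$. Since $\gcd(p,q)=1$, the equation forces $y+z=pt$ and $x=-qt$ for some integer $t$; this is exactly the normal form used in the proof of Lemma~\ref{lem_basis}. Then $z+x = pt - qt + (z - (y+z-z)) $ — more cleanly, writing $y+z=pt$, $x=-qt$, we get $z+x = z - qt$ and $x+y = -qt + y = -qt + (pt - z) = (p-q)t - z$. The key point is that $\gcd(y,z+x)$ divides $z+x = z-qt$ and also divides $y$; I would combine these to bound the gcd by a quantity controlled by $|p|,|q|$. Concretely, $\gcd(y, z+x)$ divides $\gcd(y, z+x)$ times suitable integer combinations; using $y+z = pt$ one shows $\gcd(y,z+x)$ divides a fixed multiple (by $p$ or $q$) of $t$, and since the primitivity of $a$ forces $\gcd$-type constraints on $t,x,y,z$, the divisor cannot exceed $|p|+|q|$ or so in each of the two terms. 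Then adding the two terms gives the bound $2|p|+2|q|$. The hypothesis $\tfrac{p}{q}\neq 1$ is presumably needed to rule out the degenerate case where all three boundary slopes coincide (Lemma~\ref{lem_slope1}), in which $F_a$ can have arbitrarily many boundary components on every torus simultaneously.

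The main obstacle I anticipate is the bookkeeping in the gcd estimates: one has to be careful that the two terms $\gcd(y,z+x)$ and $\gcd(z,x+y)$ are each bounded individually (rather than just their combination), and that the coprimality $\gcd(p,q)=1$ is used correctly to pass from $-px=q(y+z)$ to the parametrization $x=-qt$, $y+z=pt$. A cleaner route, which I would try first, is to invoke Lemma~\ref{lem_NormChange2}: the Thurston norm satisfies $\|\overline a\|_{p/q} = \|a\| - |x/q|$ for $a=(x,y,z)\in S_\alpha(\tfrac{p}{q})$, and since $F_{\overline a}$ is a minimal representative, $\sharp(\partial F_{\overline a})$ is governed by the Euler characteristic together with the genus; combined with the elementary observation that a $\gcd$ of two integers $u,v$ with a fixed linear relation $\lambda u + \mu v = c$ is bounded in terms of $|\lambda|,|\mu|,|c|$, this should yield $2|p|+2|q|$ after a short computation. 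I expect the final inequality to be somewhat lossy (not sharp), which is fine since the lemma only asserts boundedness by $2|p|+2|q|$, mirroring the contrast with Lemma~\ref{lem_PuncturedTorus} advertised in the surrounding text.
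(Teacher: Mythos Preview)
Your approach matches the paper's exactly: express $\sharp(\partial F_{\overline a})=\gcd(y,z+x)+\gcd(z,x+y)$, parametrize $x=-qt$, $y+z=pt$ from $\gcd(p,q)=1$, and bound each gcd by $|p|+|q|$. The one step you leave vague---``primitivity forces $\gcd$-type constraints on $t,x,y,z$''---is the heart of the argument, and the paper makes it explicit: one shows $\gcd(y,t)=1$ (if a prime $\ell$ divides both $y$ and $t$, then $\ell\mid z=pt-y$ and $\ell\mid x=-qt$, contradicting primitivity of $(x,y,z)$). With this in hand, $z+x=(p-q)t-y$ gives $\gcd(y,z+x)=\gcd(y,(p-q)t)=\gcd(y,p-q)\le|p-q|\le|p|+|q|$, and symmetrically for $\gcd(z,x+y)$. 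The hypothesis $p/q\ne1$ enters precisely here: it ensures $p-q\ne0$, so $\gcd(y,p-q)$ is genuinely bounded (when $p=q=1$ one gets $\gcd(y,0)=|y|$, which recovers the unbounded boundary count of Lemma~\ref{lem_PuncturedTorus}).

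Your proposed alternative via Lemma~\ref{lem_NormChange2} would not work: the Thurston norm controls $|\chi(F_{\overline a})|$, not the number of boundary components, and without an independent genus bound you cannot extract a boundary-component bound from it.
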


\begin{proof} 
Suppose that $a= (x,y,z) \in S_{\alpha}(\tfrac{p}{q})$ is a primitive fibered class. 
The number of the boundary components of $F_{\overline{a}} $ equals 
$\gcd(y, z+x) + \gcd(z, x+y)$.

We shall prove that $\gcd(y, z+x) \le |p|+|q|$. 
The  inequality $\gcd(z, x+y) \le |p|+|q|$ can be proved by  the same argument. 
Since $-px= q(y+z)$, there exists an integer $k$ such that $x= -qk$ and $y+z=pk$. 
Hence $z= pk-y$. 

\begin{claim} 
\label{claim_rela-prime}
$\gcd(y,k)= \gcd(z,k)=1$. 
\end{claim} 

\noindent
Proof of Claim~\ref{claim_rela-prime}. 
Suppose that $\gcd(y,k)= \ell >1$. 
Then 
$y= y' \ell$ and $k= k' \ell$ for some integers $y'$ and $k'$. 
One sees that 
$$\gcd(z,k) = \gcd(pk-y, k) = \gcd(pk' \ell - y' \ell, k' \ell) \ge \ell .$$
Thus $\ell$ is a factor of $y,z$ and $k$. 
Recall that $x= -qk$. 
This implies that $\ell$ is also a factor of $x$.  
Thus $\gcd(x,y,z) \ge \ell$. 
Since $(x,y,z)$ is a primitive class, this is a contradiction. 
This completes  the proof of Claim~\ref{claim_rela-prime}. 
\medskip
\\
One has $\gcd(y, z+x) = \gcd(y, (p-q)k-y) = \gcd(y, (p-q)k)$.   
Since $\gcd(y,k)=1$, one obtains 
$ \gcd(y, (p-q)k) = \gcd(y, p-q) \le p-q \le |p|+ |q|$. 
This completes the proof Lemma~\ref{lem_num-boundary}. 
\end{proof}

The following is an application of Lemmas~\ref{lem_LargeSet} and \ref{lem_num-boundary}.

\begin{prop}
\label{prop_small-MNE}
For each $r \in {\mathcal Hyp} \setminus \{1\}$, 
let $\Omega$ be a fibered face of $N(r)$ which enjoys $(*)$  in Theorem~\ref{thm_key}. 
Let $a_n \in S_{\beta}(r)$ be a primitive fibered class such that $\overline{a_n} \in int(C_{\Omega})$ for each $n \in {\Bbb N}$. 
Suppose that $\overline{a_n} $ converges projectively to a unique point of $int(\Omega)$ 
which achieves  $\min \mathrm{Ent}(N(r), \Omega)$. 
Then $\phi_{a_n} \in \mathcal{M}$ for large $n$.  
Moreover 
$$\min \mathrm{Ent}(N(r), \Omega)= \lim_{n \to \infty} \mathrm{Ent}(\widehat{\phi}_{a_n}).$$
\end{prop}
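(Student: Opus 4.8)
\textbf{Proof proposal for Proposition~\ref{prop_small-MNE}.}
The plan is to assemble the statement from three ingredients already in hand: the bookkeeping lemma for normalized entropies (Lemma~\ref{lem_NEnt_hat}), the "largeness" of $\mathcal{M}$ (Lemma~\ref{lem_LargeSet}), and a boundary-count bound guaranteeing the defect from filling stays controlled (Lemmas~\ref{lem_num-boundary}, \ref{lem_PuncturedTorus}, and the discussion around Lemma~\ref{lem_Ent_r}). First I would invoke Lemma~\ref{lem_LargeSet} in the appropriate case: since $\Omega$ enjoys $(*)$ and $\overline{a_n}$ converges projectively to an interior point of $\Omega$ with $\overline{a_i}\neq\overline{a_j}$ for $i\neq j$, the relevant part of Lemma~\ref{lem_LargeSet}(1),(2) or (3) applies (the trichotomy on $r\in(-\infty,-2)$, $r\in(-2,0)$, $r\in(0,\infty)$ exactly matches the hypotheses there, recalling $r\neq 1$), yielding $\phi_{a_n}\in\mathcal{M}$ for all large $n$. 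This gives the first assertion, and in particular $\widehat\phi_{a_n}\in\widehat{\mathcal{M}}$ is defined with $\lambda(\widehat\phi_{a_n})=\lambda(a_n)$ for large $n$.

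Next I would compute the limiting normalized entropy. By Lemma~\ref{lem_NEnt_hat}, for large $n$,
\begin{equation*}
	\mathrm{Ent}(\widehat\phi_{a_n}) = \bigl(\|a_n\| - \sharp(\partial F_{a_n})\bigr)\log\lambda(a_n),
\end{equation*}
normalized so as to read off the rational representative; equivalently, writing $\overline{a_n}\in H_2(N(r),\partial N(r))$ and using the condition $(*)$ so that $\lambda_r(\overline{a_n})=\lambda(a_n)$, one has $\mathrm{Ent}(\widehat\phi_{a_n})$ sitting between $\mathrm{Ent}_r(\overline{a_n})$ and a quantity differing from it only by the contribution of the $\sharp(\partial_\alpha F_{a_n})+\sharp(\partial_\gamma F_{a_n})$ boundary circles not on $T_\beta$. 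Here is where the key step enters: I must show that $\sharp(\partial F_{a_n})/\|a_n\|\to 0$, i.e. that the number of boundary components of the extended fiber stays bounded while $\|a_n\|\to\infty$ (the latter because $\overline{a_n}$ are pairwise distinct integral classes converging projectively to an interior ray). The number on $T_\beta$ is controlled by Lemma~\ref{lem_face-prong} (it equals $p+2q$ or $|q|$, a constant), and the numbers on $T_\alpha$, $T_\gamma$ are bounded by Lemma~\ref{lem_num-boundary}. Therefore $\sharp(\partial F_{a_n})$ is bounded, and since $\mathrm{Ent}_r$ extends continuously to $int(\Omega)$ (Fried's theorem, as recalled in Section~\ref{subsection_Tnorm}), $\mathrm{Ent}_r(\overline{a_n})\to\min\mathrm{Ent}(N(r),\Omega)$ by the convergence of the rays. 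The boundedness of $\sharp(\partial F_{a_n})$ forces $\mathrm{Ent}(\widehat\phi_{a_n})$ and $\mathrm{Ent}_r(\overline{a_n})$ to have the same limit, giving $\min\mathrm{Ent}(N(r),\Omega)=\lim_{n\to\infty}\mathrm{Ent}(\widehat\phi_{a_n})$.

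The main obstacle is the uniform bound on $\sharp(\partial F_{a_n})$, specifically controlling $\sharp(\partial_\alpha F_{a_n})$ and $\sharp(\partial_\gamma F_{a_n})$: a priori, as $\overline{a_n}$ ranges over integral classes in the cone, the other two boundary slopes $b_\alpha(a_n)$ and $b_\gamma(a_n)$ vary with $n$, so one cannot simply quote a single application of Lemma~\ref{lem_num-boundary}. The resolution is that $a_n\in S_\beta(r)$ forces, via the argument in the proof of Lemma~\ref{lem_num-boundary} (with $-ry = x+z$ in place of $-px = q(y+z)$), that $\sharp(\partial_\alpha F_{a_n})$ and $\sharp(\partial_\gamma F_{a_n})$ are each bounded by a constant depending only on $p$ and $q$, hence only on $r$, not on $n$. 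Once this is granted the rest is the continuity-and-convergence bookkeeping sketched above, which is routine. I would close by noting that this proposition is precisely the device used in Lemma~\ref{lem_6+12i} (and the analogue for $N(4)$), so the argument there can be cited or repeated verbatim with $\Omega=\Omega_S$ or $\Omega_A$ as needed.
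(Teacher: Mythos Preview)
Your proposal is correct and follows essentially the same route as the paper's proof: invoke Lemma~\ref{lem_LargeSet} for the first assertion, use continuity of $\mathrm{Ent}_r$ on $int(\Omega)$ to get $\mathrm{Ent}_r(\overline{a_n})\to\min\mathrm{Ent}(N(r),\Omega)$, and then use Lemma~\ref{lem_num-boundary} to bound $\sharp(\partial F_{\overline{a_n}})$ uniformly so that $\mathrm{Ent}(\widehat\phi_{a_n})$ has the same limit. Your extra care about Lemma~\ref{lem_num-boundary} being stated for $S_\alpha$ rather than $S_\beta$ is well-placed but unnecessary: the paper applies it directly, the symmetry in $\alpha,\beta,\gamma$ being understood.
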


\begin{proof} 
The first assertion is immediate from Lemma~\ref{lem_LargeSet}. 
By the assumption of $\overline{a_n} $, one has 
$$\min \mathrm{Ent}(N(r), \Omega) = \lim_{n \to \infty} \mathrm{Ent}_r(\overline{a_n}) (= \lim_{n \to \infty} \mathrm{Ent}(\phi_{\overline{a_n}})).$$
Since $\phi_{a_n} \in \mathcal{M}$ for large $n$, it follows that 
$\lambda_r(\overline{a_n}) (= \lambda(\phi_{\overline{a_n}})) = \lambda(\widehat{\phi}_{a_n})$ for large $n$. 
By Lemma~\ref{lem_num-boundary}, the number of the boundary components of the minimal representative of $\overline{a_n}$ is bounded. 
Thus the normalized entropy $ \mathrm{Ent}(\widehat{\phi}_{a_n})$ of $\widehat{\phi}_{a_n}$ 
tends to $\min \mathrm{Ent}(N(r), \Omega)$ 
as $n$ tends to $\infty$. 
\end{proof}

\end{document}